\newtheorem{theorem}{Theorem}[section]
\newtheorem{proposition}[theorem]{Proposition}
\newtheorem{lemma}[theorem]{Lemma}
\newtheorem{corollary}[theorem]{Corollary}
\newtheorem{conjecture}[theorem]{Conjecture}
\newtheorem{question}[theorem]{Question}
\newtheorem{claim}{Claim}
\newtheorem{problem}[theorem]{Problem}
\theoremstyle{definition}
\newtheorem{definition}[theorem]{Definition}
\newtheorem{example}[theorem]{Example}
\theoremstyle{remark}
\newtheorem{remark}[theorem]{Remark}
\newenvironment{claimproof}[1][\proofname]
	{
		\proof[#1]%
			
	}
	{
		\endproof
	}
\newcommand{\NN}{\mathbb{N}}
\newcommand{\CC}{\mathbb{C}}
\newcommand{\RR}{\mathbb{R}}
\newcommand{\ZZ}{\mathbb{Z}}
\newcommand{\FF}{\mathbb{F}}
\newcommand{\PP}{\mathbb P}
\newcommand{\suchthat}{\;\ifnum\currentgrouptype=16 \middle\fi|\;}
\newcommand{\bigmid}{\left.\vphantom{\Big\{} \suchthat \vphantom{\Big\}}\right.}
\newcommand{\lin}{{\mathrm{lin}}}
\newcommand{\cB}{\mathcal{B}}
\newcommand{\cF}{\mathcal{F}}
\newcommand{\cG}{\mathcal{G}}
\newcommand{\bo}{\mathbf{1}}
\newcommand{\countsymb}{\mathbf{c}}
\newcommand{\ccountd}[2]{\countsymb_{#1}(#2)}
\newcommand{\ccount}[1]{\ccountd{2}{#1}}
\newcommand{\ccountsym}[2]{\countsymb_2^{\circlearrowleft #2}(#1)}
\newcommand{\ccountper}[2]{\countsymb_2^{\rightleftarrows #2}(#1)}
\newcommand{\ee}{\mathrm{e}}
\newcommand{\ii}{\mathfrak{i}}
\DeclareMathOperator{\codim}{codim}
\DeclareMathOperator{\conv}{conv}
\DeclareMathOperator{\mult}{mult}
\DeclareMathOperator{\Span}{Span}
\DeclareMathOperator{\Trop}{Trop}
\DeclareMathOperator{\rank}{rank}
\DeclareMathOperator{\nbc}{nbc}
\DeclareMathOperator{\im}{im}
\DeclareMathOperator{\Tropp}{\overline{Trop}}
\DeclareMathOperator{\aff}{aff}
\DeclareMathOperator{\rec}{rec}
\DeclareMathOperator{\Star}{star}
\colorlet{colbg}{white}
\colorlet{colfg}{black}
\colorlet{colgraphv}{colfg!75!white}
\colorlet{colgraphe}{colfg!55!white}
\colorlet{colG}{DarkSeaGreen}
\definecolor{colR}{HTML}{CC6677}
\definecolor{colO}{HTML}{DDCC77}
\definecolor{colB}{HTML}{6699CC}
\colorlet{colY}{Gold!90!black}
\tikzstyle{vertex}=[fill=colgraphv,circle,inner sep=0pt, minimum size=4pt]
\tikzstyle{edge}=[line width=1.5pt,colgraphe]
\tikzstyle{dedge}=[edge, -{LaTeX[round,length=8pt]}]
\tikzstyle{labelsty}=[font=\scriptsize]
\title{Counting fibres of the Hadamard product using Bergman fans}
\author{Oliver Clarke\thanks{Department of Mathematical Sciences, Durham University}
\and
Sean Dewar\thanks{Numerical Analysis and Applied Mathematics (NUMA), KU Leuven}
\and
Matteo Gallet\thanks{Department of Mathematics, Informatics and Geosciences, University of Trieste}
\and
Georg Grasegger\thanks{Johann Radon Institute for Computational and Applied Mathematics, Austrian Academy of Sciences}
\and
Daniel Green Tripp\thanks{School of Mathematics, University of Bristol}
\and
Ben Smith\thanks{School of Mathematical Sciences, Lancaster University}
}
\date{}
\begin{document}

\maketitle

\begin{abstract}
    We study the generic fibre of the Hadamard product of linear spaces via matroid theory and tropical geometry.
    To do so, we introduce the \emph{flip product}, a numerical invariant associated to a pair of matroids defined via the stable intersection of their (flipped) Bergman fans.
    Our first main result is that the cardinality of a generic fibre for the Hadamard product of linear spaces is exactly the flip product of their matroids.
    We also provide a recursive algorithm for computing the flip product of any pair of matroids.
    As an application of our techniques, we extend the notion of realisation numbers from rigidity theory to rotational-symmetric and periodic realisation numbers and we provide combinatorial algorithms to compute them.
    Finally, we show a number of existing matroid invariants are specialisations of the flip product, including the beta invariant.
\end{abstract}

\section{Introduction}\label{sec:intro}

Given a finite indexing set $E$ and linear spaces $U, V \subset \mathbb{C}^E$,
we define the \emph{Hadamard map} of~$U,V$ to be the bilinear map
\begin{equation*}
    f_{U,V} \colon U \times V \rightarrow \mathbb{C}^E, ~ (u,v) \mapsto u \odot v := (u_e v_e)_{e \in E}.
\end{equation*}
The Zariski closure of $f_{U,V}(U \times V)$ is called the \emph{Hadamard product} of $U$ and $V$, and is denoted by~$U \odot V$.
The Hadamard map is invariant under the natural action of the algebraic torus~$\mathbb{C} \setminus \{0\}$ on $U \times V$ that sends a pair $(u,v) \in U \times V$ and a scalar $t \in \mathbb{C} \setminus \{0\}$ to the pair $(tu, t^{-1}v)$ since $(t u ) \odot (t^{-1} v) = u \odot v$.
With this in mind, our aim in this paper is to address the following problem:

\begin{problem}\label{problem}
    Determine the number of points contained in a generic fibre of the Hadamard map~$f_{U,V}$ modulo the natural torus action on $U \times V$.
\end{problem}

The first appearance of Hadamard products of algebraic varieties can be traced to the papers~\cite{CuetoMortonSturmfels2010, CuetoTobisYu2010}.
After that, several papers and books appeared on the topic, including \cite{BocciCarlini2024,BocciCarliniKileel2016}.
The Hadamard product of two linear spaces $U, V \subset \mathbb{C}^E$ is closely tied to the behaviour of their \emph{algebraic matroids} $M(U)$ and $M(V)$; see \cite{RosenSidmanTheran2020}.
For example, Bernstein \cite{Bernstein2022} proves that the dimension of the Hadamard product $U \odot V$ is entirely determined by the matroids~$M(U)$ and~$M(V)$; see \Cref{thm:bern_matroid_linear}.

\subsection{Introducing: the flip product of matroids}

In this paper, we introduce the following concept that was previously alluded to in \cite{ClarkeDewarEtAl2025}. An early version of the concept can be found in \cite[Proposition~5.2 and Lemma~6.1]{HuhKatz2012} for the special case where one of the matroids is uniform.

\begin{definition}
\label{definition:flip_product}
The \emph{flip product} of matroids~$M$ and~$N$ with shared ground set~$E$ is given by the formula
\begin{equation*}
    M \ast N \ := \ \bigl(-\Tropp(N)\bigr) \cdot \Tropp(M) \ \in \ \mathbb{Z}_{\geq 0} \cup \{\infty\},
\end{equation*}
where $\Tropp (M)$ is the projective Bergman fan of $M$ (see \Cref{def:projectiveBergmanFan}),
$-\Tropp(N)$ is the tropical cycle $\{-x : x \in \Tropp(N)\}$,
and the operation ``$\,\cdot\,$'' is the tropical intersection product (see \Cref{subsec:tropical_intersection_product}).
\end{definition}

In addition to the convention that $M \ast N = 1$ when $E= \emptyset$,
the flip product satisfies the following properties:
\begin{enumerate}
    \item $M \ast N = N \ast M$;
    \item if either $M$ or $N$ contain a loop, then $M \ast N = 0$ (\Cref{prop:loopbad});
    \item if $r(M) + r(N) < |E|+1$, then $M \ast N = 0$ and if $r(M) + r(N) > |E|+1$, then $M \ast N \in \{0,\infty\}$ (\Cref{prop:intersection+number}); here $r(\cdot)$ denotes the rank of a matroid.
\end{enumerate}

\subsection{Main results}

Our first main result describes how flip products can be used to solve \Cref{problem}.
Specifically, we show that for any $\mathbb{C}$-representable matroids $M,N$ with the same ground set $E$,
the flip product $M \ast N$ can be geometrically interpreted as the degree of the Hadamard product of the $\mathbb{C}$-representations for $M$ and $N$.

\begin{restatable}{theorem}{mainalg}\label{thm:mainalg}
    Let $U, V \subset \mathbb{C}^E$ be linear subspaces and let $M(U)$ and $M(V)$ be the representable matroids defined by~$U$ and~$V$, respectively.
    Then, for any generic $\lambda \in \mathbb{C}^E$ we have
    \begin{equation*}
        \#\bigl(f_{U,V}^{-1}(\lambda)/\!\!\sim\bigr) = M(U) \ast M(V),
    \end{equation*}
    where $\sim$ is the equivalence relation on $U\times V$ where $(u,v) \sim (u',v')$ if and only if
    $u'=tu$ and $v'=t^{-1}v$ for some $t \in \CC\setminus \{0\}$, i.e., the pairs $(u,v)$ and $(u',v')$ are in the same orbit of the natural torus action on $U \times V$.
\end{restatable}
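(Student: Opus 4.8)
The plan is to convert the fibre count into a tropical (stable) intersection number of the projective Bergman fans of $M(U)$ and $M(V)$, passing through the reciprocal linear space of $U$. It suffices to prove the identity on a Zariski-dense open set of $\lambda$, so I may assume $\lambda$ has no zero coordinate. First I would dispose of the loop case: if some $e\in E$ is a loop of $M(U)$ or of $M(V)$, then the $e$-th coordinate vanishes identically on $U$ or on $V$, so $(u\odot v)_e=0$ for all $(u,v)$ and $f_{U,V}^{-1}(\lambda)=\emptyset$; meanwhile $M(U)\ast M(V)=0$ by \Cref{prop:loopbad}, so the identity holds. Hence I may assume $M(U),M(V)$ are loopless, equivalently $U^{\circ}:=U\cap(\CC\setminus\{0\})^E$ and $V^{\circ}:=V\cap(\CC\setminus\{0\})^E$ are dense in $U$ and $V$.

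Next I would re-encode the fibre as an intersection inside the torus. Since $\lambda$ has no zero coordinate, every $(u,v)\in f_{U,V}^{-1}(\lambda)$ lies in $U^{\circ}\times V^{\circ}$ and satisfies $u=\lambda\odot v^{\odot-1}$, so $(u,v)\mapsto v$ is a bijection from $f_{U,V}^{-1}(\lambda)$ onto $V^{\circ}\cap\bigl(\lambda\odot(U^{\circ})^{\odot-1}\bigr)$, where $(U^{\circ})^{\odot-1}=\{\,u^{\odot-1}:u\in U^{\circ}\,\}$ is the \emph{reciprocal linear space} of $U$. Under this bijection the torus action $(u,v)\mapsto(tu,t^{-1}v)$ becomes coordinatewise scaling $v\mapsto t^{-1}v$, and both $V^{\circ}$ and $\lambda\odot(U^{\circ})^{\odot-1}$ are stable under scaling; so passing to $\PP^{|E|-1}=\PP(\CC^E)$ gives a bijection
\begin{equation*}
f_{U,V}^{-1}(\lambda)/\!\!\sim\ \ \longleftrightarrow\ \ \PP(V)\ \cap\ \overline{\lambda\odot(U^{\circ})^{\odot-1}},
\end{equation*}
all of whose points lie in the dense torus of $\PP^{|E|-1}$ (for generic $\lambda$ no intersection point escapes to the toric boundary, because $\PP(V)$ meets the torus densely).

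Finally I would evaluate this intersection tropically over $\CC$ with the trivial valuation. The classical description of the tropicalisation of a linear space gives $\Trop\PP(V)=\Tropp(M(V))$; the coordinatewise inversion $x\mapsto x^{\odot-1}$ is the torus automorphism acting by $-1$ on the cocharacter lattice, so $\Trop$ of $(U^{\circ})^{\odot-1}$ is $-\Tropp(M(U))$, and translating by the unit $\lambda$ does not change the tropicalisation. Because $\lambda$ is generic, $\lambda\odot(U^{\circ})^{\odot-1}$ is a generic torus translate of $(U^{\circ})^{\odot-1}$, so by the generic-translate form of the tropical intersection formula (as developed in \Cref{subsec:tropical_intersection_product}) the intersection $\PP(V)\cap\overline{\lambda\odot(U^{\circ})^{\odot-1}}$ is transverse, consists of reduced points, and has cardinality
\begin{equation*}
\bigl(-\Tropp(M(U))\bigr)\cdot\Tropp(M(V))\ =\ M(V)\ast M(U)\ =\ M(U)\ast M(V),
\end{equation*}
the last step being symmetry of the flip product. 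When $r(M(U))+r(M(V))\neq|E|+1$ the two varieties have non-complementary dimension: computing $\dim\Tropp$ of a Bergman fan and invoking the projective dimension theorem shows that both sides of the claimed equality are $0$ (if the rank sum is too small, where the generic intersection is empty) or $\infty$ (if it is too large), consistently with \Cref{prop:intersection+number}.

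The step I expect to be the main obstacle is the last one: rigorously justifying that for generic $\lambda$ the set-theoretic intersection $\PP(V)\cap\overline{\lambda\odot(U^{\circ})^{\odot-1}}$ is reduced, remains inside the torus, and has cardinality exactly the stable intersection of the two Bergman fans. This requires (a) that both varieties meet the ambient torus in a dense open subset --- precisely looplessness of $M(U)$ and $M(V)$; (b) transferring the generic-translate intersection theorem, usually stated over a non-trivially valued field or via Kleiman transversality for the torus acting on itself, to our trivially valued setting, which I would do by spreading $\lambda$ out over a one-parameter family (equivalently, passing to Puiseux series) or by invoking the fan-displacement rule directly; and (c) the transversality/reducedness, which follows from the same genericity once the fans are seen to meet in complementary dimension. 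The remaining ingredients --- the loop reduction, the reparametrisation by $v$, and the identification of $\Trop$ of a reciprocal linear space with $-\Tropp$ --- are routine.
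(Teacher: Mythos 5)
Your proposal is correct in outline and takes a genuinely different route from the paper, although the details at the step you flag need to be supplied. You parametrise the fibre by its $V$-coordinate and recognise the quotient as the projective intersection of $\PP(V)$ with a torus translate of the reciprocal linear space of $U$, then appeal to the identification $\Trop\bigl((U^{\circ})^{\odot -1}\bigr) = -\Tropp\bigl(M(U)\bigr)$ and a generic-translate transversality argument (Kleiman plus the Fulton--Sturmfels fan-displacement rule) to read off the count. The paper reaches essentially the same intersection --- compare the ideals $I_1$, $I_2$ in \Cref{lem:hadamardSIAGA2}, which after the substitution $v_e \mapsto a_e y_e^{-1}$ cut out $U$ and the $a$-translated reciprocal of $V$ --- but replaces the geometric transversality step with an algebraic one: it encodes $2\,\#(f_{U,V}^{-1}(\lambda)/\!\!\sim)$ as a generic root count of a parametrised Laurent ideal (the auxiliary equation $b_1 y_\epsilon^2 + b_2$ fixes the torus gauge and contributes the factor $2$, matching the multiplicity-$2$ cell of $\Trop(y_\epsilon^2-1)$), verifies multiplicity one at each solution by a direct Jacobian computation in \Cref{lem:hadamardrealisationNumberTwoDimensionalEdgeVariables}, and then invokes the root-count-to-tropical-intersection theorem of Holt and Ren. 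Your projective formulation elegantly avoids the factor-of-$2$ bookkeeping; what you buy in cleanliness you pay for in the step you yourself identify as ``the main obstacle'': the passage from the trivially valued setting to a generic-translate intersection count is not automatic, and reducedness does \emph{not} simply ``follow from the same genericity once the fans are seen to meet in complementary dimension'' --- transversality of fans does not by itself force the algebraic intersection to be reduced, and one needs either a Kleiman--Bertini argument in the dense torus together with the Fulton--Sturmfels degree formula, or (as the paper does) the Holt--Ren criterion, which packages both the multiplicity-one verification and the stable-intersection formula. One further small imprecision: in the non-complementary case $r(M(U)) + r(M(V)) > |E|+1$ both sides can be either $0$ or $\infty$ (not only $\infty$), depending on whether $M(U)\odot M(V)$ is free, as \Cref{prop:intersection+number}\ref{prop:intersection+numberitem4} and \Cref{thm:bern_matroid_linear} make precise; the conclusion is still consistent, but the dichotomy is not quite as you state it.
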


Notice that the condition $\dim(U) + \dim(V) = |E| + 1$ is necessary in order to ensure a finite number of preimages modulo torus action.

Our second main result provides a recursive algorithm for computing flip products of matroids when combined with the following base case:
if $|E|=1$, then
\begin{equation*}
    M \ast N =
    \begin{cases}
        1 &\text{if } M\cong U_{1,1} ~ \text{and} ~ N\cong U_{1,1}, \\
        0 &\text{otherwise},
    \end{cases}
\end{equation*}
where $U_{1,1}$ is the rank-1 uniform matroid with 1 element (\Cref{lem:1element}).

\begin{restatable}{theorem}{main}\label{thm:main}
    Let $M, N$ be loopless matroids on $E$ such that $r(M) + r(N) = |E| + 1$,
    and let $\epsilon \in E$.
    Then the following equality holds:
    \begin{equation*}
        M \ast N = \sum_{E_1, E_2} \Big( (M / E_1) \ast (N \setminus E_1) \Big) \Big( (M \setminus E_2) \ast (N /E_2) \Big),
    \end{equation*}
    where each pair $E_1, E_2 \subset E$ satisfies $E_1 \cup E_2 = E$ and
    $E_1 \cap E_2 = \{ \epsilon \}$.
\end{restatable}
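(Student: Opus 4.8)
The plan is to compute the tropical intersection number $M\ast N=(-\Tropp(N))\cdot\Tropp(M)$ directly by the fan displacement rule, displacing in the coordinate direction $e_\epsilon$. Because $r(M)+r(N)=|E|+1$, inside $\RR^E/\RR\bo$ the cones of $\Tropp(M)$ and of $-\Tropp(N)$ have complementary dimensions, so $M\ast N$ equals the sum over all pairs $(\sigma,\tau)$ of maximal cones of $\Tropp(M)$ and $\Tropp(N)$ with complementary linear spans and with $\sigma\cap(-\tau+v)\neq\emptyset$ — equivalently $v\in\sigma+\tau$ — of the lattice index $[\Lambda:\Lambda_\sigma+\Lambda_\tau]$, where $\Lambda=\ZZ^E/\ZZ\bo$, each maximal cone of a Bergman fan carrying multiplicity $1$, and $v$ is a fixed small generic perturbation of $\delta e_\epsilon$. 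Recall that maximal cones $\sigma$ of $\Tropp(M)$ correspond to complete flags $F_1\subsetneq\dots\subsetneq F_{r(M)-1}$ of nonempty proper flats of $M$, with $\sigma$ generated by the indicator vectors $e_{F_1},\dots,e_{F_{r(M)-1}}$, and similarly maximal cones $\tau$ of $\Tropp(N)$ correspond to flags $G_1\subsetneq\dots\subsetneq G_{r(N)-1}$ of flats of $N$; complementarity of spans says precisely that $\{e_{F_i}\}_i\cup\{e_{G_j}\}_j$ is a basis of $\RR^E/\RR\bo$, and $v\in\sigma+\tau$ says that the coordinates of $v$ in this basis are nonnegative.

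The combinatorial heart of the argument is to identify the contributing pairs: the displacement direction $e_\epsilon$ breaks the stable intersection up according to how $\epsilon$ sits relative to the remaining elements on the two sides. The claim is that $(\sigma,\tau)$ contributes exactly when there is a partition $A\sqcup B=E\setminus\{\epsilon\}$ such that the flag $F_\bullet$, guided by the flats $A$ and $A\cup\{\epsilon\}$, breaks up into a complete flag of flats of the restriction $M|_A$ (on ground set $A$) and a complete flag of flats of the contraction $M/(A\cup\{\epsilon\})$ (on ground set $B$), and, symmetrically, $G_\bullet$ breaks up, guided by $B$ and $B\cup\{\epsilon\}$, into a complete flag of flats of $N|_B$ and one of $N/(B\cup\{\epsilon\})$; the partition records, for each $f\neq\epsilon$, whether $f$ separates from $\epsilon$ first along $F_\bullet$ or along $G_\bullet$. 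Establishing this — including pinning down the precise form of the ``break up'', with its extra ``threshold'' rays $e_{\mathrm{cl}_M(A\cup\{\epsilon\})}$ and $-e_{\mathrm{cl}_N(B\cup\{\epsilon\})}$ — reduces to analysing when $e_\epsilon$ has nonnegative coordinates in a flag basis; the dictionary between ``broken-up flags'' and ``pairs of flags of minors'' is the standard description of the star of a Bergman fan along the ray through $e_S$, for a flat $S$, as the product $\Tropp(M|_S)\times\Tropp(M/S)$.

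Granting the splitting, the basis $\{e_{F_i}\}\cup\{e_{G_j}\}$ separates into one family of indicator vectors supported on $A$ (the $M|_A$-part of $F_\bullet$ and the $N/(B\cup\{\epsilon\})$-part of $G_\bullet$) and one supported on $B$; accordingly the index $[\Lambda:\Lambda_\sigma+\Lambda_\tau]$ factors as the product of the two corresponding indices on the ground sets $A$ and $B$, and the perturbed $v\approx\delta e_\epsilon$ is forced to contribute positively in both blocks. Summing first over the cone pairs in each block and recognising those sums, by the fan displacement rule applied on the smaller ground sets, as $\bigl(M|_A\bigr)\ast\bigl(N/(B\cup\{\epsilon\})\bigr)$ and $\bigl(M/(A\cup\{\epsilon\})\bigr)\ast\bigl(N|_B\bigr)$, and then summing over partitions $A\sqcup B=E\setminus\{\epsilon\}$, yields the stated identity with $E_1=A\cup\{\epsilon\}$, $E_2=B\cup\{\epsilon\}$. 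The cases $|E|\le 1$ reduce to \Cref{lem:1element} and the convention $M\ast N=1$ for $E=\emptyset$, so this is a terminating recursion.

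I expect the genericity analysis, intertwined with the degenerate terms, to be the main obstacle. One must verify that replacing $\delta e_\epsilon$ by a nearby generic $v$ changes neither the set of contributing cone pairs nor any lattice index — i.e. that the arrangement of the cones $\sigma+\tau$ around the ray through $e_\epsilon$ is exactly the one indexed by the partitions $A\sqcup B$ — which is precisely where the combinatorics of which flats contain $\epsilon$ must be controlled. Separately, a partition for which one of the four minors acquires a loop contributes $0$ to both sides by property~(ii), and for a partition for which one of the two factors has the wrong rank, hence lies in $\{0,\infty\}$ by property~(iii), a short computation from $r(M)+r(N)=|E|+1$ shows that the two rank excesses (actual minus target) of the two factors sum to a nonpositive integer, so if one factor could be infinite the other is forced to be genuinely $0$, and no indeterminate product $0\cdot\infty$ arises.
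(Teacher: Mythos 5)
Your overall plan aligns with the paper's: both compute $M\ast N$ via stable intersection with a displacement in the $e_\epsilon$ direction (the paper uses the specific translate $\lambda$ with $\lambda(e)=-1$ for $e\neq\epsilon$, $\lambda(\epsilon)=0$, which is $e_\epsilon$ modulo $\bo$), both index the intersection by partitions $(E_1,E_2)$, and both must then show the local contribution at each partition factors as the claimed product. Your translation of the partition into minors is exactly right: with $E_1 = A\cup\{\epsilon\}$ and $E_2 = B\cup\{\epsilon\}$, your factors $(M|_A)\ast(N/(B\cup\{\epsilon\}))$ and $(M/(A\cup\{\epsilon\}))\ast(N|_B)$ are $(M\setminus E_2)\ast(N/E_2)$ and $(M/E_1)\ast(N\setminus E_1)$. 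Your rank-excess computation for the degenerate terms is also correct and matches the paper's \Cref{rem:notinfinity}; note that it simultaneously covers the case where $\epsilon$ is a coloop of $M\setminus(E_2\setminus\epsilon)$ or $N\setminus(E_1\setminus\epsilon)$, since those are precisely the situations where $r_M(E_1\setminus\epsilon)-r_M(E_1)$ or $r_N(E_2\setminus\epsilon)-r_N(E_2)$ equals $-1$, making the excess sum strictly negative.

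The genuine gap is the factoring of the local contribution, which you state but do not prove, and you acknowledge this is ``the main obstacle.'' Two points need actual arguments that your sketch does not supply. First, the claim that contributing cone pairs $(\sigma,\tau)$ are exactly indexed by partitions, with flags ``breaking up'' at $\mathrm{cl}_M(A)$ and $\mathrm{cl}_M(A\cup\{\epsilon\})$, must be established carefully; the paper achieves the analogous pinning-down (intersection points are the integer vectors $x\in\{-1,0\}^E$ with $x(\epsilon)=0$) by citing \cite[Lemma~2.3]{ArdilaEurPenaguiao2023}, and uses \cite[Proposition~2]{ArdilaKlivans2006} and \cite[Corollary~4.4.8]{MaclaganSturmfels2015} to identify the star with a direct sum of minors. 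Second, and more seriously, the claim that the lattice index $[\Lambda:\Lambda_\sigma+\Lambda_\tau]$ factors into a product of two indices on ground sets $A$ and $B$, and that summing cone pairs per block recovers two smaller flip products, is not automatic: the shared element $\epsilon$ is precisely what threatens it. The paper isolates exactly this issue in \Cref{lemma:tropical_untangling} (proved via explicit injections $\varphi,\psi$ built from Shaw-type section maps $g,h$), together with \Cref{lem:removebridge} and \Cref{lem:coloop case,lem:coloop case2} which route the problematic coloop configurations to $0$ on both sides. Your sketch would need an equally substantive argument in its place — e.g.\ a direct combinatorial proof that the basis $\{e_{F_i}\}\cup\{e_{G_j}\}$ of $\Lambda$ can be transformed by elementary column operations, modulo $\bo$, into a block-diagonal basis over $A$ and $B$ with blocks being exactly flag bases of the four minors; as written, this step is asserted rather than proved.
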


Using the convention that $0 \cdot \infty = 0$,
the summation given in \Cref{thm:main} always provides a finite value;
see \Cref{rem:notinfinity} for more details.

\Cref{thm:main} is the matroidal analogue of the main result of Capco et al.\ \cite[Theorem~4.7]{CapcoGalletEtAl2018}.
There, they combine techniques from algebraic geometry and ideas from tropical geometry to obtain their result, which only applies to graphic matroids.
In contrast, our proof solely uses tropical geometry because neither matroid in the flip product is required to be realisable.

\begin{remark}\label{rem:characteristic}
    The authors believe that \Cref{thm:mainalg} should extend to Hadamard products of linear spaces over other algebraically closed fields with odd or zero characteristic,
    although proving this would require generalising many of the tools used throughout the paper (e.g., Bernstein's theorem --- \Cref{thm:bern_matroid_linear}).
    If this generalisation does indeed go through with no issues, then \Cref{thm:main} can be immediately applied to provide an inductive algorithm for computing the cardinality of generic fibres for the relevant Hadamard map.
    It is, however, unclear to the authors whether \Cref{thm:mainalg} holds for fields of characteristic 2.
    For example, many of the key steps in proving \Cref{thm:mainalg} fail over characteristic 2 since they require using squared values.
\end{remark}

\subsection{Applications: counting symmetric/periodic framework realisations}

One important application of \Cref{thm:mainalg} and \Cref{thm:main} is within the field of rigidity theory.
Here, we consider a graph to yield a bar-and-joint framework in the plane,
with edges modelling stiff bars and vertices modelling flexible joints. 
Formally, given a graph~$G = ([n], E)$ and a natural number~$d$, we define a realisation of~$G$ to be a map $p \colon [n] \rightarrow \RR^d$ and we say that the pair $(G,p)$ is a \emph{framework}.
If the only continuous motions of the framework that preserve edge lengths are rigid body motions, then the framework is said to be \emph{rigid}.
Asimow and Roth \cite{AsimowRoth1978} proved that rigidity is a generic property, in that either every generic framework is rigid, or none of them are.
Hence, we say that a graph is \emph{rigid} if any generic framework $(G,p)$ is rigid.

An equivalent definition for a generic framework $(G,p)$ to be rigid is that the number of frameworks $(G,q)$, modulo isometries, with the same edge lengths is finite.
Unlike rigidity, this value is not a generic property if we restrict ourselves to real frameworks.
However, if we allow for complex solutions, this value (here denoted by $\ccountd{d}{G}$) is the same for all generic frameworks.
It was proven by Clarke et al.~\cite{ClarkeDewarEtAl2025} that when the graph in question is \emph{minimally rigid} --- in that the graph contains no proper spanning rigid subgraph --- then the \emph{realisation number} $\ccount{G}$ is exactly half the flip product of two copies of the graphical matroid for $G$:

\begin{theorem}[Clarke et al.~\cite{ClarkeDewarEtAl2025}]\label{thm:originalrigid}
    Let $G$ be a minimally rigid graph.
    Then, given $M(G)$ is the graphical matroid for $G$, we have
    \begin{equation*}
        \ccount{G} = \frac{1}{2} M(G) \ast M(G).
    \end{equation*}
\end{theorem}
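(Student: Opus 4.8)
The plan is to deduce \Cref{thm:originalrigid} from \Cref{thm:mainalg} by exhibiting the planar rigidity measurement map as a Hadamard map over $\CC$. Write $G = ([n],E)$; a rigid graph is connected, so we may assume $G$ is connected, and minimal rigidity in the plane gives $|E| = 2n-3$. Fix an orientation of the edges and let $\partial \colon \CC^n \to \CC^E$ be the coboundary map $\partial(w)_{ij} = w_i - w_j$. Its image $U := \im(\partial)$ has dimension $n-1$, and a set of edges is independent in $M(U)$ precisely when it supports no cycle, so $M(U) = M(G)$; in particular $\dim U + \dim U = 2n-2 = |E|+1$, which matches the rank hypothesis of \Cref{thm:mainalg}.

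Next I would pass to isotropic coordinates. The linear isomorphism $\CC^2 \xrightarrow{\ \sim\ } \CC \times \CC$, $(x,y)\mapsto (x+\ii y,\, x-\ii y)$, identifies a complex realisation $p\colon [n] \to \CC^2$ with a pair $(w,\widetilde w)\in \CC^n\times\CC^n$, and under this identification the squared length of an edge $e = ij$ becomes $(x_i-x_j)^2 + (y_i-y_j)^2 = (w_i-w_j)(\widetilde w_i - \widetilde w_j)$. Hence the squared-length map factors as
\begin{equation*}
\CC^n \times \CC^n \xrightarrow{\ \partial\times\partial\ } U \times U \xrightarrow{\ f_{U,U}\ } \CC^E,
\qquad (w,\widetilde w) \longmapsto (\partial w,\partial\widetilde w) \longmapsto \partial w \odot \partial\widetilde w .
\end{equation*}
The heart of the argument is to match the symmetry groups. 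The kernel of $\partial\times\partial$ consists of pairs of constant vectors, which corresponds to the (complexified) translations of $\CC^2$. A rotation by $t = \ee^{\ii\theta}\in\CC\setminus\{0\}$ acts on isotropic coordinates by $w\mapsto tw$, $\widetilde w\mapsto t^{-1}\widetilde w$, hence on $U\times U$ by $(u,v)\mapsto(tu,t^{-1}v)$ — this is exactly the natural torus action, i.e.\ the relation $\sim$. A reflection (post-composed with a rotation and a translation) swaps the two isotropic coordinates, so it acts on $U\times U$ by the involution $\sigma\colon (u,v)\mapsto(v,u)$. Consequently, for fixed squared edge lengths $\lambda$, the set of realisations with those lengths modulo the full complex isometry group is in bijection with $\bigl(f_{U,U}^{-1}(\lambda)/\!\!\sim\bigr)$ further quotiented by $\sigma$.

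It remains to see that this last quotient is generically a clean two-to-one identification. A class $[(u,v)]$ is $\sigma$-fixed iff $(v,u) = (tu,t^{-1}v)$ for some $t$, i.e.\ iff $v = tu$; then $u\odot v = t\,(u\odot u)$, so the $\sigma$-fixed classes map under $f_{U,U}$ into $\{\,u\odot u : u\in U\,\}$ (which is scale-closed because $(\lambda u)\odot(\lambda u) = \lambda^2(u\odot u)$), a subvariety of $\CC^E$ of dimension at most $n-1 < 2n-3 = |E|$ for $n\ge 3$. Hence for generic $\lambda$ the fibre $f_{U,U}^{-1}(\lambda)$ contains no $\sigma$-fixed point, $\sigma$ acts freely on $f_{U,U}^{-1}(\lambda)/\!\!\sim$, and the quotient above has exactly $\tfrac12\,\#\!\bigl(f_{U,U}^{-1}(\lambda)/\!\!\sim\bigr)$ elements. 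Since $G$ is rigid, the squared-length map is dominant, so $\overline{U\odot U} = \CC^E$ and a generic realisation yields a generic $\lambda$; combining genericity of $\ccount{G}$ with \Cref{thm:mainalg} applied to $U = V$ (so $M(U) = M(V) = M(G)$) gives $\ccount{G} = \tfrac12\,\#\!\bigl(f_{U,U}^{-1}(\lambda)/\!\!\sim\bigr) = \tfrac12\, M(G)\ast M(G)$. The main obstacle is the bookkeeping in this last step: one must check that a single \emph{generic} framework can be chosen to lie simultaneously in the locus where \Cref{thm:mainalg} applies, in the locus where $\ccount{G}$ attains its generic value, and in the locus where $\sigma$ acts freely on the fibre — all three being dense open conditions pulled back along the dominant measurement map, hence jointly satisfiable. (The case $n = 2$, i.e.\ $G = K_2$, is genuinely exceptional since there $\sigma$ has fixed points, and is read as implicitly excluded.)
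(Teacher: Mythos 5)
The paper does not prove this theorem --- it is imported from \cite{ClarkeDewarEtAl2025} --- but your argument is essentially the specialisation to graphic matroids of \Cref{thm:hmap} (itself a consequence of \Cref{thm:gmap} and hence of \Cref{thm:mainalg}), which is exactly how the paper obtains the analogous \Cref{thm:rotation,thm:periodic}: pass to isotropic coordinates, identify $SO(2,\CC)$ with the torus action behind $\sim$, and identify the nonidentity coset of $O(2,\CC)$ with the swap involution $\sigma$. Your proof is correct. One thing you spell out that the paper's proof of \Cref{thm:hmap} silently skips is the verification that $\sigma$ acts freely on $f_{U,U}^{-1}(\lambda)/\!\!\sim$ for generic $\lambda$; the paper passes directly from ``$\sim_h$ is $\sim_g$ up to swap'' to dividing the count by two, which is only legitimate when there are no $\sigma$-fixed classes. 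Your dimension count, $\dim\{u\odot u : u\in U\}\le n-1 < 2n-3 = |E|$ for $n\ge 3$, is exactly what is needed, and the issue is genuine: for $K_2$ one has $\ccount{K_2}=1$ while $\frac{1}{2}\,U_{1,1}\ast U_{1,1}=\frac{1}{2}$, so the formula fails there, and the same degeneracy shows \Cref{thm:hmap} as literally stated is false when $X$ is a $1\times 1$ matrix. Your reading of $K_2$ as implicitly excluded is the right one.
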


Building on this idea, the paper \cite{BielbyChauhanEtAl2025} studies how the product $\bigl(-\Trop(\Sigma_1)\bigr) \cdot \Trop(\Sigma_2)$ behaves, where $\Sigma_1$ and $\Sigma_2$ are tropical stars along rays of $\Trop\bigl(M(G)\bigr)$, with the aim to provide better lower bounds for the realisation number~$\ccount{G}$.

Using \Cref{thm:main}, we extend the ideas in \cite{ClarkeDewarEtAl2025} to counting the number of edge-length equivalent frameworks when some type of symmetry is enforced.
The two types of symmetric rigidity that we can approach using Hadamard products are \emph{$k$-fold rotation symmetric rigidity}, where every framework is assumed to have $k$-fold rotational symmetry, and \emph{periodic rigidity}, where every framework is assumed to be an infinite framework that has either 1- or 2-periodic symmetry.
Since we only care about our frameworks with enforced symmetry, we can quotient out any graph by a particular symmetry $\Gamma$ to form a \emph{$\Gamma$-gain graph} (see \Cref{subsec:gain}).

The number of complex symmetric edge-length equivalent frameworks modulo isometries is again a generic (up to enforced symmetry) property.
By adapting \Cref{thm:mainalg},
we obtain the following flip product expressions for the symmetric variants of $\ccount{G}$.
All relevant terminology can be found in \Cref{applications}.

\begin{restatable}{theorem}{rotation}\label{thm:rotation}
    Let $(G,\phi)$ be a $\ZZ_k$-gain graph.
    If $(G,\phi)$ is minimally $k$-fold rotation symmetrically rigid,
    then, given $M(G,\phi)$ is the $\ZZ_k$-gain graphical matroid for $(G,\phi)$, we have
    \begin{equation*}
        \ccountsym{G,\phi}{k} = \frac{1}{2} M(G,\phi) \ast M(G,\phi).
    \end{equation*}
\end{restatable}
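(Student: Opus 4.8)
The plan is to follow the route of Clarke et al.\ for \Cref{thm:originalrigid}, replacing the graphical matroid by the $\ZZ_k$-gain graphical matroid and the ordinary coboundary space of $G$ by a twisted version that records the gains. Fix a primitive $k$-th root of unity $\omega\in\CC$ and identify $\RR^2$ with $\CC$ so that the generating rotation of the symmetry group acts as multiplication by $\omega$. For a symmetric realisation $z\in\CC^{V}$ the squared length of an edge $e=ij$ of gain $\phi(e)$ is $\bigl(z_i-\omega^{\phi(e)}z_j\bigr)\bigl(\overline{z_i}-\omega^{-\phi(e)}\,\overline{z_j}\bigr)$, so replacing the conjugate coordinates by an independent vector $w\in\CC^{V}$ yields the complexified measurement map $(z,w)\mapsto\bigl(\,(z_i-\omega^{\phi(e)}z_j)(w_i-\omega^{-\phi(e)}w_j)\,\bigr)_{e\in E}$. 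Writing $\partial_\phi\colon\CC^{V}\to\CC^{E}$, $z\mapsto(z_i-\omega^{\phi(e)}z_j)_{e\in E}$, for the gain-coboundary map and setting $U=\im(\partial_\phi)$, $V=\im(\partial_{-\phi})$, this map is precisely $f_{U,V}\circ(\partial_\phi\times\partial_{-\phi})$, since $(\partial_\phi z)\odot(\partial_{-\phi}w)$ has exactly the displayed coordinates.

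The next step is to identify the matroids. From the standard $\CC$-representation of the $\ZZ_k$-gain graphical matroid (representing an edge $ij$ of gain $g$ by the vector $\mathbf{e}_i-\omega^{g}\mathbf{e}_j$; see \Cref{applications}) one gets $M(U)=M(G,\phi)$; moreover $g\mapsto-g$ is an automorphism of $\ZZ_k$, so $(G,-\phi)\cong(G,\phi)$ and hence $M(V)=M(G,-\phi)=M(G,\phi)$. One then checks, using the combinatorial characterisation of minimal $k$-fold rotation symmetric rigidity (\Cref{applications}), that $r\bigl(M(G,\phi)\bigr)+r\bigl(M(G,\phi)\bigr)=|E|+1$, which is the hypothesis of \Cref{thm:mainalg}. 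Applying that theorem to $U$ and $V$ gives, for generic edge lengths $\lambda$,
\[
    \#\bigl(f_{U,V}^{-1}(\lambda)/\!\!\sim\bigr)\;=\;M(U)\ast M(V)\;=\;M(G,\phi)\ast M(G,\phi).
\]

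It remains to relate the torus quotient $f_{U,V}^{-1}(\lambda)/\!\!\sim$ to $\ccountsym{G,\phi}{k}$, and this comparison of symmetry groups is where I expect the real work to lie. Under $\partial_\phi\times\partial_{-\phi}$, the torus action $(u,v)\mapsto(tu,t^{-1}v)$ corresponds to the complexified rotation subgroup $SO(2,\CC)\cong\CC\setminus\{0\}$ of the symmetric isometry group, while any translational freedom (which occurs only for $k=1$) has already been killed because $U$ and $V$ are coboundary spaces, i.e.\ it lies in $\ker\partial_{\pm\phi}$. The one remaining symmetric isometry is the reflection through the centre of rotation: it sends a $(G,\phi)$-symmetric realisation $z$ to the $(G,-\phi)$-symmetric realisation $\overline{z}$ with the same edge lengths, which in the complexified picture is the involution swapping $z$ with $w$ and $\phi$ with $-\phi$; since $M(G,\phi)=M(G,-\phi)$ this stays within the same counting problem, and by the definition of $\ccountsym{G,\phi}{k}$ (\Cref{applications}) it descends to an involution on $f_{U,V}^{-1}(\lambda)/\!\!\sim$ whose orbit set is counted by $\ccountsym{G,\phi}{k}$. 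A genericity argument — a generic fibre is reduced and this reflection involution acts on it without fixed points, exactly as in the proof of \Cref{thm:originalrigid} — then produces the factor $\tfrac12$ (and the case $k=1$ recovers \Cref{thm:originalrigid}). The main obstacle is making this final step watertight: pinning down the complexified symmetric configuration space together with its isometry group, and showing the reflection involution has no generic fixed points so that the identification is genuinely $2$-to-$1$; by comparison the matroid-theoretic inputs are routine.
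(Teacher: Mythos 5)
Your proposal follows the paper's own proof almost exactly. The paper also sets $\tilde x(u)=x(u)+\ii y(u)$, $\tilde y(u)=x(u)-\ii y(u)$, uses the algebraic identity that turns $\|p(v)-R_k^{\phi(e)}p(w)\|^2$ into the product $\bigl(\tilde x(v)-\ee^{\ii\gamma\theta_k}\tilde x(w)\bigr)\bigl(\tilde y(v)-\ee^{-\ii\gamma\theta_k}\tilde y(w)\bigr)$, recognises this as the map $g_{X,Y}$ of \Cref{subsec:gandh} with $X=I(G,\phi)$ and $Y=I(G,-\phi)$ (your formulation via $U=\im\partial_\phi$, $V=\im\partial_{-\phi}$ and $f_{U,V}$ is the same thing, by the linear isomorphism used in the proof of \Cref{thm:gmap}), notes that $M(G,-\phi)=M(G,\phi)$ (the paper compares rank functions; your route through the automorphism $g\mapsto -g$ of $\ZZ_k$ is equally valid and gives equality on the nose, not merely isomorphism), and then invokes \Cref{thm:gmap}. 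The explicit rank check via Malestein--Theran that you mention is not strictly needed, since \Cref{thm:mainalg} and \Cref{thm:gmap} already cover the degenerate cases.

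The step you flag as the main obstacle — accounting for the quotient by $O(2,\CC)$ rather than $SO(2,\CC)$, and hence the factor $\tfrac12$ — is also the step the paper leaves informal: it simply writes $2\,\ccountsym{G,\phi}{k}=\#\bigl(g_{X,Y}^{-1}(\lambda)/\!\!\sim_g\bigr)$ without further argument. Your caution is well placed. For $k\geq 3$ one has $X=I(G,\phi)\neq I(G,-\phi)=Y$, and the reflection that under the change of variables becomes the swap $(\tilde x,\tilde y)\mapsto(\tilde y,\tilde x)$ does \emph{not} preserve $g_{X,Y}^{-1}(\lambda)$: it carries it to $g_{Y,X}^{-1}(\lambda)$, which is the $(G,-\phi)$ fibre. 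So the phrase "descends to an involution on $f_{U,V}^{-1}(\lambda)/\!\!\sim$" in your write-up, like the paper's bald assertion of the factor $2$, is hiding the real work: one must make precise what "modulo $O(2,\CC)$" means when the reflection changes $\phi$ to $-\phi$ (e.g.\ by identifying $(G,\phi)$- and $(G,-\phi)$-realisations outright, or by working on the union of the two fibres). Neither the paper's proof nor your proposal pins this down; you have correctly isolated the one genuinely delicate point, and made the rest of the argument as the paper does.
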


\begin{restatable}{theorem}{periodic}\label{thm:periodic}
    Let $(G,\phi)$ be a $\mathbb{Z}^k$-gain graph for $k \in \{1,2\}$.
    If $(G,\phi)$ is minimally periodically rigid,
    then, given $M(G,\phi)$ is the $\ZZ^k$-gain graphical matroid for $(G,\phi)$, we have
    \begin{equation*}
        \ccountper{G,\phi}{k} = \frac{1}{2} M(G,\phi) \ast M(G,\phi).
    \end{equation*}
\end{restatable}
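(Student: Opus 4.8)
The plan is to follow the template behind \Cref{thm:originalrigid} and its rotational refinement \Cref{thm:rotation}: complexify the periodic rigidity measurement map, recognise it as a Hadamard map whose two factors are $\CC$-representations of the $\ZZ^k$-gain graphical matroid $M(G,\phi)$, invoke \Cref{thm:mainalg}, and then strip off the residual isometries, which account for the factor $\tfrac12$. Throughout, genericity is meant up to the enforced $\ZZ^k$-periodicity; I use without comment that $\ccountper{G,\phi}{k}$ is a generic invariant in this sense, that minimal periodic rigidity makes the measurement map dominant and generically finite onto $\CC^E$, and that it forces $r\bigl(M(G,\phi)\bigr)=\tfrac12(|E|+1)$.

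First I would complexify. A $\ZZ^k$-periodic planar framework with quotient gain graph $(G,\phi)$ is given by a realisation $p=(p_1,p_2)\colon V(G)\to\RR^2$ of the quotient together with a lattice matrix $L\in\RR^{2\times k}$, and the squared length of the edge $e=uv$ carrying gain $\phi(e)$ equals $\lVert p(u)-p(v)-L\phi(e)\rVert^2$. Extending scalars to $\CC$ and applying the substitution $\zeta=p_1+\ii p_2$, $\bar\zeta=p_1-\ii p_2$ on vertex positions and $\mu=L_1+\ii L_2$, $\bar\mu=L_1-\ii L_2$ on the two rows of $L$ — now treating $\zeta,\bar\zeta$ and $\mu,\bar\mu$ as independent, which turns the squared-norm $x_1^2+x_2^2$ into a product — this squared length becomes $z_e w_e$, where
\[
    z_e=\zeta(u_e)-\zeta(v_e)-\langle\mu,\phi(e)\rangle ,\qquad
    w_e=\bar\zeta(u_e)-\bar\zeta(v_e)-\langle\bar\mu,\phi(e)\rangle .
\]
Hence the complexified measurement map factors through the Hadamard map $f_{U,U}$, where $U\subset\CC^E$ is the image of $(\zeta,\mu)\mapsto(z_e)_e$ and $w$ also ranges over $U$ (so that, here, the two Hadamard factors genuinely coincide, since $\bar\zeta,\bar\mu$ are free over $\CC$). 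The combinatorial heart is to verify that $U$ is a $\CC$-representation of $M(G,\phi)$; combined with $r\bigl(M(G,\phi)\bigr)=\tfrac12(|E|+1)$ this gives $\dim U+\dim U=|E|+1$, so \Cref{thm:mainalg} applies and gives $\#\bigl(f_{U,U}^{-1}(\lambda)/\!\sim\bigr)=M(G,\phi)\ast M(G,\phi)$ for generic $\lambda$.

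Next I would match the remaining group actions. The complexified translations $\CC^2$ add independent constants to $(\zeta,\bar\zeta)$ and form exactly the kernel of $(\zeta,\bar\zeta,\mu,\bar\mu)\mapsto(z,w)$ for connected $G$, so passing to $U\times U$ already quotients by translations; the complexified rotations $SO(2,\CC)\cong\CC^\times$ act by $\zeta\mapsto t\zeta$, $\bar\zeta\mapsto t^{-1}\bar\zeta$ (and $\mu\mapsto t\mu$, $\bar\mu\mapsto t^{-1}\bar\mu$), which is precisely the relation $\sim$. Thus the count in \Cref{thm:mainalg} already records periodic frameworks modulo translations and rotations, and only the order-two reflection $\zeta\leftrightarrow\bar\zeta$, $\mu\leftrightarrow\bar\mu$ — i.e., the swap $(z,w)\mapsto(w,z)$ on fibres — remains. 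This involution descends to the quotient by $\sim$ (it interchanges $t$- and $t^{-1}$-orbits), and I would show it acts freely on a generic fibre: a fixed point forces $\lambda$ into $\{z\odot z:z\in U\}$, which is a cone (as $s(z\odot z)=(\sqrt{s}\,z)\odot(\sqrt{s}\,z)$ with $\sqrt{s}\,z\in U$) of dimension at most $\dim U<|E|$ — using $r\bigl(M(G,\phi)\bigr)\ge 2$, which holds for any non-degenerate minimally periodically rigid gain graph — hence a proper closed subvariety of $U\odot U=\CC^E$. Dividing $M(G,\phi)\ast M(G,\phi)$ by $2$ then yields the identity.

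I expect the main obstacle to be the bookkeeping around the periodic structure rather than the tropical machinery. The two delicate points are: (i) proving that $U$ is genuinely the stated representation of $M(G,\phi)$ — this needs the gains to have full rank $k$, so that the $\mu$-coordinates contribute rank $k$ and the kernel of $(\zeta,\mu)\mapsto z$ reduces to the one-dimensional translation space, and one must check this is forced by the hypotheses (a periodically rigid gain graph cannot degenerate in this way), analogously to the corresponding step for \Cref{thm:rotation}; and (ii) confirming that the symmetry group of a $\ZZ^k$-periodic framework with a fixed gain graph is exactly translations $\rtimes SO(2,\CC)\rtimes\ZZ/2$ — in particular that one does not additionally quotient by $GL_k(\ZZ)$-relabellings of $\ZZ^k$, which would in any case be harmless since both sides of the asserted identity are gain-graph-isomorphism invariants. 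The remaining items — that generic complex edge lengths are realised by real periodic frameworks, that the generic fibre is reduced, and the treatment of degenerate cases such as $r(M(G,\phi))=1$ (where the reflection may fail to act freely) — should go through exactly as for \Cref{thm:originalrigid} and \Cref{thm:rotation}, using Bernstein's theorem (\Cref{thm:bern_matroid_linear}) and standard arguments from (symmetric) rigidity theory.
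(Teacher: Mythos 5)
Your argument is correct and is, at heart, the same one the paper uses; the only difference is packaging. The paper has already encapsulated the entire substance of your proposal in \Cref{subsec:gandh}: the substitution $\zeta=p_1+\ii p_2$, $\bar\zeta=p_1-\ii p_2$ you perform is exactly the isomorphism $\rho$ in the proof of \Cref{thm:hmap}, the matching of complex rotations/reflections to the equivalence relations $\sim_g$, $\sim_h$ is exactly what \Cref{thm:gmap} and \Cref{thm:hmap} record, and the factor $\tfrac12$ from the swap $\zeta\leftrightarrow\bar\zeta$ acting freely on generic fibres is precisely the $\tfrac12$ built into \Cref{thm:hmap}. With that machinery in place, the paper's proof is a one-liner: observe that the periodic measurement map $m_{G,\phi}$ \emph{is} the map $h_X$ of \Cref{subsec:gandh} with $X=I(G,\phi)$ (no complexification needed --- the squared-distance shape of $h_X$ was designed to match $m_{G,\phi}$ on the nose), and then quote \Cref{thm:hmap}, noting that by construction $M(G,\phi)$ is the row matroid of $I(G,\phi)$, i.e.\ $M(U)=M(G,\phi)$ for $U$ the column span. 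Your worries (i) and (ii) then evaporate: (i) is definitional, since $M(G,\phi)$ is \emph{defined} as the row matroid of $I(G,\phi)$; and (ii) is built into the paper's definition of $\ccountper{G,\phi}{k}$ as the fibre count modulo $O(2,\CC)\ltimes\CC^2$, which $h_X$'s quotient by $\ker X$ and the relation $\sim_h$ already encode. So your route re-derives \Cref{thm:hmap} inline rather than invoking it; what you gain is an explicit freeness argument for the swap that the paper leaves implicit, at the cost of repeating work the paper has already done.
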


Both \Cref{thm:rotation} and \Cref{thm:periodic} can be combined with \Cref{thm:main} to provide an inductive formula for computing symmetric realisation numbers.
In fact, this can be streamlined into an inductive formula using gain graphs instead of matroids (see \Cref{subsubsec:rotationformula,subsubsec:periodicformula}).
The immediate advantage here is that matroids can require a lot of information for storage (such as a list of bases) while gain graphs require relatively little.

\subsection{In the language of Chow rings and toric varieties}

Flip products admit another description when we think of balanced fans as classes in the Chow ring of some complete toric variety.
We very briefly sum up this construction, leaving to the interested reader to go into more depth via the references provided afterwards.

Let $\Delta$ be a rational fan in $\RR^n$ with $|\Delta|=\RR^n$ and let $X(\Delta)$ the corresponding toric variety.
A \emph{codimension $k$ Minkowski weight} on $\Delta$ is the data of multiplicities $\mult(\sigma) \in \ZZ$ for each cone $\sigma\in\Delta$ of dimension $n-k$ which makes the $(n-k)$-skeleton of $\Delta$ balanced.
With addition given by adding the weights on each cone, the set $\mathrm{MW}^k(\Delta)$ of codimension $k$ Minkowski weights on~$\Delta$ is an abelian group.
Furthermore,
\begin{equation*}
    \mathrm{MW}^\bullet(\Delta) := \bigoplus_{k=0}^n\mathrm{MW}^k(\Delta)
\end{equation*}
is a graded ring with product given by stable intersection (see \Cref{def:stableIntersection}). Fulton and Sturmfels proved in \cite{FultonSturmfels1997} that $\mathrm{MW}^\bullet(\Delta)$ and the Chow cohomology ring $A^\bullet\bigl(X(\Delta)\bigr)$ (as defined by Fulton and MacPherson, see~\cite{Fulton1998,FultonMacPherson1981}) are canonically isomorphic as graded rings.

A particular case of interest for us is when $\Delta=\Delta_n$ is the fan underlying $\Tropp(U_{[n],n})$ (see \Cref{def:BergmanFan}), where $U_{[n],n}$ is the rank $n$ uniform matroid with ground set $[n]$. Notice that the (projective) Bergman fan of any matroid $M$ of rank $r$ with ground set $[n]$ defines a codimension $n-r$ Minkowski weight on $\Delta_n$ and thus defines a class $\alpha_M\in A^{n-r}\bigl( X(\Delta_n) \bigr)$. Finally, the linear map $\Delta_n\to\Delta_n$ given by scalar multiplication by $-1$ defines a toric automorphism $\mathrm{Crem} \colon X(\Delta_n)\rightarrow X(\Delta_n)$.
Each pullback $\mathrm{Crem}^\ast (\alpha)\in A^\bullet\bigl(X(\Delta_n)\bigr)$ is the class corresponding to the Minkowski weight $\mult_{\mathrm{Crem}^\ast (\alpha)}(\sigma):=\mult_{\alpha}(-\sigma)$; that is, if we consider $\alpha$ as a balanced fan, then $\mathrm{Crem}^\ast (\alpha)=-\alpha$. In this language, we can restate \Cref{definition:flip_product} as the following:

\begin{definition}\label{def:chow}
    The \emph{flip product} of matroids $M$ and $N$ with shared ground set $[n]$ is given by the following formula, calculated in $A^\bullet(X(\Delta_n))$:
    \begin{equation*}
        M\ast N = \deg \bigl( \alpha_M \smallsmile \mathrm{Crem}^\ast (\alpha_N) \bigr) \,,
    \end{equation*}
    where $\smallsmile$ denotes the cup product in $A^\bullet\bigl(X(\Delta_n)\bigr)$.
\end{definition}

Using \Cref{def:chow}, any results presented in this work concerning flip products can be restated as intersection products in some toric variety. We use this interpretation in \Cref{matroid_application:characteristic_polynomial}.
For further information about this approach, see \cite{AdiprasitoHuhKatz2018,Ardila2024,HuhKatz2012}.

\subsection{Structure of paper}
The structure of the paper is as follows.
In \Cref{preliminaries} we provide background on the most important concepts that are used throughout the paper, such as matroids, the tropical intersection product, Bergman fans, and Hadamard products of matroids.
Additional background material for tropical geometry can also be found in \Cref{tropical_background}.
In \Cref{flip_products} we introduce the flip product and investigate some of its basic properties, mostly regarding when it attains values~$0$ and~$\infty$.
We prove \Cref{thm:mainalg} and \Cref{thm:main} in \Cref{hadamard_products} and \Cref{algorithm} respectively.
In \Cref{applications} we prove both \Cref{thm:rotation} and \Cref{thm:periodic} on using \Cref{thm:mainalg}.
After doing so, we then use \Cref{thm:main} to provide an iterative combinatorial algorithm for computing realisation numbers for symmetric frameworks.
In \Cref{matroid_applications} we provide characterisations for beta invariants and characteristic polynomials using the flip product.
In \Cref{weak_order},
we investigate the behaviour of the flip product with respect to matroid partitions.
We conclude the paper in \Cref{computational} with a computational analysis of flip products for matroids with small ground sets.

\section{Preliminaries}
\label{preliminaries}

In this section, we cover the preliminaries for algebraic geometry and tropical geometry needed throughout the paper.
First, we remark on the notion of `genericity' used in this paper.

Given an irreducible algebraic set $S$ over a field $K$, we say that a property $P$ of the points in $S$ holds for \emph{almost all} points of $S$ or \emph{generic} points of $S$, if $P$ holds for all points in some non-empty Zariski-open subset of $S$.
For $K=\RR$ or $K=\CC$ specifically, the definition of `generic' used here is a strictly weaker notion than what is usually used in most rigidity theory literature. There, a point $(x_i)_{i \in [n]} \in \CC^n$ is `generic' if $x_1,\dots,x_n$ are algebraically independent over $\mathbb{Q}$.

\subsection{Matroid notation and terminology}

We here introduce a few pieces of notation and terminology that we are freely using in this paper.

Given a matroid~$M$, we denote by $r(M)$ its \emph{rank}, and more generally we denote by $r_M(\cdot)$ its \emph{rank function}.
We denote the \emph{dual} of $M$ (i.e., the matroid whose bases are the complements of bases of $M$) by $M^*$.
An element $e$ of $M$ is a \emph{loop} if the set $\{e\}$ is a circuit.
An element $f$ of $M$ is a \emph{coloop} (also known as a \emph{bridge}) if $f$ is a loop of $M^*$;
equivalently, $f$ is a coloop if and only if $f$ is not contained in any circuit.

There are two important families of matroids we need throughout the paper:
\begin{itemize}
    \item Given a finite set~$E$ and a number $r \in \NN$ with $r \leq |E|$, we denote by~$U_{E,r}$ the \emph{uniform matroid} of rank $r$ on the ground set~$E$, i.e., the matroid whose bases are all and only the subsets of $E$ of cardinality~$r$. When $E = [n]$, we write $U_{n,r}$ instead of~$U_{[n],r}$.
    \item The \emph{graphic matroid} $M_G$ for a (multi)graph\footnote{All multigraphs are assumed to have finitely many vertices and edges. We reserve the term `graph' for multigraphs with no parallel edges or loops.} $G$ with edge set $E$ is the matroid with ground set~$E$ and circuits given by the edge subsets that form cycles of~$G$.
\end{itemize}

Given a matroid $M$ and a subset $F$ of the ground set $E$,
we denote the \emph{contraction of $F$ in $M$} by $M/F$ and the \emph{deletion of $F$ from $M$} by $M \setminus F$.
Throughout the paper we use, without proof, the elementary fact that $(M/F)^*=M^* \setminus F$ and $(M\setminus F)^*=M^* / F$.

Given two matroids $M_1$ and $M_2$ on two disjoint ground sets $E_1$ and $E_2$, we define the \emph{direct sum} of~$M_1$ and~$M_2$, denoted $M_1 \oplus M_2$ as the matroid whose ground set is $E_1 \sqcup E_2$ and whose collection of independent sets is the union of the collections of independent sets of~$M_1$ and~$M_2$.

A key aspect in the paper is the assignment of matroids to algebraic structures via their coordinate projections.
We recall that a morphism of varieties $f \colon X \rightarrow Y$ is \emph{dominant} if the image of $f$ is Zariski-dense in $Y$, i.e., $\overline{f(X)} = Y$.
Notice that here by ``variety'' we mean an irreducible algebraic set.

\begin{definition}
    Given a variety $X \subset \mathbb{C}^E$,
    we construct the \emph{algebraic matroid} $M(X)$ with ground set $E$ and independent sets given by
    \begin{equation*}
        F \text{ independent in } M(X) \quad \iff \quad \pi_F \colon X  \rightarrow \mathbb{C}^E, ~ (x_e)_{e \in E} \mapsto (x_e)_{e \in F} \text{ is dominant}.
    \end{equation*}
\end{definition}

\begin{example}
    Our key example of an algebraic matroid is $M(U)$, where $U \subseteq \CC^E$ is a linear space.
    In this case, the algebraic matroid has a particularly simple form.
    Let $A$ be any $|E| \times \dim(U)$ matrix whose column span equals $U$.
    Then $M(U)$ is just the row matroid on $A$.
    For more general varieties $X \subseteq \CC^E$, their algebraic matroids $M(X)$ are also representable over $\CC$, but the construction is more involved; see~\cite{RosenSidmanTheran2025}.
\end{example}

\subsection{Tropical intersection product}\label{subsec:tropical_intersection_product}

We freely use terminology from tropical geometry.
For the user's convenience, we recall the most basic concepts in \Cref{tropical_background}.

We denote the stable intersection of balanced polyhedral complexes $\Sigma_1, \ldots, \Sigma_k$ contained in $\mathbb{R}^n$ by $\Sigma_1 \wedge \dotsb \wedge \Sigma_k$.
We define the \emph{tropical intersection product} $\Sigma_1  \cdot \Sigma_2 \cdots \Sigma_k$ to be the cardinality of $\Sigma_1 \wedge \dotsb \wedge \Sigma_k$ counted with multiplicity.
We use the conventions that when $X \wedge Y$ is empty, $X \cdot Y = 0$, and when $X \wedge Y$ is of positive dimension, we say $X \cdot Y = \infty$.
The following result describes when the stable intersection of two polyhedral complexes is empty (and hence their tropical intersection product is zero).

\begin{theorem}[see {\cite[Theorem~3.6.10]{MaclaganSturmfels2015}}] \label{thm:stable+intersection}
    Let $\Sigma_1$ and $\Sigma_2$ be balanced polyhedral complexes in~$\RR^n$ of codimensions~$d$ and~$e$, respectively, and let $\ell$ be the dimension of the intersection of their lineality spaces.
    Then the stable intersection $\Sigma_1 \wedge \Sigma_2$ is either empty or a balanced polyhedral complex of codimension $d+e$.
    If $d + e + \ell > n$, then the stable intersection is empty.
\end{theorem}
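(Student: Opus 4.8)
The plan is to argue from the displacement-rule description of stable intersection, which is the standard route in \cite{MaclaganSturmfels2015}. Pick a generic vector $v \in \RR^n$, form the translate $\Sigma_1 + \varepsilon v$, and set $\Sigma_1 \wedge \Sigma_2 := \lim_{\varepsilon \to 0^+} (\Sigma_1 + \varepsilon v) \cap \Sigma_2$, where a cell $(\sigma_1 + \varepsilon v) \cap \sigma_2$ coming from maximal cells $\sigma_i$ of $\Sigma_i$ carries the weight $m_{\sigma_1} m_{\sigma_2} \cdot [\, \ZZ^n : (\ZZ^n \cap W_1) + (\ZZ^n \cap W_2) \,]$, with $W_i := \mathrm{span}(\sigma_i - \sigma_i)$. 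The fan displacement rule guarantees that this limit exists, is a polyhedral complex, and does not depend on the generic choice of $v$; I would take this as given, since it is established before the cited statement in \cite{MaclaganSturmfels2015}.

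Granting that, the codimension claim is pure linear algebra. If $(\sigma_1 + \varepsilon v) \cap \sigma_2 \neq \emptyset$ for generic $v$ and maximal $\sigma_1,\sigma_2$, then necessarily $W_1 + W_2 = \RR^n$, for otherwise a generic translate of $\aff(\sigma_1)$ is disjoint from $\aff(\sigma_2)$; and when $W_1 + W_2 = \RR^n$ the affine intersection $\aff(\sigma_1) \cap \aff(\sigma_2)$ has dimension $\dim W_1 + \dim W_2 - n = (n-d) + (n-e) - n = n-d-e$. Hence every cell of $(\Sigma_1 + \varepsilon v) \cap \Sigma_2$ has codimension at least $d+e$, with equality on maximal cells, and passing to the limit shows $\Sigma_1 \wedge \Sigma_2$ is either empty or pure of codimension $d+e$.

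For the dimension bound I would use the same transversality condition together with the lineality spaces. Every cell of $\Sigma_i$ is invariant under translation by the lineality space $L_i$, so $L_i \subseteq W_i$ for each cell $\sigma_i$, and therefore $L_1 \cap L_2 \subseteq W_1 \cap W_2$ for any contributing pair. By inclusion–exclusion, a transverse pair of maximal cells satisfies
\[
n = \dim(W_1 + W_2) = \dim W_1 + \dim W_2 - \dim(W_1 \cap W_2) \le (n-d) + (n-e) - \ell,
\]
hence $d + e + \ell \le n$. Contrapositively, if $d+e+\ell > n$ then no pair of cells is transverse, so $(\Sigma_1 + \varepsilon v) \cap \Sigma_2 = \emptyset$ for generic $v$, and thus $\Sigma_1 \wedge \Sigma_2 = \emptyset$.

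The real content is that $\Sigma_1 \wedge \Sigma_2$ is balanced, and this is where I expect the main difficulty. Since balancing is a local condition at the codimension-$(d+e+1)$ cells, I would fix such a cell $\tau$ and replace $\Sigma_1,\Sigma_2$ by the local fans (stars) of the cells of $\Sigma_1,\Sigma_2$ containing $\tau$, reducing to a statement about fans. In the fan case one checks directly, via the fan displacement rule, that around each ridge of $\Sigma_1 \wedge \Sigma_2$ the alternating sum of weighted primitive normal vectors of the adjacent maximal cones can be rewritten — using multiplicativity of the lattice indices and the balancing relations of $\Sigma_1$ and of $\Sigma_2$ separately — as a linear combination of those (vanishing) relations; alternatively, when the two fans refine a common complete fan $\Delta$, one can transport the whole statement through the Fulton–Sturmfels isomorphism $\mathrm{MW}^\bullet(\Delta) \cong A^\bullet(X(\Delta))$ of \cite{FultonSturmfels1997} and read off both balancedness and purity of codimension from the ring structure. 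Either way, this bookkeeping step is the crux; the codimension and emptiness assertions are immediate once transversality is in hand.
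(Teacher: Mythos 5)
This statement is cited by the paper from \cite[Theorem~3.6.10]{MaclaganSturmfels2015} without proof, so there is no internal argument to compare against; what you have written is a reconstruction of the standard proof, and it should be judged on those terms.

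Your handling of the codimension and emptiness assertions is essentially correct and is the same transversality argument that appears in the literature. One small point worth tightening: the affine computation shows $\dim\bigl(\aff(\sigma_1)\cap(\aff(\sigma_2)+\varepsilon v)\bigr) = n-d-e$ whenever $W_1+W_2=\RR^n$, which bounds the cells of the displaced intersection \emph{above} in dimension; to get equality (purity), you also need that a generic displacement hitting the cells at all hits them in relative interior points, so the intersection inherits the full affine dimension. That is true and standard, but it should be said, since you use it to conclude purity before passing to the limit.

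The genuine gap is the balancedness claim, which you correctly identify as the crux but do not actually prove. The two routes you gesture at both leave the hard work undone. For the direct route, the balancing condition is not an ``alternating sum'' --- it is the single signed sum $\sum_{\sigma\supsetneq\tau}\mult(\sigma)\,v_\sigma \in N_\tau$ from \Cref{def:balanced} --- and the rewriting you describe (using multiplicativity of lattice indices together with the two input balancing relations) is precisely the nontrivial computation; asserting that it ``can be rewritten'' is restating the claim. For the Chow-theoretic route via $\mathrm{MW}^\bullet(\Delta)\cong A^\bullet(X(\Delta))$, you would first need both local fans to be supported on (refinements of) a common complete fan $\Delta$, which requires taking a common refinement and then verifying that refinement does not change the Minkowski weight; none of this is free. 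Since the reduction to the fan case via stars is itself the standard move, what is missing is exactly the core lemma that stable intersection of balanced fans is balanced, and your proposal currently presupposes it rather than proving it. If the intent is to cite it (as the paper does), say so; if the intent is to prove it, the ridge computation needs to be carried out.
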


We can give a precise characterisation of when the stable intersection is non-empty.
The proof of the following lemma is due to Yue Ren.

\begin{lemma}
    \label{lem:StableIntersectionNonempty}
    Let $\Sigma_1$ and $\Sigma_2$ be as in \Cref{thm:stable+intersection} and with only positive multiplicities.
    Then
    \begin{align*}
        \Sigma_1\wedge\Sigma_2\neq\emptyset \quad&\Longleftrightarrow \quad \exists \sigma_1\in\Sigma_1, \exists \sigma_2\in\Sigma_2: \aff(\sigma_1)+\aff(\sigma_2)=\RR^n \\
        \quad&\Longleftrightarrow \quad \dim(\Sigma_1 + \Sigma_2) = n
    \end{align*}
\end{lemma}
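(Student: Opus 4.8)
The plan is to prove the two stated equivalences as a pair of implications, relying on the Minkowski-weight/fan structure theory of stable intersections. The key technical input, which I would invoke from standard references (e.g.\ \cite[\S3.6]{MaclaganSturmfels2015} or Allermann--Rau), is the local description of stable intersection: for a point $w \in \RR^n$ in general position, the stable intersection $\Sigma_1 \wedge \Sigma_2$ near a candidate cell is obtained by translating $\Sigma_2$ by $w$ and recording those pairs of cells $\sigma_1 \in \Sigma_1$, $\sigma_2 \in \Sigma_2$ whose translated relative interiors meet transversally; transversality here means precisely $\aff(\sigma_1) + \aff(\sigma_2 + w) = \RR^n$, equivalently $\aff(\sigma_1) + \mathrm{lin}(\aff(\sigma_2)) = \RR^n$. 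Writing $\aff(\sigma_i)$ for the affine span, the condition ``$\aff(\sigma_1) + \aff(\sigma_2) = \RR^n$'' that appears in the statement should be read in this translation-invariant sense (sum of direction spaces spanning $\RR^n$, which is how I would phrase it to avoid ambiguity).

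First I would prove $(\Rightarrow)$ of the first equivalence: if $\Sigma_1 \wedge \Sigma_2 \neq \emptyset$, pick a maximal cell $\tau$ of the stable intersection with positive multiplicity (it exists because multiplicities are positive, so no cancellation can kill a geometrically present cell). By the fan-displacement rule, $\tau$ lies in $\sigma_1 \cap (\sigma_2 + w)$ for suitable cells $\sigma_i \in \Sigma_i$ and generic small $w$, and the multiplicity of $\tau$ is a sum of lattice indices $[\,\ZZ^n : \Lambda_{\sigma_1} + \Lambda_{\sigma_2}\,]$ over such pairs; positivity of this contribution forces $\aff(\sigma_1) + \aff(\sigma_2) = \RR^n$ for at least one pair. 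That gives the first bullet. For the converse $(\Leftarrow)$ of the first equivalence, suppose $\aff(\sigma_1)+\aff(\sigma_2) = \RR^n$ for some $\sigma_1 \in \Sigma_1$, $\sigma_2 \in \Sigma_2$. Then for a generic translation vector $w$ the relative interiors $\mathrm{relint}(\sigma_1)$ and $\mathrm{relint}(\sigma_2 + w)$ intersect (this is where genericity of $w$ and the full-dimensionality of the affine sum are used: the set of $w$ for which $\mathrm{relint}(\sigma_1) \cap \mathrm{relint}(\sigma_2+w) \neq \emptyset$ is open and nonempty, being the relint of $\mathrm{aff}(\sigma_1)$-projection of a product of full-dimensional-in-their-spans polyhedra). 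Such an intersection point has the right codimension and contributes a positive lattice index, so it survives into $\Sigma_1 \wedge \Sigma_2$, proving non-emptiness.

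Next, the second equivalence $\bigl(\exists \sigma_1,\sigma_2 : \aff(\sigma_1)+\aff(\sigma_2)=\RR^n\bigr) \Longleftrightarrow \dim(\Sigma_1 + \Sigma_2) = n$. The direction $(\Rightarrow)$ is immediate: $\sigma_1 + \sigma_2 \subseteq \Sigma_1 + \Sigma_2$ and $\sigma_1 + \sigma_2$ already has affine span of dimension $n$, so the Minkowski sum is full-dimensional. For $(\Leftarrow)$, note $\Sigma_1 + \Sigma_2 = \bigcup_{\sigma_1, \sigma_2} (\sigma_1 + \sigma_2)$ is a finite union of polyhedra; if the union has dimension $n$, then by the Baire-type/finite-union argument one of the pieces $\sigma_1 + \sigma_2$ has dimension $n$, and $\dim(\sigma_1+\sigma_2) = \dim(\aff(\sigma_1)+\aff(\sigma_2))$ as direction spaces, giving the full-dimensional affine sum. (Taking maximal cells suffices, so the union is genuinely finite.)

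The main obstacle I anticipate is not any single implication but getting the bookkeeping of the fan-displacement rule exactly right: one must be careful that positivity of multiplicities on $\Sigma_1, \Sigma_2$ genuinely precludes cancellation in the weighted count defining $\Sigma_1 \wedge \Sigma_2$, so that ``geometrically nonempty'' and ``nonempty with positive weight'' coincide — this is exactly the hypothesis ``only positive multiplicities'' in the lemma and is the crux of the $(\Rightarrow)$ direction of the first equivalence. A secondary subtlety is the translation-invariance convention for $\aff(\sigma_1) + \aff(\sigma_2)$: since the $\sigma_i$ are cells of fans through the origin (Bergman fans, in our application) this is harmless, but in the general polyhedral-complex setting I would state once at the outset that the sum is taken between direction (linear) spans. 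Everything else reduces to standard transversality and dimension-counting for polyhedra, which I would not spell out in detail.
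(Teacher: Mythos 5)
The main gap is in the $(\Leftarrow)$ direction of the first equivalence. You observe that the set of translation vectors $w$ for which $\mathrm{relint}(\sigma_1) \cap \mathrm{relint}(\sigma_2 + w) \neq \emptyset$ is open and nonempty, and conclude that such an intersection point ``survives into $\Sigma_1 \wedge \Sigma_2$.'' But that open set of good $w$'s need not contain any neighbourhood of the origin --- think of two bounded cells whose translates meet only after a large shift --- whereas the stable intersection $\Sigma_1 \wedge \Sigma_2$ is computed from \emph{small} generic perturbations. To make the jump from ``a good $w$ exists somewhere'' to ``$\Sigma_1 \wedge \Sigma_2 \neq \emptyset$'' you must invoke some invariance of the intersection number under translation, i.e.\ the global stability that gives stable intersection its name, and you do not supply that step. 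The paper closes exactly this gap differently: it passes to stars $\Star_{w_i}(\Sigma_i)$ at points $w_i$ in the relative interiors of the (WLOG maximal) cells $\sigma_i$. These stars are linear subspaces $\aff(\sigma_i)-w_i$, and two linear subspaces summing to $\RR^n$ always intersect in a subspace of the expected codimension; capping with an orthogonal complement $L$ yields a strictly positive intersection number, and the star inequality $\Sigma_1 \cdot \Sigma_2 \cdot L \geq \Star_{w_1}(\Sigma_1)\cdot\Star_{w_2}(\Sigma_2)\cdot L > 0$ \cite[Lemma 5.6]{ClarkeDewarEtAl2025} then lifts positivity back to the original complexes. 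That inequality is doing the work your argument leaves undone.

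A secondary remark: your $(\Rightarrow)$ direction goes through the multiplicity formula and lattice indices, invoking positivity of the weights. This is correct, but heavier than necessary --- the paper's definition of $\Sigma_1 \wedge \Sigma_2$ already only retains pairs with $\dim(\sigma_1 + \sigma_2) = n$, so the contrapositive is immediate from the definition without touching weights. Your treatment of the second equivalence (finite union, pick a full-dimensional piece, identify $\dim(\sigma_1+\sigma_2)$ with $\dim(\aff(\sigma_1)+\aff(\sigma_2))$) matches the paper and is fine.
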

\begin{proof}
    We begin with the first equivalence.

    \begin{description}
        \item[($\Rightarrow$)]
        If $\aff(\sigma_1)+\aff(\sigma_2)\neq\RR^n$ for all $\sigma_1\in\Sigma_1$ and all $\sigma_2\in\Sigma_2$, then $\dim(\sigma_1+\sigma_2)\neq n$ for all $\sigma_1\in\Sigma_1$ and all $\sigma_2\in\Sigma_2$.
        Hence, $\Sigma_1\wedge\Sigma_2=\emptyset$ by definition.
        \item[($\Leftarrow$)]
        We may assume without loss of generality that the $\sigma_i$ are maximal, so that $\Star_{w_i}(\sigma_i)=\aff(\sigma_i)-w_i$ for $w_i\in\mathrm{relint}(\sigma_i)$. Then, as $\aff(\sigma_1) + \aff(\sigma_2) = \RR^n$, we have that $\Star_{w_1}(\sigma_1)+\Star_{w_2}(\sigma_2)=\RR^n$. Hence, by definition of stable intersection and \Cref{thm:stable+intersection}, $\Star_{w_1}(\sigma_1) \wedge \Star_{w_2}(\sigma_2) = \Star_{w_1}(\sigma_1) \cap \Star_{w_2}(\sigma_2)$ is of codimension $d+e$.
        Let $L$ be the $d+e$ dimensional linear space in $\RR^n$ orthogonal to $\Star_{w_1}(\sigma_1) \cap \Star_{w_2}(\sigma_2)$, so that they have a unique point of intersection.
        By \cite[Lemma 5.6]{ClarkeDewarEtAl2025}, we have
        \begin{equation*}
            \Sigma_1\cdot\Sigma_2\cdot L \geq \Star_{w_1}(\Sigma_1)\cdot\Star_{w_2}(\Sigma_2)\cdot L > 0.
        \end{equation*}
        Hence, $\Sigma_1\wedge\Sigma_2\neq\emptyset$.
    \end{description}
    The second equivalence follows from the fact that $\Sigma_1 + \Sigma_2$ is the finite union of cells $\sigma_1 + \sigma_2$, hence
    \begin{equation*}
        \dim(\Sigma_1 + \Sigma_2) = \max_{\sigma_i \in \Sigma_i}\bigl(\dim(\sigma_1 + \sigma_2)\bigr) \,. \qedhere
    \end{equation*}
\end{proof}

\subsection{Bergman fans}

We now introduce one of the key players in the paper: the Bergman fan.

\begin{definition}\label{def:BergmanFan}
    Let $M$ be a matroid on $E$. The \emph{Bergman fan} of $M$, denoted $\Trop(M)$, is the polyhedral fan contained in $\mathbb{R}^E$ given by
    \begin{equation*}
        \left\{
          (w_e)_{e \in E}
          \, : \,
          \min_{e \in C} w_e
          \text{ attains its minimum at least twice for every circuit }
          C \text{ of } M
        \right\}
    \end{equation*}
    with the fan structure induced as follows.
    For any point $x \in \mathbb{R}^E$ and any $t \in \mathbb{R}$,
    we fix
    \begin{equation*}
        E_{x\geq t} := \{ e \in E : x_e \geq t\}.
    \end{equation*}
    Then $w,w'\in \Trop(M)$ are in the relative interior of the same cone if
    \begin{equation*}
        \{E_{w\geq t} : t\in \RR\}=\{E_{w'\geq t} : t\in \RR\} \,.
    \end{equation*}
\end{definition}

\begin{remark}
    The fan structure defined above is referred to as the \emph{fine structure} of the Bergman fan.
    This was first introduced in \cite{ArdilaKlivans2006}.
\end{remark}

We note that if $M$ has a loop then $\Trop(M)$ is the empty set, as the set containing such loop is a circuit for which no point can attain its minimum at least twice.
On the other hand, if $M$ is loopless then $(0,\ldots,0)\in\Trop(M)$. In this case, if $w\in \Trop(M)$, then each set $E_{w\geq t}$ is a flat of $M$ and $\{E_{w\geq t} : t\in \RR\}$ is totally ordered by inclusion, defining a chain of flats which contains~$\emptyset$ and~$E$.
Moreover, any such chain of flats $\mathcal{F}:\emptyset=F_0\subsetneq F_1\subsetneq\cdots\subsetneq F_k=E$ defines a cone of $\Trop(M)$
\begin{equation*}
    \sigma_\mathcal{F}=\RR_{\geq0}\cdot\chi_{F_1}+\cdots+\RR_{\geq0}\cdot\chi_{F_{k-1}}+\RR\cdot\chi_{E}
\end{equation*}
where $\chi_F\in\RR^E$ denotes the indicator function of a subset $F\subseteq E$ (i.e., $\chi_F(e) = 1$ if $e \in F$, $\chi_F(e) = 0$ if $e \notin F$).

Note that, in particular, each cone $\sigma_\mathcal{F}$ contains $\RR\cdot\chi_{E}=\RR\cdot\bo$ in its lineality space, so that the quotient $\sigma_\mathcal{F}/\mathbb{R} \cdot \bo$, where $\bo$ is the all-ones vector, is a well defined cone in $\mathbb{R}^E/\mathbb{R} \cdot \bo$.

\begin{definition}\label{def:projectiveBergmanFan}
    Let $M$ be a matroid on $E$.
    We define \emph{projective Bergman fan}, denoted $\Tropp(M)$, to be the image of the Bergman fan of~$M$ in the quotient~$\mathbb{R}^E/ \mathbb{R} \cdot \bo$.
\end{definition}

Both the Bergman fan and projective Bergman fan are balanced fans where every maximal cell is assigned multiplicity~$1$ (see \Cref{tropical_background:bcp}).

\subsection{Generic root counts}

So as to better consider the properties of generic fibres, we introduce the following concepts.

\begin{definition}
    Let
    \begin{equation*}
      \CC[a][x^\pm]\coloneqq \CC\Bigl[a_{j}\mid j\in [m] \Bigr]\Bigl[x_i^{\pm1}\mid i\in [n]\Bigr]
    \end{equation*}
    be a \emph{parametrised (Laurent) polynomial ring} with parameters $a_j$ and variables $x_i$. Let $f\in \CC[a][x^\pm]$ be a parametrised polynomial, say $f=\sum_{\alpha\in\ZZ^n}c_\alpha x^\alpha$ with $c_\alpha\in \CC[a]$, and let $I\subseteq \CC[a][x^\pm]$ be a parametrised polynomial ideal.  We define their \emph{specialisation} at a choice of parameters $P\in \CC^m$ to be
    \begin{equation*}
        f_P\coloneqq\sum_{\alpha\in\ZZ^n}c_\alpha(P)\cdot x^\alpha\in \CC[x^\pm] \quad\text{and}\quad I_P\coloneqq\langle h_P\mid h \in I \rangle\subseteq \CC[x^\pm].
    \end{equation*}
    The \emph{root count} of $I$ at $P$ is defined to be the vector space dimension
    \begin{equation*}
        \ell_{I,P}\coloneqq \dim_\CC (\CC[x^\pm]/I_P)\in\ZZ_{\geq 0}\cup\{\infty\}.
    \end{equation*}
\end{definition}

\begin{definition}
    Let $I\subseteq \CC[a][x^\pm]$ be a parametrised polynomial ideal.  Let $\CC(a)\coloneqq \CC(a_j\mid j\in [m])$ denote the rational function field in the parameters $a_j$.  The \emph{generic specialisation} of $I$ is the ideal in $\CC(a)[x^\pm]$ generated by $I$, i.e.,
    \begin{equation*}
        I_{\CC(a)}\coloneqq \langle h\colon h \in I\rangle \subseteq \CC(a)[x^\pm].
    \end{equation*}
    The \emph{generic root count} of $I$ is $\ell_{I,\CC(a)}\coloneqq \dim_{\CC(a)}(\CC(a)[x^\pm]/I_{\CC(a)})\in\ZZ_{\geq 0}\cup\{\infty\}$.

    We say that $I$ is generically a complete intersection if $I_{\CC(a)}$ is a complete intersection, and that $I$ is generically zero-dimensional if $I_{\CC(a)}$ is zero-dimensional (in which case $\ell_{I,\CC(a)}<\infty$).
\end{definition}

\begin{remark}
    \label{rem:genericRootCount}
    The name `root count' for the vector space dimension $\ell_{I,P}$ is derived from the fact that it is the number of roots counted with a suitable algebraic multiplicity \cite[\S 4, Corollary~2.5]{CoxLittleOShea2005}. The name `generic root count' for the vector space dimension $\ell_{I,\CC(a)}$ is justified by the fact that there is an Zariski-open subset in the parameter space $U\subseteq\CC^{a}$ over which it is attained, i.e., $\ell_{I,P}=\ell_{I,\CC(a)}$ for all $P\in U$.
\end{remark}

\begin{example}
    Consider the parametrised principal ideal
    \begin{equation*}
        I\coloneqq\langle a_0+a_1x+a_2x^2\rangle\subseteq\CC[a_0,a_1,a_2][x^\pm].
    \end{equation*}
    Then $\ell_{I,(0,0,0)}=\infty$, $\ell_{I,(1,0,0)}=0$, $\ell_{I,(0,1,0)}=1$ and the generic root count is $\ell_{I,\CC(a)}=2$, which is attained whenever $a_2\neq 0$.
\end{example}

\subsection{Hadamard product of matroids}

Given matroids $M,N$ with shared ground set $E$,
we define the \emph{Hadamard product} $M \odot N$ to be the matroid with ground set $E$ where a non-empty set $F$ is independent if and only if
\begin{equation*}
    r_M(F') + r_N(F') \geq |F'| + 1 \qquad \text{for all } \quad \emptyset \subsetneq F' \subseteq F.
\end{equation*}
That $M \odot N$ is a matroid follows from classical results of Edmonds \cite{Edmonds2003} (which were later independently proven by Dunstan \cite{Dunstan1976});
see \cite[Theorem 2.7]{AntoliniDewarTanigawa2025} for an explicit proof.

In \cite{Bernstein2022},
Bernstein proved that the Hadamard product of the algebraic matroids for two linear spaces is equal to the algebraic matroid of the Hadamard product of those linear spaces.

\begin{theorem}[Bernstein {\cite[Theorem 3.1]{Bernstein2022}}]
\label{thm:bern_matroid_linear}
    Let $U,V \subseteq \mathbb{C}^E$ be linear spaces that are not contained in any coordinate hyperplane (equivalently, $M(U)$ and $M(V)$ are loopless).
    Then
    \begin{equation*}
        M(U \odot V) = M(U) \odot M(V).
    \end{equation*}
    In particular, the dimension of $U \odot V$ is equal to the rank of the matroid $M(U) \odot M(V)$.
\end{theorem}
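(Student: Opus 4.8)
The plan is to reduce the matroid identity to a single dimension computation and then settle that computation with tropical geometry. Since the algebraic matroid $M(X)$ of a variety $X\subseteq\CC^E$ has rank function $r_{M(X)}(F)=\dim\overline{\pi_F(X)}$, the theorem is equivalent to the statement that, for every $F\subseteq E$, the set $F$ is independent in $M(U\odot V)$ if and only if $r_{M(U)}(F')+r_{M(V)}(F')\ge |F'|+1$ for every nonempty $F'\subseteq F$. Both sides of this equivalence are compatible with restriction to $F$: geometrically $\overline{\pi_F(U\odot V)}=\pi_F(U)\odot\pi_F(V)$ straight from the definition of the Hadamard map, $\pi_F(U)$ again lies in no coordinate hyperplane, and $M(\pi_F(U))$ is the restriction of $M(U)$ to $F$ (still loopless); combinatorially $(M(U)\odot M(V))|_F=(M(U)|_F)\odot(M(V)|_F)$, because the defining inequalities only involve ranks of subsets of $F$. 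So it suffices to prove the case $F=E$: for $U,V\subseteq\CC^E$ not contained in any coordinate hyperplane, $\dim(U\odot V)=|E|$ if and only if $r_{M(U)}(F')+r_{M(V)}(F')\ge |F'|+1$ for every nonempty $F'\subseteq E$.

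For the ``only if'' direction I would argue by contraposition. If some nonempty $F'$ satisfies $r_{M(U)}(F')+r_{M(V)}(F')\le |F'|$, consider the Hadamard map $\pi_{F'}(U)\times\pi_{F'}(V)\to\CC^{F'}$. It is invariant under the $\CC^{\ast}$-action $t\cdot(u,v)=(tu,t^{-1}v)$, and this action is faithful on a dense subset because $\pi_{F'}(U)$ and $\pi_{F'}(V)$ are nonzero (as $M(U),M(V)$ are loopless and $F'\ne\emptyset$). Hence its generic fibre is positive-dimensional, so $\dim\bigl(\pi_{F'}(U)\odot\pi_{F'}(V)\bigr)\le r_{M(U)}(F')+r_{M(V)}(F')-1\le |F'|-1$, meaning $F'$, and therefore its superset $E$, is dependent in $M(U\odot V)$; that is, $\dim(U\odot V)<|E|$.

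For the ``if'' direction I would pass to the torus. Since $M(U),M(V)$ are loopless, $U\odot V$ is the closure of the set-theoretic product $\mathring U\cdot\mathring V$ taken inside the torus $(\CC^{\ast})^E$, where $\mathring U=U\cap(\CC^{\ast})^E$ and likewise for $V$, and $\cdot$ is the group multiplication of the torus. Because tropicalisation carries multiplication of subvarieties of a torus to Minkowski sum, and the dimension of a variety equals the dimension of its tropicalisation, we obtain $\dim(U\odot V)=\dim\bigl(\Trop(M(U))+\Trop(M(V))\bigr)$, the Minkowski sum of the two Bergman fans in $\RR^E$. Thus the ``if'' direction reduces to a purely combinatorial claim: if $M,N$ are loopless matroids on $E$ with $r_M(F')+r_N(F')\ge |F'|+1$ for every nonempty $F'\subseteq E$, then $\Trop(M)+\Trop(N)=\RR^E$. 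Unwinding the cone structure of Bergman fans --- maximal cones correspond to complete flags of flats, and the linear span of such a cone is spanned by the indicator vectors of the flats in the flag --- this says precisely that one may choose a complete flag of flats of $M$ and one of $N$ so that the indicator vectors of all flats occurring in the two flags together span $\RR^E$; equivalently, the bipartite multigraph whose vertices are the parts of the two flag-partitions, with each $e\in E$ forming an edge between its $M$-part and its $N$-part, is a forest.

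This combinatorial claim is the main obstacle, and I would prove it by induction on $|E|$. If some $e\in E$ is a coloop of both $M$ and $N$, then $M\setminus e$ and $N\setminus e$ inherit the count condition, and one attaches $e$ as a new largest part to inductively chosen flags of $M\setminus e$ and $N\setminus e$, which merely adds an isolated edge to the forest. If instead $e$ is not a coloop of, say, $M$, then $r(M\setminus e)=r(M)$ and, after extending a flag of $M\setminus e$ by taking closures in $M$, the element $e$ lands inside an already-present flag-part; re-adding $e$ then contributes an edge between two vertices already in the inductively obtained forest, so one must arrange that these two vertices lie in distinct tree components (this is automatic unless $e$ is also a non-coloop of $N$). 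Guaranteeing this is the delicate step --- a careless choice of the inductive flags can create a cycle --- so one needs either a strengthened inductive hypothesis or a more global selection argument in the spirit of the matroid union / Rado theorems. Once the combinatorial claim is in hand, the ``in particular'' clause follows at once, since $\dim(U\odot V)=r_{M(U\odot V)}(E)=r_{M(U)\odot M(V)}(E)=r\bigl(M(U)\odot M(V)\bigr)$.
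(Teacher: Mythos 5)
Your reduction to the case $F=E$ is sound, and the ``only if'' direction is a complete argument: if $r_{M(U)}(F')+r_{M(V)}(F')\le|F'|$ for some nonempty $F'$, the free $\CC^\ast$-action on $\pi_{F'}(U)\times\pi_{F'}(V)$ forces $\dim\bigl(\pi_{F'}(U)\odot\pi_{F'}(V)\bigr)\le |F'|-1$, so $F'$ is dependent in $M(U\odot V)$. The structural reduction of the ``if'' direction is also correct: passing to the torus and using that tropicalisation preserves dimension and carries Hadamard product to Minkowski sum reduces the theorem to showing that the rank condition forces $\dim\bigl(\Trop(M(U))+\Trop(M(V))\bigr)=|E|$, and your translation of this into the existence of two complete flags whose bipartite incidence multigraph is a forest is accurate.

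But at exactly this point the proposal stops. You say plainly that you do not know how to carry out the induction, and the concern you name is genuine: after deleting a shared non-coloop $e$, an arbitrary choice of flags for the deletion minors may already have every pair of relevant parts joined, so that re-adding $e$ closes a cycle; controlling the component structure requires either a strengthened inductive hypothesis or a global selection argument, neither of which you supply. What you have reduced to is precisely the $F=E$ case of \Cref{thm:bern_tropical}, which the paper itself flags as ``a key step in Bernstein's proof.'' That lemma is where the nontrivial combinatorial content of the theorem lives; the rest of your proposal is correct but routine bookkeeping. Note also that the paper does not give a proof of \Cref{thm:bern_matroid_linear} --- it cites Bernstein --- so the fair comparison is with Bernstein's argument, and the lemma you leave unproved is exactly the one carrying the weight there. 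As written, the proposal is therefore incomplete.
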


A key step in Bernstein's proof is to describe the rank function of the Hadamard product of matroids using Bergman fans.

\begin{theorem}[Bernstein {\cite[Theorem 3.3]{Bernstein2022}}]
\label{thm:bern_tropical}
    Let $M,N$ be loopless matroids with a shared ground set $E$.
    Then a subset $F \subset E$ is independent in $M \odot N$ if and only if
    \begin{equation*}
        \dim \Bigl( \pi_F \bigl(\Trop (M) + \Trop(N)\bigr)\Bigr) = |F| +1,
    \end{equation*}
    where $\pi_F$ is the projection $(x_e)_{e \in E} \mapsto (x_e)_{e \in F}$.
\end{theorem}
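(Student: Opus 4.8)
The plan is to reduce the equivalence to the case $F=E$, then settle the forward implication by a short dimension count and the converse by an inductive construction (the construction being where I expect the real difficulty). For the reduction, note first that the rank-inequality definition of the Hadamard product gives $(M\odot N)|_F=(M|_F)\odot(N|_F)$ --- the rank functions of $M$ and $N$ restrict to those of $M|_F$ and $N|_F$ --- so $F$ is independent in $M\odot N$ exactly when the whole ground set of $(M|_F)\odot(N|_F)$ is independent in it. On the tropical side I would prove the elementary identity $\pi_F(\Trop(M))=\Trop(M|_F)$: for ``$\subseteq$'', $\pi_F$ carries the cone $\sigma_{\mathcal F}$ of a flag of flats $\mathcal F=(F_0\subsetneq\cdots\subsetneq F_k)$ of $M$ to the cone spanned by the indicators $\chi_{F_i\cap F}$, and each $F_i\cap F$ is a flat of $M|_F$; for ``$\supseteq$'', a flag of flats of $M|_F$ lifts to one of $M$ by taking $M$-closures, and $\pi_F$ sends its cone back down. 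Since $\pi_F$ commutes with Minkowski sums, $\pi_F(\Trop(M)+\Trop(N))=\Trop(M|_F)+\Trop(N|_F)$, so it suffices to treat $F=E$: for loopless $M,N$ on $E$, the ground set $E$ is independent in $M\odot N$ if and only if $\Trop(M)+\Trop(N)$ has maximal dimension.

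For the forward direction, suppose $\Trop(M)+\Trop(N)$ is of maximal dimension; as a Minkowski sum of balanced fans it is then the whole ambient space, and this persists after projecting to any nonempty $F'\subseteq E$, where the projection is $\Trop(M|_{F'})+\Trop(N|_{F'})$. But $M|_{F'}$ and $N|_{F'}$ are loopless, so each of $\Trop(M|_{F'})$, $\Trop(N|_{F'})$ contains the lineality line $\RR\cdot\bo$; choosing a maximal cone of each, their affine hulls have dimensions $r_M(F')$ and $r_N(F')$ and meet in dimension at least one, whence $\dim\bigl(\Trop(M|_{F'})+\Trop(N|_{F'})\bigr)\le r_M(F')+r_N(F')-1$. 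Comparing dimensions forces $r_M(F')+r_N(F')\ge|F'|+1$ for every nonempty $F'$, i.e.\ $E$ is independent in $M\odot N$.

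For the converse, assume all inequalities $r_M(F')+r_N(F')\ge|F'|+1$ hold. By \Cref{lem:StableIntersectionNonempty} it is enough to produce a maximal cone of $\Trop(M)$ and one of $\Trop(N)$ whose affine hulls together span the ambient space; since the affine hull of $\sigma_{\mathcal F}$ consists of the vectors constant on each block $F_i\setminus F_{i-1}$, this amounts to finding a complete flag of flats of $M$ and one of $N$ whose induced ordered partitions of $E$ are \emph{transverse} --- equivalently, whose ``block-intersection'' bipartite graph is a forest. I would build such a pair by induction on $|E|$, the one-element case being immediate. For the inductive step I single out an element $\epsilon$ together with a choice of which of $M,N$ to contract $\epsilon$ in and which to delete it from, pass to the resulting pair of matroids on $E\setminus\epsilon$, invoke the inductive hypothesis there, and then extend both flags across $\epsilon$ (as a new minimal flat on the deletion side and via the closure operator on the contraction side) so that $\epsilon$ becomes a leaf of the bipartite graph and transversality persists. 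The hard part --- and the only place where the inequalities for \emph{all} subsets of $E$, not just $E$ itself, are used --- is to show that some admissible choice of $(\epsilon,\text{contract/delete})$ exists, i.e.\ that not every element is ``blocked'' by a tight set $F'$ with $r_M(F')+r_N(F')=|F'|+1$; this is a Hall/Rado-type claim that I would prove by a submodularity argument, essentially the same mechanism (Edmonds, Dunstan) invoked in the preliminaries to see that $M\odot N$ is a matroid.
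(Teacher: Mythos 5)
The paper does not prove this statement: it is quoted from Bernstein (\cite[Theorem~3.3]{Bernstein2022}) and used as a black box, so there is no in-paper argument to compare yours against. Two remarks on your attempt are nonetheless warranted. First, the formula as printed contains a typo: $\pi_F$ has target $\RR^F$, so the dimension of the image is at most $|F|$, and the intended right-hand side is $|F|$, not $|F|+1$. This is also the version the paper actually invokes in the proof of \Cref{prop:intersection+number}~\ref{prop:intersection+numberitem4}, where the case $F=E$ is read off as $\dim(\Trop(M)+\Trop(N))=|E|$. Your argument implicitly establishes the corrected statement, which is reasonable, but a reader would want this discrepancy flagged.

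Second, and more substantively, your converse direction has a genuine gap. The reduction to $F=E$, the projection identity $\pi_F(\Trop(M))=\Trop(M|_F)$, and the forward dimension count (affine hulls of maximal cones meet in at least $\RR\cdot\bo$) are all correct. But the heart of the converse --- producing, from the inequalities $r_M(F')+r_N(F')\geq|F'|+1$, a pair of maximal flags of flats whose block-intersection bipartite multigraph is a forest --- is exactly where you stop. Moreover, the inductive step as sketched does not obviously close. If one contracts $\epsilon$ in $M$ and deletes it from $N$, then for $F'\subseteq E\setminus\epsilon$ one has $r_{M/\epsilon}(F')+r_{N\setminus\epsilon}(F')=r_M(F'\cup\epsilon)-1+r_N(F')$; the available inequality $r_M(F'\cup\epsilon)+r_N(F'\cup\epsilon)\geq|F'|+2$ combined with $r_N(F'\cup\epsilon)\leq r_N(F')+1$ only yields $r_{M/\epsilon}(F')+r_{N\setminus\epsilon}(F')\geq|F'|$, which is one less than the inductive hypothesis needs. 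So the choice of $\epsilon$ and of which factor to contract versus delete is precisely where the difficulty lives, and the submodularity/Hall-type argument you allude to is the proof, not a finishing remark. As written, the proposal settles the easy implication and outlines a plausible but unverified strategy for the hard one.
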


\begin{remark}
    The matroid $M \odot N$ can be alternatively defined by the following procedure.
    First, one considers the monotone submodular function $F \mapsto r_M(F) + r_N(F) - 1$, for $F \subset E$.
    Taking the \emph{Dilworth truncation} (see \cite[Section~48]{Schrijver2003}) of the latter function yields a submodular, nonnegative function.
    Finally, using methods of Edmonds and Rota \cite{EdmondsRota1966}, one gets the desired matroid.
\end{remark}

\section{Flip products of matroids}
\label{flip_products}

In this section, we collect some useful observations about the flip product that are required throughout the paper.

First of all, now that we have introduced all the relevant notions, let us recall how we define the flip product between two matroids $M$ and $N$ over the same ground set $E$:
\begin{equation*}
    M \ast N \ := \ \bigl(-\Tropp(N)\bigr) \cdot \Tropp(M) \,.
\end{equation*}
We often use the equivalent definition for the flip product in terms of affine Bergman fans.
With this,
the flip product of $M$ and $N$ is the tropical intersection product
\begin{equation*}
    M \ast N = (-\Trop(N)) \cdot \Trop(M) \cdot \Trop (y_\epsilon -1)
\end{equation*}
for any arbitrary choice of $\epsilon \in E$.
Since $\Trop (y_\epsilon -1) = \mathbb{R}^{E \setminus \epsilon} \times \{0\}$ and $\Trop(M)$ contains $\mathbb{R} \cdot \bo$,
the flip product can be expressed as the cardinality of a set for some generic $w \in \mathbb{R}^E$:
\begin{equation}\label{eq:FPasCardinality}
    M \ast N = \Big| \Trop(M) \cap (-\Trop(N) +w) \cap \Trop (y_\epsilon -1) \Big| .
\end{equation}
By a direct inspection of how the stable intersection is defined, one has that $M \ast N = N \ast M$.

We begin with the following observation regarding matroids with loops.

\begin{proposition}\label{prop:loopbad}
    Let $M, N$ be matroids that share a ground set $E$.
    If either $M$ or $N$ contains a loop, then $M *N=0$.
\end{proposition}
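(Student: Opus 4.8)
The plan is to use the characterisation of the Bergman fan recalled in \Cref{def:BergmanFan} together with the fact, noted immediately afterwards, that a matroid with a loop has empty Bergman fan. Concretely: suppose without loss of generality that $M$ contains a loop $e \in E$, so that $\{e\}$ is a circuit of $M$. For any point $w = (w_f)_{f \in E} \in \RR^E$, the minimum $\min_{f \in \{e\}} w_f = w_e$ is attained exactly once (there is only one element to choose), so $w$ cannot lie in $\Trop(M)$. Hence $\Trop(M) = \emptyset$, and passing to the quotient $\RR^E/\RR\cdot\bo$ gives $\Tropp(M) = \emptyset$ as well.

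Next I would feed this into the definition of the flip product. Since $M \ast N = \bigl(-\Tropp(N)\bigr) \cdot \Tropp(M)$ and $\Tropp(M) = \emptyset$, the stable intersection $\bigl(-\Tropp(N)\bigr) \wedge \Tropp(M)$ is the stable intersection with the empty set, which is empty; by the convention recorded in \Cref{subsec:tropical_intersection_product} (``when $X \wedge Y$ is empty, $X \cdot Y = 0$''), we conclude $M \ast N = 0$. The case where $N$ contains a loop is handled identically, or alternatively follows from the symmetry $M \ast N = N \ast M$ noted just before the proposition.

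There is essentially no obstacle here — the only thing to be careful about is to invoke the right conventions (empty stable intersection yields product $0$, and the empty-ground-set convention is not in play since a loop presupposes $E \neq \emptyset$). If one prefers to avoid passing through $\Tropp$, the same argument runs verbatim using the affine form $M \ast N = (-\Trop(N)) \cdot \Trop(M) \cdot \Trop(y_\epsilon - 1)$ from \eqref{eq:FPasCardinality}, since $\Trop(M) = \emptyset$ makes that triple intersection empty too.

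\begin{proof}
    By the symmetry $M \ast N = N \ast M$, we may assume that $M$ contains a loop; say $e \in E$ is a loop, so $\{e\}$ is a circuit of $M$.
    For any $w = (w_f)_{f \in E} \in \RR^E$, the minimum of $w_f$ over $f \in \{e\}$ is attained only once, since $\{e\}$ has a single element.
    Hence $w \notin \Trop(M)$, and as $w$ was arbitrary, $\Trop(M) = \emptyset$.
    Passing to the quotient $\RR^E/\RR\cdot\bo$, we also get $\Tropp(M) = \emptyset$.
    Therefore the stable intersection $\bigl(-\Tropp(N)\bigr) \wedge \Tropp(M)$ is empty, and by the convention that the tropical intersection product of balanced complexes with empty stable intersection is $0$, we conclude $M \ast N = \bigl(-\Tropp(N)\bigr) \cdot \Tropp(M) = 0$.
\end{proof}
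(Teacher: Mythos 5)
Your proof is correct and follows the same route as the paper's: observe that a loop forces $\Tropp(M) = \emptyset$, so the stable intersection with $-\Tropp(N)$ is empty and the flip product is $0$ by convention. You simply spell out the reason $\Trop(M) = \emptyset$ (the singleton circuit cannot attain its minimum twice), which the paper records in the remark following \Cref{def:BergmanFan} and then cites implicitly ``by definition.''
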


\begin{proof}
    If $M$ say contains a loop, then $\Tropp(M) = \emptyset$ by definition.
    This implies that the stable intersection of $\Tropp(M)$ and $\Tropp(N)$ is empty, hence $(-\Tropp(N)) \cdot \Tropp(M) = 0$.
\end{proof}

We next identify exactly when the flip product of matroids is a finite positive integer.

\begin{proposition}\label{prop:intersection+number}
    Let $M,N$ be matroids that share a ground set $E$.
    \begin{enumerate}
        \item \label{prop:intersection+numberitem1} If $r(M) + r(N) < |E| +1$, then $M \ast N = 0$.
        \item \label{prop:intersection+numberitem2} If $r(M) + r(N) = |E| +1$, then $M \ast N$ is finite.
        \item \label{prop:intersection+numberitem3} If $r(M) + r(N) > |E| +1$, then $M \ast N$ is either 0 or $\infty$.
        \item \label{prop:intersection+numberitem4} $M \ast N$ is positive if and only if $M \odot N$ is a free matroid, i.e., all subsets of~$E$ are independent in $M \odot N$.
    \end{enumerate}
\end{proposition}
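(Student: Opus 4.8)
The plan is to derive parts~(i)--(iii) from the codimension bookkeeping in \Cref{thm:stable+intersection}, and part~(iv) by combining \Cref{lem:StableIntersectionNonempty} with Bernstein's tropical description of the Hadamard product matroid (\Cref{thm:bern_tropical}). Throughout I may assume that $M$ and $N$ are loopless: otherwise $\Tropp(M)$ or $\Tropp(N)$ is empty, so $M \ast N = 0$ by \Cref{prop:loopbad}, while a loop $e$ of, say, $M$ is also a loop of $M \odot N$ since $r_M(\{e\}) + r_N(\{e\}) \leq 1 < |\{e\}| + 1$; hence all four statements hold in that case. I also assume $E \neq \emptyset$ (for $E = \emptyset$ the flip product is $1$ by convention).

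For parts~(i)--(iii), the key observation is that for a loopless matroid $M$ the fan $\Tropp(M)$ is a non-empty balanced fan of dimension $r(M) - 1$ inside $\RR^E/\RR\cdot\bo \cong \RR^{|E|-1}$ with all maximal cells of multiplicity~$1$, hence of codimension $|E| - r(M)$; likewise $-\Tropp(N)$ has codimension $|E| - r(N)$, since negation preserves dimensions and multiplicities. The sum of the two codimensions is $d + e = 2|E| - r(M) - r(N)$, and the ambient dimension is $n = |E| - 1$. Then \Cref{thm:stable+intersection} gives the trichotomy at once: if $r(M) + r(N) < |E| + 1$ then $d + e > n$, so the stable intersection is empty and $M \ast N = 0$; if $r(M) + r(N) = |E| + 1$ then $d + e = n$, so the stable intersection is empty or zero-dimensional, whence $M \ast N$ is finite; and if $r(M) + r(N) > |E| + 1$ then $d + e < n$, so the stable intersection is either empty (so $M \ast N = 0$) or of positive dimension (so $M \ast N = \infty$).

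For part~(iv), I first note that, since Bergman fans carry all multiplicities equal to~$1$, any non-empty stable intersection $\Tropp(M) \wedge (-\Tropp(N))$ has strictly positive multiplicities (each such multiplicity is a positive integer), so $M \ast N$ is positive if and only if $\Tropp(M) \wedge (-\Tropp(N)) \neq \emptyset$ --- the value being a positive integer when this stable intersection is zero-dimensional and $\infty$ when it has positive dimension. By \Cref{lem:StableIntersectionNonempty}, non-emptiness is equivalent to $\dim\bigl(\Tropp(M) + (-\Tropp(N))\bigr) = |E| - 1$. I then transport this across the quotient $q \colon \RR^E \to \RR^E/\RR\cdot\bo$: linearity of $q$ gives $\Tropp(M) + (-\Tropp(N)) = q\bigl(\Trop(M) - \Trop(N)\bigr)$, and since $\Trop(M)$ contains $\RR\cdot\bo$ while $0 \in -\Trop(N)$, the linear span of $\Trop(M) - \Trop(N)$ contains $\RR\cdot\bo$, so $\dim q\bigl(\Trop(M) - \Trop(N)\bigr) = \dim\bigl(\Trop(M) - \Trop(N)\bigr) - 1$. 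As negation preserves linear spans, $\dim\bigl(\Trop(M) - \Trop(N)\bigr) = \dim\bigl(\Trop(M) + \Trop(N)\bigr)$. Chaining these equivalences, $M \ast N$ is positive if and only if the Minkowski sum $\Trop(M) + \Trop(N)$ is full-dimensional in $\RR^E$; by \Cref{thm:bern_tropical} applied with $F = E$ this holds precisely when $E$ is independent in $M \odot N$, that is (as $E$ is the whole ground set) precisely when $M \odot N$ is free.

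The codimension count and the linear-algebra bookkeeping relating $\Tropp$ to $\Trop$ and the sign-flipped Minkowski sum to the ordinary one are routine. The step I expect to require the most care --- and the conceptual heart of part~(iv) --- is the reduction ``$M \ast N$ positive $\iff$ the stable intersection is non-empty'': it relies on Bergman fans having positive multiplicities, so that no cancellation can occur, together with the recognition that positivity of the flip product is governed by exactly the full-dimensionality of $\Trop(M) + \Trop(N)$ that Bernstein's theorem ties to freeness of $M \odot N$.
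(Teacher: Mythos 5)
Your proof is correct and follows essentially the same route as the paper: parts~(i)--(iii) come from the codimension count via \Cref{thm:stable+intersection}, and part~(iv) combines \Cref{lem:StableIntersectionNonempty} with \Cref{thm:bern_tropical} exactly as the paper does. The only difference is stylistic: you run the codimension bookkeeping in the quotient $\RR^E/\RR\bo$ (so $n = |E|-1$ and no lineality-space correction term is needed), then transport across $q$ for part~(iv), whereas the paper works with the affine Bergman fans in $\RR^E$ throughout and invokes the lineality term $\ell\ge1$ in \Cref{thm:stable+intersection}; both renderings of the argument are routine and equivalent.
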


\begin{proof}
    The flip product $M \ast N$ is a positive integer if and only if $\Tropp(M) \wedge (-\Tropp(N))$ is zero-dimensional, or equivalently $\Trop(M) \wedge (-\Trop(N))$ is one-dimensional.
    Similarly, $M \ast N=0$ if and only if $\Tropp(M) \wedge (-\Tropp(N))$ is empty.
    Each of $\Trop(M)$ and $\Trop(N)$ have codimension $|E| - r(M)$ and $|E| - r(N)$ respectively, hence \Cref{thm:stable+intersection} gives that the stable intersection $\Trop(M) \wedge (-\Trop(N))$ is either a polyhedral complex of codimension $2|E| - r(M) - r(N)$ or it is empty.
    Moreover,
    since the intersection of the linearity spaces of $\Trop(M)$ and $\Trop(N)$ has dimension $\ell \geq 1$,
    if $2|E| - r(M) - r(N) + 1 > |E|$ then the stable intersection is empty.
    With this, we make three observations:
    \begin{itemize}
        \item If $r(M) + r(N) < |E| +1$, then $2|E| - r(M) - r(N) + 1 > |E|$ and so the stable intersection is empty (and so $M \ast N = 0$).
        \item If $r(M) + r(N) = |E| +1$,
        then either $\Trop(M) \wedge (-\Trop(N))$ is one-dimensional (and so $M \ast N$ is a positive integer) or $\Trop(M) \wedge (-\Trop(N))$ is empty (and so $M \ast N = 0$).
        \item If $r(M) + r(N) > |E| +1$, then $\Tropp(M) \wedge (-\Tropp(N))$ is either empty (and so $M \ast N = 0$) or has dimension greater than 1 (and so $M \ast N = \infty$). In either case, $M \ast N$ is not a positive integer.
    \end{itemize}
    These observations prove points \ref{prop:intersection+numberitem1}, \ref{prop:intersection+numberitem2} and \ref{prop:intersection+numberitem3} hold.

    We now proceed to show \ref{prop:intersection+numberitem4} holds.
    From our previous observations, we can assume that $r(M) + r(N) = |E| + 1$,
    and by \Cref{prop:loopbad} we can assume both $M$ and $N$ are loopless.
    By \Cref{lem:StableIntersectionNonempty}, the stable intersection is non-empty if and only if $\dim(\Trop(M) + (-\Trop(N))) = |E|$.
    By various properties of the affine span, we have
    \begin{align*}
        |E| &= \dim(\Trop(M) + (- \Trop(N)) \\
        &= \max_{\cF, \cG}(\dim(\sigma_\cF + (-\sigma_\cG))) \\
        &= \max_{\cF, \cG}(\dim(\aff(\sigma_\cF) + \aff(-\sigma_\cG))) \\
        &= \max_{\cF, \cG}(\dim(\aff(\sigma_\cF) + \aff(\sigma_\cG))) \\
        &= \dim(\Trop(M) + \Trop(N)).
    \end{align*}
    The result now follows from \Cref{thm:bern_tropical}.
\end{proof}

\begin{example}\label{ex:K4glueC4}
    Consider the graph $G =([6],E)$ on six vertices and nine edges obtained by gluing a copy of $K_4$ and a 4-cycle along an edge; see \Cref{fig:K4glueC4}.
    The corresponding graphic matroid $M_G$ has rank~$5$, hence taking $M = N = M_G$ satisfies the rank condition $r(M) + r(N) = |E| + 1$.
    However, consider $F = E(K_4) \subset E$: this subset has $r_{M_G}(F) = 3$,
    hence $|F| + 1 > r(M) + r(N)$.
    \Cref{prop:intersection+number} tells us that $M_G * M_G = 0$.
    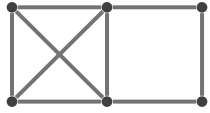
\begin{figure}[ht]
        \centering
        \begin{tikzpicture}[scale=1.25]
            \node[vertex] (a) at (0,0) {};
            \node[vertex] (b) at (1,0) {};
            \node[vertex] (c) at (1,1) {};
            \node[vertex] (d) at (0,1) {};
            \node[vertex] (e) at (2,0) {};
            \node[vertex] (f) at (2,1) {};
            \draw[edge] (a)--(b) (a)--(c) (a)--(d) (b)--(c) (b)--(d) (c)--(d);
            \draw[edge] (b)--(e) (e)--(f) (f)--(c);
        \end{tikzpicture}
        \caption{The graph $G$ formed from a $K_4$ glued to a $C_4$.}
        \label{fig:K4glueC4}
    \end{figure}
    Now consider the graph $G' =([6],E')$ obtained from the graph $G$ by moving a single edge; see \Cref{fig:multtri}.
    Taking $M = N = M_{G'}$, we have that $M \odot N$ is a free matroid and $r(M) + r(N) = |E'| + 1$.
    Hence, \Cref{prop:intersection+number} tells us that $M_{G'} * M_{G'}$ is a positive integer (in fact, $M_{G'} * M_{G'} = 16$).
    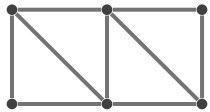
\begin{figure}[ht]
        \centering
        \begin{tikzpicture}[scale=1.25]
            \node[vertex] (a) at (0,0) {};
            \node[vertex] (b) at (1,0) {};
            \node[vertex] (c) at (1,1) {};
            \node[vertex] (d) at (0,1) {};
            \node[vertex] (e) at (2,0) {};
            \node[vertex] (f) at (2,1) {};
            \draw[edge] (a)--(b) (a)--(d) (b)--(c) (b)--(d) (c)--(d);
            \draw[edge] (b)--(e) (e)--(f) (f)--(c);
            \draw[edge] (c)--(e);
        \end{tikzpicture}
        \caption{The graph $G'$ formed from the graph $G$ by moving a single edge.}
        \label{fig:multtri}
    \end{figure}
\end{example}

\begin{example}\label{ex:flip_product_is_zero}
    Let $M$ be a non-uniform matroid on a ground set $E$.
    Since $M$ is non-uniform, then there exists a dependent set~$D$ whose cardinality equals the rank of~$M$.
    We define $N$ to be the matroid $U_{E\setminus D,k} \oplus U_{D,1}$, where $k = |E|-r(M)$. Note that
    \begin{equation*}
        r(M) + r(N) = r(M) + |E| - r(M) + 1 = |E| + 1
    \end{equation*}
    so $M\ast N$ is finite by \Cref{prop:intersection+number} \ref{prop:intersection+numberitem2}, and as $r_M(D)+r_N(D)<r+1=|D|+1$,
    by \Cref{prop:intersection+number}~\ref{prop:intersection+numberitem4} we have that $M\ast N=0$.
    Hence, for every non-uniform matroid $M$, there exists another loopless matroid $N$ on the same ground set $E$ such that $r(M) + r(N) = |E|+1$ and $M \ast N = 0$.
\end{example}

If our choice of matroids $M$ and $N$ have the same separation,
we can also prove that the flip product $M \ast N$ is zero.

\begin{lemma}\label{lem:zero_flip_if_disconnected}
    Let $M,N$ be matroids with shared ground set $E$. If $M \ast N$ is finite and there exists a set $\emptyset\neq F \subsetneq E$ such that
    \begin{equation*}
        M = (M \setminus F) \oplus (M \setminus (E \setminus F)) , \qquad N = (N \setminus F) \oplus (N \setminus (E \setminus F)),
    \end{equation*}
    then $M \ast N = 0$.
\end{lemma}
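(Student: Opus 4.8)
The plan is to reduce everything to the positivity criterion \Cref{prop:intersection+number}\ref{prop:intersection+numberitem4}, which says that $M \ast N$ is positive if and only if $M \odot N$ is the free matroid on $E$. I would argue by contradiction: assume $M \ast N \neq 0$. Since $M \ast N$ is finite by hypothesis, it is then a positive integer, so $M \odot N$ is free, i.e.
\begin{equation*}
r_M(F') + r_N(F') \ \geq \ |F'| + 1 \qquad \text{for every nonempty } F' \subseteq E.
\end{equation*}

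Next I would exploit the two separating sets $F$ and $E \setminus F$, both of which are nonempty by assumption. Applying the displayed inequality with $F' = F$ and then with $F' = E \setminus F$ and adding the two results gives
\begin{equation*}
\bigl(r_M(F) + r_M(E \setminus F)\bigr) + \bigl(r_N(F) + r_N(E \setminus F)\bigr) \ \geq \ |F| + |E \setminus F| + 2 \ = \ |E| + 2 .
\end{equation*}
Now the direct sum hypotheses enter: since $M = (M \setminus (E\setminus F)) \oplus (M \setminus F)$, rank is additive over the two restrictions, so $r(M) = r_M(F) + r_M(E \setminus F)$, and similarly $r(N) = r_N(F) + r_N(E \setminus F)$. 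Substituting, $r(M) + r(N) \geq |E| + 2 > |E| + 1$, so \Cref{prop:intersection+number}\ref{prop:intersection+numberitem3} forces $M \ast N \in \{0, \infty\}$ --- contradicting that $M \ast N$ is a positive integer. Hence $M \ast N = 0$.

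I do not expect a serious obstacle here; the one point that needs care is that the finiteness hypothesis is genuinely necessary and must be used, since two matroids can share a separation while $r(M) + r(N) > |E| + 1$, in which case $M \ast N$ could equal $\infty$ rather than $0$. As an alternative that stays closer to the tropical language of the paper, one could instead observe that $\Trop(M) = \Trop(M \setminus (E\setminus F)) \times \Trop(M \setminus F)$ inside $\RR^F \times \RR^{E \setminus F}$, so the lineality space of $\Trop(M)$ contains the $2$-dimensional space spanned by $\bo_F$ and $\bo_{E \setminus F}$, and likewise for $\Trop(N)$; hence the lineality spaces of $\Tropp(M)$ and $\Tropp(N)$ share a common line. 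Feeding $\ell \geq 1$ into \Cref{thm:stable+intersection} shows the stable intersection of $\Tropp(M)$ and $-\Tropp(N)$ is empty as soon as $r(M) + r(N) \leq |E| + 1$, giving $M \ast N = 0$; and when $r(M) + r(N) > |E| + 1$ finiteness again forces $M \ast N = 0$ via \Cref{prop:intersection+number}\ref{prop:intersection+numberitem3}.
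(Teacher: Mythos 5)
Your main argument is correct and essentially matches the paper's proof: both isolate the case $r(M)+r(N)=|E|+1$, use rank additivity over the separation $F \mid E\setminus F$, and compare $r(M)+r(N)$ against $(|F|+1)+(|E\setminus F|+1)=|E|+2$ to invoke \Cref{prop:intersection+number}\ref{prop:intersection+numberitem4}. The paper phrases it contrapositively (at least one of the two sub-freeness inequalities must fail, so $M\odot N$ is not free) rather than by contradiction, but the arithmetic and the cited facts are the same; your tropical lineality-space aside is also a valid alternative route, though not the one the paper takes.
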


\begin{proof}
    As $M \ast N$ is finite, the result follows immediately from \Cref{prop:intersection+number} \ref{prop:intersection+numberitem1} and \ref{prop:intersection+numberitem3} if $r(M) + r(N) \neq |E| +1$. Now, assume that $r(M) + r(N) = |E| +1$. As, by hypothesis, $r(M)= r(M \setminus F) + r(M \setminus (E \setminus F)) = r_M(E\setminus F)+r_M(F)$ and $r(N)= r(N \setminus F) + r(N \setminus (E \setminus F)) = r_N(E\setminus F)+r_N(F)$, we have that
    \begin{equation*}
        r_M(E\setminus F)+r_M(F)+r_N(E\setminus F)+r_N(F)=|E|+1<(|E\setminus F|+1)+(|F|+1).
    \end{equation*}
    Therefore $r_M(E\setminus F)+r_N(E\setminus F)<|E\setminus F|+1$ or $r_M(F)+r_N(F)<|F|+1$, either implying $M\ast N=0$ by \Cref{prop:intersection+number} \ref{prop:intersection+numberitem4}.
\end{proof}

\begin{remark}\label{rmk:shared_coloop}
    If $M$ and $N$ share a coloop $e\in E$ and $|E|\geq2$, then \Cref{lem:zero_flip_if_disconnected} implies that $M \ast N = 0$, as in this case we have that $M\setminus(E \setminus e)=N\setminus(E \setminus e)=U_{\{e\},1}$, $M = (M \setminus e) \oplus U_{\{e\},1}$ and $N = (N \setminus e) \oplus U_{\{e\},1}$.
\end{remark}

When the ground set of both matroids contains exactly one element, it is easy to determine the exact value of the flip product.

\begin{lemma}\label{lem:1element}
    Let $M$ and $N$ be matroids over $E$.
    If $|E|=1$, then
    \begin{equation*}
        M \ast N =
        \begin{cases}
            1 &\text{if } M\cong U_{1,1} ~ \text{and} ~ N\cong U_{1,1}, \\
            0 &\text{otherwise}.
        \end{cases}
    \end{equation*}
\end{lemma}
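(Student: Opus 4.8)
The plan is to enumerate the two possible matroids on a one-element ground set $E = \{e\}$: the rank-$1$ uniform matroid $U_{1,1}$ (where $e$ is a coloop) and the rank-$0$ matroid $U_{1,0}$ (where $e$ is a loop). This gives four cases for the ordered pair $(M,N)$, and since $M \ast N = N \ast M$, only three need separate attention.

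\medskip

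First I would dispose of every case in which at least one of $M$, $N$ is $U_{1,0}$: by \Cref{prop:loopbad}, if either matroid contains a loop then $M \ast N = 0$. This covers three of the four cases and establishes the ``otherwise'' branch of the statement. It remains to compute $M \ast N$ when $M \cong N \cong U_{1,1}$. Here I would compute directly from the definition. The projective Bergman fan lives in $\RR^E / \RR \cdot \bo$, which is the zero vector space since $|E| = 1$; concretely $\Tropp(U_{1,1})$ is a single point (the origin of the trivial quotient), carrying multiplicity $1$. The flip product is $\bigl(-\Tropp(N)\bigr) \cdot \Tropp(M)$, the stable intersection of two copies of this single point with its negative — which is again that single point with multiplicity $1$ — so $M \ast N = 1$.

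\medskip

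Alternatively, and perhaps more cleanly, I would verify this via the affine formulation in \eqref{eq:FPasCardinality}: with $E = \{\epsilon\}$, we have $\Trop(U_{1,1}) = \RR \cdot \bo = \RR^E$ (the element $\epsilon$ is a coloop, so it lies in no circuit and the Bergman fan condition is vacuous, giving all of $\RR^E$), and $\Trop(y_\epsilon - 1) = \RR^{E \setminus \epsilon} \times \{0\} = \{0\}$. Then $\Trop(M) \cap (-\Trop(N) + w) \cap \Trop(y_\epsilon - 1) = \RR^E \cap (\RR^E + w) \cap \{0\} = \{0\}$, a single point; each cell has multiplicity $1$, so the tropical intersection product is $1$. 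One should also sanity-check consistency with \Cref{prop:intersection+number}: $r(U_{1,1}) + r(U_{1,1}) = 2 = |E| + 1$, so finiteness is guaranteed by part \ref{prop:intersection+numberitem2}, and $U_{1,1} \odot U_{1,1}$ is the free matroid on a single element, so part \ref{prop:intersection+numberitem4} confirms positivity.

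\medskip

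I do not anticipate a genuine obstacle here — the statement is a base case and the only subtlety is bookkeeping about the trivial quotient space $\RR^E / \RR \cdot \bo$ when $|E| = 1$, namely that a ``point'' in a zero-dimensional space still carries a well-defined multiplicity and that the empty stable intersection conventions are applied correctly in the loop cases. The mildly delicate point worth stating explicitly is that the stable intersection of a multiplicity-$1$ point with itself is that point with multiplicity $1$ (not $0$ and not higher), which follows immediately from the definition of stable intersection once one observes the lineality spaces are trivial and $\dim(\Trop(M)) + \dim(\Trop(N)) = |E| + 1$ matches the ambient-plus-one count needed for a transverse zero-dimensional intersection in $\RR^E$.
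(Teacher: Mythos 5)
Your proof is correct and takes essentially the same route as the paper: dispose of the loop cases via \Cref{prop:loopbad}, then compute $\Trop(U_{1,1}) \cdot (-\Trop(U_{1,1})) \cdot \Trop(y_\epsilon - 1) = 1$ directly using the affine formulation. The only difference is that you additionally give the computation in the (trivial) projective quotient and a sanity check against \Cref{prop:intersection+number}, neither of which is needed but both of which are consistent.
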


\begin{proof}
    Up to swapping $M$ and $N$,
    there are three cases to consider:
    (i) neither $M$ nor $N$ are isomorphic to $U_{1,1}$,
    (ii) $M $ is not isomorphic to $U_{1,1}$ but $N$ is,
    and (iii) both $M$ and $N$ are isomorphic to $U_{1,1}$.
    The first two cases both imply $M$ contains a loop, which implies that $M \ast N = 0$ by \Cref{prop:loopbad}.
    If both $M$ and $N$ are isomorphic to $U_{1,1}$, then
    \begin{equation*}
        \Trop (M) = \Trop (N) = \mathbb{R} \quad \implies \quad \Trop (M) \cdot (-\Trop (N)) \cdot \Trop (y_\epsilon - 1) = 1,
    \end{equation*}
    and so $M \ast N = 1$.
\end{proof}

\section{Hadamard products for representable matroids}
\label{hadamard_products}

We recall from \Cref{sec:intro} that for linear spaces $U,V \subset \mathbb{C}^E$, the map~$f_{U,V}$ is the Hadamard map, the set $U \odot V$ is the Hadamard product, and $\sim$ is the equivalence relation on $U\times V$ given by the action of the algebraic torus.

It follows from \Cref{thm:bern_matroid_linear} that the value $\#(f_{U,V}^{-1}(\lambda)/\!\!\sim)$ for generic $\lambda \in \CC^E$ is a positive integer if and only if $f_{U,V}$ is dominant and
\begin{equation}\label{eq:rankuv}
    r\bigl(M(U)\bigr) + r\bigl(M(V)\bigr) = |E|+1.
\end{equation}
In particular,
if $f_{U,V}$ is not dominant then $\#(f_{U,V}^{-1}(\lambda)/\!\!\sim) = 0$,
whilst if $f_{U,V}$ is dominant but \Cref{eq:rankuv} does not hold then
$r(M(U)) + r(M(V)) > |E|+1$ and $\#(f_{U,V}^{-1}(\lambda)/\!\!\sim) = \infty$.

\subsection{Proof of \texorpdfstring{\Cref{thm:mainalg}}{Hadamard product theorem}}

We first reframe enumerating generic fibres as a generic root count.
Recall that given a linear space $U \subseteq \CC^E$, for each circuit $C$ of $M(U)$, there exists a unique, up to scalar, linear polynomial $h_{C,U} \in \mathbb{C}[x_e \mid e \in E]$ with support equal to $C$ that vanishes in $U$ \cite[Lemma~4.1.4]{MaclaganSturmfels2015}.
Explicitly, if $U$ is the column span of a matrix $A$, then for each circuit $C \in M(U)$, the rows $\{A_e\}_{e \in C}$ of $A$ indexed by $C$ are minimally linearly dependent, i.e., there exist unique (up to a scalar factor) non-zero coefficients $\alpha_e \in \CC$  such that $\sum_{e \in C} \alpha_e A_e = 0$ and we define $h_{C,U} = \sum_{e \in C} \alpha_e x_e$.
Furthermore, these polynomials completely determine $U$, since
\begin{equation*}
    U = \mathbb{V}\bigl( \{h_{C,U} \colon C \text{ is a circuit of } M(U) \} \bigr) \, ,
\end{equation*}
where $\mathbb{V}(S)$ denotes the algebraic set generated by $S$.

\begin{lemma}
\label{lem:hadamardrealisationNumberTwoDimensionalEdgeVariables}
    Let $U, V \subset \mathbb{C}^E$ be linear subspaces not contained in any coordinate hyperplane and fix some $\epsilon \in E$.
    Consider the parametrised Laurent polynomial ring
    \begin{align*}
        \CC[a,b][u^{\pm },v^{\pm}] := \CC \Big[a_{e}\mid e \in E \Big] \Big[ b_1,b_2\Big] \left[ u^{\pm1}, v^{\pm 1} \mid e\in E \right]
    \end{align*}
    with parameters $a_{e}, b_{\ell}$ and variables $u_e,v_e$.
    Let $I_{U,V}$ be the parametrised polynomial ideal generated by
    \begin{equation*}
        \begin{aligned}
            f_{e}&\coloneqq u_e\cdot v_e - a_{e} &&\text{for each } e \in E \\
            g &\coloneqq b_1 u_\epsilon + b_2 v_\epsilon, \\
            h_{C,U} &\in \mathbb{C}[u_e^\pm \mid e \in E] &&\text{for each circuit $C$ of } M(U) , \\
            h_{C,V} &\in \mathbb{C}[v_e^\pm \mid e \in E] &&\text{for each circuit $C$ of } M(V) .
        \end{aligned}
    \end{equation*}
    Then the generic root count $\ell_{I_{U,V},\CC(a,b)}$ is equal to $2 \cdot\#(f_{U,V}^{-1}(\lambda)/\!\!\sim)$ for any choice of generic $\lambda \in \mathbb{C}^E$.
\end{lemma}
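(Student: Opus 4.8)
The plan is to set up an explicit bijection between $\sim$-orbits of generic fibres of $f_{U,V}$ and a subset of the variety defined by $I_{U,V}$, then argue the generic root count is exactly twice the size of that subset.

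First I would unwind what the ideal $I_{U,V}$ is cutting out. A point $(u,v) \in (\CC^\times)^E \times (\CC^\times)^E$ lies in $\mathbb{V}(I_{U,V})$ for the parameter choice $(a,b)$ precisely when: (i) $u \in U$ and $v \in V$ (this is exactly what the $h_{C,U}$ and $h_{C,V}$ vanishing conditions encode, by the description of linear spaces via circuit polynomials recalled just before the lemma, at least on the torus where $U,V$ meet $(\CC^\times)^E$); (ii) $u \odot v = a$, i.e. $f_{U,V}(u,v) = a$; and (iii) the linear normalisation $b_1 u_\epsilon + b_2 v_\epsilon = 0$ holds. So for a fixed generic parameter value $\lambda = a$ (and generic $b$), the solution set is $\{(u,v) \in f_{U,V}^{-1}(\lambda) : b_1 u_\epsilon + b_2 v_\epsilon = 0\}$, with everything automatically in the torus because $\lambda$ is generic (so no coordinate of $u$ or $v$ vanishes) and because $U,V$ avoid coordinate hyperplanes.

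Next I would analyse the torus action within a single fibre $f_{U,V}^{-1}(\lambda)$. Each $\sim$-orbit is $\{(tu, t^{-1}v) : t \in \CC^\times\}$. I want to count how many points of a given orbit satisfy the extra equation $b_1 u_\epsilon + b_2 v_\epsilon = 0$: substituting, we need $b_1 t u_\epsilon + b_2 t^{-1} v_\epsilon = 0$, i.e. $t^2 = -b_2 v_\epsilon/(b_1 u_\epsilon)$, which for generic $b$ (and since $u_\epsilon, v_\epsilon \neq 0$) is a nonzero scalar and hence has exactly two solutions $t$. Thus each $\sim$-orbit in $f_{U,V}^{-1}(\lambda)$ contributes exactly $2$ points to $\mathbb{V}(I_{U,V})_{(\lambda,b)}$, giving a $2$-to-$1$ map onto $f_{U,V}^{-1}(\lambda)/\!\!\sim$ and hence $\#\mathbb{V}(I_{U,V})_{(\lambda,b)} = 2 \cdot \#(f_{U,V}^{-1}(\lambda)/\!\!\sim)$ as sets. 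The remaining point is to upgrade this set-theoretic count to the scheme-theoretic root count $\ell_{I_{U,V},\CC(a,b)} = \dim_\CC \CC[x^\pm]/(I_{U,V})_{(\lambda,b)}$: I would invoke \Cref{rem:genericRootCount} to pass to generic parameters, and then argue that $(I_{U,V})_{\CC(a,b)}$ is radical (equivalently, that for generic parameters all solutions are reduced/simple), so that the vector-space dimension equals the number of points. Reducedness can be shown by a Jacobian/smoothness argument: the equations $u_e v_e = a_e$ have everywhere-nonvanishing differential on the torus, the linear circuit relations and the normalisation $g$ are linear, and transversality of the linear spaces $U, V$ with the graph of the Hadamard map together with the generic normalisation should force the Jacobian to have full rank at every solution for a generic parameter choice; alternatively one can observe that $\CC[x^\pm]/I_{U,V}$ is generically a flat family over the parameter space so the fibre dimension is constant and equals the generic (reduced) count.

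The main obstacle I anticipate is precisely this last step: rigorously showing the generic specialisation is zero-dimensional \emph{and} reduced, so that ``root count'' genuinely equals ``number of points.'' The zero-dimensionality should follow from \Cref{eq:rankuv} and \Cref{thm:bern_matroid_linear} (dominance of $f_{U,V}$ plus the dimension count forces generic finiteness of fibres, and the extra equation $g$ kills the one remaining torus direction), but I would need to be careful that adding $g$ does not accidentally drop the dimension or create embedded components; checking that $g$ meets each positive-dimensional orbit transversally for generic $b$ is the delicate part. Reducedness then needs the Jacobian argument above, and the cleanest way to organise it is probably to exhibit an explicit dominant parametrisation of the incidence variety $\{((u,v),(a,b)) : \text{all equations hold}\}$ by $(U \cap (\CC^\times)^E) \times (V \cap (\CC^\times)^E) \times \{b\}$ via $(u,v,b) \mapsto ((u,v), (u\odot v, b))$ and apply generic smoothness (in characteristic $0$) to conclude the generic fibre is smooth, hence reduced of the expected dimension $0$.
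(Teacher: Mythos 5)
Your proposal is correct and takes essentially the same route as the paper: identify $\mathbb{V}(I_{U,V,(\alpha,\beta)})$ with $f_{U,V}^{-1}(\alpha)\cap\mathbb{V}(g_{(\alpha,\beta)})$, use the $t^2=-\beta_2 y_\epsilon/(\beta_1 x_\epsilon)$ computation to get the $2$-to-$1$ orbit count, and upgrade to the scheme-theoretic root count via a multiplicity-one (full-rank Jacobian) argument. The paper makes your anticipated ``delicate part'' concrete without invoking generic smoothness: by Bernstein's theorem the fibre modulo the torus action is finite, so the tangent space of $f_{U,V}^{-1}(\alpha)$ at a generic $(x,y)$ is exactly the line $\langle (x,-y)\rangle$, which is transverse to the hyperplane $\mathbb{V}(g_{(\alpha,\beta)})$ since $\beta_1 x_\epsilon - \beta_2 y_\epsilon \neq 0$ generically.
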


\begin{proof}
    As mentioned in \Cref{rem:genericRootCount}, we can choose generic $(\alpha,\beta)\in \CC^{E}\times\CC^2$ so that $\ell_{I_{U,V},\CC(a,b)}=\ell_{I_{U,V},(\alpha,\beta)}$.
    In particular, we can assume that $(\alpha,\beta)$ has nonzero coordinates.
    It is clear that the collection of linear polynomials $h_{C,U},h_{C,V}$ cut out the set $U \times V$ in $\CC^E \times \CC^E$.
    When combined with the collection of polynomials $f_{e,(\alpha,\beta)}$, this describes the fibre $f_{U,V}^{-1}(\alpha)$.
    Hence $\mathbb{V}(I_{U,V,(\alpha,\beta)})=f_{U,V}^{-1}(\alpha)\cap \mathbb{V}(g_{(\alpha,\beta)})$.

    We first note that $2 \#(f_{U,V}^{-1}(\alpha)/\!\!\sim)=\#\mathbb{V}(I_{U,V,(\alpha,\beta)})$.
    Indeed, if $(x,y)\in f_{U,V}^{-1}(\alpha)$, we have for any $t\in\CC \setminus \{0\}$ that $(tx,t^{-1}y)\in \mathbb{V}(I_{U,V,(\alpha,\beta)})$ if and only if $(tx,t^{-1}y)\in \mathbb{V}(g_{(\alpha,\beta)})$, that is, $t \beta_1 x_\epsilon + t^{-1}\beta_2 y_\epsilon = 0$.
    As $(\alpha,\beta)$ has nonzero coordinates, the latter happens if and only if $t^2=-\beta_2y_\epsilon/(\beta_1x_\epsilon)$, which has exactly 2 solutions generically.
    Thus, each equivalence class of points in $f_{U,V}^{-1}(\alpha)$ has exactly two representatives in $\mathbb{V}(I_{U,V,(\alpha,\beta)})$.

    Now, we want to show that $\ell_{I_{U,V},(\alpha,\beta)}=\#\mathbb{V}(I_{U,V,(\alpha,\beta)})$.
    This is automatic if $\#\mathbb{V}(I_{U,V,(\alpha,\beta)}) \in \{0, \infty\}$,
    so we need only consider the case where $\#\mathbb{V}(I_{U,V,(\alpha,\beta)})$ is a positive integer.
    Here, we need to show that every point in $\mathbb{V}(I_{U,V,(\alpha,\beta)})$ has multiplicity one.
    By \cite[Chapter~4, Corollary~2.6]{CoxLittleOShea2005}, it is sufficient to prove that each point in $\mathbb{V}(I_{U,V,(\alpha,\beta)})$ has full rank Jacobian.
    In fact, since $\mathbb{V}(I_{U,V,(\alpha,\beta)}) = f_{U,V}^{-1}(\alpha)\cap \mathbb{V}(g_{(\alpha,\beta)})$,
    it suffices to show that the intersection of tangent space of~$f_{U,V}^{-1}(\alpha)$ and $\mathbb{V}(g_{(\alpha,\beta)})$ at each point $(x,y) \in \mathbb{V}(I_{U,V,(\alpha,\beta)})$ is zero-dimensional.

    Choose any $(x,y)\in \mathbb{V}(I_{U,V,(\alpha,\beta)})\subseteq\CC^E\times\CC^E$.
    By \Cref{thm:bern_matroid_linear}, the map $f_{U,V}$ is dominant and \Cref{eq:rankuv} holds.
    Taking into account the natural action of the algebraic torus on $U \odot V$ and the genericity of $(x,y)$ (the latter being a consequence of the genericity of $(\alpha,\beta)$),
    this implies that the kernel of the derivative $Df_{U,V}(x,y) \colon U \times V \rightarrow \mathbb{C}^E$ is the linear space $\left\langle (x,-y) \right\rangle$.
    Hence, the tangent space of $f_{U,V}^{-1}(\alpha)$ at $(x,y)$ is $\langle (x,-y) \rangle$.
    Since $\beta_1 x_\epsilon + \beta_2 y_\epsilon = 0$ and (by genericity)
    we have that $\beta_1 x_\epsilon - \beta_2 y_\epsilon \neq 0$.
    It hence follows that the tangent space for $\mathbb{V}(g_{(\alpha,\beta)})$ at $(x,y)$ does not contain the linear space $\langle (x,-y) \rangle$,
    and so the intersection of tangent spaces is zero-dimensional.
    This now concludes the proof.
\end{proof}

We can then reformulate this into a more `tropical-friendly' format:

\begin{lemma}
    \label{lem:hadamardSIAGA2}
    Let $U, V \subset \mathbb{C}^E$ be linear subspaces not contained in any coordinate hyperplane and fix some $\epsilon \in E$.
    Consider the modified parametrised Laurent polynomial ring
    \begin{equation*}
        \CC \Big[a_{e}\mid e \in E \Big] \Big[ b_1,b_2\Big] \Big[ y_{e}^{\pm1}\mid e\in E \Big],
    \end{equation*}
    with parameters $a_e, b_{l}$ and variables $y_{e}$.
    Let $I_{(G,H)}'$ be the parametrised polynomial ideal generated by:
    \begin{align*}
        \label{eq:twoDimensionalModification2}
        h_{C,U}'&\coloneqq \ell_{C,M(U)}(y) &&\text{for each circuit $C$ of } M(U) , \\
        h_{C,V}'&\coloneqq \ell_{C,M(V)}((a_e y^{-1}_e)) &&\text{for each circuit $C$ of } M(V), \\
        g' &\coloneqq b_1y_{\epsilon}^2 + b_2.
    \end{align*}
    Then $\ell_{I_{U,V}',\CC(a,b)}$ is equal to $2 \cdot \#(f_{U,V}^{-1}(\lambda)/\!\!\sim)$ for any choice of generic $\lambda \in \mathbb{C}^E$.
\end{lemma}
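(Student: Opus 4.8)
The plan is to deduce this from \Cref{lem:hadamardrealisationNumberTwoDimensionalEdgeVariables} by eliminating the variables $v_e$ using the relations $f_e = u_e v_e - a_e$. The key point is that over the rational function field $\CC(a,b)$ every parameter $a_e$ is automatically a unit, so the $\CC(a,b)$-algebra homomorphism
\begin{equation*}
    \CC(a,b)[u^{\pm},v^{\pm}] \longrightarrow \CC(a,b)[u^{\pm}], \qquad u_e \mapsto u_e,\quad v_e \mapsto a_e u_e^{-1},
\end{equation*}
is a well-defined surjection, and it is a retraction onto $\CC(a,b)[u^{\pm}]$, so its kernel is $\langle v_e - a_e u_e^{-1} : e\in E\rangle = \langle u_e v_e - a_e : e\in E\rangle$ (the two generating sets agree because $u_e$ is a unit). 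Since the $f_e$ appear among the generators of $I_{U,V}$, this descends to an isomorphism
\begin{equation*}
    \CC(a,b)[u^{\pm},v^{\pm}]/(I_{U,V})_{\CC(a,b)} \;\cong\; \CC(a,b)[u^{\pm}]/J_{\CC(a,b)},
\end{equation*}
where $J$ is generated by the images of the remaining generators: $h_{C,U}$ (unchanged, as it does not involve $v$), $h_{C,V}$ with $v_e$ replaced by $a_e u_e^{-1}$, and $b_1 u_\epsilon + b_2 a_\epsilon u_\epsilon^{-1}$. Renaming $u_e$ as $y_e$ and multiplying the last generator by the unit $y_\epsilon$ (which does not change an ideal of the Laurent ring), $J$ is generated precisely by $\ell_{C,M(U)}(y)$, by $\ell_{C,M(V)}\bigl((a_e y_e^{-1})\bigr)$, and by $b_1 y_\epsilon^2 + b_2 a_\epsilon$. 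Taking $\dim_{\CC(a,b)}$ of both sides and invoking \Cref{lem:hadamardrealisationNumberTwoDimensionalEdgeVariables} gives $\ell_{J,\CC(a,b)} = \ell_{I_{U,V},\CC(a,b)} = 2\cdot\#(f_{U,V}^{-1}(\lambda)/\!\!\sim)$.

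It then remains to pass from $J$ to $I_{U,V}'$: the two ideals have the same generators $h_{C,U}' = \ell_{C,M(U)}(y)$ and $h_{C,V}' = \ell_{C,M(V)}((a_e y_e^{-1}))$, and their last generators differ only in that $b_2 a_\epsilon$ is replaced by $b_2$. Because $h_{C,V}'$ involves the parameters only through the $a_e$, the $\CC$-algebra automorphism $\psi$ of $\CC(a,b)$ fixing every $a_e$ and $b_1$ and sending $b_2 \mapsto a_\epsilon b_2$ extends (fixing every $y_e$) to an automorphism $\Psi$ of $\CC(a,b)[y^{\pm}]$ taking $(I_{U,V}')_{\CC(a,b)}$ onto $J_{\CC(a,b)}$. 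As $\Psi$ is $\psi$-semilinear it identifies the two quotient rings as vector spaces of equal dimension over $\CC(a,b)$, so $\ell_{I_{U,V}',\CC(a,b)} = \ell_{J,\CC(a,b)} = 2\cdot\#(f_{U,V}^{-1}(\lambda)/\!\!\sim)$, as claimed. (Equivalently, one may invoke \Cref{rem:genericRootCount} and observe that $(\alpha,\beta_1,\beta_2)\mapsto(\alpha,\beta_1,\alpha_\epsilon\beta_2)$ is a bijection of a dense open subset of the parameter space under which the two specialised ideals literally coincide.)

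I do not expect a genuine obstacle here; the argument is essentially bookkeeping, and one could alternatively re-run the proof of the previous lemma with $v_e$ already eliminated. The only two places requiring care are: (i) checking that the elimination of the $v_e$ is legitimate for the \emph{generic} specialisation rather than just for a fixed nonzero parameter, which works exactly because over $\CC(a,b)$ the $a_e$ are units; and (ii) making sure the spurious factor $a_\epsilon$ between $g'$ and the image of $g$ does not change the generic root count, which is handled by the semilinear reparametrisation (or the parameter-space bijection) above.
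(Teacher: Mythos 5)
Your argument is correct and follows the same route as the paper: eliminate the $v_e$ from $I_{U,V}$ via the substitution $v_e = a_e u_e^{-1}$, which is legitimate over $\CC(a,b)$ because the $a_e$ are units there. The paper's own proof is a single sentence and in fact passes silently over the factor-of-$a_\epsilon$ discrepancy between the substituted-and-cleared generator $b_1 y_\epsilon^2 + a_\epsilon b_2$ and the stated generator $g' = b_1 y_\epsilon^2 + b_2$; your $\psi$-semilinear reparametrisation (equivalently, the bijection $(\alpha,\beta_1,\beta_2)\mapsto(\alpha,\beta_1,\alpha_\epsilon\beta_2)$ on the open locus $\alpha_\epsilon\neq 0$, combined with \Cref{rem:genericRootCount}) is exactly the justification needed to close that gap, and the rest of your bookkeeping of kernels and generators is accurate.
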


\begin{proof}
    Using that $f_e=0$ implies $v_e = a_e u_e^{-1}$,
    we make the substitution $u_e \mapsto y_e$ and $v_e \mapsto a_ey_e^{-1}$ in the ideal $I_{U,V}$ for each $e \in E$.
\end{proof}

We now prove our first main theorem, which we recall is the following.

\mainalg*

\begin{proof}
    The proof proceeds almost identically to the specific case outlined in \cite[Theorem 3.8]{ClarkeDewarEtAl2025} when $M=N=M_G$ for some graph $G$.
    We now provide a short proof outlining the key details.

    If either of $U$ or $V$ are contained in a coordinate hyperplane then $\#(f_{U,V}^{-1}(\lambda)/\!\!\sim) = 0$ and (since one of $M(U)$ or $M(V)$ contain a loop) $M(U) \ast M(V)=0$,
    so we may suppose this is not the case.
    By \Cref{thm:bern_matroid_linear} and \Cref{prop:intersection+number},
    we may assume that $\#(f_{U,V}^{-1}(\lambda)/\!\!\sim)$ is finite and $r(M)+r(N) = |E| +1$.
    Consider the ideals from \Cref{lem:hadamardSIAGA2}:
    \begin{align*}
        I_1 &= \langle h_{C,M(U)}'\mid C\subseteq E \text{ circuit of } M(U) \rangle, &
        I_2 &= \langle h_{C,M(V)}'\mid C\subseteq E \text{ circuit of } M(V)\rangle, &
        I_3 &= \langle g' \rangle \, .
    \end{align*}
    Using the exact same techniques as described in \cite[Theorem 3.8]{ClarkeDewarEtAl2025},
    we see that their tropical varieties are
    \begin{equation*}
        \Trop(I_1) = \Trop\bigl(M(U)\bigr), \qquad 
        \Trop(I_2) = -\Trop\bigl(M(V)\bigr), \qquad 
        \Trop(I_3) = \Trop(y^2_\epsilon - 1).
    \end{equation*}
    It is immediate that $\codim(\Trop(y^2_\epsilon - 1)) = 1$.
    As $\dim(\Trop(M)) = r(M)$ for any matroid $M$ and tropicalisation preserves dimension (see, for example, \cite[Theorem 3.3.5]{MaclaganSturmfels2015}), we deduce that
    \begin{equation*}
        \codim(I_{1,\CC(a,c)}) + \codim(I_{2,\CC(a,c)}) + \codim(I_{3,\CC(a,c)}) = \bigl(|E|- r(M)\bigr) + \bigl(|E|- r(N)\bigr) + 1 = |E|.
    \end{equation*}
    Using the same method described in \cite[Theorem 3.8]{ClarkeDewarEtAl2025},
    it can be checked that $I_1,I_2,I_3$ satisfy the necessary conditions to apply \cite[Proposition~1]{HoltRen2025},
    and so
    \begin{equation*}
        \ell_{I'_{U,V}, \mathbb{C}(a,b)} = 
        \Trop(I_{1,P}) \cdot \Trop(I_{2,P}) \cdot \Trop(I_{3,P}) 
        = \Trop\bigl(M(U)\bigr) \cdot \Bigl(-\Trop\bigl(M(V)\bigr)\Bigr)\cdot \Trop(y_{\epsilon}^2-1)
    \end{equation*}
    for generic $P \in \CC^{E} \times \mathbb{C}^2$.
    The statement then follows from \Cref{lem:hadamardSIAGA2} and the fact that $\Trop(y_{\epsilon}^2-1)$ and $\Trop(y_{\epsilon}-1)$ coincide set-theoretically, but the one cell of $\Trop(y_{\epsilon}^2-1)$ is of multiplicity $2$, whereas the cell of $\Trop(y_{\epsilon}-1)$ has multiplicity $1$.
\end{proof}

\Cref{thm:mainalg} can be simplified in the special case where both $U$ and $V$ are generic linear spaces (i.e., generic points in their corresponding Grassmannians):

\begin{corollary}\label{cor:mainalg}
    Let $U, V \subset \mathbb{C}^E$ be generic linear subspaces.
    If $\dim U + \dim V = |E|+1$,
    then for any generic $\lambda \in \mathbb{C}^E$ we have
    \begin{equation*}
        \#(f_{U,V}^{-1}(\lambda)/\!\!\sim) = \binom{|E|-1}{\dim U - 1} = \binom{|E|-1}{\dim V - 1}.
    \end{equation*}
\end{corollary}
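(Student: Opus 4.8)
The plan is to combine Theorem~\ref{thm:mainalg} with the observation that for a generic linear subspace $U \subset \CC^E$ of dimension $k$, the associated matroid $M(U)$ is the uniform matroid $U_{E,k}$. This is classical: a generic $|E|\times k$ matrix has all $k\times k$ minors nonzero, so every $k$-subset of the rows is a basis. Hence by Theorem~\ref{thm:mainalg} the quantity $\#(f_{U,V}^{-1}(\lambda)/\!\!\sim)$ equals the flip product $U_{E,k} \ast U_{E,m}$ where $k = \dim U$, $m = \dim V$, and the hypothesis becomes $k + m = |E| + 1$. So the corollary reduces to the purely combinatorial claim that $U_{E,k} \ast U_{E,m} = \binom{|E|-1}{k-1}$ under this rank condition, and the two binomial expressions are equal since $(|E|-1)-(k-1) = |E|-k = m-1$.

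For the combinatorial claim I would use the geometric description of the flip product via stable intersection, together with the well-known fact that the projective Bergman fan $\Tropp(U_{n,r})$ is (the fan underlying) the tropical linear space of generic codimension $n-r$, i.e.\ it represents the class of a generic linear subspace of projective space $\PP^{n-1}$ of dimension $r-1$. Writing $n = |E|$, the stable intersection $\Tropp(U_{n,k}) \wedge (-\Tropp(U_{n,m}))$ computes the intersection number of two such generic cycles in $\PP^{n-1}$ of dimensions $k-1$ and $m-1$; since $(k-1)+(m-1) = n-2 = \dim \PP^{n-1} - 1$, this is the degree of the intersection of a generic line with the product, which is just the product of degrees — and the degree of $\Tropp(U_{n,r})$ as a tropical cycle is $\binom{n-1}{r-1}$ (this is the degree of the corresponding rational normal scroll / the number of maximal cones meeting a generic affine subspace, computable directly from the fan structure since maximal cones correspond to complete flags and one counts chains). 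Multiplying $\binom{n-1}{k-1}$ by the degree $1$ of a hyperplane-class-power, one lands on $\binom{n-1}{k-1}$.

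Alternatively, and perhaps more cleanly for a short proof, I would invoke Theorem~\ref{thm:main} inductively: for $M = U_{E,k}$ and $N = U_{E,m}$ with $k+m = |E|+1$, the contractions and deletions of uniform matroids are again uniform, so $U_{E,k}\ast U_{E,m}$ satisfies a Pascal-type recursion whose base case $|E| = 1$ gives $1$ by Lemma~\ref{lem:1element}. Matching this recursion against the Pascal recursion for $\binom{|E|-1}{k-1}$ would finish the argument. The main obstacle in either route is bookkeeping: in the tropical-degree approach one must carefully justify that the relevant tropical intersection number equals the classical intersection product of generic cycles (invoking the Bergman-fan interpretation of generic linear spaces and the multiplicativity of degree under stable intersection with a complementary-dimension generic cycle), while in the inductive approach one must verify that only the "balanced" terms $(M/E_1)\ast(N\setminus E_1)$ and $(M\setminus E_2)\ast(N/E_2)$ with the correct ranks contribute, and check the combinatorial identity on the number of contributing pairs $(E_1,E_2)$ matches the binomial recursion — essentially an exercise in Vandermonde/Pascal identities. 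I expect the inductive route to be shorter to write down rigorously given that Theorem~\ref{thm:main} and Lemma~\ref{lem:1element} are already available.
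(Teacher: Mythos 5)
Your reduction to computing $U_{E,\dim U} \ast U_{E,\dim V}$ (via genericity and Theorem~\ref{thm:mainalg}) is the same first step as the paper's, but you then miss the paper's key ingredient: Theorem~\ref{thm:nbc}, which says $M \ast U_{n,n-r+1} = \nbc(M)$ for a matroid $M$ of rank $r$ on $[n]$. With $n = |E|$ and $k = \dim U$, the rank condition forces $\dim V = n-k+1$, so $U_{E,k} \ast U_{E,n-k+1} = \nbc(U_{E,k}) = \binom{n-1}{k-1}$ immediately, and the corollary is a two-line consequence.

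Of your two proposed substitutes, route (a) has a concrete error. With $k+m = n+1$ one gets $(k-1)+(m-1) = n-1 = \dim \PP^{n-1}$, not $n-2$, so there is no extra line to intersect with. More seriously, the negated fan $-\Tropp(U_{n,m})$ is \emph{not} a tropical linear space (negation of a Bergman fan is generally not a Bergman fan), so the stable intersection $\Tropp(U_{n,k}) \cdot \bigl(-\Tropp(U_{n,m})\bigr)$ is \emph{not} the Bezout number of two generic linear subspaces of $\PP^{n-1}$ --- that number would be $1$, not $\binom{n-1}{k-1}$. (Likewise, the degree of $\Tropp(U_{n,r})$ as a tropical cycle is $1$, not $\binom{n-1}{r-1}$.) The computation really lives in the Chow ring of the permutohedral toric variety twisted by the Cremona pullback, as in Definition~\ref{def:chow}, which is exactly what Theorem~\ref{thm:nbc} (the Huh--Katz result) packages for you. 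Route (b), induction via Theorem~\ref{thm:main} and Lemma~\ref{lem:1element}, could succeed in principle since minors of uniform matroids are uniform, but you have left the bookkeeping --- identifying which pairs $(E_1,E_2)$ contribute and matching the sum against a Pascal/Vandermonde identity --- entirely undone; as written it is an unverified sketch and a much longer path than the paper's one-line appeal to Theorem~\ref{thm:nbc}.
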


\begin{proof}
    As $U$ and $V$ are generic,
    we have that $M(U) = U_{E,\dim U}$ and $M(V) = U_{E,\dim V}$,
    where $U_{E,r}$ is the uniform matroid with ground set $E$ and rank $r$.
    By \Cref{thm:nbc}, we have that $M(U) \ast M(V)$ is equal to the number of nbc-bases of $M(U)$ (equivalently, of $M(V)$).
    It is easy to check that $\nbc(U_{E,r}) = \binom{|E|-1}{r-1}$, which concludes the result.
\end{proof}

\begin{remark}
    Note that, by \cite[Theorem 5.1, Corollary 6.2]{Speyer2009}, it was known that if $\widetilde{U}$ is a generic linear subspace of $\CC^{E\sqcup\{a\}}$ of dimension $d$ and $\widetilde{V}=\widetilde{U}^\perp$, then the cardinality of a fibre of a generic point in their Hadamard product is $\beta(U_{E\sqcup\{a\},d})=\binom{|E|-1}{d-1}$ (see \Cref{matroid_applications}).
    By projecting away the coordinate $a$, namely, by considering $\pi \colon \CC^{E\sqcup\{a\}}\longrightarrow\CC^E$, we get the situation of our \Cref{cor:mainalg} with $U=\pi(\widetilde{U})$ and $V=\pi(\widetilde{V})$. Hence, \Cref{cor:mainalg} may be viewed as a generalisation of this fact, as $U$ and $V$ are now not required to be related to each other.
    \Cref{cor:mainalg} may also be viewed as a corollary of \cite[Theorem 5.2]{KummerVinzant2019}, by using a similar trick: if $\widetilde{U},\widetilde{V}\subseteq\CC^{E\sqcup\{a\}}$ are (unrelated) generic linear subspaces of complementary dimensions, then the projection
    \[
        \pi \colon \mathbb{P}\left(\widetilde{U}\odot\widetilde{V} \right) \subset\PP(\CC^{E\sqcup\{a\}})\dashrightarrow\PP(\CC^E)
    \]
    is a rational map is of degree $\binom{|E|-1}{d-1}$, which in turn implies that the ``projective Hadamard map'' $\PP\bigl(\pi(\widetilde{U})\bigr) \times  \PP\bigl(\pi(\widetilde{V})\bigr)\dashrightarrow \PP(\CC^E)$ is also of degree $\binom{|E|-1}{d-1}$.
\end{remark}

\subsection{Reformulating as squared distance map}
\label{subsec:gandh}

We now reformulate \Cref{thm:mainalg} into a format that is directly applicable for some specific situations (see \Cref{applications}).

Let $X, Y$ be complex matrices of size $|E| \times n_X$ and $|E| \times n_Y$ respectively,
each with rows $X_e \in \mathbb{C}^{1 \times n_X}$, $Y_e \in \mathbb{C}^{1 \times n_Y}$ for each $e \in E$.
From this, define the maps
\begin{align*}
    g_{X,Y} &\colon \frac{\mathbb{C}^{n_X}}{\ker X} \times \frac{\mathbb{C}^{n_Y}}{\ker Y} \longrightarrow \mathbb{C}^E,  ~ (x,y) \longmapsto \left( (X_e x) \cdot (Y_ey) \right)_{e \in E}, \\
    h_{X} &\colon \frac{\mathbb{C}^{n_X}}{\ker X} \times \frac{\mathbb{C}^{n_X}}{\ker X} \longrightarrow \mathbb{C}^E, ~ (x,y) \longmapsto \left( (X_e x)^2 + (X_ey)^2 \right)_{e \in E}.
\end{align*}
We fix the equivalence relations $\sim_g,\sim_h$ such that:
\begin{enumerate}
    \item $(x,y) \sim_g (x',y')$ if and only if there exists $r \in \mathbb{C} \setminus \{0\}$ where $x' = rx$ and $y' = \frac{1}{r}y$;
    \item $(x,y) \sim_h (x',y')$ if and only if there exists a $2 \times 2$ matrix $A$ with $A^T A = A A^T = I_2$ where for each $v \in [n_X]$ we have
    \begin{equation*}
        A
        \begin{pmatrix}
            x(v) \\
            y(v)
        \end{pmatrix}
        =
        \begin{pmatrix}
            x'(v) \\
            y'(v)
        \end{pmatrix}.
    \end{equation*}
\end{enumerate}
Simply put, the relation $\sim_g$ describes equivalence by torus action, while $\sim_h$ describes equivalence by rotation and/or reflection.

\begin{theorem}\label{thm:gmap}
    Let $X, Y$ be complex matrices of size $|E| \times n_X$ and $|E| \times n_Y$ respectively,
    and fix $U,V$ to be the column spaces of $X$ and $Y$ respectively.
    Then for any generic $\lambda \in \mathbb{C}^E$,
    we have
    \begin{equation*}
        \# \left( g_{X,Y}^{-1}(\lambda)/\!\!\sim_g \right) = M(U) \ast M(V).
    \end{equation*}
\end{theorem}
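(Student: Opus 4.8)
The plan is to reduce $g_{X,Y}$ to the Hadamard map $f_{U,V}$ of the previous subsection and then invoke \Cref{thm:mainalg} directly. The key point is that the maps $\mathbb{C}^{n_X}/\ker X \to U$, $x \mapsto (X_e x)_{e \in E}$ and $\mathbb{C}^{n_Y}/\ker Y \to V$, $y \mapsto (Y_e y)_{e \in E}$ are linear isomorphisms onto $U$ and $V$ respectively (this is exactly the statement that $U$, $V$ are the column spaces of $X^T$, $Y^T$ — or rather the row spaces of $X$, $Y$ — once one is careful with transposes; the quotient by the kernel is precisely what makes these injective). Under these isomorphisms, the map $g_{X,Y}$ becomes the coordinatewise product map $U \times V \to \mathbb{C}^E$, $(u,v) \mapsto (u_e v_e)_{e \in E}$, which is $f_{U,V}$. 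Moreover the relation $\sim_g$ is transported exactly to the torus-action relation $\sim$ on $U \times V$, since $x \mapsto rx$ corresponds to $u \mapsto ru$ and $y \mapsto r^{-1}y$ to $v \mapsto r^{-1}v$.

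Carrying this out: first I would set up the two linear isomorphisms $\varphi_X \colon \mathbb{C}^{n_X}/\ker X \xrightarrow{\sim} U$ and $\varphi_Y \colon \mathbb{C}^{n_Y}/\ker Y \xrightarrow{\sim} V$ explicitly, noting well-definedness (the value $X_e x$ depends only on the class of $x$ modulo $\ker X$) and surjectivity (the image is the span of the columns, which is $U$ by hypothesis) and injectivity (if $X_e x = 0$ for all $e$ then $x \in \ker X$). Second, I would check the commuting square $g_{X,Y} = f_{U,V} \circ (\varphi_X \times \varphi_Y)$, which is immediate from the definitions once the isomorphisms are in place. Third, since $\varphi_X \times \varphi_Y$ is a bijection that intertwines $\sim_g$ with $\sim$, it induces a bijection $g_{X,Y}^{-1}(\lambda)/\!\!\sim_g \;\longrightarrow\; f_{U,V}^{-1}(\lambda)/\!\!\sim$ for every $\lambda \in \mathbb{C}^E$; in particular the cardinalities agree, and the same $\lambda$ is generic for one side if and only if it is generic for the other. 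Finally, applying \Cref{thm:mainalg} gives $\#(f_{U,V}^{-1}(\lambda)/\!\!\sim) = M(U) \ast M(V)$, which completes the proof.

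I do not expect a serious obstacle here: the only thing requiring genuine care is the bookkeeping of rows versus columns and the claim that $M(U)$ (resp. $M(V)$) is precisely the row matroid of $X$ (resp. $Y$) — but this is exactly the content of the example on algebraic matroids of linear spaces recalled in the preliminaries, so it may be cited rather than reproved. One subtlety worth a sentence: genericity of $\lambda$ is preserved because $\varphi_X \times \varphi_Y$ and its inverse are morphisms of varieties, so the relevant Zariski-open subsets of $\mathbb{C}^E$ coincide (in fact they literally are the same subset, since the target $\mathbb{C}^E$ is unchanged). If desired, one can also remark that $h_X$ is not used in this theorem — it is recorded here only for later use in \Cref{applications} — so no statement about $\sim_h$ is needed.
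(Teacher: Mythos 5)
Your proof is correct and is essentially identical to the paper's: define the linear isomorphism $\phi = \varphi_X \times \varphi_Y$, observe $g_{X,Y} = f_{U,V}\circ\phi$ and that $\phi$ intertwines $\sim_g$ with $\sim$, then invoke \Cref{thm:mainalg}. One small slip in your parenthetical: the image of $x \mapsto (X_e x)_{e\in E} = Xx$ is the column space of $X$ (a subspace of $\mathbb{C}^E$), not the row space of $X$ nor the column space of $X^T$; but this does not affect the argument, which you carry out correctly.
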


\begin{proof}
    By \Cref{thm:mainalg},
    it suffices to show that
    \begin{equation}\label{eq:fisg}
        \# \left( g_{X,Y}^{-1}(\lambda)/\!\!\sim_g \right) = \# \left( f_{U,V}^{-1}(\lambda)/\!\!\sim \right)
    \end{equation}
    for generic $\lambda \in \mathbb{C}^E$. Observe that the map
    \begin{equation*}
        \phi \colon \frac{\mathbb{C}^{n_X}}{\ker X} \times \frac{\mathbb{C}^{n_Y}}{\ker Y} \longrightarrow U \times V, ~ (x,y) \longmapsto (X x , Y y)
    \end{equation*}
    is a linear isomorphism such that  $g_{X,Y} = f_{U,V} \circ \phi$.
    Also, the equivalence classes under $\sim_g$ are mapped to equivalence classes under $\sim$.
    Hence, $\phi$ defines a bijection between $g_{X,Y}^{-1}(\lambda)/\sim_g$ and~$f_{U,V}^{-1}(\lambda)/\sim$.
\end{proof}

\begin{theorem}\label{thm:hmap}
    Let $X$ be a $|E| \times n_X$ complex matrix,
    and fix $U$ to be the column space of $X$.
    Then for any generic $\lambda \in \mathbb{C}^E$,
    we have
    \begin{equation*}
        \# \left( h_{X}^{-1}(\lambda)/\!\!\sim_h \right) = \frac{1}{2} (M(U) \ast M(U)).
    \end{equation*}
\end{theorem}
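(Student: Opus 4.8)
The plan is to reduce $h_X$ to a Hadamard map by the classical trick of diagonalising the quadratic form $x^2+y^2 = (x+\ii y)(x-\ii y)$ over $\CC$, so that $\sim_h$ becomes the torus action $\sim$ and the doubling factor $\tfrac12$ emerges from the two-to-one passage between the orthogonal group $O(2,\CC)$ and $\CC\setminus\{0\}$. Concretely, I would introduce the change of coordinates $p = x + \ii y$, $q = x - \ii y$ on $\CC^{n_X}/\ker X$ (twice), under which $(X_e x)^2 + (X_e y)^2 = (X_e p)(X_e q)$; hence $h_X(x,y) = f_{U,U}(Xp, Xq)$ after composing with the linear isomorphism $(x,y)\mapsto(p,q)$ (this is an isomorphism since $\ii$ is invertible), exactly as in the proof of \Cref{thm:gmap}. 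Thus $h_X^{-1}(\lambda)$ is identified with $f_{U,U}^{-1}(\lambda)$ via this coordinate change, and it remains only to compare the equivalence relations and count fibres of the identification map.

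The key step is to analyse what $\sim_h$ becomes in the $(p,q)$-coordinates. A matrix $A$ with $A^TA = AA^T = I_2$ over $\CC$ is either a ``rotation'' $R_t = \bigl(\begin{smallmatrix} c & -s \\ s & c\end{smallmatrix}\bigr)$ with $c^2+s^2=1$ (so $c = \tfrac12(t+t^{-1})$, $s = \tfrac{1}{2\ii}(t - t^{-1})$ for some $t\in\CC\setminus\{0\}$) or a reflection $R_t$ composed with $\bigl(\begin{smallmatrix}1&0\\0&-1\end{smallmatrix}\bigr)$. Conjugating by the fixed matrix $\bigl(\begin{smallmatrix}1 & \ii \\ 1 & -\ii\end{smallmatrix}\bigr)$ that implements $(x,y)\mapsto(p,q)$, a rotation $R_t$ acts on $(p,q)$ by $(p,q)\mapsto(t\,p, t^{-1}q)$, i.e.\ exactly the torus action $\sim$; the determinant-$-1$ elements swap $p\leftrightarrow q$ up to the same scaling. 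Therefore, on the dense open set where $X_\epsilon p, X_\epsilon q \neq 0$ (so generically on the fibre), the $O(2,\CC)$-orbit of a point $(p,q)$ decomposes into its $\CC^\times$-orbit together with the $\CC^\times$-orbit of the swapped point $(q,p)$. Generically these two $\sim$-classes are distinct (if $f_{U,U}^{-1}(\lambda)$ had only orbits fixed by the swap, the count would be $0$, matching $\tfrac12\cdot 0$), so each $\sim_h$-class decomposes into exactly two $\sim$-classes. It follows that
\begin{equation*}
    \#\bigl(h_X^{-1}(\lambda)/\!\!\sim_h\bigr) = \tfrac12 \,\#\bigl(f_{U,U}^{-1}(\lambda)/\!\!\sim\bigr) = \tfrac12\, \bigl(M(U)\ast M(U)\bigr),
\end{equation*}
where the last equality is \Cref{thm:mainalg} (or \Cref{thm:gmap} with $X=Y$).

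The main obstacle I expect is not the coordinate change but the bookkeeping of the equivalence relations, specifically verifying rigorously that (i) every $\sim_h$-class in a generic fibre really does split into exactly two, not one, $\sim$-classes — i.e.\ that the swap $(p,q)\mapsto(q,p)$ generically moves a point to a different torus orbit — and (ii) that the $O(2,\CC)$-orbit contains \emph{nothing beyond} these two torus orbits, which amounts to the elementary but necessary classification of $2\times 2$ complex orthogonal matrices sketched above. Point (i) is where genericity of $\lambda$ is used: one argues that the locus of fibre points with $(Xp, Xq)$ in the same torus orbit as $(Xq, Xp)$ is a proper closed subset, essentially because a self-swapped orbit would force $U\odot U$ to have strictly smaller degree, contradicting the already-known positivity/dimension count from \Cref{prop:intersection+number} and \Cref{thm:bern_matroid_linear}. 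One should also note the degenerate case where $M(U)\ast M(U)=0$: then both sides vanish and there is nothing to prove, so the splitting argument is only needed when the flip product is a positive even integer — and its evenness is itself a consequence of the two-to-one structure.
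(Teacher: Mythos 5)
Your approach is essentially identical to the paper's: both use the change of coordinates $\rho(x,y)=(x+\ii y,x-\ii y)$ so that $h_X = g_{X,X}\circ\rho$, both observe that under $\rho$ the relation $\sim_h$ becomes $\sim_g$ together with the swap $(p,q)\mapsto(q,p)$, and both conclude the factor of $\tfrac12$ and invoke \Cref{thm:gmap}. Your explicit classification of $O(2,\mathbb{C})$ and your genericity argument for why the swap does not fix a torus orbit simply flesh out a step that the paper's proof leaves implicit.
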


\begin{proof}
    Given $\mathfrak{i}$ is the imaginary unit of $\mathbb{C}$,
    fix the linear isomorphism
    \begin{equation*}
        \begin{array}{rccc}
        \rho \colon & \displaystyle \frac{\mathbb{C}^{n_X}}{\ker X} \times \frac{\mathbb{C}^{n_X}}{\ker X} & \longrightarrow & \displaystyle \frac{\mathbb{C}^{n_X}}{\ker X} \times \frac{\mathbb{C}^{n_X}}{\ker X}, \\[8pt]
        & (x,y) & \longmapsto & (x + \mathfrak{i} y, x -\mathfrak{i} y)
        \end{array}
    \end{equation*}
    We observe that $g_{X,X} \circ \rho = h_X$, and $(x,y) \sim_f (x',y')$ if and only if either $\rho(x,y) \sim_g \rho(x',y')$ or $\rho(x,y) \sim_g \rho(y',x')$.
    Hence,
    \begin{equation*}
        \# \left( h_{X}^{-1}(\lambda)/\!\!\sim_h \right) = \frac{1}{2} \cdot \# \left( g_{X,X}^{-1}(\lambda)/\!\!\sim_g \right).
    \end{equation*}
    The result now follows from \Cref{thm:gmap}.
\end{proof}

\section{Inductive algorithm for flip products}
\label{algorithm}

In this section, we prove our inductive algorithm: \Cref{thm:main}.
For this, we require the following two lemmas that we prove later in \Cref{subsec:defferedlemmas}:

\begin{restatable}[Coloop removal]{lemma}{removebridge}\label{lem:removebridge}
    Let $M,N$ be matroids that share a ground set $E$.
    If $r(M) + r(N) = |E|+1$ and $e$ is a coloop of $N$ but neither a coloop or a loop of $M$,
    then
    \begin{equation*}
        M \ast N = (M \setminus e) \ast (N \setminus e).
    \end{equation*}
\end{restatable}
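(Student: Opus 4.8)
The plan is to work directly with the cardinality description of the flip product from \eqref{eq:FPasCardinality}. Write $E' = E \setminus e$, and note that since $e$ is a coloop of $N$, we have $N = (N \setminus e) \oplus U_{\{e\},1}$, so $r(N \setminus e) = r(N) - 1$. Since $e$ is neither a loop nor a coloop of $M$, the matroid $M \setminus e$ is loopless with $r(M \setminus e) = r(M)$, hence $r(M \setminus e) + r(N \setminus e) = r(M) + r(N) - 1 = |E| = |E'| + 1$, so the right-hand side is again a finite flip product by \Cref{prop:intersection+number}. The first key step is to describe $\Trop(N)$ in terms of $\Trop(N \setminus e)$: since $e$ is a coloop, every circuit of $N$ avoids $e$, so the Bergman fan $\Trop(N) \subseteq \RR^E$ is exactly $\Trop(N\setminus e) \times \RR$, where the $\RR$-factor is the free $e$-coordinate. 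In particular $-\Trop(N) = -\Trop(N\setminus e) \times \RR$.

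The second key step is to choose the auxiliary hyperplane in \eqref{eq:FPasCardinality} adapted to $e$: take $\epsilon = e$, so that $\Trop(y_e - 1) = \RR^{E'} \times \{0\}$ cuts out the $e$-coordinate. Then
\begin{equation*}
    M \ast N = \Bigl| \Trop(M) \cap \bigl(-\Trop(N) + w\bigr) \cap \bigl(\RR^{E'} \times \{0\}\bigr) \Bigr|
\end{equation*}
for generic $w \in \RR^E$. Restricting to the slice $\{x_e = 0\}$ and using $-\Trop(N) = -\Trop(N\setminus e)\times\RR$, the $e$-coordinate of $-\Trop(N) + w$ is unconstrained, so intersecting with the slice $\{x_e=0\}$ just imposes no condition coming from the $N$-side in that coordinate; what remains is $\Trop(M)|_{x_e=0} \cap \bigl(-\Trop(N\setminus e) + w'\bigr)$ inside $\RR^{E'}$, where $w'$ is the projection of $w$. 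The remaining point is to identify $\Trop(M) \cap \{x_e = 0\}$ with $\Trop(M \setminus e)$: the slice $x_e = 0$ of the Bergman fan of $M$ is (after projecting away the $e$-coordinate) precisely $\Trop(M\setminus e)$, because deletion corresponds to forgetting a coordinate and the defining circuit inequalities of $M\setminus e$ are those of $M$ not involving $e$ (and those involving $e$ become vacuous on $\{x_e = 0\}$ after noting $e$ is not a coloop, so $M\setminus e$ has the same rank). Matching multiplicities — all maximal cells carry multiplicity $1$ on both sides — and checking genericity of $w'$ transfers, we get exactly $(M\setminus e) \ast (N\setminus e)$.

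The main obstacle I expect is the careful bookkeeping in the slicing step: one must verify that the stable intersection (not merely set-theoretic intersection) behaves correctly under restricting to the coordinate hyperplane $\{x_e = 0\}$ and under the product decomposition $-\Trop(N) = -\Trop(N\setminus e) \times \RR$, i.e. that no multiplicity is lost or gained and that a generic $w$ for the $E$-problem restricts to a generic $w'$ for the $E'$-problem. A clean way to handle this is to invoke the projection formula / compatibility of stable intersection with the fan $\Trop(y_e - 1) = \RR^{E'}\times\{0\}$ (which has multiplicity one and lineality $\RR^{E'}$), reducing the three-fold intersection in $\RR^E$ to a two-fold intersection in $\RR^{E'}$; alternatively one can argue via the Chow-ring formulation of \Cref{def:chow}, where deletion of a non-coloop corresponds to a pullback along a toric morphism and the identity becomes a projection formula computation. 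I would present the fan-theoretic version as the primary argument since it keeps everything elementary, and only remark on the Chow-theoretic reinterpretation.
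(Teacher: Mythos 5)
There is a genuine gap in the slicing step. The paper's proof deliberately chooses the auxiliary element $\epsilon \in E \setminus \{e\}$ and then \emph{projects} away the $e$-coordinate via the map $\pi_e$ of \Cref{lem:FrancoisRau2013}, whereas you choose $\epsilon = e$ and \emph{slice} $\Trop(M)$ by the hyperplane $\{x_e = 0\}$. These are not interchangeable. Your claim that ``$\Trop(M) \cap \{x_e = 0\}$, after projecting away the $e$-coordinate, is precisely $\Trop(M\setminus e)$'' is false, and the justification offered (``the defining circuit inequalities involving $e$ become vacuous on $\{x_e = 0\}$'') is a non-sequitur. Recall that a point $(w', 0) \in \RR^{E\setminus e} \times \RR$ lies in $\Trop(M)$ if and only if \emph{every} circuit $C$ of $M$ has its minimum over $(w',0)$ attained at least twice. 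For a circuit $C$ not containing $e$ this is exactly the constraint defining $\Trop(M\setminus e)$, but for a circuit $C$ containing $e$ the constraint
\begin{equation*}
    \min\Bigl( \min_{f \in C\setminus e} w'_f \, , \ 0 \Bigr) \text{ is attained at least twice}
\end{equation*}
is certainly not vacuous: it rules out, for instance, any $w'$ with $\min_{f \in C \setminus e} w'_f > 0$. Concretely, take $M = U_{3,2}$ on $\{e,f,g\}$; then $\Trop(M\setminus e) = \RR^{\{f,g\}}$, but $\pi_e\bigl(\Trop(M) \cap \{x_e = 0\}\bigr)$ is the one-dimensional tropical line (three rays meeting at the origin) $\{(b,c) : \min(0,b,c) \text{ attained twice}\}$, a proper and much smaller subset. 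The fact that $e$ is not a coloop only guarantees $r(M\setminus e) = r(M)$; it says nothing about the slice.

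This matters for the rest of your argument: what you obtain after the slicing step is not $\Trop(M\setminus e)$ but a lower-dimensional balanced complex with no lineality in the $\bo$-direction, so it cannot be plugged into \eqref{eq:FPasCardinality} for $(M\setminus e) \ast (N\setminus e)$ without further work (that cardinality formula requires the full fan $\Trop(M\setminus e)$ together with its own auxiliary hyperplane $\Trop(y_{\epsilon'}-1)$ for some $\epsilon' \in E \setminus e$). The paper sidesteps all of this by keeping the auxiliary hyperplane in a direction $\epsilon \neq e$, which commutes with the coordinate projection $\pi_e$, and then invoking \Cref{lem:FrancoisRau2013}: the projection $\pi_e \colon \Trop(M) \to \Trop(M\setminus e)$ is surjective (this uses $r(M\setminus e) = r(M)$) and is injective on the relative interiors of maximal cones not containing $\{0\}^{E\setminus e}\times\RR$ in their linear span. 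One then checks, by a short genericity argument, that the finitely many intersection points in $\RR^E$ avoid those bad cones, so $\pi_e$ restricts to a bijection between the two finite intersection loci. To repair your proof you would either need to switch to this projection strategy with $\epsilon \neq e$, or else prove an independent statement that the sliced complex has the same stable intersection numbers against $-\Trop(N\setminus e)$ as $\Trop(M\setminus e) \wedge \Trop(y_{\epsilon'}-1)$ does, which is not obvious and is not addressed by your appeal to the projection formula.
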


\begin{restatable}[Tropical untangling]{lemma}{tropicaluntangling}\label{lemma:tropical_untangling}
    Let $E_1$ and $E_2$ be finite sets with $E_1\cap E_2=\{\epsilon\}$.
    Let $M_1,\ M_2,\ N_1$ and $N_2$ be matroids with ground sets as follows:
    \begin{center}
        \begin{tabular}{rcccc}
            \toprule
            matroid & $M_1$ & $M_2$ & $N_1$ & $N_2$ \\
            ground set & $E_1$ & $E_2 \setminus \epsilon$ & $E_1 \setminus \epsilon$ & $E_2$ \\
            \bottomrule
        \end{tabular}
    \end{center}
    Assume that
    \begin{enumerate}
        \item $M_1$ and $N_2$ are loopless;
        \item $\epsilon$ is not a coloop of $M_1$ or $N_2$;
        \item $r(M_1\setminus \epsilon)+r(N_1)=|E_1|$;
        \item $r(M_2)+r(N_2\setminus \epsilon)=|E_2|$.
    \end{enumerate}
    Then
    \begin{equation*}
        (M_1  \oplus  M_2) \ast (N_1  \oplus  N_2) = \Big( (M_1 \setminus \epsilon) \ast N_1 \Big) \Big( M_2 \ast (N_2 \setminus \epsilon) \Big).
    \end{equation*}
\end{restatable}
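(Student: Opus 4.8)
The plan is to work with the cardinality description of the flip product from \Cref{eq:FPasCardinality}, reducing everything to a statement about intersections of Bergman fans in the appropriate ambient spaces. Writing $E = E_1 \cup E_2$ and noting that $\Trop(M_1 \oplus M_2) = \Trop(M_1) \times \Trop(M_2)$ inside $\RR^{E} = \RR^{E_1} \times \RR^{E_2 \setminus \epsilon}$, and similarly $\Trop(N_1 \oplus N_2) = \Trop(N_1) \times \Trop(N_2)$ inside $\RR^{E_1 \setminus \epsilon} \times \RR^{E_2}$, the key point is that the two copies of the $\epsilon$-coordinate must be matched up. First I would check that the rank hypotheses (iii) and (iv), together with loopnessness (i) and the non-coloop conditions (ii), imply $r(M_1 \oplus M_2) + r(N_1 \oplus N_2) = |E| + 1$, so that by \Cref{prop:intersection+number} all four flip products appearing are finite, and that $M_1 \oplus M_2$ and $N_1 \oplus N_2$ are loopless. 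Indeed $r(M_1\oplus M_2) = r(M_1) + r(M_2)$ and $r(N_1 \oplus N_2) = r(N_1) + r(N_2)$; combining (iii), (iv) and the fact that $\epsilon$ not being a coloop of $M_1$ or $N_2$ forces $r(M_1) = r(M_1 \setminus \epsilon)$ and $r(N_2) = r(N_2 \setminus \epsilon)$, one gets $r(M_1) + r(M_2) + r(N_1) + r(N_2) = |E_1| + |E_2| = |E| + 1$ since $|E_1| + |E_2| = |E| + 1$.

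Next I would set up the intersection computation directly. By \Cref{eq:FPasCardinality}, for generic $w$ we have
\begin{equation*}
    (M_1 \oplus M_2) \ast (N_1 \oplus N_2) = \Big| \Trop(M_1 \oplus M_2) \cap \big(-\Trop(N_1\oplus N_2) + w\big) \cap \Trop(y_\epsilon - 1) \Big|.
\end{equation*}
The slice $\Trop(y_\epsilon - 1)$ fixes the $\epsilon$-coordinate. Now a point of $\Trop(M_1 \oplus M_2)$ is a pair $(a, b)$ with $a \in \Trop(M_1) \subseteq \RR^{E_1}$ and $b \in \Trop(M_2) \subseteq \RR^{E_2 \setminus \epsilon}$, and a point of $\Trop(N_1 \oplus N_2)$ is a pair $(c, d)$ with $c \in \Trop(N_1) \subseteq \RR^{E_1 \setminus \epsilon}$, $d \in \Trop(N_2) \subseteq \RR^{E_2}$; the ambient identification requires $a$ restricted to $E_1 \setminus \epsilon$ to match $c$ (shifted by $w$) and $b$ to match $d$ restricted to $E_2 \setminus \epsilon$ (shifted by $w$), while the $\epsilon$-coordinate is pinned. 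The plan is to decompose the generic shift $w$ into independent generic shifts on $E_1$ and $E_2$, use the freedom to translate along the lineality line $\RR \cdot \mathbf{1}$ in each factor to absorb the pinned $\epsilon$-coordinate, and thereby factor the intersection count as a product of an intersection happening entirely over $E_1$ and one happening entirely over $E_2$. Concretely, after using that both $\Trop(M_1)$ and $\Trop(N_1)$ contain their respective all-ones lines, the $E_1$-part becomes $|\Trop(M_1) \cap (-\Trop(N_1) + w_1) \cap \Trop(y_\epsilon - 1)|$ with the $\epsilon$-coordinate of the $N_1$-fan frozen, which is exactly the cardinality realising $(M_1 \setminus \epsilon) \ast N_1$ — here the deletion on $M_1$ appears because pinning $y_\epsilon$ on one side and letting it range on the other has the effect of replacing $\Trop(M_1)$ by $\Trop(M_1 / \epsilon)$ or $\Trop(M_1 \setminus \epsilon)$ depending on which side carries the slice; I would need to carefully track which of contraction/deletion arises, using that $\epsilon$ is neither a loop nor (by (ii)) a coloop. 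The $E_2$-part symmetrically gives $M_2 \ast (N_2 \setminus \epsilon)$.

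The main obstacle, and the step requiring the most care, is the bookkeeping of the $\epsilon$-coordinate: showing rigorously that the single shared coordinate $y_\epsilon$, together with the slice $\Trop(y_\epsilon - 1)$, splits cleanly so that the stable intersection genuinely factors as a Cartesian product of two lower-dimensional stable intersections, with multiplicities multiplying correctly (each maximal cone of a Bergman fan has multiplicity one, so the product of multiplicities stays one, but one must confirm transversality is inherited). I expect to handle this by invoking \Cref{lem:StableIntersectionNonempty} in each factor to guarantee nonemptiness reduces to the dimension counts (iii) and (iv), and by using that stable intersection commutes with taking products of balanced fans in complementary coordinates — a statement that follows from the fan-displacement definition of stable intersection by choosing the generic perturbation to respect the product structure. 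A secondary subtlety is relating the $\epsilon$-pinned fan $\Trop(M_1) \cap \Trop(y_\epsilon - c)$ to $\Trop(M_1 \setminus \epsilon)$ (respectively $\Trop(M_1/\epsilon)$); this is a standard fact about Bergman fans under deletion/contraction versus coordinate slicing, valid precisely because $\epsilon$ is neither a loop nor a coloop, and I would cite or quickly reprove it.
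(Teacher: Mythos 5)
Your high-level plan (reduce to the cardinality formula \eqref{eq:FPasCardinality}, split the intersection into an $E_1$-part and an $E_2$-part, handle the shared $\epsilon$-coordinate) is the same starting point as the paper's proof, and you correctly identify the shared $\epsilon$-coordinate as the crux of the argument. But the two concrete ideas you propose for the crux do not work as stated, and they are precisely the places where the paper has to do real work.

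First, ``stable intersection commutes with products over complementary coordinates'' is true, but does not apply here, because $\epsilon$ is not a complementary coordinate: $\Trop(M_1\oplus M_2)$ decomposes as a product over $(E_1)\sqcup(E_2\setminus\epsilon)$ while $\Trop(N_1\oplus N_2)$ decomposes over $(E_1\setminus\epsilon)\sqcup(E_2)$, so the two product structures put the $\epsilon$-coordinate in different factors. This mismatch is exactly what the lemma is ``untangling,'' and no off-the-shelf product formula resolves it.

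Second, your claim that the $\epsilon$-pinned slice $\Trop(M_1)\cap\Trop(y_\epsilon-c)$ is a standard replacement for $\Trop(M_1\setminus\epsilon)$ is not correct. For $M_1 = U_{\{1,\epsilon\},1}$ the Bergman fan is $\RR\cdot\bo$, whose slice at $y_\epsilon=c$ is the single point $\{(c,c)\}$, whereas $\Trop(M_1\setminus\epsilon)=\Trop(U_{\{1\},1})=\RR$. The relationship between $\Trop(M_1)$ and $\Trop(M_1\setminus\epsilon)$ that the proof actually needs is that the coordinate \emph{projection} $\pi_\epsilon\colon\Trop(M_1)\to\Trop(M_1\setminus\epsilon)$ is surjective and has a piecewise-linear section when $\epsilon$ is neither a loop nor a coloop; this is the content of \Cref{lem:FrancoisRau2013} and of the Shaw section used in the paper. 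Slicing and projecting are different operations, and the paper's argument crucially uses the latter.

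Third, your outline does not address the asymmetry of the torus quotients: the left-hand flip product quotients by a \emph{single} all-ones translation of $\RR^E$, while the right-hand product of two flip products quotients by \emph{two} independent all-ones translations, one on $\RR^{E_1}$ and one on $\RR^{E_2}$. These are genuinely different group actions, and the paper handles this by introducing the two equivalence relations $\sim_1$ (one scalar) and $\sim_2$ (two scalars) and constructing explicit mutually injective maps $\varphi,\psi$ between the two quotient sets, with the piecewise-linear sections $g,h$ used to recover the missing $\epsilon$-coordinate canonically. Without something like this explicit matching, the ``factorisation'' of the intersection count does not follow. So while your rank computation at the start is correct and the overall strategy is in the right spirit, the core mechanism of the paper's proof --- projections plus piecewise-linear sections plus the one-versus-two torus quotients --- is missing from the proposal, and what is proposed in its place (slicing and a product formula over complementary coordinates) would not go through.
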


\begin{remark}
    \Cref{lem:removebridge} and \Cref{lemma:tropical_untangling} are the tropical counterparts of \cite[Lemma 2.19]{CapcoGalletEtAl2018} and \cite[Proposition 2.21]{CapcoGalletEtAl2018} respectively.
\end{remark}

For the rest of the section, we use the convention that $0\cdot\infty=0$ and we fix the following notation.
For any loopless matroids $M,N$ on the ground set $E$ and any vector $\lambda\in\RR^E$, we set
\begin{equation*}
     (M \wedge N)_\lambda := (\lambda-\Trop(N)) \wedge\Trop(M) \wedge \Trop(y_\epsilon - 1).
\end{equation*}
By \cite[Proposition~3.6.12]{MaclaganSturmfels2015}, the $(M \wedge N)_\lambda$ contains exactly $M \ast N$ points when each point is counted with multiplicity. Moreover, if $\lambda$ is generic, then every point of $(M \wedge N)_\lambda$ has multiplicity~$1$ and the cardinality of the set is exactly $M \ast N$.
For any point $x \in \mathbb{R}^E$, any $e \in E$ and any $t \in \mathbb{R}$,
we fix
\begin{equation*}
    E_{x>t} := \{ f \in E : x(f)> t\}, \qquad E_{x<t} := \{ f \in E : x(f) < t\}.
\end{equation*}

\subsection{Proof of \texorpdfstring{\Cref{thm:main}}{algorithm theorem}}

For the following result, we recall that $\Star_x(X)$ denotes the star of a tropical cycle at a point~$x$ and $M_{p,w}$ denotes the matroid generated by the tropical Pl\"{u}cker coordinates $p$ and weight $w$;
see \Cref{def: matroid subdivision} and \Cref{tropical_background} for definitions and more details.

\begin{lemma}\label{lemma:point_multiplicity}
    Let $M,N$ be loopless matroids with shared ground set $E$ with $M \ast N \in \mathbb{Z}_{>0}$.
	Choose any $\epsilon \in E$ and any $\lambda \in \mathbb{R}^E$.
	Given points $x \in \mathbb{R}^E$ and $y \in \mathbb{R}^E$ satisfying $x+y =\lambda$ and $x(\epsilon) = 0$,
	the multiplicity\footnote{Here multiplicity 0 would mean that $x\notin(M \wedge N)_\lambda$.} of the point $x$ in $(M \wedge N)_\lambda$ is
	\begin{equation*}
		\left( \bigoplus_{t \in \im x} M \setminus E_{x<t} \, / \, E_{x>t} \right) \ast \left( \bigoplus_{t \in \im y} N \setminus E_{y<t} \, / \, E_{y>t} \right).
	\end{equation*}
\end{lemma}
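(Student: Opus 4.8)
The plan is to compute the local multiplicity of the point $x$ in the stable intersection $(M\wedge N)_\lambda$ by passing to stars. The key tool is the compatibility of stable intersection with taking stars: for tropical cycles $A,B$ and a point $x$ one has $\Star_x(A\wedge B)=\Star_x(A)\wedge\Star_x(B)$ (and the multiplicity of $x$ in $A\wedge B$ equals the total stable-intersection number of the star complexes, since $0$-dimensional stars are just weighted points at the origin). So the first step is to write
\begin{equation*}
    \mathrm{mult}_x\bigl((M\wedge N)_\lambda\bigr) = \Star_x\bigl(\lambda-\Trop(N)\bigr)\cdot\Star_x\bigl(\Trop(M)\bigr)\cdot\Star_x\bigl(\Trop(y_\epsilon-1)\bigr),
\end{equation*}
using that translation by $\lambda$ commutes with everything and $\Star_x(\lambda-\Trop(N))=-\Star_y(\Trop(N))$ where $y=\lambda-x$. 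Since $\Trop(y_\epsilon-1)=\RR^{E\setminus\epsilon}\times\{0\}$ is a linear space and $x(\epsilon)=0$, its star at $x$ is itself, and the last factor just re-imposes the coordinate condition at $\epsilon$; so the multiplicity is $\Star_x(\Trop(M))\cdot(-\Star_y(\Trop(N)))\cdot\Trop(y_\epsilon-1)$.

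The second and main step is to identify $\Star_x(\Trop(M))$ with a Bergman fan of a matroid built from $x$. This is the standard local structure of Bergman fans: if $x\in\Trop(M)$ lies in the relative interior of the cone $\sigma_{\mathcal F}$ corresponding to the chain of flats $\mathcal F:\emptyset=F_0\subsetneq\cdots\subsetneq F_k=E$ determined by the level sets $E_{x\ge t}$, then $\Star_x(\Trop(M))$ is the Bergman fan of the matroid $\bigoplus_i (M|_{F_i}/F_{i-1})$ — equivalently $\bigoplus_{t\in\im x} M\setminus E_{x<t}/E_{x>t}$, since deleting the elements below level $t$ and contracting those strictly above $t$ picks out exactly the minor $M|_{E_{x\ge t}}/E_{x>t}$ that appears as a block. (When $x\notin\Trop(M)$ the star is empty and the displayed formula should be read as giving multiplicity $0$, consistent with the footnote; I would note this degenerate case explicitly.) This is where I would cite the matroid-subdivision / tropical Plücker machinery referenced in the statement: $\Star_x(\Trop(M))=\Trop(M_{p,w})$ for the appropriate weight, and $M_{p,w}$ is precisely this direct sum of minors. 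I expect this identification — keeping the bookkeeping of which minor corresponds to which level set straight, and handling the lineality direction $\RR\cdot\bo$ carefully in the projective-versus-affine passage — to be the main obstacle.

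The third step is purely formal: substituting the two star identifications into the multiplicity formula gives
\begin{equation*}
    \mathrm{mult}_x\bigl((M\wedge N)_\lambda\bigr) = \Trop\!\Bigl(\bigoplus_{t\in\im x} M\setminus E_{x<t}/E_{x>t}\Bigr)\cdot\Bigl(-\Trop\!\Bigl(\bigoplus_{t\in\im y} N\setminus E_{y<t}/E_{y>t}\Bigr)\Bigr)\cdot\Trop(y_\epsilon-1),
\end{equation*}
and the right-hand side is by definition the flip product of the two direct-sum matroids, which is exactly the claimed expression. I would close by remarking that this is well-defined regardless of whether $x$ actually lies in $(M\wedge N)_\lambda$: if $x\notin\Trop(M)$ or $y\notin\Trop(N)$ then one of the direct-sum matroids has a loop and the flip product is $0$ by \Cref{prop:loopbad}, matching the convention in the footnote.
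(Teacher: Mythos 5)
Your proposal is correct and follows essentially the same route as the paper's own proof: pass to stars using the compatibility of stable intersection with $\Star_x$ (the paper cites \cite[Lemma~3.6.7]{MaclaganSturmfels2015}), identify $\Star_x(\Trop(M))$ as $\Trop(M_{\mathbf{0},x})$ via \cite[Corollary~4.4.8]{MaclaganSturmfels2015}, and then unpack $M_{\mathbf{0},x}$ as the direct sum of minors $\bigoplus_{t\in\im x} M\setminus E_{x<t}/E_{x>t}$ via \cite[Proposition~2]{ArdilaKlivans2006}. Your remark that the degenerate case $x\notin\Trop(M)$ yields a loop in the direct-sum matroid and hence flip product $0$ is a correct and slightly more explicit handling of the convention the paper folds into its ``multiplicity zero'' footnote.
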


\begin{proof}
    As $(M \wedge N)_\lambda$ is zero dimensional (\Cref{prop:intersection+number}), we have that $\Star_{x}\bigl((M \wedge N)_\lambda\bigr)$ consists solely of the zero vector $\mathbf{0}$ with multiplicity equal to the multiplicity of $x$ in $(M \wedge N)_\lambda$; here, we allow the point $x$ to be in $(M \wedge N)_\lambda$ with multiplicity zero. 
    By \cite[Lemma~3.6.7]{MaclaganSturmfels2015} and \cite[Corollary~4.4.8]{MaclaganSturmfels2015}, we have
    \begin{align*}
        \Star_{x}\bigl((M \wedge N)_\lambda\bigr) &= \Star_{x}\bigl(\Trop(M)\bigr) \wedge \Star_{x}\bigl(\lambda-\Trop(N)\bigr) \wedge \Star_{x}\bigl(\Trop(y_\epsilon-1)\bigr) \\
        &= \Star_{x}\bigl(\Trop(M)\bigr) \wedge \Bigl(-\Star_{\lambda-x}\bigl(\Trop(N)\bigr)\Bigr) \wedge \Star_{x}\bigl(\Trop(y_\epsilon-1)\bigr) \\
        &= \Trop(M_{\mathbf{0},x}) \wedge \bigl(- \Trop(N_{\mathbf{0},y})\bigr) \wedge \Trop(y_\epsilon-1).
    \end{align*}
    Finally, \cite[Proposition~2]{ArdilaKlivans2006} tells us that
    \begin{equation*}
        M_{\mathbf{0},x}=\bigoplus_{t \in \im x} M \setminus E_{x<t} \, / \, E_{x>t}, \qquad N_{\mathbf{0},y}=\bigoplus_{t \in \im y} N \setminus E_{y<t} \, / \, E_{y>t},
    \end{equation*}
    and so $x$ has the desired multiplicity in $(M \wedge N)_\lambda$.
\end{proof}

\begin{lemma}\label{lem:coloop case}
    Let $M,N$ be loopless matroids on $E$ such that $r(M)+r(N)=|E|+1$.
    Let $\epsilon\in E$ and $E_1,E_2\subseteq E$ be such that $E_1\cup E_2=E$ and $E_1\cap E_2=\{\epsilon\}$.
    If $|E_1|\geq2$, $|E_2|\geq2$ and $\epsilon$ is a coloop of either $M\setminus(E_2\setminus\epsilon)$ or $N\setminus(E_1\setminus\epsilon)$ or both, then
    \begin{equation*}
        \bigl(M \setminus (E_2 \setminus \epsilon)  \oplus  M / E_1\bigr) \ast \bigl(N \setminus (E_1 \setminus \epsilon)  \oplus  N / E_2\bigr) = 0.
    \end{equation*}
\end{lemma}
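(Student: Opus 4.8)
The goal is to show that the flip product of the two direct sums in the statement vanishes whenever $\epsilon$ is a coloop of one of the two restricted matroids. By \Cref{prop:intersection+number}~\ref{prop:intersection+numberitem4}, it suffices to exhibit a non-empty subset $F$ of the ground set of the direct sum on which the Hadamard product is not independent, i.e.\ a set $F$ with $r_{M'}(F) + r_{N'}(F) < |F| + 1$, where $M' = M \setminus (E_2 \setminus \epsilon) \oplus M/E_1$ and $N' = N \setminus (E_1 \setminus \epsilon) \oplus N/E_2$. (One first checks the rank condition $r(M') + r(N') = |E'| + 1$ is met, so that \ref{prop:intersection+numberitem2} applies and the flip product is finite; this follows from $r(M \setminus (E_2 \setminus \epsilon)) + r(M/E_1) = r_M(E_1) + (r(M) - r_M(E_1)) = r(M)$ and similarly for $N$, together with $|E'| = |E_1| + |E_2 \setminus \epsilon| + |E_1 \setminus \epsilon| + |E_2| = |E| + 1$ after accounting for the identifications — here one must be careful that the ground set $E'$ of the direct sum is the disjoint union, not $E$.)

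The natural candidate for $F$ uses the coloop. Suppose $\epsilon$ is a coloop of $M \setminus (E_2 \setminus \epsilon)$; the other case is symmetric after swapping the roles of $M$ and $N$ and of $E_1$ and $E_2$. The idea is to take $F$ to be the copy of $\epsilon$ sitting inside the $M \setminus (E_2 \setminus \epsilon)$-summand of $M'$, together with its partner copy of $\epsilon$ sitting inside the $N \setminus (E_1 \setminus \epsilon)$-summand of $N'$ — but since $\epsilon$ does not appear in either the $M/E_1$-summand or the $N/E_2$-summand, this really means pairing the single ``$\epsilon$-column'' of $M'$ with the single ``$\epsilon$-column'' of $N'$. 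On this one-element set $F$, the rank in $M'$ is $r_{M \setminus (E_2 \setminus \epsilon)}(\{\epsilon\})$ and the rank in $N'$ is $r_{N \setminus (E_1 \setminus \epsilon)}(\{\epsilon\})$, each of which is $1$ (since $M$ is loopless, hence $\epsilon$ is not a loop of $M \setminus (E_2 \setminus \epsilon)$). That gives $r_{M'}(F) + r_{N'}(F) = 2 = |F| + 1$, which is \emph{not} a violation — so a single element does not suffice and the coloop hypothesis must be used more globally.

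The correct choice is $F = $ (the $\epsilon$-column) $\cup$ (all columns of the $M/E_1$-summand) on the $M$-side, matched with the corresponding columns on the $N$-side — in other words, $F$ should be chosen so that on the $M'$-side it is $\{\epsilon\} \sqcup (E_2 \setminus \epsilon)$-as-realised-in-$M/E_1$, i.e.\ the $\epsilon$-column together with $M/E_1$'s ground set. The point is: because $\epsilon$ is a coloop of $M \setminus (E_2 \setminus \epsilon)$, adding $\epsilon$ to any set increases rank by exactly $1$ in that summand, but in the \emph{other} summand $M/E_1$ the set $E_2 \setminus \epsilon$ has rank $r_M(E) - r_M(E_1)= r(M) - r_M(E_1)$, while on the $N$-side the matching set $E_2 \subseteq E$ has $r_N(E_2)$ and $\epsilon$ contributes nothing new to $N/E_2$. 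Writing out $r_{M'}(F) + r_{N'}(F)$ and comparing with $|F|+1$, the coloop condition is precisely what forces the submodular defect, via the identity $r_M(E_1) + r_M(E_2) \geq r_M(E) + r_M(E_1 \cap E_2) = r(M) + r_M(\{\epsilon\})$ combined with the failure of equality that a coloop produces; I expect the clean way is to invoke \Cref{lemma:tropical_untangling}-style bookkeeping or, more directly, to use \Cref{lem:zero_flip_if_disconnected} or \Cref{rmk:shared_coloop} — indeed, if $\epsilon$ is a coloop of $M \setminus (E_2 \setminus \epsilon)$ then in $M'$ the element $\epsilon$ is a coloop, and if it is \emph{also} (or instead) a coloop of $N \setminus (E_1 \setminus \epsilon)$ then $\epsilon$ is a shared coloop of $M'$ and $N'$ and \Cref{rmk:shared_coloop} finishes it immediately. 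The remaining case — $\epsilon$ a coloop of exactly one of them, say $M \setminus (E_2 \setminus \epsilon)$ — is where real work is needed: here I would apply \Cref{lem:removebridge} with the coloop $\epsilon$ of $M'$ to reduce to $(M' \setminus \epsilon) \ast (N' \setminus \epsilon)$, check $\epsilon$ is neither a loop nor a coloop of $N'$ (loopless since $N$ is loopless and $\epsilon$ not a coloop gives the non-coloop part, \emph{provided} we are in the sub-case where $\epsilon$ is not a coloop of $N \setminus (E_1 \setminus \epsilon)$), and then observe that $M' \setminus \epsilon = (M \setminus E_2) \oplus (M/E_1)$ is disconnected from $N' \setminus \epsilon$'s matching decomposition across the same partition, so \Cref{lem:zero_flip_if_disconnected} gives $0$.

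**Main obstacle.** The delicate point is the bookkeeping of ground sets: the elements of $E_1 \setminus \epsilon$ appear only in the $N$-side summands $N \setminus (E_1 \setminus \epsilon)$ (no they don't — they're deleted) versus in $M$'s summand $M/E_1$ (no — contracted) — one must very carefully track which ground-set elements of the abstract direct sums $M'$ and $N'$ correspond to which, since the ``obvious'' pairing by name is \emph{not} the pairing that makes the flip product meaningful. I expect the cleanest route is to reduce, via \Cref{lem:removebridge}, to a flip product where both matroids split along the \emph{same} separation $E = (E_1 \setminus \epsilon) \sqcup (E_2 \setminus \epsilon)$ of the common ground set, and then to close with \Cref{lem:zero_flip_if_disconnected}; verifying that the hypotheses of \Cref{lem:removebridge} (namely that $\epsilon$ is a coloop of the one matroid but neither a loop nor a coloop of the other) hold in each sub-case is the part requiring genuine care, and handling the ``both'' sub-case separately via \Cref{rmk:shared_coloop} keeps the argument clean.
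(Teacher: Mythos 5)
Your final strategy is exactly the paper's: split on whether $\epsilon$ is a coloop of exactly one or of both of $M \setminus (E_2 \setminus \epsilon)$ and $N \setminus (E_1 \setminus \epsilon)$; handle the ``both'' case by \Cref{rmk:shared_coloop}, and the ``exactly one'' case by applying \Cref{lem:removebridge} to strip $\epsilon$ and then \Cref{lem:zero_flip_if_disconnected} on the resulting common separation $E\setminus\epsilon = (E_1\setminus\epsilon)\sqcup(E_2\setminus\epsilon)$. The earlier detour through \Cref{prop:intersection+number}~\ref{prop:intersection+numberitem4} (hunting for a violating set $F$) is abandoned and not needed, and the worry about ground-set ``cross-alignment'' is unfounded --- both $M \setminus (E_2 \setminus \epsilon) \oplus M/E_1$ and $N \setminus (E_1 \setminus \epsilon) \oplus N/E_2$ have ground set exactly $E$ with the obvious identifications, so the flip product is meaningful as written; once this is noticed, what remains is the routine check that $r(M')+r(N') = r(M)+r(N) = |E|+1$ (needed for \Cref{lem:removebridge}) and that $\epsilon$ is neither a loop nor coloop of $N'$ in the one-coloop case, both of which you flag.
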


\begin{proof}
    First assume that $\epsilon$ is a coloop of exactly one, say $M\setminus(E_2 \setminus \epsilon)$.
    Then by \Cref{lem:removebridge} we have that
    \begin{equation*}
        \big(M \setminus (E_2 \setminus \epsilon)  \oplus  M / E_1\big) \ast \big(N \setminus (E_1 \setminus \epsilon)  \oplus  N / E_2\big) = \big(M \setminus E_2  \oplus  M / E_1\big) \ast \big(N \setminus E_1  \oplus  N / E_2\big)
    \end{equation*}
    and the right hand side is 0 by \Cref{lem:zero_flip_if_disconnected}.
    If, on the other hand, $\epsilon$ is a coloop of both $M\setminus(E_2 \setminus \epsilon)$ and $N\setminus(E_1\setminus\epsilon)$,
    then the desired equality follows from \Cref{rmk:shared_coloop}.
\end{proof}

\begin{lemma}\label{lem:coloop case2}
    Let $M,N$ be loopless matroids on $E$ such that $r(M)+r(N)=|E|+1$.
    Let $\epsilon\in E$ and $E_1,E_2\subseteq E$ be such that $E_1\cup E_2=E$ and $E_1\cap E_2=\{\epsilon\}$.
    If $|E_1|\geq2$, $|E_2|\geq2$ and $\epsilon$ is a coloop of either $M\setminus(E_2\setminus\epsilon)$ or $N\setminus(E_1\setminus\epsilon)$ or both, then
    \begin{equation*}
        \Big( (M \setminus E_2) \ast (N /E_2) \Big) \Big( (M / E_1) \ast (N \setminus  E_1) \Big) =0.
    \end{equation*}
\end{lemma}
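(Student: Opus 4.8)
The plan is to bypass the Bergman-fan description entirely and argue purely from the rank inequalities in \Cref{prop:intersection+number}, which is natural since the present statement is the ``product form'' companion of \Cref{lem:coloop case}. Suppose, for a contradiction, that the displayed product is nonzero. With the convention $0 \cdot \infty = 0$ this forces both $(M \setminus E_2) \ast (N / E_2) \neq 0$ and $(M / E_1) \ast (N \setminus E_1) \neq 0$. The matroids $M \setminus E_2$ and $N / E_2$ share the ground set $E \setminus E_2 = E_1 \setminus \epsilon$, which is nonempty because $|E_1| \geq 2$; likewise $M / E_1$ and $N \setminus E_1$ share the nonempty ground set $E \setminus E_1 = E_2 \setminus \epsilon$. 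Applying the contrapositive of \Cref{prop:intersection+number}~\ref{prop:intersection+numberitem1} to each factor then gives $r(M \setminus E_2) + r(N / E_2) \geq |E_1|$ and $r(M / E_1) + r(N \setminus E_1) \geq |E_2|$.

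Next I would substitute $r(M \setminus E_2) = r_M(E_1 \setminus \epsilon)$, $r(N / E_2) = r_N(E) - r_N(E_2)$, $r(M / E_1) = r_M(E) - r_M(E_1)$ and $r(N \setminus E_1) = r_N(E_2 \setminus \epsilon)$, and add the two inequalities above. Writing $\delta_M := r_M(E_1) - r_M(E_1 \setminus \epsilon)$ and $\delta_N := r_N(E_2) - r_N(E_2 \setminus \epsilon)$, and using $|E_1| + |E_2| = |E| + 1$ (inclusion--exclusion, since $E_1 \cap E_2 = \{\epsilon\}$) together with the hypothesis $r_M(E) + r_N(E) = |E| + 1$, the bookkeeping collapses to
\[
    |E| + 1 \ \leq \ r(M\setminus E_2) + r(N/E_2) + r(M/E_1) + r(N\setminus E_1) \ = \ \bigl(r_M(E) + r_N(E)\bigr) - \delta_M - \delta_N \ = \ |E| + 1 - \delta_M - \delta_N .
\]
Since adjoining a single element raises the rank by $0$ or $1$, we have $\delta_M, \delta_N \in \{0,1\}$, so $\delta_M + \delta_N \leq 0$ forces $\delta_M = \delta_N = 0$.

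The final step is to read this back into the hypothesis. The matroid $M \setminus (E_2 \setminus \epsilon)$ has ground set $E_1$ and rank $r_M(E_1)$, and deleting $\epsilon$ from it yields $M \setminus E_2$ with rank $r_M(E_1 \setminus \epsilon)$; hence $\epsilon$ is a coloop of $M \setminus (E_2 \setminus \epsilon)$ exactly when $\delta_M = 1$, and symmetrically $\epsilon$ is a coloop of $N \setminus (E_1 \setminus \epsilon)$ exactly when $\delta_N = 1$. Thus $\delta_M = \delta_N = 0$ contradicts the assumption that $\epsilon$ is a coloop of at least one of these two matroids, and the lemma follows.

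The only steps that need care are identifying the ground sets of the relevant minors — $E_1 \setminus \epsilon$ for the first flip product and $E_2 \setminus \epsilon$ for the second, which is where the hypotheses $|E_1|, |E_2| \geq 2$ enter, keeping those ground sets nonempty so that \Cref{prop:intersection+number} applies — and the arithmetic that makes the four ranks telescope against $r_M(E) + r_N(E) = |E| + 1$. I do not foresee any genuine difficulty beyond this bookkeeping; in particular, no tropical input is needed, so one does \emph{not} have to route through \Cref{lem:removebridge} or \Cref{lem:zero_flip_if_disconnected} as in the proof of \Cref{lem:coloop case}.
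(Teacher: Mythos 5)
Your proof is correct and is essentially the paper's own argument, just run in contrapositive form: your $\delta_M,\delta_N$ are exactly the paper's $c_1,c_2$, and both proofs reduce to the same rank bookkeeping combined with \Cref{prop:intersection+number}~\ref{prop:intersection+numberitem1} and the identity $|E_1|+|E_2|=|E|+1$. You are also right that, unlike \Cref{lem:coloop case}, this lemma needs no tropical input, which matches what the paper does.
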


\begin{proof}
    Define the constants
    \begin{align*}
    c_1 :=
    \begin{cases}
        1 &\text{if } \epsilon ~ \text{is a coloop of } M\setminus(E_2 \setminus \epsilon), \\
        0 &\text{otherwise},
    \end{cases}\qquad
     c_2 :=
    \begin{cases}
        1 &\text{if } \epsilon ~ \text{is a coloop of } N\setminus(E_1\setminus\epsilon), \\
        0 &\text{otherwise}.
    \end{cases}
    \end{align*}
    By our assumption, $c_1+c_2>0$.
    Then
    \begin{align*}
        & \ r(M \setminus E_2) + r (N /  E_2)+r (M / E_1) + r (N \setminus  E_1)\\
        =
        &\ r(M \setminus (E_2\setminus\epsilon)) +r (M / E_1) + r (N /  E_2) + r (N \setminus  (E_1\setminus\epsilon))-(c_1+c_2)\\
        =
        &\ r_M(E_1)+ \Big(r(M)-r_M(E_1) \Big)+ \Big(r(N)-r_N(E_2) \Big)+r_N(E_2)-(c_1+c_2)\\
        =
        &\ r(M) + r(N) -(c_1+c_2)\\
        <
        &\ |E|+1.
    \end{align*}
    Since $|E_1|+|E_2| = |E|+1$,
    either $r (M \setminus E_2) + r (N /  E_2) < |E_1|$ or $r (M / E_1) + r (N \setminus  E_1) < |E_2|$ or both.
    The result now follows from \Cref{prop:intersection+number} \ref{prop:intersection+numberitem1}.
\end{proof}

\begin{lemma}\label{lem:cases_untangling}
    Let $M,N$ be loopless matroids on $E$ such that $r(M)+r(N)=|E|+1$.
    Let $\epsilon\in E$ and $E_1,E_2\subseteq E$ be such that $E_1\cup E_2=E$ and $E_1\cap E_2=\{\epsilon\}$.
    Then
    \begin{equation}\label{eq:untangling}
    \begin{gathered}
        \Big(M \setminus (E_2 \setminus \epsilon)  \oplus  M / E_1\Big) \ast \Big(N \setminus (E_1 \setminus \epsilon)  \oplus  N / E_2\Big)\\
        = 
        \Big( (M \setminus E_2) \ast (N /E_2) \Big) \Big( (M / E_1) \ast (N \setminus  E_1) \Big).
    \end{gathered}
    \end{equation}
\end{lemma}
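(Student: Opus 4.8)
The plan is to prove \Cref{eq:untangling} by splitting into cases according to whether $\epsilon$ is a coloop of the ``gluing pieces''. The two cases where $\epsilon$ \emph{is} a coloop of $M\setminus(E_2\setminus\epsilon)$ or $N\setminus(E_1\setminus\epsilon)$ are already handled: \Cref{lem:coloop case} shows the left-hand side vanishes and \Cref{lem:coloop case2} shows the right-hand side vanishes, so both sides are $0$ and the equality holds. Hence the substantive case is when $\epsilon$ is a coloop of \emph{neither} $M\setminus(E_2\setminus\epsilon)$ nor $N\setminus(E_1\setminus\epsilon)$, and I would spend the bulk of the argument there.

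In that remaining case, the strategy is to apply \Cref{lemma:tropical_untangling} (Tropical untangling) with the substitution $M_1 = M\setminus(E_2\setminus\epsilon)$, $M_2 = M/E_1$, $N_1 = N\setminus(E_1\setminus\epsilon)$, $N_2 = N/E_2$. (Note $M/E_1$ has ground set $E\setminus E_1 = E_2\setminus\epsilon$ and $N/E_2$ has ground set $E_1\setminus\epsilon$, so after relabelling the roles of $E_1$ and $E_2$ in the statement of \Cref{lemma:tropical_untangling}, the ground sets match.) One must then verify the four hypotheses of \Cref{lemma:tropical_untangling}: that $M_1$ and $N_2$ are loopless (which follows since $M,N$ are loopless and restriction/contraction of a loopless matroid along these sets stays loopless — contraction can only create loops from coloops, but here one checks no new loops appear on the relevant ground sets, or more directly $M\setminus(E_2\setminus\epsilon)$ is a restriction hence loopless, and $N/E_2$ could in principle have loops, so this needs a short argument); that $\epsilon$ is not a coloop of $M_1$ or $N_2$ (the first is exactly our case assumption; the second needs checking, perhaps using the dual or the hypothesis that $\epsilon$ is not a coloop of $N\setminus(E_1\setminus\epsilon)$ together with a rank computation); and the two rank equalities $r(M_1\setminus\epsilon)+r(N_1)=|E_1|$ and $r(M_2)+r(N_2\setminus\epsilon)=|E_2|$. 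The rank equalities should drop out of the global hypothesis $r(M)+r(N)=|E|+1$ combined with $|E_1|+|E_2|=|E|+1$ and the standard identities $r(M/E_1)=r(M)-r_M(E_1)$, $r(N\setminus E_1)=r_N(E\setminus E_1)=r_N(E_2)$, etc.; this is the same bookkeeping already carried out in the proof of \Cref{lem:coloop case2}, so I would mirror that computation. Once the hypotheses are verified, \Cref{lemma:tropical_untangling} gives
\begin{equation*}
    (M_1\oplus M_2)\ast(N_1\oplus N_2) = \bigl((M_1\setminus\epsilon)\ast N_1\bigr)\bigl(M_2\ast(N_2\setminus\epsilon)\bigr),
\end{equation*}
and unwinding the substitutions, $M_1\setminus\epsilon = M\setminus E_2$, $N_1 = N\setminus(E_1\setminus\epsilon)$, $M_2 = M/E_1$, $N_2\setminus\epsilon = N/E_2 \setminus\epsilon$ — but wait, one actually needs $(N\setminus(E_1\setminus\epsilon))$ to match $(N/E_2)$ in a flip product with $M\setminus E_2$, and $N/E_2$ has ground set $E_1\setminus\epsilon$ while $N\setminus(E_1\setminus\epsilon)$ has ground set $E_2$; so the correct reading is that \Cref{lemma:tropical_untangling} applied in the appropriate orientation yields exactly $\bigl((M\setminus E_2)\ast(N/E_2)\bigr)\bigl((M/E_1)\ast(N\setminus E_1)\bigr)$ after also invoking \Cref{lem:removebridge} or a symmetric-deletion identity to replace the ``$\setminus(E_i\setminus\epsilon)$'' pieces by ``$\setminus E_i$'' pieces where $\epsilon$ is no longer a coloop. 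This is precisely the content of \Cref{eq:untangling}.

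The main obstacle I anticipate is the careful matching of ground sets and the verification that the non-coloop and looplessness hypotheses of \Cref{lemma:tropical_untangling} hold in the remaining case — in particular, confirming that $\epsilon$ fails to be a coloop of $N/E_2$ given it fails to be a coloop of $N\setminus(E_1\setminus\epsilon)$, which is not purely formal and may require the rank identity together with the absence of loops. A secondary subtlety is making sure that, in the remaining case, all four matroids $M\setminus E_2$, $N/E_2$, $M/E_1$, $N\setminus E_1$ are loopless (else the flip products on the right-hand side are forced to be $0$ by \Cref{prop:loopbad}, and one would need to check the left-hand side vanishes too — but I expect the case split via \Cref{lem:coloop case} and \Cref{lem:coloop case2} already covers every scenario where a loop could appear, so the ``remaining case'' is genuinely clean). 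I would present the proof as: state the case distinction; dispatch the coloop cases by citing \Cref{lem:coloop case} and \Cref{lem:coloop case2}; then in the main case, set up the substitution, verify the four hypotheses via the rank arithmetic already used in \Cref{lem:coloop case2}, apply \Cref{lemma:tropical_untangling}, and finally use \Cref{lem:removebridge} to rewrite the restrictions $\setminus(E_i\setminus\epsilon)$ as $\setminus E_i$, obtaining \Cref{eq:untangling}.
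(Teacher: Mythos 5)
There is a genuine gap: you've omitted the case $|E_1|=1$ or $|E_2|=1$, and it does not fall under your case split. Both \Cref{lem:coloop case} and \Cref{lem:coloop case2} explicitly require $|E_1|\geq 2$ and $|E_2|\geq 2$, so they cannot dispatch the coloop cases when one of the sides is a singleton. But if $|E_1|=1$ then $E_1=\{\epsilon\}$ and $M\setminus(E_2\setminus\epsilon) = M|_{\{\epsilon\}} \cong U_{\{\epsilon\},1}$ (since $M$ is loopless), so $\epsilon$ is \emph{always} a coloop there — meaning your ``main case'' never applies either, and the equality $\bigl(M\setminus(E\setminus\epsilon)\oplus M/\epsilon\bigr)\ast N = (M/\epsilon)\ast(N\setminus\epsilon)$ must be proved directly. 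The paper does this by subdividing further on whether $\epsilon$ is a coloop of $N$, using \Cref{lem:removebridge} in one subcase and \Cref{rmk:shared_coloop} together with a rank count in the other.

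Second, your identification for the Tropical Untangling lemma is wrong and the attempted patch via \Cref{lem:removebridge} is a sign of that. You set $N_1 = N\setminus(E_1\setminus\epsilon)$ and $N_2 = N/E_2$, but $N\setminus(E_1\setminus\epsilon)$ has ground set $E_2$, while \Cref{lemma:tropical_untangling} requires $N_1$ to live on $E_1\setminus\epsilon$ — and ``relabelling $E_1\leftrightarrow E_2$'' does not fix this, since then $N_1$ should live on $E_2\setminus\epsilon$, still not $E_2$. The correct identification (the one the paper makes) is $N_1 = N/E_2$ and $N_2 = N\setminus(E_1\setminus\epsilon)$. With that choice all the hypotheses you worried about become automatic: $M_1$ and $N_2$ are both \emph{restrictions} of loopless matroids hence loopless (no ``short argument'' required for $N_2$), and ``$\epsilon$ not a coloop of $M_1$ or $N_2$'' is \emph{verbatim} the remaining-case assumption, so no extra rank computation is needed. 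Moreover the conclusion then reads $\bigl((M_1\setminus\epsilon)\ast N_1\bigr)\bigl(M_2\ast(N_2\setminus\epsilon)\bigr) = \bigl((M\setminus E_2)\ast(N/E_2)\bigr)\bigl((M/E_1)\ast(N\setminus E_1)\bigr)$ directly, with no invocation of \Cref{lem:removebridge}. The places you flagged as ``not purely formal'' or ``needs a short argument'' are symptoms of the mis-assignment, not genuine obstacles.
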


\begin{proof}
    By \Cref{lem:coloop case,lem:coloop case2},
    we need only consider the remaining cases where either $|E_1|=1$, $|E_2|=1$, or $\epsilon$ is not a coloop of $M\setminus(E_2\setminus\epsilon)$ and also not a coloop $N\setminus(E_1\setminus\epsilon)$.
    
    If $\epsilon$ is not a coloop of either $M\setminus(E_2\setminus\epsilon)$ or $N\setminus(E_1\setminus\epsilon)$, then we immediately get the result by \Cref{lemma:tropical_untangling} with the following identifications:
    \begin{equation*}
        M_1=M\setminus(E_2 \setminus \epsilon), \qquad M_2=M / E_1, \qquad N_1=N / E_2, \qquad N_2=N \setminus (E_1 \setminus \epsilon).
    \end{equation*}
    Now suppose without loss of generality that $|E_1|=1$.
    Then $E_1=\{\epsilon\}$ and $E_2=E$. Recalling that the flip product of two empty matroids is by convention $1$, \Cref{eq:untangling} that we are aiming to prove becomes
    \begin{equation}\label{eq:single_untangling}
        \big(M\setminus(E \setminus \epsilon)\oplus M/\epsilon\big) \ast N = (M/\epsilon) \ast (N\setminus \epsilon).
    \end{equation}
    Note that $M\setminus(E \setminus \epsilon)=U_{\{\epsilon\},1}$ as $M$ is loopless, and so $\epsilon$ is a coloop of $M\setminus(E_2 \setminus \epsilon)\oplus M/\epsilon$.
    If $\epsilon$ is not a coloop of $N$ then \Cref{eq:single_untangling} is true by \Cref{lem:removebridge}.
    If $\epsilon$ is a coloop of $N$,
    then the left hand side of \Cref{eq:single_untangling} is 0 by \Cref{rmk:shared_coloop} and,
    since
    \begin{equation*}
        r(M/\epsilon) + r(N \setminus \epsilon) = \Big( r(M) -1 \Big) + \Big( r(N ) - 1 \Big) = |E| < |E| +1,
    \end{equation*}
    the right hand side of \Cref{eq:single_untangling} is $0$ by \Cref{prop:intersection+number} \ref{prop:intersection+numberitem1}.
\end{proof}

We are now ready to prove \Cref{thm:main},
which we recall is the following:

\main*

\begin{proof}
    By \Cref{lemma:point_multiplicity}, we have
    \begin{equation*}
        M \ast N = \sum_{x \in (M \wedge N)_\lambda} \mult_{(M \wedge N)_\lambda}(x) = \sum_{\substack{x\in \mathbb{R}^E \\ x(\epsilon)=0\\ \ y=\lambda-x}}\left( \bigoplus_{t \in \im x} M \setminus E_{x<t} / E_{x>t} \right)\ast \left( \bigoplus_{t \in \im y} N \setminus E_{y<t} / E_{y>t} \right).
    \end{equation*}
    Pick $\lambda\in\mathbb{R}^E$ with $\lambda(e) = -1$ for each $e \neq \epsilon$, and $\lambda(\epsilon)=0$.
    For each $x \in (M \wedge N)_\lambda$, let $y=\lambda-x$;
    we observe that this then implies $x \in \Trop(M)$ and $y \in \Trop(N)$.
    By \cite[Lemma 2.3]{ArdilaEurPenaguiao2023}, we have $x,y \in \{-1,0\}^E$, with $x(\epsilon)=y(\epsilon)=0$. Note that, given $E_1(x) := x^{-1}(\{0\})$ (i.e., we consider the indices where the coordinates of $x$ are zero) and $E_2(x) := y^{-1}(\{0\})$ for each $x \in \{-1,0\}^E$ with $x(\epsilon)=0$, we have
    \begin{align*}
        \left( \bigoplus_{t \in \im x} M \setminus E_{x<t} / E_{x>t} \right) &= M \setminus E_{x<0} / E_{x>0}   \oplus  M \setminus E_{x<-1} / E_{x>-1}  = M \setminus (E_2(x) \setminus \epsilon)  \oplus  M / E_1(x) \\
        \left( \bigoplus_{t \in \im y} N \setminus E_{y<t} / E_{y>t} \right) &= N \setminus E_{y<0} / E_{y>0}   \oplus  N \setminus E_{y<-1} / E_{y>-1}  = N \setminus (E_1(x) \setminus \epsilon)  \oplus  N / E_2(x).
    \end{align*}
    Using the bijection
    \begin{align*}
        \left\{x \in \{-1,0\}^E: x(\epsilon) =0\right\} &\longrightarrow \left\{(E_1,E_2)\in 2^E\times2^E: E_1 \cup E_2 = E, \ E_1 \cap E_2 = \{\epsilon\} \right\}\\
        x &\longmapsto \bigl(E_1(x),E_2(x)\bigr),
    \end{align*}
    the above equations combine to give
    \begin{align*}
        M\ast N &= \sum_{\substack{x \in \{-1,0\}^E\\ \ x(\epsilon)=0}}\Big( M \setminus (E_2(x) \setminus \epsilon)  \oplus  M / E_1(x) \Big)\ast \Big(N \setminus (E_1(x) \setminus \epsilon)  \oplus  N / E_2(x) \Big)\\
        &= \sum_{\substack{E_1,E_2\subseteq E\\ \ E_1 \cup E_2 = E\\E_1 \cap E_2 = \{\epsilon\} }}\Big( M \setminus (E_2 \setminus \epsilon)  \oplus  M / E_1 \Big)\ast \Big(N \setminus (E_1 \setminus \epsilon)  \oplus  N / E_2 \Big).
    \end{align*}
    The result now follows from applying \Cref{lem:cases_untangling}.
\end{proof}

\begin{remark}\label{rem:notinfinity}
    Since some of the summation terms given in \Cref{thm:main} can be $\infty$, it is not immediately clear if \Cref{thm:main} always provides a finite answer.
    That it does stems from the observation that, if we have some $E_1,E_2$ with $E_1 \cup E_2 = E$, $E_1 \cap E_2 = \{\epsilon\}$ and $(M/E_1) * (N \setminus E_1) = \infty$,
    then the flip product $(M\setminus E_2) * (N / E_2)$ is equal to 0.
    To see this, first observe that, as $(M/E_1) * (N \setminus E_1) = \infty$,
    it follows from \Cref{prop:intersection+number} that
    \begin{equation}\label{eq:notinfinity}
        r(M/E_1) + r(N \setminus E_1) > |E| - |E_1| + 1.
    \end{equation}
    Recall from minor rank formulas that $r(M/X) = r_M(E) - r_M(X)$ and $r(N \setminus X) = r_N(E \setminus X)$.
    Substituting these into \Cref{rem:notinfinity}, we get
    \begin{align*}
        |E| - |E_1| + 1 &< r_M(E) - r_M(E_1) + r_N(E \setminus E_1) \\
        \Longleftrightarrow \quad r_M(E_1) &< r_N(E \setminus E_1) + |E_1| + \underbrace{r_M(E) - |E| - 1}_{-r_N(E)}  \\
        &= r_N(E \setminus E_1) + |E_1| - r_N(E) \\
        &= r_{N^*}(E_1)
    \end{align*}
    where the final step follows by the dual rank formula.
    As removing an element can decrease the rank by at most one, we have $r_M(E_1 -\epsilon) \leq r_{N^*}(E_1 -\epsilon)$.
    Using this with the dual rank function for $N^*$, we observe that
    \begin{align*}
        r(M\setminus E_2) + r(N/E_2) &= r_M(E \setminus E_2) + r_N(E) - r_N(E_2) \\
        &= r_M(E_1 \setminus\epsilon) + r_N(E) - r_N(E \setminus (E_1 \setminus\epsilon)) \\
        &= r_M(E_1 \setminus \epsilon) - r_{N^*}(E_1 \setminus \epsilon) + |E_1 \setminus \epsilon| \\
        &\leq |E| - |E_2| \\
        &< |E| - |E_2| + 1 \, .
    \end{align*}
    With this, \Cref{prop:intersection+number} implies that $(M\setminus E_2) * (N / E_2) = 0$.
\end{remark}

We can drastically reduce the number of pairs $E_1,E_2$ required for the summation given in \Cref{thm:main} using \Cref{lem:coloop case}.
Doing so gives us the following corollary to \Cref{thm:main}, which more closely matches the main result of Capco et al.~\cite[Theorem 4.7]{CapcoGalletEtAl2018}.

\begin{corollary}\label{cor:main}
    Let $M, N$ be loopless matroids on $E$ such that $r(M) + r(N) = |E| + 1$, and let $e \in E$.
    Then the following equality holds:
    \begin{equation*}
        M \ast N := (M / e) \ast (N \setminus  e) + (M \setminus e) \ast (N / e) + \sum_{E_1,E_2} \Big( (M / E_1) \ast (N \setminus  E_1) \Big) \Big( (M \setminus E_2) \ast (N /E_2) \Big),
    \end{equation*}
    where each pair $E_1,E_2 \subset E$ satisfies the following conditions:
    \begin{enumerate}
        \item $E_1 \cup E_2 = E$ and $E_1 \cap E_2 = \{ e \}$;
        \item $|E_1| \geq 2$ and $|E_2| \geq 2$;
        \item $r (M / E_1) + r (N \setminus  E_1) = |E|-|E_1| +1$ and $r (M \setminus E_2) + r (N /  E_2) = |E| - |E_2| + 1$.
    \end{enumerate}
\end{corollary}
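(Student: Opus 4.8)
The plan is to start from \Cref{thm:main} and partition the indexing pairs $(E_1,E_2)$ according to the sizes $|E_1|$ and $|E_2|$, then discard the contributions that must vanish. Since every admissible pair satisfies $E_1 \cup E_2 = E$ and $E_1 \cap E_2 = \{e\}$, we have $|E_1| + |E_2| = |E| + 1$, so at most one of the two sets can be a singleton. First I would isolate the two ``boundary'' pairs: if $|E_1| = 1$ then $E_1 = \{e\}$ and $E_2 = E$, giving the term
\begin{equation*}
    \bigl((M/e) \ast (N \setminus e)\bigr)\bigl((M \setminus E) \ast (N/E)\bigr) = (M/e)\ast(N\setminus e),
\end{equation*}
where we used the convention that the flip product of two empty matroids is $1$; symmetrically, $|E_2| = 1$ yields $(M \setminus e) \ast (N/e)$. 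This accounts for the first two summands in the corollary.

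Next I would address the remaining pairs with $|E_1| \geq 2$ and $|E_2| \geq 2$, and show that those failing condition (iii) contribute nothing. Here I invoke \Cref{lem:coloop case}: recall that \Cref{lem:cases_untangling} rewrites each summand as $\bigl((M/E_1)\ast(N\setminus E_1)\bigr)\bigl((M\setminus E_2)\ast(N/E_2)\bigr)$, and \Cref{lem:coloop case} (via \Cref{lem:coloop case2}) says this product is $0$ whenever $|E_1|,|E_2|\geq 2$ and $\epsilon$ is a coloop of $M \setminus (E_2 \setminus \epsilon)$ or of $N \setminus (E_1 \setminus \epsilon)$. So I may restrict to pairs where $\epsilon$ is a coloop of neither. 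The key observation is then that in this remaining regime, the ``tropical untangling'' count must be proper: by \Cref{lemma:tropical_untangling} (applied with $M_1 = M\setminus(E_2\setminus\epsilon)$, $M_2 = M/E_1$, $N_1 = N/E_2$, $N_2 = N\setminus(E_1\setminus\epsilon)$), the product is nonzero only when the two rank conditions
\begin{equation*}
    r(M\setminus\epsilon|_{E_1}) + r(N/E_2) = |E_1| \quad\text{equivalently}\quad r(M/E_1) + r(N\setminus E_1) = |E| - |E_1| + 1,
\end{equation*}
and the symmetric one for $E_2$, both hold. By \Cref{prop:intersection+number}\ref{prop:intersection+numberitem1} and \ref{prop:intersection+numberitem3}, any pair where these fail gives a factor $0$ (if the rank sum is too small) or can only be dropped after checking it is not $\infty$; here \Cref{rem:notinfinity} guarantees that whenever one factor is $\infty$ the other is $0$, so the summand is $0$ under the $0\cdot\infty = 0$ convention. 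Hence only pairs satisfying condition (iii) survive, which is exactly the claimed sum.

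The main obstacle I anticipate is the bookkeeping around the $\infty$ values: one must verify carefully that pruning the pairs failing (iii) does not silently discard a genuinely infinite (and hence problematic) term. This is resolved precisely by \Cref{rem:notinfinity}, which shows the two flip products attached to a pair $(E_1,E_2)$ cannot both be ``bad'' — if $(M/E_1)\ast(N\setminus E_1) = \infty$ then $(M\setminus E_2)\ast(N/E_2) = 0$ — so every surviving summand is a finite positive integer and the rewriting is legitimate. Once that is in hand, the corollary is just \Cref{thm:main} with the zero terms removed and the two singleton pairs written out explicitly.
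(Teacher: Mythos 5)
Your proof is correct and matches the intended route: start from \Cref{thm:main}, peel off the two singleton pairs via the $M\ast N = 1$ convention for the empty ground set, and argue that all remaining summands failing condition (iii) vanish. The decisive step — which you do carry out — is the application of \Cref{prop:intersection+number}\ref{prop:intersection+numberitem1} and \ref{prop:intersection+numberitem3} together with \Cref{rem:notinfinity}: if a factor has rank sum strictly below the threshold it is $0$, and if it has rank sum strictly above then it is $0$ or $\infty$, and in the $\infty$ case the complementary factor is $0$, so the product vanishes under the $0 \cdot \infty = 0$ convention. That argument alone already kills every pair failing (iii), including the coloop cases.

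A few minor points worth tidying. First, \Cref{thm:main} already states the summand as $\bigl((M/E_1)\ast(N\setminus E_1)\bigr)\bigl((M\setminus E_2)\ast(N/E_2)\bigr)$, so no rewriting via \Cref{lem:cases_untangling} is needed at the level of the corollary — that lemma is used inside the proof of \Cref{thm:main}, not on its conclusion. Second, the sentence invoking \Cref{lemma:tropical_untangling} reverses its logical direction: that lemma has the rank equalities as hypotheses and gives an identity, it does not assert that nonvanishing implies the rank conditions; fortunately your actual argument does not rest on this claim. Third, the detour through \Cref{lem:coloop case}/\Cref{lem:coloop case2} is dispensable: whenever $\epsilon$ is a coloop of $M\setminus(E_2\setminus\epsilon)$ or of $N\setminus(E_1\setminus\epsilon)$, the computation in the proof of \Cref{lem:coloop case2} shows the four ranks sum to strictly less than $|E_1|+|E_2|$, so (iii) already fails and \Cref{prop:intersection+number}\ref{prop:intersection+numberitem1} applies directly. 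Keeping the coloop step is consistent with the paper's hint, but the argument is cleaner if you cite only \Cref{prop:intersection+number} and \Cref{rem:notinfinity} for the pruning.
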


\section{Counting symmetric and periodic graph realisations}
\label{applications}

The original motivation for the flip product in \cite{ClarkeDewarEtAl2025} was to count the number of edge-length equivalent realisations for generic rigid frameworks in the plane.
In this section we extend this to counting realisations for frameworks with rotational symmetry or periodicity.
We additionally provide streamlined variants of \Cref{thm:main} for these cases,
since here we can store each matroid as a smaller object --- specifically, a $\Gamma$-gain multigraph (defined in \Cref{subsec:gain}).

\subsection{Section preliminaries}

Before we discuss applications, we first cover the following preliminaries on contractions, deletions and gain graphs and respective matroids.

\subsubsection{Contractions and deletions for representable matroids}

For a chosen characteristic 0 field $\mathbb{F}$, let $X$ be a matrix over $\mathbb{F}$ of size $|E| \times n$ and let $M$ be the matroid on $E$ given by $X$.
It is easy to see how to create a matrix from $X$ that generates the matroid $M \setminus \epsilon$ for any $\epsilon \in E$:
simply delete row~$\epsilon$ from~$X$.
The same operation for contracting a non-loop edge  $\epsilon \in E$ is a little trickier.
First, we choose a column $v \in [n]$ for which $X(\epsilon,v) \neq 0$, which is guaranteed to exist since $\epsilon$ is not a loop.
We now construct the $|E| \times n$ matrix $\tilde{X}$ formed by adding column $v$ scaled by $-X(\epsilon,u)/X(\epsilon,v)$ to column $u$ to cancel out the $X(\epsilon,u)$ entry.
With this,
the matrix $\tilde{X}$ generates the same matroid~$M$,
and the only non-zero entry for the $\epsilon$-th row is $X(\epsilon,v)$ in the $v$ column.
Now fix $Y$ to be the matrix formed by deleting row $\epsilon$ and column $v$ from~$\tilde{X}$.
With this, we see that the matroid for $Y$ is $M/\epsilon$.

\subsubsection{Gain multigraphs}\label{subsec:gain}

We fix throughout this section that $G=([n],E,f)$ is a (finite) multigraph;
i.e., $[n]$ is a vertex set, $E$ is a set of labels for the edges and $f$ maps a label to the vertices of that edge.
We define $\vec{E}$ to be the set of all possible ordered triples $(e,v,w)$, where $e \in E$ and $v,w \in [n]$ are the ends of $e$ (note that $\vec{E}$ contains both $(e,v,w)$ and $(e,w,v)$ when $v \neq w$).
Each element $(e,v,w) \in \vec{E}$ is said to be a \emph{directed edge} with \emph{source} $v$ and \emph{sink} $w$.

Now let $(\Gamma,+)$ be an abelian group with identity $0$.
A \emph{$\Gamma$-gain map} is a map $\phi \colon \vec{E} \rightarrow \Gamma$ where $\phi(e,v,w) = -\phi(e,w,v)$ for all $(e,v,w) \in \vec{E}$ where $v \neq w$.
We refer to the pair $(G,\phi)$ as a \emph{$\Gamma$-gain multigraph}.
We say that $(G,\phi)$ is a \emph{$\Gamma$-gain graph} if $\phi$ satisfies the additional conditions:
\begin{enumerate}
	\item for every distinct pair of edges $e,e'$ between vertices $v,w$ we have $\phi(e,v,w) \neq \phi(e',v,w)$, and
	\item $\phi(e,v,v) \neq 0$ for every loop $e\in E$.
\end{enumerate}
A \emph{gain graph/map/multigraph} is any $\Gamma$-gain graph/map/multigraph for some group $\Gamma$.

We always can form a gain (multi)graph by quotienting.
Specifically, if $H$ is a (multi)graph and $\Gamma$ is abelian group that acts on $H$ freely,
then we can form a $\Gamma$-gain (multi)graph by setting $G=H/\Gamma$ and using the gain map to `record' the original structure of $H$.
This process can be reversed to `lift' any $\Gamma$-gain (multi)graph to a unique (multi)graph known as its \emph{cover}.
For more detail on covers, see \cite[Section 2.1]{GrossTucker2001}\footnote{Gross and Tucker use slightly different terminology to our own: gains are `voltages', gain (multi)graphs are `voltage graphs' and covers are `derived graphs'.}.

One method for generating a new $\Gamma$-gain graph $(G,\phi')$ with the same cover as a $\Gamma$-gain graph $(G,\phi)$ is by using a \emph{switching operation}:
given a vertex $v$ and a group element $\gamma \in \Gamma$,
we define $\phi'$ to be the $\Gamma$-gain map where 
\begin{equation*}
    \phi'(e,x,y) =
    \begin{cases}
        \phi(e,x,y) &\text{if $x,y \neq v$ or $x=y=v$},\\
        \phi(e,v,y) + \gamma &\text{if $x=v$ and $y \neq v$}\\
        \phi(e,x,v) - \gamma &\text{if $x\neq v$ and $y = v$}.
    \end{cases}
\end{equation*}
We say that any two gain graphs are \emph{gain-equivalent} if there exists a sequence of switching operations that take one gain graph to the other.

One important concept we require is the following.
Let $W=(e_1,\ldots,e_t)$ be a walk in $G$.
We define $\phi(W):= \sum_{i=1}^t \phi(e_i)$.
A cycle $C$ of $G$ is called \emph{balanced} if $\phi(C)=0$.
A set $F\subseteq E$ is called \emph{balanced} if every cycle in $F$ is balanced.

\begin{remark}
    In this paper we are only concerned with abelian gain graphs: in particular, those coming from either the cycle group with $k$ elements $\mathbb{Z}_k$, or the rank $k$ commutative free group~$\mathbb{Z}^k$.
    All of the above concepts can, however, be extended to non-abelian groups.
    More on the general theory of non-abelian gain graphs can be found in \cite[Section 2]{GrossTucker2001}.
\end{remark}

\subsubsection{\texorpdfstring{$\ZZ_k$}{Zk}-gain graphic matroid}

Let $(G,\phi)$ be a $\ZZ_k$-gain multigraph with an arbitrary total ordering on its vertices.
Fix $I(G,\phi)$ to be the $|E|\times n$ matrix where any edge $e$ connecting vertices $v < w$ with gain $\gamma$ corresponds to the row
\begin{equation*}
    \setcounter{MaxMatrixCols}{12}
    \begin{bNiceMatrix}[first-row,first-col,code-for-first-row=\scriptstyle,code-for-first-col=\scriptstyle]
          &   &       &   & v &   &       &   & w                             &   &       &  \\
        e & 0 & \cdots & 0 & 1 & 0 & \cdots & 0 & -e^{2\pi\mathfrak{i}\gamma/k} & 0 & \cdots & 0
    \end{bNiceMatrix},
\end{equation*}
and any loop $\ell$ at vertex $v$ with gain $\gamma$ corresponds to the row
\begin{equation*}
    \begin{bNiceMatrix}[first-row,first-col,code-for-first-row=\scriptstyle,code-for-first-col=\scriptstyle]
          &   &       &   & v                        &   &       &   \\
        e & 0 & \cdots & 0 & 1-e^{2\pi\mathfrak{i}\gamma/k} & 0 & \cdots & 0
    \end{bNiceMatrix}.
\end{equation*}
We now fix $M(G,\phi)$ to be the row matroid formed from the matrix $I(G,\phi)$.

To get a better understanding of $M(G,\phi)$,
let us investigate the rank function of the matroid.
Choose any subset $F \subset E$ and let $(G[F],\phi|_F)$ be the $\ZZ_k$-gain graph formed by restricting to the multigraph $G[F]=(V[F],F)$ with induced vertex set
\begin{equation*}
    V[F] := \{ v \in [n] : v \text{ source or sink for some } e \in F \}.
\end{equation*}
We now see that
\begin{equation*}
    r(F) = \rank I(G[F],\phi|_F)
    = \bigl| V[F] \bigr| - \# \text{ connected balanced components of $G[F]$}.
\end{equation*}
It is now easy to check that gain equivalent $\mathbb{Z}_k$-gain graphs generate the same matroid.

\subsubsection{\texorpdfstring{$\mathbb{Z}^k$}{Zk}-gain graphic matroid}

Let $(G,\phi)$ be a $\mathbb{Z}^k$-gain multigraph with an arbitrary total ordering on its vertices.
Fix $I(G,\phi)$ to be the $|E|\times (n+k)$ matrix where any edge $e$ connecting vertices $v < w$ with gain $\gamma$ corresponds to the row
\begin{align*}
    \begin{bNiceMatrix}[first-row,first-col,code-for-first-row=\scriptstyle,code-for-first-col=\scriptstyle]
          &   &       &   & v &   &       &   & w  &   &       &   & \mathbb{Z}^k \\
        e & 0 & \cdots & 0 & 1 & 0 & \cdots & 0 & -1 & 0 & \cdots & 0 & \gamma^{\top}
    \end{bNiceMatrix},
\end{align*}
and any loop $\ell$ at vertex $v$ with gain $\gamma$ corresponds to the row
\begin{align*}
    \begin{bNiceMatrix}[first-row,first-col,code-for-first-row=\scriptstyle,code-for-first-col=\scriptstyle]
          &   &       &   & v &   &       &   & \mathbb{Z}^k \\
        e & 0 & \cdots & 0 & 0 & 0 & \cdots & 0 & \gamma^{\top}
    \end{bNiceMatrix}.
\end{align*}
We now fix $M(G,\phi)$ to be the row matroid formed from the matrix $I(G,\phi)$.

The rank function for $M(G,\phi)$ can be described as follows.
We define the \emph{lattice rank} of a connected $\mathbb{Z}^k$-gain multigraph $(G,\phi)$ (here denoted $\rank (G,\phi)$) to be the rank of the lattice
\begin{equation*}
    \langle \phi(C) \colon C \text{ is a directed cycle of } G \rangle.
\end{equation*}
Now let $F \subset E$ be an edge set and let $F_1,\ldots,F_n$ be the edge sets of the connected components of~$G[F]$.
With this, we have the rank function
\begin{equation*}
    r(F) = \rank I(G[F],\phi|_F)
    = \sum_{i=1}^n  |V[F_i]|-1 + \rank \big( G[F_i], \phi|_{F_i} \big).
\end{equation*}
It is now easy to check that gain-equivalent $\mathbb{Z}^k$-gain graphs generate the same matroid.

\subsection{Rotationally symmetric frameworks}

In lieu of a proper definition of rotationally-symmetric framework rigidity,
we instead opt to describe all properties using gain graphs and fibre dimensions.
For more background on rotational-symmetric frameworks, see \cite{JordanKaszanitzkyTanigawa2016}.

Let $(G,\phi)$ be a $\mathbb{Z}_k$-gain graph.
Given the rotation matrix
\begin{equation*}
    R_k :=
    \begin{bmatrix}
        \cos (2\pi/k) & -\sin (2\pi/k) \\
        \sin (2\pi/k) & \cos (2\pi/k)
    \end{bmatrix},
\end{equation*}
we define the \emph{rotational-symmetric measurement map} for~$(G,\phi)$:
\begin{equation*}
    m_{G,\phi} \colon (\mathbb{C}^2)^n  \longrightarrow \mathbb{C}^E, ~ p = \big(p(v) \big)_{v \in [n]} \longmapsto \left( \left\| p(v) -   R_k^{\phi(e)} p(w) \right\|^2  \right)_{(e,v,w)\in E},
\end{equation*}
where $\|\cdot\|^2$ is the quadratic form on $\mathbb{C}^2$ given by $(x,y) \mapsto x^2 + y^2$.

It is immediate that, given $A \in O(2,\mathbb{C})$,
the rotational-symmetric measurement map $m_{G,\phi}$ is invariant under the action $p \mapsto (Ap(v))_{v \in [n]}$.
This motivates the following definition for rigidity.

\begin{definition}
    A $\mathbb{Z}_k$-gain graph $(G,\phi)$ is \emph{$k$-fold rotation symmetric rigid} if $m_{G,\phi}^{-1}\bigl(m_{G,\phi}(p)\bigr)$ is 1-dimensional for generic $p \in (\mathbb{C}^2)^n$.
    If in addition the map $m_{G,\phi}$ is dominant, then $(G,\phi)$ is \emph{minimally $k$-fold rotation symmetric rigid};
    equivalently, $(G,\phi)$ is minimally $k$-fold rotation symmetric rigid if it is $k$-fold rotation symmetric rigid but is not if an edge is removed.
\end{definition}

Fortunately, minimal $k$-fold rotation symmetric rigidity (and hence also $k$-fold rotation symmetric rigidity) has an exact combinatorial characterisation.

\begin{theorem}[Malestein and Theran \cite{MalesteinTheran2015}]\label{thm:rotationalrigidity}
    Let $(G,\phi)$ be a $\ZZ_k$-gain graph.
    Then $(G,\phi)$ is minimally $k$-fold rotation symmetric rigid if and only if $|E|=2n-1$ and one of the following holds for every non-empty subset $F \subset E$:
    \begin{enumerate}
        \item $(G[F],\phi|_F)$ is balanced and $|F| \leq 2|V[F]|-3$; or
        \item $(G[F],\phi|_F)$ is not balanced and $|F| \leq 2|V[F]|-1$.
    \end{enumerate}
\end{theorem}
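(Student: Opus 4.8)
The statement is due to Malestein and Theran, so one may simply cite \cite{MalesteinTheran2015}; what follows is a sketch of how the argument runs. The plan is to trade the transcendental condition (the generic fibre dimension of $m_{G,\phi}$) for a linear-algebraic one. Writing $R(G,\phi,p) := Dm_{G,\phi}(p)$ for the Jacobian of $m_{G,\phi}$ at $p$, the gain graph $(G,\phi)$ is $k$-fold rotation symmetric rigid exactly when $\rank R(G,\phi,p) = 2n-1$ for generic $p \in (\mathbb{C}^2)^n$, and it is \emph{minimally} so exactly when in addition $|E| = 2n-1$ and the rows of $R(G,\phi,p)$ are generically linearly independent. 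The ``$-1$'' here reflects that the only nontrivial infinitesimal symmetries of the rotational-symmetric measurement map form the $1$-dimensional space of infinitesimal rotations (these commute with every power of $R_k$, hence lie in the kernel of $R(G,\phi,p)$), whereas translations are \emph{not} symmetries, since $(I - R_k^{\gamma})t \neq 0$ once $\gamma \neq 0$. The clause $|E| = 2n-1$ then comes directly from the fibre-dimension count $\dim(\mathbb{C}^2)^n - 1 = |E|$ when $m_{G,\phi}$ is dominant.

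For necessity, assume $(G,\phi)$ is minimally rigid and fix a nonempty $F \subseteq E$. The rows of $R(G,\phi,p)$ indexed by $F$ must be generically independent, so $|F| \le \rank R(G[F],\phi|_F,p|_{V[F]})$ for generic $p$, and hence $|F|$ is at most $2|V[F]|$ minus the kernel dimension of this restricted matrix. If $(G[F],\phi|_F)$ is balanced, switching all gains on $F$ to $0$ — a legitimate move, and a change of coordinates that preserves rank — turns the restricted map into the ordinary squared-edge-length map on $V[F]$, whose rigidity matrix has a $3$-dimensional kernel of Euclidean infinitesimal motions, giving $|F| \le 2|V[F]| - 3$, which is (i). If $(G[F],\phi|_F)$ is unbalanced, such desymmetrisation is blocked by a nonzero-gain cycle, the translational infinitesimal motions drop out of the kernel, and only the $1$-dimensional space of rotations survives, giving $|F| \le 2|V[F]| - 1$, which is (ii).

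For sufficiency, the idea is an inductive Henneberg-type construction: one shows that every $\mathbb{Z}_k$-gain graph $(G,\phi)$ with $|E| = 2n-1$ satisfying (i)--(ii) for all $F$ can be reduced, by inverse moves — a ``$0$-extension'' removing a degree-$2$ vertex, a ``$1$-extension'' removing a degree-$3$ vertex and reinserting an edge, and moves handling loops and short unbalanced cycles — to a small list of base gain graphs (for instance a single vertex carrying a nonbalanced loop). Each move must be shown both to preserve the combinatorial condition and to preserve \emph{generic} row-independence of $R(G,\phi,\cdot)$. The combinatorial part is purely matroidal: one checks that the function $F \mapsto 2|V[F]| - 3$ on balanced sets, extended by an Edmonds--Rota/Dilworth-truncation construction to take value $2|V[F]| - 1$ on unbalanced sets, is submodular, hence is the rank function of a matroid on $E$, and that the moves respect independence in it. The geometric part is proved move by move: the $0$-extension is an easy genericity argument, while for the $1$-extension one must exhibit — directly, or by a limiting/specialisation argument — a single configuration of the extended framework whose rigidity matrix attains full row rank.

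The main obstacle is exactly this last geometric step: showing the $1$-extension (and the loop/short-cycle moves) preserves generic independence. Unlike the classical Laman theorem, the entries of $I(G,\phi)$ involve the roots of unity $e^{2\pi\mathfrak{i}\gamma/k}$, so a ``generic'' configuration must avoid the extra degeneracies these roots of unity can create; for small $k$ such accidental dependencies genuinely occur, so the argument has to track $k$ with care, and this is where the hypothesis that $(G,\phi)$ is a true $\mathbb{Z}_k$-gain \emph{graph} rather than merely a gain multigraph enters. A secondary technical point is the well-definedness of the count matroid: verifying submodularity of the balance-aware count function and performing the Dilworth truncation so that its rank function is exactly the stated minimum over covers.
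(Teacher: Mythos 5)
The paper does not prove this theorem: it is stated as a citation of Malestein and Theran \cite{MalesteinTheran2015} and used as a black box in \Cref{applications}. Your opening observation --- that one may simply cite --- therefore matches the paper exactly, and there is no in-paper proof to compare your sketch against.

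The sketch you append is a sound high-level account: the reduction to generic Jacobian rank, the identification of the trivial-motion space as $1$-dimensional (rotations only, since translations fail to commute with $R_k^\gamma$ once $\gamma\neq 0$), and the balanced/unbalanced dichotomy explaining the $-3$ versus $-1$ deficits are all correct, as is your diagnosis that the hard part of sufficiency is showing the extension moves preserve generic row independence. One caveat worth noting: the Henneberg-style recursive construction you outline for sufficiency is the route taken in later treatments such as Jord\'{a}n--Kaszanitzky--Tanigawa \cite{JordanKaszanitzkyTanigawa2016}; Malestein and Theran's original argument is organised somewhat differently (working through direction networks and a substitution principle rather than vertex splits and $1$-extensions). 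Both routes establish the stated sparsity characterisation, but if fidelity to the cited source matters, the sketch should be attributed accordingly.
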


Given a minimally $k$-fold rotation symmetric rigid $\mathbb{Z}_k$-gain graph $(G,\phi)$,
we define $\ccountsym{G,\phi}{\phi}$ to be the cardinality of a generic fibre of $m_{G,\phi}$ modulo the action of $O(2,\mathbb{C})$ described above.
In the language of rigidity theory, $\ccountsym{G,\phi}{k}$ is counting the number of edge-length equivalent forced-symmetric frameworks one would expect for a generic choice of edge lengths that respect symmetry.
Using techniques from \Cref{hadamard_products},
we can now characterise this value:

\rotation*

\begin{proof}
    Set $\theta_k = 2\pi /k$ and choose any $p = \big( x(v) , y(v) \big)_{v \in [n]} \in (\mathbb{C}^2)^n$.
    We first observe the following equality that holds for any edge $(e,v,w)$ with gain $\gamma = \phi(e)$:
    \begin{align*}
    	&\Big(x(v) -  \cos (\gamma \theta_k) x(w) + \sin (\gamma \theta_k) y(w) \Big)^2 + \Big(
    	y(v) - \sin (\gamma \theta_k) x(w) - \cos (\gamma \theta_k) y(w)  \Big)^2 \\
    	= \quad &\left((x(v) + \ii y(v)) -  \ee^{\ii \gamma\theta_k}(x(w) + \ii y(w))\right)\left( (x(v) - \ii y(v)) - \ee^{-\ii \gamma\theta_k}(x(w) - \ii y(w)) \right)
    \end{align*}
    If we now apply the linear transformation $x(u) + \ii y(u) \mapsto \tilde x(u)$ and $x(u) - \ii y(u) \mapsto \tilde y(u)$ to the left hand side for each vertex $u \in [n]$,
    we see that we obtain the equation
    \begin{equation*}
    	m_{G,\phi}(p)_{(e,v,w)}=
        \left( \tilde x(v) -  \ee^{\ii \gamma\theta_k}\tilde x(w) \right) \left( \tilde y(v)  - \ee^{-\ii \gamma\theta_k} \tilde y(w) \right).
    \end{equation*}
    With this reformulation,
    we see that for generic $\lambda \in \mathbb{C}^E$ we have
    \begin{equation*}
        2 \ccountsym{G,\phi}{k} = \# \left( g_{X,Y}^{-1}(\lambda)/\!\!\sim_g \right),
    \end{equation*}
    where $X = I(G,\phi)$, $Y=I(G,-\phi)$ (with $-\phi$ being the gain map that takes value $-\phi(e)$ at each edge) and $g_{X,Y}$ is the map introduced in \Cref{subsec:gandh}.
    By comparing each matroid's rank function, we see that $M(G,-\phi) = M(G,\phi)$.
    Hence, $\ccountsym{G,\phi}{k} = \frac{1}{2} M(G,\phi) \ast M(G,\phi)$ by \Cref{thm:gmap}.
\end{proof}

\subsubsection{Deletion-contraction computation of realisation numbers}\label{subsubsec:rotationformula}

We now want to describe a variant of \Cref{thm:main} for $\ZZ_k$-gain matroids that is solely described by $\ZZ_k$-gain graphs.
To do so,
we need to understand how deletions and contractions of a gained edge~$(\epsilon,v,w)$ change a given $\ZZ_k$-gain matroid $M(G,\phi)$.

\paragraph{How to delete $\epsilon$:}
\begin{enumerate}
    \item Fix $\phi'$ to be the gain map for $G\setminus \epsilon$ formed from $\phi$ by restricting to $\{(e,v,w) \in \vec{E} : e \neq \epsilon\}$.
    \item It is now easy to see that $M(G\setminus \epsilon,\phi') = M(G,\phi)\setminus \epsilon$.
\end{enumerate}

\paragraph{How to contract $\epsilon$ if $w \neq v$:}
(Here we assume our contraction leaves the vertex $v$ and removes the vertex $w$.)
\begin{enumerate}
    \item By applying switching operations,
    form a gain-equivalent $\ZZ_k$-gain multigraph $(G,\phi')$ where $\phi'(\epsilon,v,w) = 0$.
    \item Fix $\phi''$ to be the gain map for $G/\epsilon$ formed from $\phi'$ by setting for any triple $(e,x,y)$ for $G/\epsilon$ stemming from $(e,x',y') \in \vec{E}$ the gain
    \begin{equation*}
        \phi''(e,x,y) =
        \begin{cases}
            \phi'(e,x',y') &\text{if } w \notin \{x',y'\}, \\
            \phi'(e,x',v)  &\text{if } y' = w, ~ x' \neq v, \\
            \phi'(e,v,y')  &\text{if } x' = w, ~ y' \neq v, \\
            \phi'(e,v,w)   &\text{if } \{x',y'\} = \{v,w\}.
        \end{cases}
    \end{equation*}
    \item We now have $M(G/\epsilon,\phi'') = M(G,\phi')/ \epsilon$.
\end{enumerate}

\paragraph{How to contract $\epsilon $ if $w = v$ and $\phi(\epsilon,v,v) \neq 0$:}
Here we assume that the loops at $v$ are $\epsilon,\ell_1,\ldots,\ell_t$
\begin{enumerate}
    \item Fix $G'$ to be the graph formed from $G$ by deleting the vertex $v$,
    changing each edge from $w \neq v$ to $v$ to be a loop at $w$,
    and adding the loops $\ell_1,\ldots,\ell_t$ at an arbitrary vertex $u \neq v$.
    If $G$ only has a single vertex, we rename the vertex $v$ to be $u$ and add the loops $\ell_1,\ldots,\ell_t$ to $u$.
    \item Fix $\phi'$ to be the gain map for $G'$ formed from $\phi$ by setting for any triple $(e,x,y)$ for $G'$ stemming from $(e,x',y') \in \vec{E}$ the gain
    \begin{equation*}
        \phi'(e,x,y) =
        \begin{cases}
            \phi(e,x',y') & \text{if } v \notin \{x',y'\}, \\
            1 & \text{if } x' = v \text{ and } y' \neq v, \text{ or vice versa}, \\
            0 & \text{if } e = \ell_i \text{ for some } i \in [t].
        \end{cases}
    \end{equation*}
    Note that the 0 gain on loops here is intentional.
    \item We now have $M(G',\phi') = M(G,\phi)/ \epsilon$.
\end{enumerate}
Contracting a loop with gain 0 is just the deletion of that loop.

\begin{example}
    Let $(G,\phi)$ be the $\mathbb{Z}_4$-gain graph depicted in \Cref{fig:symgain}.
    \begin{figure}[ht]
        \centering
        \begin{tikzpicture}[]
            \node[vertex,label={[labelsty]left:$v_1$}] (1) at (0,0) {};
            \node[vertex,label={[labelsty]right:$v_2$}] (2) at (0:2) {};
            \node[vertex,label={[labelsty,label distance=4pt]below:$v_3$}] (3) at (60:2) {};
            \draw[dedge] (1) to[bend left=20] node[above,labelsty] {1} (2);
            \draw[dedge] (2) to[bend left=20] node[below,labelsty] {0} (1);
            \draw[dedge] (2) to node[right,labelsty] {2} (3);
            \draw[dedge] (3) to node[left,labelsty] {3} (1);
            \draw[dedge] (3) to[out=60,in=120,loop,min distance=1cm] node[above,labelsty] {1} (3);
        \end{tikzpicture}
        \qquad
        \begin{tikzpicture}[]
            \foreach \w/\r [count=\kk] in {0/1,60/1.6,30/1.5}
            {
                \foreach \m [count=\mm] in {0,1,2,3}
                {
                    \node[vertex,label={[labelsty]{\w+\m*90}:$v_\kk^{(\mm)}$}] (v\kk\mm) at (\w+\m*90:\r) {};
                }
            }
            \foreach \s/\t/\g in {1/2/1,2/1/0,2/3/2,3/3/1,3/1/3}
            {
                \foreach \i [evaluate=\i as \k using {int(mod(\i+\g-1,4)+1)}] in {1,2,3,4}
                {
                    \draw[edge] (v\s\i)--(v\t\k);
                }
            }
        \end{tikzpicture}
        \caption{A $\ZZ_4$-gain graph (left) and a 4-fold symmetric realisation of the cover (right).}
        \label{fig:symgain}
    \end{figure}
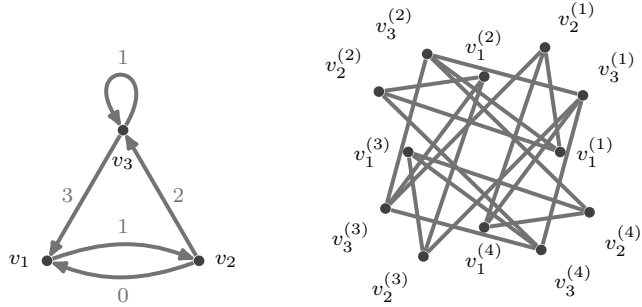
    
    Using \Cref{thm:rotation} we can compute now $\ccountsym{G,\phi}{4}=6$.
    For this graph we can indeed find for certain edge lengths six real realisations by solving the system of distance equations; see for example \Cref{fig:symrealisations}. Note however, that this latter method is computationally not feasible for larger graphs.

    \begin{figure}
        \centering
        \foreach \coord in {{(1.76881,0.),(0.,-1.76881),(-1.76881,0.),(0.,1.76881),(5.85538,4.15932),(4.15932,-5.85538),(-5.85538,-4.15932),(-4.15932,5.85538),(1.08055,-3.21441),(-3.21441,-1.08055),(-1.08055,3.21441),(3.21441,1.08055)},{(1.86128,0.),(0.,-1.86128),(-1.86128,0.),(0.,1.86128),(-2.38468,-3.99647),(-3.99647,2.38468),(2.38468,3.99647),(3.99647,-2.38468),(1.6467,-2.96452),(-2.96452,-1.6467),(-1.6467,2.96452),(2.96452,1.6467)},{(2.68335,0.),(0.,-2.68335),(-2.68335,0.),(0.,2.68335),(-2.14746,-3.26547),(-3.26547,2.14746),(2.14746,3.26547),(3.26547,-2.14746),(-3.10643,-1.36018),(-1.36018,3.10643),(3.10643,1.36018),(1.36018,-3.10643)},{(4.63083,0.),(0.,-4.63083),(-4.63083,0.),(0.,4.63083),(6.24959,5.60175),(5.60175,-6.24959),(-6.24959,-5.60175),(-5.60175,6.24959),(-3.30723,0.749824),(0.749824,3.30723),(3.30723,-0.749824),(-0.749824,-3.30723)},{(8.14689,0.),(0.,-8.14689),(-8.14689,0.),(0.,8.14689),(2.88893,2.52069),(2.52069,-2.88893),(-2.88893,-2.52069),(-2.52069,2.88893),(1.16838,3.18353),(3.18353,-1.16838),(-1.16838,-3.18353),(-3.18353,1.16838)},{(8.38989,0.),(0.,-8.38989),(-8.38989,0.),(0.,8.38989),(3.44073,3.08316),(3.08316,-3.44073),(-3.44073,-3.08316),(-3.08316,3.44073),(-0.636965,3.33081),(3.33081,0.636965),(0.636965,-3.33081),(-3.33081,-0.636965)}}
        {
            \begin{tikzpicture}[scale=0.25,yscale=-1]
                \clip (-9,-9) rectangle (9,9);
                \foreach \pp [count=\ii,evaluate=\ii as \mm using {int(mod(\ii-1,4)+1)},evaluate=\ii as \kk using {int(int((\ii-1)/4)+1)}] in \coord
                {
                    \node[vertex] (v\kk\mm) at \pp {};
                }
                \foreach \s/\t/\g in {1/2/1,2/1/0,2/3/2,3/3/1,3/1/3}
                {
                    \foreach \i [evaluate=\i as \k using {int(mod(\i+\g-1,4)+1)}] in {1,2,3,4}
                    {
                        \draw[edge] (v\s\i)--(v\t\k);
                    }
                }
            \end{tikzpicture}%
        }
        \caption{Six real realisations of the graph from \Cref{fig:symgain}.}
        \label{fig:symrealisations}
    \end{figure}
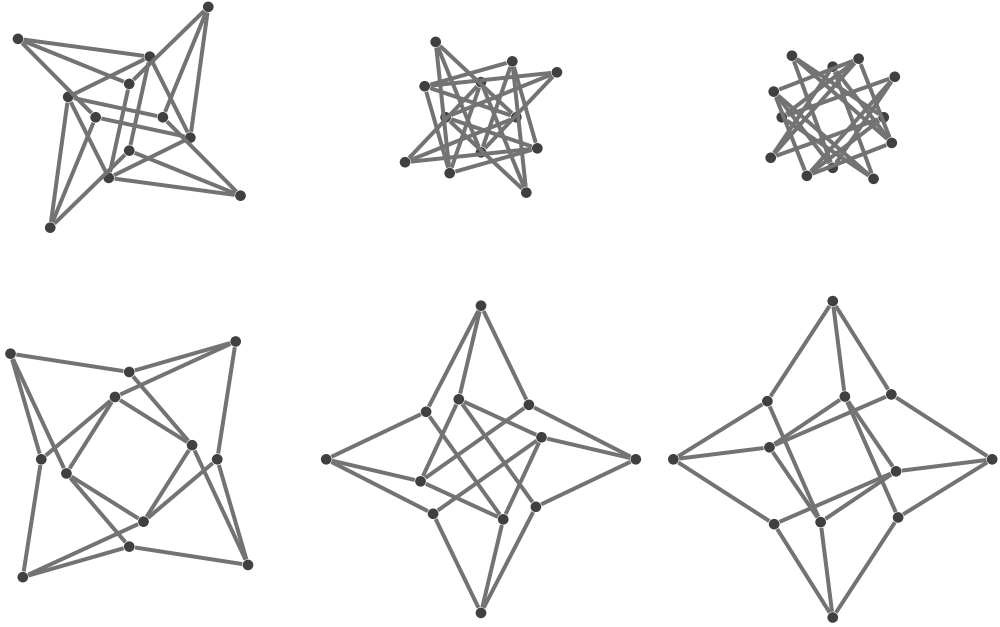
\end{example}

\subsection{Periodic frameworks with flexible lattices}

Again, in lieu of a proper definition for periodic framework rigidity,
we instead opt to describe all properties using gain graphs and fibre dimensions.
For more on the general theory surrounding periodic frameworks, see \cite{BorceaStreinu2010}.

Let $(G,\phi)$ be a $\mathbb{Z}^k$-gain graph.
We define the following map, known here as the \emph{periodic measurement map} for $(G,\phi)$:
\begin{equation*}
    m_{G,\phi} \colon (\mathbb{C}^2)^n \times \mathbb{C}^{2 \times k}  \longrightarrow \mathbb{C}^E, ~ (p, L) = \left(\big(p(v) \big)_{v \in [n]} , L \right) \longmapsto \left( \left\| p(v) - p(w) + L\phi(e) \right\|^2  \right)_{(e,v,w)\in E},
\end{equation*}
where $\|\cdot\|^2$ is the quadratic form on $\mathbb{C}^2$ given by $(x,y) \mapsto x^2 + y^2$.

It is immediate that, given $A \in O(2,\mathbb{C})$ and $q \in \mathbb{C}^2$,
the periodic measurement map~$m_{G,\phi}$ is invariant under the action $p \mapsto (Ap(v) +q)_{v \in [n]}$.
This motivates the following definition for rigidity.

\begin{definition}
    A $\mathbb{Z}^k$-gain graph $(G,\phi)$ is \emph{periodically rigid} if $m_{G,\phi}^{-1}(m_{G,\phi}(p,L))$ is 3-dimensional for generic $p \in (\mathbb{C}^2)^n$ and $L \in \mathbb{C}^{2 \times k}$.
    If in addition the map $m_{G,\phi}$ is dominant, then $(G,\phi)$ is \emph{minimally periodically rigid};
    equivalently, $(G,\phi)$ is minimally periodically rigid if it is periodically rigid but is not if an edge is removed.
\end{definition}

Here we also have an exact combinatorial characterisation.

\begin{theorem}[Malestein and Theran \cite{MalesteinTheran2013}]\label{thm:periodicrigidity}
    Let $(G,\phi)$ be a $\ZZ^k$-gain graph for $k \in \{1,2\}$.
    Then $(G,\phi)$ is minimally periodically rigid if and only if $|E|=2n-3 +2k$ and the following holds for every non-empty subset $F \subset E$ that induces a connected subgraph:
    \begin{equation*}
        |F| \leq
        \begin{cases}
            2|V[F]|-3 &\text{if } (G[F],\phi|_F) \text{ is balanced},\\
            2|V[F]|-1 &\text{if } \rank(G[F],\phi|_F) = 1,\\
            2|V[F]|+1 &\text{if } \rank(G[F],\phi|_F) = 2.
        \end{cases}
    \end{equation*}
\end{theorem}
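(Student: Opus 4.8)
The plan is to recognise \Cref{thm:periodicrigidity} as a combinatorial description of the independent sets of the generic periodic rigidity matroid. First I would differentiate the periodic measurement map $m_{G,\phi}$ at a point $(p,L)$ to obtain the \emph{periodic rigidity matrix} $R(G,\phi,p,L)$, an $|E|\times(2n+2k)$ matrix whose row indexed by $(e,v,w)$ records the vector $d_{e}:=p(v)-p(w)+L\phi(e)$ (with opposite signs in the $v$- and $w$-blocks) and $d_{e}\otimes\phi(e)$ in the lattice block. Since $m_{G,\phi}$ is invariant under the $3$-dimensional group generated by $O(2,\CC)$ and the translations of $\CC^2$, its generic fibre has dimension at least $3$, and $(G,\phi)$ is periodically rigid exactly when $\rank R=2n+2k-3$ for generic $(p,L)$. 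Being \emph{minimally} periodically rigid additionally means $m_{G,\phi}$ is dominant, i.e. the rows of $R$ are generically linearly independent; equivalently $|E|=2n+2k-3$ and $E$ is independent in the row matroid $\mathcal{R}(G,\phi)$ of the generic periodic rigidity matrix. So it suffices to show $E$ is independent in $\mathcal{R}(G,\phi)$ if and only if every connected $F\subseteq E$ satisfies the stated sparsity inequality.

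\textbf{Necessity.} For the ``only if'' direction I would exploit gain-switching invariance: switching a vertex leaves the row space of $R$ unchanged, so given a connected $F$ I may assume a spanning tree of $G[F]$ has all gains $0$, after which $\{\phi(e):e\in F\}$ spans a linear subspace of $\CC^k$ of dimension $r=\rank(G[F],\phi|_F)\in\{0,1,2\}$, which a linear change of the lattice coordinates makes a coordinate subspace. Then the rows of $R$ indexed by $F$ are supported on the $2|V[F]|$ vertex coordinates of $V[F]$ together with the $2r$ lattice coordinates of that subspace, and the corresponding submatrix is precisely the $r$-periodic rigidity matrix of $(G[F],\phi|_F)$. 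Its kernel always contains the $3$-dimensional space of trivial infinitesimal motions (plane translations and one infinitesimal rotation, acting compatibly on an $r$-periodic configuration for every $r\ge 0$), so $\rank_F R\le 2|V[F]|+2r-3$; independence of $E$ forces $|F|\le 2|V[F]|+2r-3$, which unpacks into the three cases of the statement, and the global equality $|E|=2n+2k-3$ follows since $E$ is then a basis of $\mathcal{R}(G,\phi)$.

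\textbf{Sufficiency.} For the ``if'' direction I would first check that the count condition cuts out exactly the bases of a matroid $\mathcal{C}(G,\phi)$, its matroid property following from submodularity of $F\mapsto 2|V[F]|+2\,\rank(G[F],\phi|_F)-3$ extended additively over connected components; Step~(necessity) gives $\mathcal{R}(G,\phi)\le\mathcal{C}(G,\phi)$, so it remains to show every $\mathcal{C}$-tight $(G,\phi)$ is generically periodically rigid. I would argue by a Henneberg-type induction: fix base cases ($n=1$ with a single loop of gain $1$ when $k=1$; $n=1$ with three loops of gains $(1,0),(0,1),(1,1)$ when $k=2$), verify each is periodically rigid directly from the rank of $R$, and show that any $\mathcal{C}$-tight gain graph which is not a base case admits an inverse $0$-extension (deleting a vertex of degree $2$) or an inverse $1$-extension (un-splitting an edge), possibly after gain-switching and relocating loops as in the gain-graph deletion--contraction operations described above. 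One then checks that $0$- and $1$-extensions preserve generic periodic rigidity by the usual perturbation argument (a new degree-$2$ vertex placed generically keeps the rank; for a $1$-extension one shows the new row lies generically outside the span of the others because the deleted edge's row did).

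\textbf{Main obstacle.} The delicate part is the combinatorial extraction step inside the sufficiency argument: ruling out a $\mathcal{C}$-tight gain graph with no reducible vertex, and locating a reducible edge when one occurs. This is the analogue of the degree-counting reduction in Laman's theorem but complicated by unbalanced subgraphs, and it is precisely where $k\le 2$ is used, so that only lattice ranks $0,1,2$ appear and the trivial-motion count stays at $3$. I expect to need either Malestein and Theran's direction-network / parallel-redrawing technique or a matroid-union decomposition of $\mathcal{R}(G,\phi)$ to exclude the bad configurations; a secondary technical hurdle is verifying that the $1$-extension preserves rigidity in the presence of non-trivial gains, which is handled by choosing the perturbation generically within the gain-constrained configuration space.
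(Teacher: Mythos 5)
This statement is not proved in the paper at all: it is imported verbatim from Malestein and Theran \cite{MalesteinTheran2013} (the paper only adds a remark reducing the $k=1$ case to $k=2$ by adding two loops), so there is no internal proof to compare against; your proposal has to stand on its own as a proof of the cited result.

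Your necessity direction is essentially sound: the reformulation of minimal periodic rigidity as ``$|E|=2n+2k-3$ and the rows of the periodic rigidity matrix are generically independent'' is correct, the switching and lattice-coordinate changes are legitimate (they act by invertible coordinate changes on the domain, hence preserve ranks), and the $3$-dimensional space of trivial motions (two translations plus one rotation acting jointly on $p$ and $L$) gives exactly the bound $|F|\le 2|V[F]|+2\rank(G[F],\phi|_F)-3$, which unpacks into the three displayed cases. The genuine gap is in sufficiency. A Henneberg induction using only inverse $0$-extensions (degree-$2$ vertices) and inverse $1$-extensions (degree-$3$ vertices) cannot reach all tight gain graphs when $k=2$: tightness forces $|E|=2n+1$, so the degree sum is $4n+2$ and the average degree exceeds $4$; there are tight gain graphs of minimum degree $4$, so the reducible vertex you need simply need not exist. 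This is not merely the ``delicate extraction step'' you flag --- the move set you propose is provably insufficient, and any induction along these lines needs substantially richer operations (or must be abandoned). Malestein and Theran in fact do not argue by Henneberg induction for this theorem; they prove it via periodic direction networks (a parallel-redrawing/genericity argument combined with a matroidal decomposition of the count condition), which is precisely the machinery your sketch defers to without supplying. As written, the ``if'' direction is therefore an outline of a strategy whose key combinatorial step fails in the stated form, not a proof.
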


\begin{remark}
    Malestein and Theran's main result in \cite{MalesteinTheran2013} is stated for the $k=2$ case. To get the $k=1$ case, we convert each gain $n \in \mathbb{Z}$ into the lattice point $(n,0) \in \mathbb{Z}^2$,
    then add two loops with gain $(1,0)$ and $(1,1)$.
    With this, the original $\mathbb{Z}$-gain graph is minimally periodically rigid if and only if the new $\mathbb{Z}^2$-gain graph is minimally periodically rigid.
\end{remark}

Given a minimally periodically rigid $\mathbb{Z}^k$-gain graph $(G,\phi)$,
we define $\ccountper{G,\phi}{k}$ to be the cardinality of a generic fibre of $m_{G,\phi}$ modulo the action of $O(2,\mathbb{C}) \ltimes \mathbb{C}^2$ described above.
In the language of rigidity theory, $\ccountper{G,\phi}{k}$ is counting the number of edge-length equivalent periodic frameworks (allowing for lattice deformation) one would expect for a generic choice of edge lengths that respect symmetry.
Using techniques from \Cref{hadamard_products},
we can now characterise this value:

\periodic*

\begin{proof}
    Here we observe that $m_{G,\phi}$ is exactly the map $h_X$ described in \Cref{subsec:gandh} with $X=I(G,\phi)$.
    The result now follows from \Cref{thm:hmap}.
\end{proof}

\subsubsection{Deletion-contraction formula for periodic realisation numbers}\label{subsubsec:periodicformula}

As like before, we want to describe a variant of \Cref{thm:main} for $\mathbb{Z}^k$-gain matroids that is solely described by $\mathbb{Z}^k$-gain graphs.
To do so,
we need to understand how deletions and contractions of a gained edge $(\epsilon,v,w)$ affect a given $\mathbb{Z}^k$-gain matroid $M(G,\phi)$.

\paragraph{How to delete $\epsilon$:}
\begin{enumerate}
    \item Fix $\phi'$ to be the gain map for $G\setminus \epsilon$ formed from $\phi$ by restricting to $\{(e,v,w) \in \vec{E} : e \neq \epsilon\}$.
    \item It is now easy to see that $M(G\setminus \epsilon,\phi') = M(G,\phi)\setminus \epsilon$.
\end{enumerate}

\paragraph{How to contract $\epsilon$ if $w \neq v$:}
(Here we assume our contraction leaves the vertex $v$ and removes the vertex $w$.)
\begin{enumerate}
    \item By applying switching operations,
    form a gain-equivalent $\mathbb{Z}^k$-gain multigraph $(G,\phi')$ where $\phi'(\epsilon,v,w) = 0$.
    \item Fix $\phi''$ to be the gain map for $G/\epsilon$ formed from $\phi'$ by setting for any triple $(e,x,y)$ for $G/\epsilon$ stemming from $(e,x',y') \in \vec{E}$ the gain
    \begin{equation*}
        \phi''(e,x,y) =
        \begin{cases}
            \phi'(e,x',y') &\text{if } w \notin \{x',y'\}, \\
            \phi'(e,x',v)  &\text{if } y' = w, ~ x' \neq v, \\
            \phi'(e,v,y')  &\text{if } x' = w, ~ y' \neq v, \\
            \phi'(e,v,w)   &\text{if } \{x',y'\} = \{v,w\}.
        \end{cases}
    \end{equation*}
    \item We now have $M(G/\epsilon,\phi'') = M(G,\phi)/ \epsilon$.
\end{enumerate}

\paragraph{How to contract $\epsilon $ if $w = v$ and $\phi(\epsilon,v,v) \neq 0$:}
Here we set $\gamma:=\phi(\epsilon,v,v)$ and we assume that the loops at $v$ are $\epsilon,\ell_1,\ldots,\ell_t$.
\begin{enumerate}
    \item Fix $G'$ to be the graph formed from $G$ by deleting the loop $\epsilon$.
    \item Choose an index $j \in [k]$ so that $\gamma_j \neq 0$.
    \item Fix $\phi'$ to be the gain map for $G'$ formed from $\phi$ by setting
    \begin{equation*}
        \phi'(e,x,y) = \gamma_j \phi(e,x,y) - \gamma \odot \phi(e,x,y),
    \end{equation*}
    where $\odot$ represents the Hadamard product.
    \item We now have $M(G\setminus \epsilon,\phi') = M(G,\phi)/ \epsilon$.
\end{enumerate}

\section{Matroid applications of the flip product}
\label{matroid_applications}

In this section we outline some matroidal applications for the flip product.

\subsection{Beta invariant}

Let $M$ be a matroid with ground set $E$.
The \emph{beta invariant of $M$} is the non-negative integer
\begin{equation*}
    \beta(M) := (-1)^{r(M)} \sum_{X \subseteq E} (-1)^{|X|} r_M(X).
\end{equation*}
In \cite{ArdilaEurPenaguiao2023},
Ardila-Mantilla, Eur and Penaguiao described the beta invariant in terms of tropical objects:

\begin{theorem}[Ardila-Mantilla, Eur and Penaguiao \cite{ArdilaEurPenaguiao2023}]\label{thm:aep}
    Let $M$ be a matroid with ground set~$E$.
    Then the beta invariant $\beta(M)$ of $M$ satisfies the following equality for any $\epsilon \in E$ that is neither a coloop nor loop:
    \begin{equation*}
        \beta(M) = \left\{ x \in \mathbb{R}^{E \setminus \epsilon} : (x,0) \in \Trop (M) \right\} \cdot (-\Trop \big((M/\epsilon)^*) \big).
    \end{equation*}
\end{theorem}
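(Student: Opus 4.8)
The goal is to recognise the right-hand side as (essentially) a flip product and then unwind it using the tools already assembled. The plan is to interpret $\{x \in \mathbb{R}^{E\setminus\epsilon} : (x,0)\in\Trop(M)\}$ as an affine slice of $\Trop(M)$ by the hyperplane $\{y_\epsilon = 0\}$, which is exactly the object $\Trop(M)\cap\Trop(y_\epsilon-1)$ appearing in the affine description of the flip product in \Cref{eq:FPasCardinality}. Likewise $-\Trop\bigl((M/\epsilon)^*\bigr)$ plays the role of $-\Trop(N)$ for $N = (M/\epsilon)^*$, but with a ground set mismatch: $(M/\epsilon)^*$ lives on $E\setminus\epsilon$, whereas $M$ lives on $E$. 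So the first step is bookkeeping: show that the stated product equals, up to this shift, the flip product $M \ast N'$ for a suitable matroid $N'$ on $E$ whose Bergman fan is $\Trop\bigl((M/\epsilon)^*\bigr)\times\mathbb{R}$ (the cylinder over the $\epsilon$-coordinate). Concretely, $N' = (M/\epsilon)^* \oplus U_{\{\epsilon\},1}$, i.e.\ adjoin $\epsilon$ back as a coloop. With $r(M) + r(N') = r(M) + (|E|-1 - r(M/\epsilon)) + 1 = r(M) + (|E|-1-(r(M)-1))+1 = |E|+1$, the rank condition of \Cref{prop:intersection+number} holds, so $M \ast N'$ is finite; and unwinding the affine definition shows $M \ast N'$ equals precisely the tropical intersection number in the statement.

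The second step is to compute $M \ast N'$ combinatorially. Here I would invoke \Cref{thm:nbc} — the result referenced in the proof of \Cref{cor:mainalg} stating that $M\ast N$ counts nbc-bases when $N$ is uniform of the complementary rank — except $N'$ is not uniform, it is a uniform matroid $U_{E\setminus\epsilon,\,|E|-1-r(M/\epsilon)}$ direct-summed with a coloop. But the flip product behaves well under coloop removal: since $\epsilon$ is a coloop of $N'$ and, by hypothesis, $\epsilon$ is neither a loop nor a coloop of $M$, \Cref{lem:removebridge} applies and gives $M \ast N' = (M\setminus\epsilon)\ast (N'\setminus\epsilon) = (M\setminus\epsilon) \ast (M/\epsilon)^*$. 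Now both matroids live on $E\setminus\epsilon$, and a direct rank count shows $r(M\setminus\epsilon) + r\bigl((M/\epsilon)^*\bigr) = |E\setminus\epsilon|+1$, so this is a genuine finite flip product on the smaller ground set, and it is exactly the tropical intersection number $\{x : (x,0)\in\Trop(M)\}\cdot(-\Trop((M/\epsilon)^*))$ written with the $\epsilon$-coordinate already projected away. So steps one and two together reduce the claim to: this flip product equals $\beta(M)$.

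The final and main step is to identify $(M\setminus\epsilon)\ast(M/\epsilon)^*$ with $\beta(M)$. The cleanest route is to use \Cref{thm:aep}-free reasoning: recall (Crapo's classical result, which I would cite) that the beta invariant satisfies the deletion–contraction recursion $\beta(M) = \beta(M\setminus\epsilon) + \beta(M/\epsilon)$ whenever $\epsilon$ is neither a loop nor a coloop, with base cases $\beta(U_{1,1}) = 1$ and $\beta$ vanishing on matroids with a coloop (for $|E|\geq 2$) or a loop. I would then show that the function $M \mapsto (M\setminus\epsilon)\ast(M/\epsilon)^*$, or rather its symmetric reformulation via \Cref{thm:main}, satisfies the same recursion and base cases. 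Applying \Cref{thm:main} to $(M\setminus\epsilon)\ast(M/\epsilon)^*$ with a deletion element splits it into boundary terms that, after using \Cref{prop:loopbad}, \Cref{lem:zero_flip_if_disconnected}, \Cref{rmk:shared_coloop} and the duality $(M/F)^* = M^*\setminus F$, $(M\setminus F)^* = M^*/F$ to discard vanishing summands, collapse to two surviving terms matching $\beta(M\setminus\epsilon)$ and $\beta(M/\epsilon)$; the base cases follow from \Cref{lem:1element} and \Cref{rmk:shared_coloop}. The hard part will be verifying that exactly the right summands in the \Cref{thm:main} expansion vanish — tracking which pairs $(E_1,E_2)$ produce a coloop or a disconnection in one factor after the $M\setminus\epsilon$ / $(M/\epsilon)^*$ substitution — and checking that the two non-vanishing terms genuinely reproduce the Crapo recursion rather than some twisted version of it. An alternative, possibly shorter, route avoids the recursion entirely: directly apply \Cref{lemma:point_multiplicity} to the slice $\{x : (x,0)\in\Trop(M)\}\wedge(-\Trop((M/\epsilon)^*)+\lambda)$ with the specific $\lambda$ supported on a single coordinate used in the proof of \Cref{thm:main}, so that each lattice point contributes a product of smaller flip products; one then matches the resulting sum term-by-term with the expansion $\beta(M) = (-1)^{r(M)}\sum_X(-1)^{|X|}r_M(X)$ via Möbius inversion over the lattice of flats. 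I expect the deletion–contraction argument to be the more robust of the two.
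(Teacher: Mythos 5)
You should be aware that the paper does \emph{not} prove \Cref{thm:aep}: it is a black-box citation to Ardila-Mantilla, Eur, and Penaguiao, and the only thing the paper does with it is immediately derive \Cref{thm:beta}. So there is no ``paper's proof'' to compare against; what you are proposing is a self-contained reproof of the cited result using the paper's own machinery, which is a genuinely different enterprise.

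Your steps 1 and 2 are correct, and they are essentially the paper's proof of \Cref{thm:beta} run in reverse. Writing the slice as $\Trop(M)\cdot\bigl(-\Trop(M^*\setminus\epsilon)\times\RR\bigr)\cdot\Trop(y_\epsilon-1)$, recognising the cylinder as $\Trop(N')$ for $N'=(M^*\setminus\epsilon)\oplus U_{\{\epsilon\},1}$, checking $r(M)+r(N')=|E|+1$, and invoking \Cref{lem:removebridge} to strip the coloop~$\epsilon$ --- this faithfully reproduces the paper's eqs.~\eqref{eq:beta1} and \eqref{eq:beta2} and the final line of the proof of \Cref{thm:beta}. The net effect is that you have reduced \Cref{thm:aep} to proving $(M\setminus\epsilon)\ast(M^*\setminus\epsilon)=\beta(M)$ without assuming \Cref{thm:aep}. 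That is a sound reduction and potentially valuable, since it would make \Cref{thm:beta} (and hence \Cref{thm:aep}) internal to the paper rather than resting on an external citation.

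The gap is in step 3, and you identify it yourself but do not close it. Applying \Cref{cor:main} to $(M\setminus\epsilon)\ast(M^*\setminus\epsilon)$ around a fresh element $e'$ does produce two ``main'' terms which, using $(M/e')^*=M^*\setminus e'$ and $(M\setminus e')^*=M^*/e'$, match $\bigl((M/e')\setminus\epsilon\bigr)\ast\bigl((M/e')^*\setminus\epsilon\bigr)$ and $\bigl((M\setminus e')\setminus\epsilon\bigr)\ast\bigl((M\setminus e')^*\setminus\epsilon\bigr)$, i.e.\ the inductive hypotheses for $\beta(M/e')$ and $\beta(M\setminus e')$. But you must still prove (i) that every boundary term with $|E_1|,|E_2|\ge 2$ vanishes, (ii) that the result is independent of the chosen $\epsilon$ (Crapo's recursion holds for any non-loop non-coloop $e'$, so the induction needs to be set up so that the same $\epsilon$ can be reused or a different one chosen consistently), and (iii) that the base cases match, including the cases where $M\setminus e'$ or $M/e'$ acquires a loop or coloop so that $\beta$ of that minor is zero and the corresponding flip product must also vanish (here \Cref{prop:loopbad}, \Cref{rmk:shared_coloop}, and \Cref{prop:intersection+number} will be needed, but the verification is not in your proposal). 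Until (i)--(iii) are actually carried out, this is a plan rather than a proof. The M\"obius-inversion ``alternative route'' is even further from complete and I would not present it as a viable fallback without substantial additional work.
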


We now recharacterise this result in terms of flip products.

\begin{theorem}\label{thm:beta}
    Let $M$ be a matroid with ground set $E$, where $|E|\geq2$.
    If $\epsilon \in E$ is any element, then
    \begin{equation*}
        \beta(M) = (M \setminus \epsilon) \ast (M^* \setminus \epsilon).
    \end{equation*}
\end{theorem}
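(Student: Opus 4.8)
The plan is to reduce the statement to Theorem~\ref{thm:aep} of Ardila-Mantilla, Eur and Penaguiao, matching their tropical expression with a flip product of restrictions.
The first thing to settle is the degenerate cases. If $\epsilon$ is a loop of $M$, then $\epsilon$ is a coloop of $M^*$, so $M^*\setminus\epsilon$ and $M\setminus\epsilon$ are not what appears in Theorem~\ref{thm:aep}; however in this case $\beta(M)=0$ (a standard property of the beta invariant), and one checks $(M\setminus\epsilon)\ast(M^*\setminus\epsilon)=0$ directly --- e.g.\ using \Cref{prop:loopbad} after observing $M\setminus\epsilon$ has a loop whenever $M$ has two loops, and handling the single-loop case by a rank count via \Cref{prop:intersection+number}. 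The case $\epsilon$ a coloop of $M$ is dual and symmetric (swap the roles of $M$ and $M^*$, using $\beta(M)=\beta(M^*)$), so it suffices to treat the generic case where $\epsilon$ is neither a loop nor a coloop of $M$, which is exactly the hypothesis under which Theorem~\ref{thm:aep} applies.

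In that main case, I would unwind both sides. On the flip product side, write $(M\setminus\epsilon)\ast(M^*\setminus\epsilon)=\Tropp(M\setminus\epsilon)\cdot\bigl(-\Tropp(M^*\setminus\epsilon)\bigr)$, and pass to the affine picture using the slice $\Trop(y_{\epsilon'}-1)$ for some $\epsilon'\in E\setminus\epsilon$, so that the flip product equals $\Trop(M\setminus\epsilon)\cdot\bigl(-\Trop(M^*\setminus\epsilon)\bigr)\cdot\Trop(y_{\epsilon'}-1)$, an intersection product of fans living in $\RR^{E\setminus\epsilon}$. On the Ardila-Mantilla--Eur--Penaguiao side, the first factor is the fibre $\{x\in\RR^{E\setminus\epsilon}:(x,0)\in\Trop(M)\}$, and the second is $-\Trop((M/\epsilon)^*)=-\Trop(M^*\setminus\epsilon)$, using the identity $(M/\epsilon)^*=M^*\setminus\epsilon$ recalled in the preliminaries. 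So the key geometric identity to prove is
\begin{equation*}
\Trop(M\setminus\epsilon)\cdot\bigl(-\Trop(M^*\setminus\epsilon)\bigr)\cdot\Trop(y_{\epsilon'}-1)
\;=\;
\bigl\{x\in\RR^{E\setminus\epsilon}:(x,0)\in\Trop(M)\bigr\}\cdot\bigl(-\Trop(M^*\setminus\epsilon)\bigr),
\end{equation*}
i.e.\ that slicing $\Trop(M)$ by $\{y_\epsilon=0\}$ gives the same tropical cycle (with multiplicities) as $\Trop(M\setminus\epsilon)$ sliced by $\{y_{\epsilon'}=0\}$ --- equivalently that $\{x:(x,0)\in\Trop(M)\}$ and $\Trop(M\setminus\epsilon)$ agree as weighted fans in $\RR^{E\setminus\epsilon}$ modulo the lineality direction $\RR\cdot\mathbf 1$.

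The main obstacle is precisely this last comparison. Set-theoretically, the projection/slice description of $\Trop(M)$ relates it to $\Trop(M\setminus\epsilon)$ when $\epsilon$ is not a coloop, but one must check the statement at the level of tropical cycles with multiplicities so that the intersection products genuinely coincide; concretely one wants that for $\epsilon$ not a coloop the map $x\mapsto(x,0)$ identifies $\Trop(M\setminus\epsilon)$ with the fibre $\{(x,0)\}\cap\Trop(M)$, and this should follow from the matroid-theoretic fact that flats of $M\setminus\epsilon$ are exactly $F\setminus\epsilon$ for flats $F$ of $M$ with compatible chains, together with the fact that both fans carry all-ones multiplicities (as noted after \Cref{def:projectiveBergmanFan}). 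Once that identification is in hand, the dimension/rank bookkeeping needed to invoke \Cref{prop:intersection+number} (so that both sides are genuinely finite and the stable intersection is the honest intersection) follows from $r(M\setminus\epsilon)+r(M^*\setminus\epsilon)$: since $\epsilon$ is neither loop nor coloop, $r(M\setminus\epsilon)=r(M)$ and $r(M^*\setminus\epsilon)=r(M^*)=|E|-r(M)$, giving $r(M\setminus\epsilon)+r(M^*\setminus\epsilon)=|E|=|E\setminus\epsilon|+1$, exactly the flip-product rank condition. Assembling these pieces --- degenerate cases, the cycle-level slice identity, and the rank count --- yields the claimed equality $\beta(M)=(M\setminus\epsilon)\ast(M^*\setminus\epsilon)$ from Theorem~\ref{thm:aep}.
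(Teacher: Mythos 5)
Your overall plan — reduce to Theorem~\ref{thm:aep} via $(M/\epsilon)^* = M^*\setminus\epsilon$ after dispatching the loop/coloop cases — matches the paper's strategy, and the rank bookkeeping at the end is correct. However, there is a genuine gap in the main step. You claim, and want to prove, that the slice $\{x\in\RR^{E\setminus\epsilon}:(x,0)\in\Trop(M)\}$ and $\Trop(M\setminus\epsilon)$ ``agree as weighted fans modulo the lineality direction $\RR\cdot\mathbf{1}$'' (equivalently, that $\Trop(M)\cap\{y_\epsilon=0\}$ equals $\Trop(M\setminus\epsilon)\cap\{y_{\epsilon'}=0\}$ as tropical cycles). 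This is false. Already for $M=U_{3,2}$ with $\epsilon=3$: the first slice is the three-ray fan $\RR_{\ge0}(1,0)\cup\RR_{\ge0}(0,1)\cup\RR_{\ge0}(-1,-1)$, while $\Trop(U_{2,2})\cap\{y_1=0\}$ is a single line. More generally, the slice of $\Trop(M)$ at $y_\epsilon=0$ sees \emph{every} circuit of $M$, including those through $\epsilon$, whereas $\Trop(M\setminus\epsilon)$ only sees circuits avoiding $\epsilon$; the former is a proper subfan of the latter. Even though both subsequently yield the same intersection number against $-\Trop(M^*\setminus\epsilon)$, this equality is not witnessed by a cycle-level identification of the two fans.

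What actually makes the two numbers agree is precisely the content of \Cref{lem:removebridge} (coloop removal), which the paper invokes here. The paper's argument introduces the auxiliary matroid $N:=(M^*\setminus\epsilon)\oplus U_{\{\epsilon\},1}$, observes that $\Trop(N)=\Trop(M^*\setminus\epsilon)\times\RR$ so that Theorem~\ref{thm:aep} becomes $\beta(M)=M\ast N$, checks $r(M)+r(N)=|E|+1$, and then applies \Cref{lem:removebridge} (using that $\epsilon$ is a coloop of $N$ but not of $M$) to conclude $M\ast N=(M\setminus\epsilon)\ast(N\setminus\epsilon)=(M\setminus\epsilon)\ast(M^*\setminus\epsilon)$. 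The coloop removal lemma is proved via \Cref{lem:FrancoisRau2013}: the projection $\pi_\epsilon\colon\Trop(M)\to\Trop(M\setminus\epsilon)$ is surjective and is a bijection on the relevant stable intersection points because, for a generic translate, none of those points lie in cones containing $\{0\}^{E\setminus\epsilon}\times\RR$ in their linear span. That projection argument is what replaces your proposed (false) fan identity; to fix your proof you should either cite \Cref{lem:removebridge} directly after introducing $N$, or re-derive its proof via the projection lemma rather than attempting to identify the two slices as cycles.
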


\begin{proof}
    Consider the case when $\epsilon\in E$ is a coloop of $M$ (and hence a loop of $M^*$). As $M$ is disconnected and $|E|\geq2$, we have $\beta(M)=0$.
    Since $\epsilon$ is a coloop of $M$ but not a coloop of $M^*$, we have
    \begin{equation*}
        r(M\setminus\epsilon) = r(M)-1, \qquad r(M^*\setminus\epsilon) = r(M^*)=|E|-r(M).
    \end{equation*}
    Hence,
    \begin{equation*}
        r(M\setminus\epsilon)+r(M^*\setminus\epsilon)=|E|-1 <  |E\setminus\epsilon|+1,
    \end{equation*}
    and thus $(M \setminus \epsilon) \ast (M^* \setminus \epsilon)=0$ by \Cref{prop:intersection+number}.
    If, instead, $\epsilon\in E$ is a loop of $M$, then it is a coloop of $M^*$ and a similar count shows $\beta(M)=0=(M \setminus \epsilon) \ast (M^* \setminus \epsilon)$.
    We are left with the case when $\epsilon\in E$ is neither a loop or a coloop.

    We first note that $(M/\epsilon)^* = (M^*
    \setminus \epsilon)$.
    It follows from \Cref{thm:aep} that
    \begin{equation}\label{eq:beta1}
        \beta(M) = \Trop (M) \cdot (-\Trop (M^*\setminus \epsilon) \times \mathbb{R}) \cdot \Trop(y_\epsilon - 1).
    \end{equation}
    Fix $N=(M^*\setminus\epsilon) \oplus U_{\{\epsilon\},1}$, where $U_{\{\epsilon\},1}$ is the uniform matroid on the set $\{\epsilon\}$ of rank $1$.
    Then
    \begin{equation*}
        \Trop (N) = \Trop (M^*\setminus \epsilon) \times \Trop(U_{\{\epsilon\},1}) = \Trop (M^*\setminus \epsilon) \times \mathbb{R}.
    \end{equation*}
    When combined with \cref{eq:beta1}, we have
    \begin{equation}\label{eq:beta2}
        \beta(M) = \Trop (M) \cdot (-\Trop (N))\cdot \Trop(y_\epsilon - 1) = M \ast N.
    \end{equation}
    Since $r(N) = r(M^*) + 1 = |E| - r(M) +1$,
    we have $r(M) + r(N) = |E|+1$.
    As $\epsilon$ is not a coloop of $M$ but it is a coloop of $N$,
    we can apply \Cref{lem:removebridge} to the $M \ast N$ term in \cref{eq:beta2} to obtain the desired result.
\end{proof}

\begin{remark}
    The statement of \Cref{thm:beta} at first looks a little odd, since it is a statement about a matroid gleamed from two of its minors.
    However, if we know the matroids~$M \setminus \epsilon$ and~$M/\epsilon$, we can reconstruct~$M$ uniquely.
    One method for seeing this is via bases: a set $B$ is a basis of $M$ if and only if it does not contain $\epsilon$ and is a basis of $M \setminus \epsilon$, or it does contain $\epsilon$ and~$B \setminus \epsilon$ is a basis of~$M/\epsilon$.
    Now, the knowledge of $M \setminus \epsilon$ $M / \epsilon$ is equivalent to the one of $M \setminus \epsilon$ and $M^{\ast} \setminus \epsilon$, since $(M/\epsilon)^* = M^* \setminus \epsilon$ and matroid duality is invertible.
    Hence, we can uniquely reconstruct $M$ from our original input.
\end{remark}

\begin{remark}
    An \emph{$n$-state linear discrete probability model} consists of a linear space $X \subset \mathbb{R}^n$ that intersects the probability simplex 
    $\Delta_n := \{p \in \mathbb{R}^n_{\geq 0} : \sum_{i=1}^n p(i) = 1\}$ and is not contained in any coordinate hyperplane.
    Given $u(i)$ is the fraction of samples in state $i$ from our sampled data,
    we determine the best-fitting probability distribution by maximising the log-likelihood function
    \begin{equation*}
        \log \mathcal{L}_u \colon X \cap \Delta_n \longrightarrow \mathbb{R}, ~ p \longmapsto \sum_{i=1}^n  u(i) \log p(i) .
    \end{equation*}
    Here we observe that domain of the derivative map $p \mapsto D_p(\log \mathcal{L}_u)$ can be extended to $Y \cap (\mathbb{C}\setminus \{0\})^n$,
    where $Y$ is the complex Zariski-closure of $X \cap \Delta_n$ (and hence an affine space).
    With this in mind,
    we define the \emph{maximum likelihood degree (ML-degree) of $X$} to be the number of points $p \in Y \cap (\mathbb{C}\setminus \{0\})^n$ for which the derivative $D_p(\log \mathcal{L}_u)$ is the zero map, where $u$ is an arbitrary generic point of $\Delta_p$.
    In some sense, the ML-degree of $X$ represents the `complexity' of working with the linear discrete probability model described by $X$.
    Given $Z \subset \mathbb{C}^{n+1}$ is the linear space formed from homogenising the affine space $Y$, it now follows from a result of Agostini et al.~\cite[Theorem 7.1]{AgostiniBrysiewiczEtAl2023} along with \Cref{thm:aep} and \Cref{thm:beta} that the ML-degree of $X$ is exactly
    \begin{equation*}
        \Big(M(Z) \setminus (n+1) \Big) \ast \Big(M(Z)^* \setminus (n+1)\Big).
    \end{equation*}
\end{remark}

\subsection{Coefficients of the characteristic polynomial and non-broken circuit bases}\label{matroid_application:characteristic_polynomial}

Let $M$ be a matroid with ground set $E$. Its \emph{characteristic polynomial} is defined as
\begin{equation*}
    p_M(\lambda) = \sum_{A\subseteq E}(-1)^{|A|}\lambda^{r_M(E)-r_M(A)}.
\end{equation*}
Notice that $p_M(1)=0$, so we can define the \emph{reduced characteristic polynomial} as the degree $r(M)-1$ polynomial
\begin{equation*}
    \overline{p}_M(\lambda) = p_M(\lambda)/(\lambda -1) = \sum_{i=0}^{r(M)-1}(-1)^{i}\mu_i\lambda^{r(M)-1-i}.
\end{equation*}
Translated into our setting, the content of \cite[Proposition 5.2]{HuhKatz2012} is that each coefficient $\mu_i$ can be expressed as flip products of a uniform matroid with a `rank-reduced' copy of $M$.
For $k \in \{0, \ldots, r(M)\}$, we set the \emph{truncation} $\mathrm{Trunc}_k(M)$ of $M$ to be the matroid with ground set $E$ and rank function $r_k(A)=\min\{r_M(A),k\}$.

\begin{theorem}[Huh and Katz \cite{HuhKatz2012}]
    For a simple matroid $M$,
    the coefficients of $\overline{p}_M(\lambda)$ are given by
    \begin{equation*}
        \mu_k = U_{E,|E|-k} \ast \mathrm{Trunc}_{k+1}(M).
    \end{equation*}
\end{theorem}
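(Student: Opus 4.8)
## Proof Strategy

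The plan is to prove the statement by reducing it, via the Chow-ring description in \Cref{def:chow}, to the known result of Huh and Katz \cite[Proposition~5.2]{HuhKatz2012}, and then translating their formula into the language of flip products. First I would recall that in \cite{HuhKatz2012}, the coefficients $\mu_k$ of the reduced characteristic polynomial are expressed (in their Proposition~5.2) as intersection numbers of the form $\deg(\alpha_{U_{E,|E|-k}} \smallsmile \beta)$ in $A^\bullet(X(\Delta_n))$, where $\beta$ is the class arising from a matroid obtained by combining $M$ with a uniform matroid; the precise bookkeeping there is done in terms of the Bergman fan of $M$ truncated appropriately. The key observation is that the matroid $\mathrm{Trunc}_{k+1}(M)$ is exactly the object whose Bergman fan matches the ``rank-reduced copy of $M$'' appearing implicitly in their computation, once one accounts for the shift $\mathrm{Crem}^\ast$ coming from the sign flip in \Cref{def:chow}.

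The key steps, in order, would be: (i) Verify that $r(U_{E,|E|-k}) + r(\mathrm{Trunc}_{k+1}(M)) = (|E|-k) + (k+1) = |E|+1$, so the flip product is finite by \Cref{prop:intersection+number}~\ref{prop:intersection+numberitem2} and the expression is well-posed. (ii) Recall that the truncation $\mathrm{Trunc}_k(M)$ is loopless whenever $M$ is simple (indeed whenever $M$ is loopless) and $k \geq 1$, so neither matroid in the product has a loop. (iii) Identify the projective Bergman fan $\Tropp(\mathrm{Trunc}_{k+1}(M))$ with the appropriate skeleton/weighting of $\Tropp(M)$: the standard fact is that $\mathrm{Trunc}_{k}(M)$ has Bergman fan equal to the weight-$k$ skeleton operation applied to $\Trop(M)$, which is how truncations interact with matroid subdivisions. (iv) Match this with the class $\beta$ in \cite[Proposition~5.2]{HuhKatz2012}, so that $U_{E,|E|-k} \ast \mathrm{Trunc}_{k+1}(M) = \deg(\alpha_{U_{E,|E|-k}} \smallsmile \mathrm{Crem}^\ast(\alpha_{\mathrm{Trunc}_{k+1}(M)}))$ equals their intersection number, which they prove equals $\mu_k$.

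The main obstacle will be step (iv): carefully reconciling the conventions. Huh and Katz work with the ``augmented'' or a slightly different normalization of Bergman fans, and their sign conventions for the Cremona/flip map, the indexing of the coefficients $\mu_i$, and the role of the uniform matroid's complement rank need to be aligned with the conventions fixed in this paper (in particular the $-\Tropp(N)$ versus $\Tropp(N)$ asymmetry, which is harmless here since $\mathrm{Crem}^\ast \alpha_{U_{E,r}} = \alpha_{U_{E,r}}$ because uniform Bergman fans are $\mathrm{Crem}$-invariant, but the asymmetry for the truncation factor must be handled). A secondary technical point is verifying the claimed identity $\Tropp(\mathrm{Trunc}_{k+1}(M)) = $ (the truncation of the cycle $\Tropp(M)$ to the appropriate dimension), i.e.\ that matroid truncation corresponds on the tropical side to the operation Huh--Katz use; this is essentially \cite[Proposition~5.2 and Lemma~6.1]{HuhKatz2012} read in reverse, together with the observation in the excerpt's introduction that their Proposition~5.2 is ``an early version'' of the flip product for the case where one matroid is uniform. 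Once these identifications are in place, the theorem follows immediately from their result.
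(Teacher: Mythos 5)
The paper gives no proof of this theorem at all: it is presented purely as a translation of \cite[Proposition~5.2]{HuhKatz2012} into the flip-product language (and indeed is attributed directly to Huh and Katz in the theorem header). Your proposal is likewise a translation-from-Huh--Katz outline, so in spirit the approach matches the paper's; there is nothing internal to compare against.

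That said, one specific claim in your step (iii) is false as stated and would derail a careful write-up. You assert that ``$\mathrm{Trunc}_{k}(M)$ has Bergman fan equal to the weight-$k$ skeleton operation applied to $\Trop(M)$.'' The Bergman fan of the truncation is \emph{not} the $(k-1)$-dimensional skeleton of $\Trop(M)$ modulo lineality. The flats of $\mathrm{Trunc}_k(M)$ are exactly the flats of $M$ of rank $<k$ together with $E$, so $\Tropp(\mathrm{Trunc}_k(M))$ uses only chains of low-rank flats, whereas the $(k-1)$-skeleton of $\Tropp(M)$ also contains short chains built from flats of high rank. Already for $M = U_{4,3}$ and $k=2$ the two disagree: $\Tropp(\mathrm{Trunc}_2(M)) = \Tropp(U_{4,2})$ has only the four rays $\RR_{\geq 0}\chi_{\{i\}}$, while the $1$-skeleton of $\Tropp(U_{4,3})$ also contains the six rays $\RR_{\geq 0}\chi_{\{i,j\}}$, and moreover a naive skeleton is not even balanced. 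The correct relationship that Huh--Katz exploit is at the level of Chow/Minkowski-weight classes after repeated capping with the hyperplane class $\alpha$ and the Cremona-conjugate class $\beta$: it is the resulting intersection number, not an identity of fan supports, that matches $\mu_k$. So the reduction to \cite[Proposition~5.2]{HuhKatz2012} is the right move --- the point where you would have to be precise is exactly step (iv), and the ``skeleton'' shortcut you lean on in step (iii) should be replaced by the bookkeeping Huh and Katz actually do with $\alpha^{r-k}\beta^k \smallfrown [\Delta_M]$.
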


We obtain a particular case of this result by considering the constant coefficient $\mu_{r(M)-1}$, which has a special meaning explained below.

Let $M$ be a matroid with ground set $E$ and a total ordering $<$ on $E$.
A \emph{broken circuit} is a set $C \subset E$ formed from a circuit $C' \subseteq E$ by removing its smallest element with respect to~$<$.
A \emph{non-broken circuit basis} (nbc-basis) is any basis of $M$ that does not contain a broken circuit.
Although the set of nbc-bases of a matroid is dependent on the choice of ordering $<$,
the number of nbc-bases of a matroid is independent of this choice and, in fact, coincide with the number $\mu_{r(M)-1}$ (see \cite[Theorem 7.4.6]{Bjorner1992}).

\begin{theorem}[Huh and Katz \cite{HuhKatz2012}; see also \cite{ClarkeDewarEtAl2025}] \label{thm:nbc}
    Let $M$ be a matroid of rank $r$ with ground set~$[n]$.
    Then
    \begin{equation*}
        M \ast U_{n,n-r+1} = \nbc(M).
    \end{equation*}
\end{theorem}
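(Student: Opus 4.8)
The plan is to prove that the invariant $F(M) := M \ast U_{|E|,\,|E|-r(M)+1}$, regarded as a function of the matroid $M$ on ground set $E$, obeys the same deletion--contraction recursion and boundary conditions as the non-broken-circuit count $\nbc(M)$; since those conditions determine a matroid invariant uniquely (every matroid can be reduced to the empty matroid by detecting a loop, stripping a coloop, or splitting along an arbitrary element), this forces $F=\nbc$. Unwound, this is an induction on $n := |E|$. The boundary conditions for $F$ are: $F(M)=0$ when $M$ has a loop, which is immediate from \Cref{prop:loopbad}; $F$ of the empty matroid equals $1$, by the convention $M\ast N = 1$ for empty ground set; and $F(M \oplus U_{1,1})=F(M)$, which follows from \Cref{lem:removebridge} (used in the form symmetric in the two matroids) applied to the new coloop, which is neither a loop nor a coloop of the relevant uniform matroid $U_{n+1,\,n-r+1}$ whenever $r\geq 1$, the case $r=0$ reducing to the loop case or to $U_{1,1}\ast U_{1,1}=1$ via \Cref{lem:1element}. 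That $\nbc$ satisfies the same three properties is classical: $\nbc(M)=0$ when $M$ has a loop because the empty set is then a broken circuit and hence lies in every basis; $\nbc$ of the empty matroid is $1$; and $\nbc(M\oplus U_{1,1})=\nbc(M)$ because a coloop lies in no circuit.

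The heart of the matter is the deletion--contraction step for $F$: for $e$ neither a loop nor a coloop of a loopless $M$, one must show $F(M)=F(M\setminus e)+F(M/e)$. Writing $s := n-r+1$, apply \Cref{cor:main} (the streamlined form of \Cref{thm:main}) with the element $e$, obtaining
\begin{equation*}
    F(M) \;=\; (M/e)\ast(U_{n,s}\setminus e) \;+\; (M\setminus e)\ast(U_{n,s}/e) \;+\; \Sigma ,
\end{equation*}
where $\Sigma$ ranges over pairs $E_1,E_2$ with $E_1\cup E_2=E$, $E_1\cap E_2=\{e\}$, $|E_1|,|E_2|\geq 2$ of the products $\big((M/E_1)\ast(U_{n,s}\setminus E_1)\big)\big((M\setminus E_2)\ast(U_{n,s}/E_2)\big)$. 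The key claim, and the step I expect to be the main obstacle, is that $\Sigma=0$. This uses crucially that $U_{n,s}$ is \emph{uniform}: its minors are again uniform, with $r(U_{n,s}\setminus E_1)=\min\{s,\,n-|E_1|\}$ and $r(U_{n,s}/E_2)=s-\min\{|E_2|,\,s\}$. Combining these with $r(M/E_1)=r-r_M(E_1)$ and $r(M\setminus E_2)=r_M(E_1\setminus e)$ and feeding them into the rank criteria of \Cref{prop:intersection+number}, one checks that a nonzero term is impossible: for instance, when $|E_1|\geq r$ the first factor can be a positive integer only if $r_M(E_1)=r-1$, whereas the second factor is nonzero only if $r_M(E_1\setminus e)\geq r$, contradicting $E_1\setminus e\subseteq E_1$; and when $|E_1|<r$ the second factor already vanishes because its two summand-ranks add up to at most $|E_1|-1$. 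A subtlety here is that a factor could \emph{a priori} be $\infty$ rather than a positive integer; this possibility is exactly what \Cref{rem:notinfinity} rules out inside a product of this shape.

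Granting $\Sigma=0$, the two surviving terms are matched to $\nbc$ by the induction hypothesis. Indeed $U_{n,s}\setminus e = U_{n-1,\,\min\{s,n-1\}}$, which for $r\geq 2$ equals $U_{n-1,\,(n-1)-(r-1)+1}$ and therefore has the ground-set size and rank of $M/e$, so $(M/e)\ast(U_{n,s}\setminus e)=\nbc(M/e)$ by induction --- and both sides vanish in the degenerate cases where $M/e$ acquires a loop (e.g.\ when $e$ is parallel to another element, or when $r=1$). Likewise $U_{n,s}/e = U_{n-1,\,s-1}=U_{n-1,\,(n-1)-r+1}$ matches $M\setminus e$, giving $(M\setminus e)\ast(U_{n,s}/e)=\nbc(M\setminus e)$. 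Hence $F(M)=\nbc(M/e)+\nbc(M\setminus e)$, which equals $\nbc(M)$ by the classical deletion--contraction recurrence for non-broken-circuit bases (a consequence of $p_M=p_{M\setminus e}-p_{M/e}$ together with the identity $\nbc(M)=\mu_{r-1}(M)$). As $F$ and $\nbc$ agree on loops, on the empty matroid, under adding a coloop, and under this deletion--contraction --- and every matroid is built from the empty matroid by these operations --- we conclude $F=\nbc$. (For simple $M$ this is also the $k=r-1$ instance of the Huh--Katz identity $\mu_{r-1}=U_{E,|E|-r+1}\ast\mathrm{Trunc}_{r}(M)$, since $\mathrm{Trunc}_{r}(M)=M$, but the recursion above is self-contained.)
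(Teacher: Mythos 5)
The paper cites Theorem~\ref{thm:nbc} without proof, attributing it to Huh--Katz~\cite{HuhKatz2012} (where it is obtained via Chow-ring intersection theory on toric varieties, as a special case of their computation of characteristic polynomial coefficients) and to~\cite{ClarkeDewarEtAl2025}. Your argument is therefore not comparable to an in-paper proof, but it is correct, and it is a genuinely different route: you re-derive the result from the paper's own deletion--contraction machinery (\Cref{thm:main}/\Cref{cor:main}) by showing that $F(M) := M \ast U_{n,n-r(M)+1}$ satisfies the defining recurrence of $\nbc$. The three boundary facts (loops kill $F$, coloops are absorbed via \Cref{lem:removebridge}, the empty matroid gives $1$) are handled cleanly, including the $r=0$ edge case, and the essential step --- that in \Cref{cor:main} the sum over $|E_1|,|E_2|\geq 2$ vanishes when $N$ is uniform --- is verified correctly in both cases $|E_1|\geq r$ and $|E_1|<r$ by comparing the rank identities forced by condition (iii) of \Cref{cor:main} with the submodular bounds $r_M(E_1\setminus e)\leq r_M(E_1)$ and $r_M(E_1\setminus e)\leq |E_1|-1$. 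One small note: your invocation of \Cref{rem:notinfinity} to handle possible $\infty$-valued factors is superfluous, since \Cref{cor:main} already restricts the sum via condition (iii) to pairs where both factors have matching rank sums and are therefore finite; the remark would only be needed if one worked directly from \Cref{thm:main}. What your approach buys over the cited proofs is self-containedness within the paper's framework --- it requires no appeal to Minkowski weights or Chow cohomology, only the tropical deletion--contraction identity that the paper proves --- and it serves as a nice sanity check that \Cref{thm:main} specializes correctly in the uniform case.
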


\begin{remark}
    Every loopless matroid has at least one nbc-basis under any ordering.
    Hence, \Cref{thm:nbc} informs us that for every loopless matroid $M$ there exists a matroid $N$ that shares the same ground set such that $M \ast N$ is a positive integer.
\end{remark}

\begin{remark}
    It is easy to see from the definition that the number of nbc-bases of a uniform matroid~$U_{n,r}$ is $\binom{n-1}{r-1}$.
    This can now also be derived by picking generic subspaces $U,V \subseteq \CC^n$ such that $M(U) = U_{n,r}$ and $M(V) = U_{n,n-r+1}$ and applying \Cref{thm:mainalg} and \Cref{cor:mainalg} alongside \Cref{thm:nbc}:
    \begin{equation*}
        \binom{n-1}{r-1} = \# \left(f^{-1}_{U,V}(\lambda) /\!\!\sim \right) = U_{n,r} * U_{n,n-r+1} = \nbc(U_{n,r}) \, .
    \end{equation*}
\end{remark}

Clarke et al.~proved in \cite{ClarkeDewarEtAl2025} that nbc-bases provide an effective upper bound for flip products.

\begin{theorem}[Clarke et al.~{\cite[Lemma 5.8]{ClarkeDewarEtAl2025}}]\label{thm:nbcupperbound}
    Let $M,N$ be matroids with shared ground set $E$ where $r(M) + r(N) = |E| +1$.
    Then
    \begin{equation*}
        M \ast N \leq \min \{\nbc(M),\nbc(N)\}.
    \end{equation*}
\end{theorem}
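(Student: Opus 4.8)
The plan is to derive this bound as a direct consequence of \Cref{thm:mainalg} combined with \Cref{thm:nbc}. The key observation is that the flip product is symmetric and can be interpreted geometrically as a count of fibres, and that nbc-bases arise precisely when one of the matroids is a uniform matroid of the complementary rank. So the strategy is: first exhibit a suitable intermediate matroid, then compare the corresponding Hadamard fibre counts.

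First I would set $r = r(M)$, so that $r(N) = |E| + 1 - r$, and consider the uniform matroid $U_{E,|E|-r+1} = U_{E,r(N)}$ on the same ground set. Note that this matroid has the same rank as $N$. By \Cref{thm:nbc}, $M \ast U_{E,|E|-r+1} = \nbc(M)$. The heart of the argument is then to show $M \ast N \leq M \ast U_{E,|E|-r+1}$, i.e., that replacing $N$ by the ``generic'' matroid of the same rank can only increase the flip product. If $M$ or $N$ has a loop, \Cref{prop:loopbad} gives $M \ast N = 0$ and there is nothing to prove, so we may assume both are loopless; the rank hypothesis guarantees the flip product is finite by \Cref{prop:intersection+number}.

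To compare $M \ast N$ with $M \ast U_{E,r(N)}$, I would pass to representations and use \Cref{thm:mainalg}. Both matroids in question need not be representable over $\CC$, but the relevant comparison is really a semicontinuity/upper-bound statement at the level of tropical intersection numbers: for fixed $\Trop(M)$, the stable intersection $(-\Tropp(N)) \cdot \Tropp(M)$ is bounded above by the corresponding intersection when $N$ is replaced by the uniform matroid of the same rank, whose projective Bergman fan is the full $(r(N)-1)$-skeleton of $\Delta_{|E|}$ and hence ``contains'' every other rank-$r(N)$ Bergman fan in the appropriate balanced-cycle sense. Concretely, $\Tropp(N)$ is a subfan (with multiplicities one) of the skeleton of $\Delta_{|E|}$, so any point of the stable intersection with $\Tropp(M)$ also contributes to the stable intersection of $\Tropp(U_{E,r(N)})$ with $\Tropp(M)$, with at least as large a multiplicity. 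Running the symmetric argument with the roles of $M$ and $N$ swapped — using $M \ast N = N \ast M$ and $N \ast U_{E,r(M)} = \nbc(N)$ — yields $M \ast N \leq \nbc(N)$ as well, and combining the two inequalities gives the claimed bound.

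The main obstacle I anticipate is making the ``$\Tropp(N)$ sits inside the uniform skeleton, so its stable intersection is dominated'' step fully rigorous: stable intersection is not literally monotone under containment of supports, since multiplicities and genericity of the translation vector interact subtly. The cleanest fix is probably to argue pointwise via \Cref{lem:StableIntersectionNonempty} and the local structure from \Cref{lemma:point_multiplicity} — at a generic translate $w$, each intersection point of $\Trop(M) \cap (-\Trop(N) + w) \cap \Trop(y_\epsilon - 1)$ lies in a maximal cone of $\Trop(N)$ which is also a (possibly non-maximal, but still present) face configuration inside $\Trop(U_{E,r(N)})$, and then one checks the multiplicity of that point in the uniform intersection is at least the multiplicity of the original — invoking that all Bergman fans here carry multiplicity one and the local intersection multiplicity is a sum of lattice-index products over matching face pairs. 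Alternatively, if the paper has already established \Cref{thm:nbc} via the explicit recursion of \Cref{thm:main}, one could instead prove \Cref{thm:nbcupperbound} by induction using the same deletion--contraction decomposition, bounding each summand by the corresponding nbc-count summand; this may in fact be the route the authors take, and it sidesteps the tropical-monotonicity subtlety entirely at the cost of a bookkeeping argument over the pairs $E_1, E_2$.
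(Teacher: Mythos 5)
Your reduction to the claim $M \ast N \leq M \ast U_{E,r(N)}$, with the other side following by symmetry, is the right first step, and matches the strategy of the cited source. However, the mechanism you propose for proving that inequality has a genuine gap: it is \emph{not} true that $\Tropp(N)$ is a subfan of $\Tropp(U_{E,r(N)})$, even at the level of supports. Take $E=\{1,2,3\}$ and $N = U_{\{1,2\},1}\oplus U_{\{3\},1}$, a loopless rank-$2$ matroid whose unique circuit is $\{1,2\}$. Then $\Trop(N) = \{w \in \RR^3 : w_1 = w_2\}$, which contains the point $(1,1,0)$. But $(1,1,0)\notin\Trop(U_{3,2})$, since the only circuit of $U_{3,2}$ is $\{1,2,3\}$ and $\min(1,1,0)$ is attained only once. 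The issue is structural: maximal cones of $\Trop(N)$ correspond to maximal chains of flats of $N$, and a flat of $N$ of rank $i$ can have cardinality larger than $i$ (e.g.\ parallel classes), whereas every rank-$i$ flat of $U_{E,r}$ with $i<r$ has cardinality exactly $i$. So chains of flats of $N$ are typically \emph{not} chains of flats of the uniform matroid, and the claimed ``face configuration inside $\Trop(U_{E,r(N)})$'' simply is not there. Both of your proposed fixes (the pointwise multiplicity argument and the star-matching) still presuppose this false containment, so neither repairs the gap.

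The actual argument, which the paper discusses in \Cref{weak_order}, works at the level of matroid \emph{polytopes} rather than Bergman \emph{fans}. One uses the corank subdivision of the hypersimplex $P(U_{E,r(N)})$ (Joswig--Schr\"oter), a matroid subdivision that contains $P(N)$ as a cell. Feeding this into \Cref{prop:flip+relaxation} gives $N \ast M \leq U_{E,r(N)} \ast M$; the proof of that proposition replaces $\Trop(U_{E,r(N)})$ by a deformed tropical linear space $L_q$ with the same recession fan (\Cref{lem:degrees}), observes $\Trop(N) = \Star_w(L_q)$ for the vertex $w$ dual to the cell $P(N)$, and then invokes the monotonicity $\Star_w(L_q)\cdot\Sigma \leq L_q\cdot\Sigma$ from \cite[Lemma~5.6]{ClarkeDewarEtAl2025}. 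The crucial difference from your approach is that $L_q$ is not a fan: it is a non-trivially translated polyhedral complex whose local behaviour at $w$ recovers $\Trop(N)$, and the comparison is local-to-global along $L_q$, not a containment of fans. Your suggested fallback of running an induction via \Cref{thm:main} and bounding summand-by-summand is a plausible alternative route, but as written it is only a hope, not an argument, and the minors of $U_{E,r(N)}$ that appear on the right-hand side would need to be matched carefully against the minors of $N$.
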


\begin{remark}
    If a matroid $M$ has a loop, then the empty set is a broken circuit under any choice of ordering.
    This in turn implies that $\nbc(M)=0$.
    \Cref{thm:nbcupperbound} now implies $M \ast N = 0$ for any choice of matroid $N$ that shares a ground set with $M$,
    which agrees with \Cref{prop:loopbad}.
\end{remark}

\section{Flip products and matroid subdivision}
\label{weak_order}

Recall that the set of matroids on a fixed ground set $E$ form a poset under the \emph{weak order of matroids}, where $M_1 \preceq M_2$ if every independent set of $M_1$ is also an independent set of $M_2$.
If $M_1$ and $M_2$ are of the same rank, we can also characterise $M_1 \preceq M_2$ as every basis of $M_1$ is a basis of~$M_2$.

A straightforward corollary of \Cref{thm:nbc} is that if $M \preceq M'$ with $r(M) = r(M') = k$, the flip product $M * U_{n,n-k+1} \leq M' * U_{n,n-k+1}$ can only increase.
This follows as every nbc-basis of~$M$ must also be an nbc-basis of $M'$.
A natural question is whether we can replace the uniform matroid with an arbitrary matroid of the correct rank.

\begin{question} \label{q:flip+weak+order}
    Let $M$ and $N$ be matroids on $E$ such that $r(M) + r(N) = |E| + 1$.
    If $M'$ a matroid satisfying $M \preceq M'$ and $r(M) = r(M')$, do we have $M*N \leq M'*N$?
\end{question}

We give a partial answer to this question by demonstrating the flip product increases under a certain polyhedral condition in terms of matroid polytopes.

We begin with the following definition.

\begin{definition}\label{def: matroid subdivision}
    Given a set system $\cB \subseteq \binom{E}{k}$, we define the polytope
    \begin{equation*}
        P(\cB) = \conv(\chi_B \mid B \in \cB) \subseteq \RR^E \, , \quad \chi_B = \sum_{e\in B} \chi_e \, .
    \end{equation*}
    If $\cB = \cB(M)$ is the set of bases of a matroid $M$, then we call this the \emph{matroid polytope} of $M$, and denote it by $P(M) := P(\cB(M))$.

    Given a weight function $p \colon \cB(M) \rightarrow \RR$ on the bases of $M$, we let $\Delta_p(M)$ be the polyhedral subdivision of $P(M)$ whose pieces are of the form
    \begin{equation*}
        \Delta_p(M) = \{P(\cB_{p,w}) \mid w \in \RR^E\} \, , \quad \cB_{p,w} = \left\{ B' \in \cB \, \Big| \, \min_{B \in \binom{E}{k}} \bigg(p_B - \sum_{i \in B} w_i  \bigg) \text{ attained at } B' \right\} \, .
    \end{equation*}
    We say that $\Delta_p(M)$ is a \emph{matroid subdivision}\footnote{This definition implicitly requires the subdivision to be \emph{regular}, i.e., induced by a weight function. Some places in the literature do not require that the subdivision need be regular. We do insist this as only regular subdivisions have a correspondence with tropical linear spaces.} if it is a subdivision into matroid polytopes, i.e., each~$\cB_{p,w}$ is the set of bases of some matroid $M_{p,w}$.
\end{definition}

We are interested in matroid polytopes and matroid subdivisions for two reasons.
The first is that they give an alternative characterisation of tropical linear spaces, detailed in \cite{Speyer2008} and \cite[Section 4.4]{MaclaganSturmfels2015}.
Let $M$ be a matroid and $p \colon \binom{E}{k} \rightarrow \RR \cup \{\infty\}$ where $p(B) \neq \infty$ if and only if $B \in \cB(M)$.
We can consider $p$ as a weight function on $\cB(M)$ and so the subdivision $\Delta_p(M)$ is well-defined.
We say $p$ is a \emph{tropical Pl\"ucker vector} if $\Delta_p(M)$ is a matroid subdivision, or equivalently $\cB_{p,w}$ is the set of bases of a matroid $M_{p,w}$ for all $w \in \RR^n$.
We define its associated \emph{tropical linear space} to be
\begin{equation}\label{eq:troplinearspace}
    L_p = \left\{w \in \RR^n \mid M_{p,w} \text{ is a loopless matroid}\right\} .
\end{equation}
The set $L_p$ can be endowed with the structure of a balanced polyhedral complex where each maximal cell has multiplicity one.
We emphasise that Bergman fans fit into this regime: the Bergman fan $\Trop(M)$ is the tropical linear space $L_q$ where $q(B)$ equals zero if $B \in \cB(M)$ and infinity otherwise.

The second reason we are interested in matroid polytopes is they give a geometric intuition to the weak order of matroids.
It is straightforward from the definitions that $M \preceq M'$ if and only if $P(M) \subseteq P(M')$.
We investigate a stronger property, where there exists a matroid subdivision~$\Delta(M')$ of $P(M')$ that contains $P(M)$ as a cell.
Our main result of this section is that under this condition, the flip product is increasing.

\begin{proposition}\label{prop:flip+relaxation}
    Let $M$ and $N$ be matroids on $E$ such that $r(M) + r(N) = |E| + 1$.
    Let $M'$ be a matroid such that there exists a matroid subdivision $\Delta(M')$ of $P(M')$ containing $P(M)$ as a cell.
    Then $M*N \leq M'*N$.
\end{proposition}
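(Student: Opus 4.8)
The plan is to realise the flip product $M \ast N$ as the stable intersection $(-\Trop(N)) \wedge \Trop(M)$ inside $\Trop(M')$, and then exploit the matroid subdivision $\Delta(M')$ to ``lift'' this computation to $M' \ast N$ without losing intersection points. First I would recall from \eqref{eq:FPasCardinality} that for a generic $w \in \RR^E$ we have $M \ast N = |\Trop(M) \cap (-\Trop(N) + w) \cap \Trop(y_\epsilon - 1)|$, and similarly for $M' \ast N$. The key geometric input is that the matroid subdivision $\Delta(M')$ of the polytope $P(M')$ corresponds, via the dictionary of \cite{Speyer2008} and \cite[Section 4.4]{MaclaganSturmfels2015}, to a tropical Pl\"ucker vector $p$ with associated tropical linear space $L_p = \Trop(M')$, and the cells of the subdivision correspond to the matroids $M_{p,w}$. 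Since $P(M)$ appears as a cell of $\Delta(M')$, there is a point $w_0 \in \RR^E$ with $M = M_{p,w_0}$; by \cite[Lemma~3.6.7]{MaclaganSturmfels2015} (as used in \Cref{lemma:point_multiplicity}) this means $\Star_{w_0}(\Trop(M')) = \Trop(M)$ up to translation.

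The core of the argument is then a local-to-global comparison of multiplicities. I would pick a generic $\lambda$ and consider the zero-dimensional stable intersection $(M' \wedge N)_\lambda$ as in \Cref{algorithm}; by \Cref{lemma:point_multiplicity} the multiplicity of each point $x$ in $(M' \wedge N)_\lambda$ is a flip product of direct sums of minors of $M'$ with minors of $N$. I would then choose $\lambda$ (or equivalently the translation vector $w$) so that one of the intersection points $x_0$ has star equal to $\Trop(M) \wedge (-\Trop(N_0))$ for the appropriate minor decomposition, so that the multiplicity of that single point already equals $M \ast N$. Concretely, one uses the fact that $\Star_{x_0}(\Trop(M')) = \Trop(M)$ to see that locally near $x_0$ the intersection $\Trop(M') \cap (-\Trop(N)+w) \cap \Trop(y_\epsilon-1)$ reproduces exactly the intersection $\Trop(M) \cap (-\Trop(N)+w') \cap \Trop(y_\epsilon-1)$ computing $M \ast N$. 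Since all multiplicities in a stable intersection of Bergman fans (with positive multiplicities) are non-negative, and the total of $M' \ast N$ counts \emph{all} points of $(M' \wedge N)_\lambda$ with multiplicity, we get $M \ast N = \mult_{x_0} \le M' \ast N$.

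The main obstacle I anticipate is making rigorous the claim that some intersection point of the $M'$-computation has local contribution exactly $M \ast N$: a priori, specialising the weight $w$ to make $x_0$ a face of $P(M)$ in the subdivision may force $\lambda$ to be non-generic for the $M'$ computation, so the stable intersection could behave badly (points colliding, higher multiplicities). I would handle this by a perturbation argument --- start at the special $\lambda_0$ where the cell $P(M)$ is ``activated'', compute the multiplicity there using \Cref{lemma:point_multiplicity} and the identity $M_{p,w_0}=M$ (together with the analogous decomposition on the $N$ side, which one can arrange by the genericity of the first coordinate translation), and then appeal to the upper semicontinuity / conservation of intersection number under perturbation of $\lambda$ (the fan displacement rule, \cite[Proposition~3.6.12]{MaclaganSturmfels2015}) to conclude that this multiplicity is bounded above by the generic count $M' \ast N$. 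A subtlety worth checking is that $N$ itself need not appear as a cell of any subdivision, but since in the local computation at $x_0$ the relevant object on the $N$ side is $-\Trop(N)$ translated generically, it decomposes trivially as $N_{p',x_0} = N$ for generic translation, so no extra hypothesis on $N$ is needed. Finally, the edge cases $r(M)+r(N)\ne|E|+1$ or loops present are dispatched immediately by \Cref{prop:loopbad} and \Cref{prop:intersection+number}, and the case $M = M'$ is trivial, so one may assume $M \prec M'$ strictly with both loopless and $r(M)=r(M')$.
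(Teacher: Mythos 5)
Your high-level idea matches the paper's — realise $M*N$ as an intersection product, find a point $w_0$ where the star of the relevant polyhedral complex is $\Trop(M)$, and then appeal to monotonicity of intersection numbers under taking stars. However, there is a concrete error at the very first step that propagates throughout: you identify $L_p$ with $\Trop(M')$. These are different polyhedral complexes. $\Trop(M')$ is the Bergman fan (a pure fan through the origin), while $L_p$ is the tropical linear space attached to the nontrivial Pl\"ucker vector $p$ inducing the subdivision; it is not a fan, and its cells are indexed by the loopless matroids $M_{p,w}$ of the subdivision. What is true is $\rec(L_p) = \Trop(M')$. Because you make this identification, you then assert $\Star_{w_0}(\Trop(M')) = \Trop(M)$, which is false: the stars of the Bergman fan $\Trop(M')$ are always Bergman fans of direct sums of minors of $M'$ (Ardila--Klivans), never an arbitrary matroid $M$ with $P(M)\subsetneq P(M')$. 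The correct statement, which the paper uses, is $\Star_{w_0}(L_p) = \Trop(M)$. This is also why your intended use of \Cref{lemma:point_multiplicity} does not go through: that lemma computes point multiplicities in stable intersections with $\Trop(M')$, whose local structure is a direct sum of minors, and so its output is never $M*N$ for a proper weak-order predecessor $M$.

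To repair the argument you need the step the paper isolates as \Cref{lem:degrees}: one must first show that $\Trop(M')\cdot\Sigma = L_q\cdot\Sigma$ for $\Sigma = (-\Trop(N))\wedge\Trop(y_\epsilon-1)$, which follows because $\rec(L_q)=\Trop(M')$ and stable intersection with a complementary-dimensional cycle only depends on the recession fan (via \cite[Theorem~5.7]{AllermannHampeRau2016}). Once you are working with $L_q$, the star at the point $w$ corresponding to the cell $P(M)$ really is $\Trop(M)$ by \cite[Corollary~4.4.8]{MaclaganSturmfels2015}, and the monotonicity $\Star_w(L_q)\cdot\Sigma \le L_q\cdot\Sigma$ is exactly \cite[Lemma~5.6]{ClarkeDewarEtAl2025}, which replaces the somewhat informal perturbation/semicontinuity argument you sketch. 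With these two ingredients in place the chain of (in)equalities $M*N = \Trop(M)\cdot\Sigma = \Star_w(L_q)\cdot\Sigma \le L_q\cdot\Sigma = \Trop(M')\cdot\Sigma = M'*N$ closes the proof, and you should also note (as you correctly anticipate) that the degenerate cases where $M$ has a loop, or $r(M)+r(N)\ne|E|+1$, are handled by \Cref{prop:loopbad} and \Cref{prop:intersection+number}.
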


We prove this by deforming $\Trop(M)$ into $\Trop(M')$ and tracking its intersection points with $-\Trop(N)$.
A similar proof technique was implemented in proving \cite[Lemma 5.8]{ClarkeDewarEtAl2025}, which is the special case where $M'$ is uniform: in this case, the matroid subdivision of $P(M')$ containing $P(M)$ is the corank subdivision described in \cite{JoswigSchroter2017}.
In fact, this special case was used in \cite{ClarkeDewarEtAl2025} (in combination with \Cref{thm:nbc}) to prove \Cref{thm:nbcupperbound}.

\begin{lemma}\label{lem:degrees}
    Let $\Sigma$ be a balanced polyhedral complex in $\RR^n$ of dimension $1 \leq s \leq n-1$ and let $L_q$ a tropical linear space with tropical Pl\"{u}cker vector $q\colon {\binom{[n]}{n-s}} \rightarrow \RR\cup \{\infty\}$.
    Let $M_q$ be the matroid where $B$ is a basis if and only if $q_B \neq \infty$.
    Then $\Sigma \cdot L_q = \Sigma \cdot \Trop(M_q)$.
\end{lemma}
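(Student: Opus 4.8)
The plan is to interpolate between the two tropical linear spaces $L_q$ and $\Trop(M_q)$ by scaling the tropical Plücker vector, and to argue that the stable intersection number with the fixed cycle $\Sigma$ is constant along this family. Concretely, for $t \in \RR_{\geq 0}$ define $q^{(t)} \colon \binom{[n]}{n-s} \to \RR \cup \{\infty\}$ by $q^{(t)}_B = t \cdot q_B$ when $q_B \neq \infty$ and $q^{(t)}_B = \infty$ otherwise; note that each $q^{(t)}$ is again a tropical Plücker vector with the same underlying matroid $M_q$ (scaling a weight function by a positive constant does not change which matroid subdivisions are induced, only refines/coarsens them), so $L_{q^{(t)}}$ is a well-defined tropical linear space of the same dimension $s$, with $L_{q^{(1)}} = L_q$ and $L_{q^{(0)}} = \Trop(M_q)$. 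First I would record that all the $L_{q^{(t)}}$ have the same recession fan, namely $\Trop(M_q)$: this is the standard fact (see \cite[Section~4.4]{MaclaganSturmfels2015}) that the recession fan of a tropical linear space $L_q$ is the Bergman fan of the matroid $M_q$, since $M_{q,w}$ for $w$ in a generic large cell agrees with the matroid read off from the recession directions.

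The key step is then a deformation/conservation argument: the stable intersection product $\Sigma \cdot L_{q^{(t)}}$ is independent of $t$. Since $\Sigma$ and each $L_{q^{(t)}}$ have complementary dimension ($s$ and $n-s$ respectively, with lineality handled appropriately), their stable intersection is a finite collection of points counted with multiplicity, and the total is a well-defined non-negative integer (or $0$). To see invariance in $t$, I would use that stable intersection can be computed via a generic translation: $\Sigma \cdot L_{q^{(t)}} = |\Sigma \cap (L_{q^{(t)}} + w)|$ counted with multiplicity for generic $w$. The family $t \mapsto L_{q^{(t)}} + w$ is a one-parameter family of tropical cycles whose recession fan is constant (equal to $\Trop(M_q)$, as above); by the fan-displacement / stability of stable intersection under deformations with fixed recession fan --- this is precisely the content of the projection-formula-type results such as \cite[Proposition~3.6.12]{MaclaganSturmfels2015} combined with the fact that stable intersection depends only on the classes in the appropriate Minkowski weight group (equivalently, in $A^\bullet$ of the toric variety, as in \Cref{def:chow}) --- the intersection number only depends on the recession fans of $\Sigma$ and $L_{q^{(t)}}$. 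Since $\Sigma$ is fixed and $\mathrm{rec}(L_{q^{(t)}}) = \Trop(M_q)$ for all $t$, we conclude $\Sigma \cdot L_{q^{(t)}} = \Sigma \cdot \Trop(M_q)$ for every $t$, and specialising to $t = 1$ gives $\Sigma \cdot L_q = \Sigma \cdot \Trop(M_q)$.

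An alternative, perhaps cleaner, route to the key step avoids explicitly moving $t$: one observes directly that as classes in $\mathrm{MW}^\bullet(\Delta_n)$ (Minkowski weights on the fan underlying $\Trop(U_{n,n})$), or equivalently in $A^\bullet(X(\Delta_n))$, a tropical linear space $L_q$ and its recession fan $\Trop(M_q)$ define the same class. This is because $L_q$ is a translate-and-glue of cones all of whose recession directions lie in $\Trop(M_q)$, and the balancing conditions force the induced Minkowski weight to agree with that of the Bergman fan; this identification of $[L_q]$ with $[\Trop(M_q)]$ in the Chow ring is well known for tropical linear spaces (they are the "constant-coefficient degenerations"). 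Since $\Sigma \cdot L_q = \deg([\Sigma] \smallsmile [L_q])$ and $\Sigma \cdot \Trop(M_q) = \deg([\Sigma] \smallsmile [\Trop(M_q)])$, equality of the classes gives equality of the products.

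The main obstacle I anticipate is making the "same recession fan implies same intersection number with a fixed cycle" step fully rigorous in the polyhedral (non-fan) setting: $\Sigma$ is only assumed balanced of dimension $s$, not a fan, so one must be careful that stable intersection with a non-fan cycle is genuinely invariant under replacing one cycle by a deformation-equivalent one. This is handled by passing to recession fans / Minkowski weights as above (the product in $\mathrm{MW}^\bullet(\Delta_n)$ only sees the classes), but one should verify that $\Sigma$ indeed has a well-defined recession fan refining a coarsening of $\Delta_n$ and that the degree map is the correct pairing; alternatively one invokes \cite[Lemma~5.6]{ClarkeDewarEtAl2025} and the star computations used elsewhere in the paper to localise the count at each intersection point and match multiplicities. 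I expect the fan/Chow-ring formulation to be the shortest path.
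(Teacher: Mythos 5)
Your approach is essentially the same as the paper's: both reduce the claim to (i) the fact that $\rec(L_q) = \Trop(M_q)$, which you correctly cite from \cite[Section 4.4]{MaclaganSturmfels2015}, and (ii) the principle that the tropical intersection product $\Sigma \cdot X$ depends only on $\Sigma$ and the recession fan $\rec(X)$. The gap you flag at the end --- making (ii) rigorous when $\Sigma$ is an arbitrary balanced polyhedral complex rather than a fan --- is precisely the content of Allermann--Hampe--Rau \cite[Theorem 5.7]{AllermannHampeRau2016}, which the paper cites directly; once you have that result you can apply it immediately to $X = L_q$ and $Y = \Trop(M_q)$, and neither the one-parameter scaling family $L_{q^{(t)}}$ nor the Chow-ring/Minkowski-weight reformulation is needed. (The Chow-ring route would also require some care since $\Sigma$ need not be a fan, so it does not define a Minkowski weight on $\Delta_n$ without first passing to its own recession fan --- which is exactly what the AHR theorem packages up for you.)
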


\begin{proof}
    \cite[Theorem 4.4.5]{MaclaganSturmfels2015} states that the recession fan of $L_q$ (see \Cref{def:recessionfan}) is precisely $\Trop(M_q)$.
    Furthermore, the recession fan of $\Trop(M_q)$ is itself as it is already a fan.
    Thus $\rec(L_q)=\rec(\Trop(M_q))$.
    By \cite[Theorem 5.7]{AllermannHampeRau2016}, if $X$ and $Y$ are polyhedral complexes with codimension complementary to that of $\Sigma$, then $\rec(X) = \rec(Y)$ implies that $\Sigma \cdot X = \Sigma \cdot Y$. Thus $\Sigma \cdot L_q = \Sigma \cdot \Trop(M_q)$.
\end{proof}

\begin{proof}[Proof of \Cref{prop:flip+relaxation}]
    If $M$ contains a loop, by \Cref{prop:intersection+number} \ref{prop:intersection+numberitem2} we have $M*N = 0$, hence $M'*N \geq M*N$ trivially.
    Hence we can assume $M$ is loopless.

    Let $\Sigma = (-\Trop(N)) \wedge \Trop(y_e -1)$, then $M*N = \Trop(M) \cdot \Sigma$.
    By assumption, there exists some weight function $q \colon \cB(M') \rightarrow \RR$ such that $\Delta_q(M')$ contains $P(M)$ as a cell.
    We can extend the domain of $q$ to $\binom{E}{k}$ by setting $q(B) = \infty$ for all $B \notin \cB(M')$.
    By \Cref{lem:degrees}, we see that $\Trop(M')\cdot\Sigma = L_q\cdot\Sigma$.
    As $P(M)$ is a cell of $\Delta_q(M)$, there exists some $w \in \RR^E$ such that $M_{q,w} = M$.
    Moreover, as $M$ is assumed to be loopless, we have $w \in L_q$ by \Cref{eq:troplinearspace}.
    By \cite[Corollary~4.4.8]{MaclaganSturmfels2015} we have $\Star_{w}(L_q) = \Trop(M)$, and by \cite[Lemma 5.6]{ClarkeDewarEtAl2025} we have $\Star_{w}(L_q) \cdot \Sigma \leq L_q \cdot\Sigma$.
    Putting this all together gives the required inequality:
    \begin{equation*}
        M * N \ = \ \Trop(M) \cdot \Sigma \  = \ \Star_{w}(L_q) \cdot \Sigma \ \leq \ L_q \cdot \Sigma \ = \ \Trop(M') \cdot \Sigma \ = \ M' * N \, .\qedhere
    \end{equation*}
\end{proof}

\begin{remark}
    We note that the proof of \Cref{prop:flip+relaxation} does not make any use of $\Trop(N)$ beyond that it is a balanced polyhedral complex of the correct dimension.
    As such, we state a more general form of \Cref{prop:flip+relaxation} as follows.
    Let $M$ and $M'$ be as stated, and let $\Sigma$ be a balanced polyhedral complex of dimension $|E| - r(M)$ with only positive multiplicities.
    Then $\Trop(M) \cdot \Sigma \leq \Trop(M') \cdot \Sigma$.
\end{remark}

\begin{remark} \label{rem:brandt+speyer}
    \Cref{prop:flip+relaxation} only gives a partial answer to \Cref{q:flip+weak+order}, as there exists matroids~$M$ and~$M'$ with $M \prec M'$ that do not satisfy the hypothesis of \Cref{prop:flip+relaxation}.
    Brandt and Speyer exhibit two matroids $M, M'$ with $P(M) \subseteq P(M')$ but show there exists no matroid subdivision of~$P(M')$ containing $P(M)$ as a cell~\cite[Theorem 5.2]{BrandtSpeyer2022}.
    However, this does not lead to a counterexample to \Cref{q:flip+weak+order} as far as we know.
\end{remark}

\Cref{ex:representable+flip} and \Cref{ex:relaxation+flip} give two applications of \Cref{prop:flip+relaxation}: the first to perturbations of representable matroids and the second to relaxations of matroids.

\begin{example}\label{ex:representable+flip}
    Let $X$ be a complex $|E| \times r$ matrix with rank $r$, and let $U \subset \mathbb{C}^E$ be its column span.
    For any choice of complex $|E| \times r$ matrix $Y$ and any $t \in \mathbb{C}$,
    set $U_t$ to be the column span of~$X+tY$.
    As the determinant map for square matrices is a polynomial, it follows that there exists a matroid~$M'$ such that $M(U_t) = M'$ for all but finitely many $t$.
    We now show that $M(U) * N \leq M' * N$ for any matroid $N$ of rank $|E| - r + 1$,
    in essence showing that small rotations of linear spaces can only increase flip products.

    To show this, we use the linear spaces to construct a matroid subdivision $\Delta(M')$ of $P(M')$ that contains $P(M)$ as a cell.
    For a $|E| \times r$ matrix $A$,
    we write $[A]_I$ for the minor of a $|E| \times r$ matrix $A$ with rows labelled by $I \in \binom{E}{r}$;
    here we recall that the bases of the row matroid of $A$ are exactly the set of bases of $A$ are exactly the $r$-sets $I$ such that $[A]_I \neq 0$.
    By considering each minor $[X+tY]_I$ to be an element of the field of Puiseux series $\CC\{\!\{t\}\!\}$,
    we now define a weight function $q$ on $\cB' = \cB(M')$ by $q(I) = \nu([X+tY]_I)$, where $\nu$ is the valuation described in \Cref{tropical_background:tropicalisation}.
    This is the tropicalisation of the Pl\"ucker vector of $U_t$ viewed as a linear subspace of $(\CC\{\!\{t\}\!\})^E$, and hence induces a matroid subdivision $\Delta_q(M')$~\cite[Prop 4.4.3]{MaclaganSturmfels2015}.
    Since each $[X+tY]_I$ is a polynomial in $t$, we have that $q(I) = 0$ if and only if $[X]_I \neq 0$, and $q(I) > 0$ otherwise.
    As such, setting $w=0$ gives that $P(M) = P(\cB'_{q,0})$ is a cell of $\Delta_q(M')$.
    Our claim then follows from \Cref{prop:flip+relaxation}.
\end{example}

\begin{remark}
    For any connected multigraph $G=([n],E)$, most\footnote{Each gain assignment can be considered to be a vector in $\mathbb{Z}^E$.
    We now view `most' to mean that the subset $S \subset \mathbb{Z}^E$ where the condition does not hold satisfies the limit equality $\lim_{N \rightarrow \infty} S \cap \{-N, \ldots, N\}^k = 0$.} $\mathbb{Z}^k$-gain assignments $\varphi$ for $G$ produce a matrix $M(G,\varphi)$ with the same row matroid as $[I(G) ~ A]$, where $A$ is a generic $|E| \times k$ complex matrix;
    namely, the matroid union $M_G \vee U_{E,2}$.
    Now, assuming $(G,\phi)$ is a minimally periodic $\mathbb{Z}^k$-gain graph, take $X$ to be the matrix formed from $M(G,\phi)$ by deleting an arbitrary vertex column and $Y = [\mathbf{0}_{|E| \times (n-1)} ~ B]$ with $B \in \mathbb{C}^{|E| \times k}$ chosen to be generic.
    With this, we obtain the following upper bound on $\ccountper{G,\phi}{k}$ courtesy of \Cref{thm:periodic} and \Cref{ex:representable+flip}:
    \begin{equation*}
        \ccountper{G,\phi}{k} = \frac{1}{2} M(G,\phi) \ast M(G,\phi) \leq \frac{1}{2} (M_G \vee U_{E,2}) \ast (M_G \vee U_{E,2}).
    \end{equation*}
\end{remark}

\begin{example}\label{ex:relaxation+flip}
    Let $M$ be a matroid on $E$ and let $X \subset E$ be both a circuit and a hyperplane of~$M$.
    The \emph{circuit-hyperplane relaxation} of $M$ is the matroid $M'$ with bases $\cB(M') = \cB(M) \cup \{X\}$.
    By definition we have $M \preceq M'$.
    We show that the flip product is non-decreasing under circuit-hyperplane relaxations, i.e., $M \ast N \leq M' \ast N$ for any matroid $N$ of rank $|E| - r(M) +1$.

    We demonstrate that we can construct a matroid subdivision $\Delta(M')$ that contains $P(M)$ as a cell; the claim then follows from \Cref{prop:flip+relaxation}.
    Define the weight function $p$ on $\cB' = \cB(M')$ by $p(B) = 0$ for all $I \in \cB(M)$ and $p(X) = 1$.
    Then $\Delta_p(M')$ decomposes into two maximal pieces: $P(M) = P(\cB'_{p,0})$ and $P(\cB'_{p,w})$ where $w_i = 1$ if $i \in X$ and $w_i = 0$ otherwise.
    It is straightforward to verify that $\cB'_{p,w} = \{Y \in \cB(M) \mid |X \cap Y| = r(M)-1\} \cup \{X\}$ form the bases of a matroid, hence $\Delta_p(M')$ is a matroid subdivision.
\end{example}

\begin{remark}
    Circuit-hyperplane relaxations are a special case of the more general family of \emph{matroid relaxations}, introduced in \cite{FerroniSchroeter2024}.
    We refrain from giving a formal definition: both constructions look to give a systematic way of adding bases to a matroid while remaining a matroid, but relaxations allow us to relax more general `stressed sets'.
    If $M'$ is a relaxation of $M$, a similar argument to above says there exists a matroids subdivision $\Delta(M')$ containing $P(M)$ as a cell: see \cite[Theorem~3.30]{FerroniSchroeter2024} for details.
    Applying \Cref{prop:flip+relaxation} gives that $M*N \leq M'*N$  holds for any matroid $N$ of rank $|E| - r(M) +1$.
\end{remark}

\section{Computational results}\label{computational}

In this short section we discuss the possible values of the flip product, and families of matroids where these values are attained. A few easy examples of such values are the following:
\begin{enumerate}
    \item 0 is easy for any size ground set: see \Cref{ex:flip_product_is_zero}.
    \item 1 is also easy to achieve for any size ground set: if $M$ is independent (i.e., $r(M)=|E|$) and $N = U_{E,1}$, then $M \ast N = 1$.
    This is as $\Trop(M) = \mathbb{C}^E$ and so $(-\Trop(M)) \wedge \Trop(N) = \Trop(N) = \mathbb{R} \cdot \mathbf{1}$.
    \item In fact, every positive integer is possible so long as $E$ is allowed to grow. If $M,N$ are uniform then $M \ast N = \binom{|E|-1}{r(M)-1}$, which (by \cite[Lemma 5.8]{ClarkeDewarEtAl2025}, or alternatively by \Cref{prop:flip+relaxation} using the corank subdivision described in \cite{JoswigSchroter2017}) is the largest possible value over all pairs of matroids with the same ground set and ranks $r(M),r(N)$.
    Hence, we have $U_{n,n-1} \ast U_{n,2} = n-1$ for each positive integer $n$.
    \item By the previous point, the maximal value for $M \ast N$ for a fixed ground set $E$ is when both $M$ and $N$ are uniform and one has rank $\lceil|E|/2\rceil$: in such a case, we achieve
    \begin{equation*}
        M \ast N = \binom{|E|-1}{\lceil|E|/2\rceil - 1}   \approx 2^{|E|-1}.
    \end{equation*}
\end{enumerate}

It is possible to enumerate all matroids on small ground sets. For instance, this functionality can be found in the \texttt{Matroids} package \cite{Chen2019} for the computer algebra system \textit{Macaulay2} \cite{GraysonStillman}. Let $\mathcal M_{n, r}$ denote a set of isomorphism class representatives of rank $r$ matroids on $[n]$. For any collection of positive integers $k_1, k_2, p$, let $n = k_1 + k_2 - 1$ and define
\begin{equation*}
    h_{k_1, k_2}(p) = \Big|\big\{(M, N, \sigma) : M \in \mathcal M_{n, k_1},\, N \in \mathcal M_{n, k_2},\, \sigma \in S_n,\, M * \sigma(N) = p \big\} \Big|,
\end{equation*}
where $\sigma \in S_n$ is a permutation that acts by permuting the ground set of $N$. Note that the value $h_{k_1, k_2}(p)$ does not depend on the choice of representatives $\mathcal M_{n,r}$.
These values have the property that $h_{k_1, k_2}(p) > 0$ if and only if there exist matroids $M, N$ of rank $k_1, k_2$ respectively on $n = k_1 + k_2 - 1$ such that $M*N = p$.

\begin{table}[t]
    \centering
    \resizebox{0.9\textwidth}{!}{
        \begin{tabular}{lr|rrrrrrrrrrrr}\toprule
            & & $p$ & & & & & & & & & & \\
            $n$ & $(k_1, k_2)$ &0 &1 &2 &3 &4 &5 &6 &7 &8 &9 &10 \\\midrule
            1 &(1,1) &0 &1 & & & & & & & & & \\
            2 &(1,2) &2 &2 & & & & & & & & & \\
            3 &(1,3) &12 &6 & & & & & & & & & \\
              &(2,2) &32 &16 &6 & & & & & & & & \\\midrule
            4 &(1,4) &72 &24 & & & & & & & & & \\
              &(2,3) &414 &174 &60 &24 & & & & & & & \\\midrule
            5 &(1,5) &480 &120 & & & & & & & & & \\
              &(2,4) &5208 &1416 &768 &288 &120 & & & & & & \\
              &(3,3) &11724 &3864 &2596 &1192 &508 &276 &120 & & & & \\\midrule
            6 &(1,6) &3600 &720 & & & & & & & & & \\
              &(2,5) &66624 &16296 &9792 &4248 &1680 &720 & & & & & \\
              &(3,4) &335160 &85116 &78172 &51624 &32808 &18128 &14372 &7772 &3824 &1584 &720 \\\midrule
            7 &(1,7) &30240 &5040 & & & & & & & & & \\
              &(2,6) &916704 &163152 &119376 &61488 &28080 &11520 &5040 & & & & \\
            \bottomrule
        \end{tabular}
    }
    \caption{Values of $h_{k_1, k_2}(p)$: the number of pairs of matroids on the ground set $[k_1 + k_2 - 1]$, up to permutations, with ranks $k_1$, $k_2$ and whose flip product equals~$p$.}
    \label{tab: small hadamard products with permutation}
\end{table}

In \Cref{tab: small hadamard products with permutation}, we write down values of $h_{k_1, k_2}$ for small values of $k_1, k_2$. These computations lead us to the following conjectures.

\begin{conjecture}\label{conj:h}
    For any pair of positive integers $k_1, k_2$ let $n = k_1 + k_2 - 1$. We have the following:
    \begin{enumerate}
        \item \label{conj:h1} $h_{k_1, k_2}(p) \ge 1$ for all $0 \le p \le U_{n,k_1} * U_{n,k_2}$ and $n \ge 2$,
        \item \label{conj:h2} for $n \ge 3$, we have $h_{k_1, k_2}(p) \le n!$ if and only if $p = U_{n,k_1} * U_{n,k_2}$.
    \end{enumerate}
\end{conjecture}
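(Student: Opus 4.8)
The plan is to reduce \Cref{conj:h} to a statement about realising values of $\nbc(\cdot)$, using \Cref{thm:nbc}: since $k_1+k_2=n+1$ we have $n-k_1+1=k_2$, so $M\ast U_{n,k_2}=\nbc(M)$ for every rank-$k_1$ matroid $M$ on $[n]$, and symmetrically. Write $p_{\max}$ for the common value $\binom{n-1}{k_1-1}=\binom{n-1}{k_2-1}$; by \Cref{thm:nbcupperbound} and \Cref{thm:nbc} this is the largest flip product among rank-$k_1$, rank-$k_2$ matroids on $[n]$, and $h_{k_1,k_2}(p)=0$ for $p>p_{\max}$, so part (ii) is to be read as quantifying over $p$ with $h_{k_1,k_2}(p)>0$ --- which, by part (i), is exactly $0\le p\le p_{\max}$.

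I would first handle the case $p=p_{\max}$ of part (ii). If $M\ast\sigma(N)=p_{\max}$, then \Cref{thm:nbcupperbound} forces $\nbc(M)=\nbc(N)=p_{\max}$, and the key point is that $U_{n,r}$ is the \emph{unique} rank-$r$ matroid on $[n]$ with $\nbc=\binom{n-1}{r-1}$. To see this, note every rank-$r$ matroid $M$ on $[n]$ satisfies $M\preceq U_{n,r}$, so every nbc-basis of $M$ (with respect to any fixed total order on $[n]$) is also an nbc-basis of $U_{n,r}$; the nbc-bases of $U_{n,r}$ are precisely the $r$-subsets containing the $<$-minimal element, so the equality $\nbc(M)=\binom{n-1}{r-1}$ forces the nbc-bases of $M$ to be exactly those sets, whence every $r$-subset containing the minimal element is a basis of $M$; since $\nbc(M)$ does not depend on the chosen order, letting the order vary shows every $r$-subset is a basis, i.e.\ $M=U_{n,r}$. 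Hence the only triples contributing to $h_{k_1,k_2}(p_{\max})$ are $(U_{n,k_1},U_{n,k_2},\sigma)$ with $\sigma\in S_n$, giving $h_{k_1,k_2}(p_{\max})=n!$.

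The remaining content rests on a realisation lemma: for $n\ge 2$ and $1\le r\le n-1$, every integer $p$ with $0\le p\le\binom{n-1}{r-1}$ equals $\nbc(M)$ for some rank-$r$ matroid $M$ on $[n]$ (when $r=n$ the only value is $1$). Granting this, part (i) follows immediately: given $0\le p\le p_{\max}$, pick a rank-$k_1$ matroid $M$ on $[n]$ with $\nbc(M)=p$ (or a rank-$k_2$ one if $k_1=n$), and then the triple consisting of its isomorphism-class representative, $U_{n,k_2}$ (resp.\ $U_{n,k_1}$), and the identity permutation witnesses $h_{k_1,k_2}(p)\ge 1$ by \Cref{thm:nbc}. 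For the remaining direction of part (ii), fix $0\le p<p_{\max}$ and $n\ge 3$; I would exhibit two disjoint families of triples with flip product $p$ (using \Cref{thm:nbc} to evaluate each): Batch A, the triples $(U_{n,k_1},\widetilde N,\sigma)$ with $\widetilde N$ running over the class representatives of rank-$k_2$ matroids on $[n]$ having $\nbc=p$ and $\sigma\in S_n$; and Batch B, the triples $(\widetilde M,U_{n,k_2},\sigma)$ with $\widetilde M\not\cong U_{n,k_1}$, $\nbc(\widetilde M)=p$, and $\sigma\in S_n$. The batches are disjoint because their first coordinates differ, and the realisation lemma (together with the previous paragraph, which guarantees $\widetilde M$ and $\widetilde N$ are not uniform) makes each batch nonempty and hence of size at least $n!$; in the degenerate situation $\{k_1,k_2\}=\{1,n\}$, which forces $p=0$, one batch vanishes but the other already contains $(n-1)\,n!$ triples, one for each $\sigma$ and each of the $n-1$ isomorphism classes of loopy rank-$1$ matroids on $[n]$. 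In all cases $h_{k_1,k_2}(p)\ge 2\,n!>n!$.

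The crux is the realisation lemma. The natural approach is to build a chain $M_0\prec M_1\prec\cdots\prec M_t=U_{n,r}$ of matroids on $[n]$, each a single circuit-hyperplane relaxation of the previous, starting from a loopy matroid $M_0$ (so $\nbc(M_0)=0$): by \Cref{ex:relaxation+flip} such a relaxation does not decrease $\nbc$, and the known characteristic-polynomial identity for circuit-hyperplane relaxation shows it changes $\nbc$ by exactly $1$, so a chain of length $\binom{n-1}{r-1}$ realises every value. The obstacle is that a circuit-hyperplane relaxation is only available when the current matroid actually possesses a circuit-hyperplane whose relaxation keeps us on a path toward $U_{n,r}$; producing such a chain of the correct length requires choosing carefully which bases to adjoin (for instance, adjoining the $r$-subsets containing a fixed element in a suitable order, or passing through a well-chosen family of nested matroids), and this --- rather than any of the reductions above --- is where the real work lies. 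An alternative would be an induction on $n$ via $\nbc(M)=\nbc(M\setminus e)+\nbc(M/e)$, realising prescribed pairs of minors through free single-element extensions on flats; establishing that the resulting nbc-values fill the whole interval is the analogous difficulty.
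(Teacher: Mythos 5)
The statement you were asked to prove is a \emph{conjecture} in the paper, not a theorem: the authors offer no proof, only computational evidence (verified through $n\le 8$ and a few $(9,k)$), and they remark only that the reverse direction of part~(ii) follows from an nbc-basis count and that part~(i) ``may be possible to prove'' via nbc-bases. So there is no proof in the paper to compare against, and any honest assessment has to be about whether your argument actually closes the conjecture.

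It does not, and you say so yourself. The reductions you carry out are all sound. Your observation that $M\ast U_{n,k_2}=\nbc(M)$ (via \Cref{thm:nbc}) together with \Cref{thm:nbcupperbound} and the uniqueness of $U_{n,r}$ among rank-$r$ matroids on $[n]$ with $\nbc=\binom{n-1}{r-1}$ correctly establishes $h_{k_1,k_2}(p_{\max})=n!$, which is the direction the paper describes as ``straightforward.'' (One small point worth making explicit in the uniqueness argument: you use that every nbc-basis of $M$ is an nbc-basis of $U_{n,r}$ when $M\preceq U_{n,r}$; this is true, since if $B$ contains a broken circuit $C'\setminus\{\min C'\}$ of $U_{n,r}$, any $M$-circuit $C\subseteq C'$ forces $\min C=\min C'$ and hence $C\setminus\{\min C\}\subseteq B$. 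The paper invokes this in passing after \Cref{thm:nbc} without detail.) Your batching argument for the forward direction of~(ii), and the reading of the ``if and only if'' as quantifying over $p$ with $h>0$, are also correct \emph{conditional on} the realisation lemma, and your special handling of $\{k_1,k_2\}=\{1,n\}$ is right.

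But the realisation lemma --- that every $p\in\{0,1,\dots,\binom{n-1}{r-1}\}$ is the nbc-count of some rank-$r$ matroid on $[n]$ --- is exactly where the work lies, and you have not proven it. The circuit-hyperplane-relaxation chain you sketch has real obstructions: a loopy matroid need not admit a circuit-hyperplane, and even when relaxations exist it is not automatic that one can steer a length-$\binom{n-1}{r-1}$ chain toward $U_{n,r}$ (your \Cref{ex:relaxation+flip} gives monotonicity but not the unit step size on its own, and in any case does not produce a chain). Your alternative via $\nbc(M)=\nbc(M\setminus e)+\nbc(M/e)$ faces the analogous interpolation problem. Since part~(i) and the harder direction of~(ii) both hinge on this lemma, the proposal reduces the conjecture rather than resolving it --- which, to be fair, is more than the paper itself does, and is useful scaffolding for anyone attempting a proof.
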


We observe that the reverse direction for \Cref{conj:h}\ref{conj:h2} follows from a straightforward nbc-basis count combined with \Cref{thm:nbc}.
We note that it may be possible to prove \Cref{conj:h}\ref{conj:h1} using nbc-bases and \Cref{thm:nbc}. A straightforward computation of the flip-product with an appropriate uniform matroid verifies this conjecture for all matroids in $\mathcal M_{n,k}$ for with $n \le 8$ and $(n,k) \in \{(9,1), (9,2), (9,3)\}$.

In \cite{JacksonOwen2019},
Jackson and Owen conjectured that for a minimally rigid $n$-vertex graph, the value~$\ccount{G}$ (described in \Cref{sec:intro}) is bounded below by $2^{n-3}$.
Inspired by \Cref{thm:originalrigid},
Matt Larson asked whether this bound holds for all matroids:

\begin{question}\label{q:M*M}
    Let $M$ be a matroid on $E$ such that $2r(M) = |E|+1$.
    If $M \ast M > 0$, then is $M \ast M \geq 2^{r(M)-1}$?
\end{question}

When $M$ is a graphic matroid, this is precisely the conjecture of Jackson and Owen, which is open.
To investigate this, we record in \Cref{tab: small hadamard self products} the number of rank $k$ matroids $M$ on the ground set $[n]$ with $M \ast M =p$ for small values of $k$ and $p$, where $n = 2k-1$.
For arbitrary matroids, we observe from \Cref{tab: small hadamard self products} that there is a counterexample to \Cref{q:M*M} which we describe below.

\begin{table}[t]\centering
    \begin{tabular}{l|rrrrrrrrrrrrr}\toprule
        &$p$ & & & & & & & & & & & \\
        $(n,k)$ &0 &1 &2 &4 &6 &8 &10 &12 &14 &16 &18 &20 \\\midrule
        $(1,1)$ &0 &1 & & & & & & & & & & \\
        $(3,2)$ &2 &0 &1 & & & & & & & & & \\
        $(5,3)$ &10 &0 &0 &2 &1 & & & & & & & \\
        $(7,4)$ &71 &0 &0 &0 &1 &15 &1 &12 &3 &2 &2 &1 \\
        \bottomrule
    \end{tabular}
    \caption{Number of matroids $M$ on $[n]$ of rank $r$ with $M*M = p$}\label{tab: small hadamard self products}
\end{table}

\begin{example}\label{ex:jacksonowencounterex}
    Consider the column matroid $M$ of the following matrix over $\FF_2$:
    \begin{equation*}
    \begin{bmatrix}
        1 & 1 & 1 & 0 & 0 & 0 & 0 \\
        1 & 0 & 0 & 1 & 1 & 0 & 0 \\
        0 & 1 & 0 & 1 & 0 & 1 & 0 \\
        1 & 1 & 0 & 1 & 0 & 0 & 1
    \end{bmatrix} \, .
    \end{equation*}
    It is rank 4 on seven elements, so satisfies $2r(M) = |E|+1$.
    However, a computer check demonstrates that $M*M = 6 < 2^{r(M)-1}$.
    It can be seen in \Cref{tab: small hadamard self products} that this is the smallest counterexample to \Cref{q:M*M}.
\end{example}

The matroid described in \Cref{ex:jacksonowencounterex} is an example of a \emph{sparse paving matroid}, namely, a matroid~$M$ where every subset of $r(M)$ elements is either a basis or a circuit-hyperplane.
It is conjectured that most matroids are sparse paving matroids \cite{MayhweNewmanEtAl2011}.
Because of this, we suspect that \Cref{q:M*M} can fail infinitely often, potentially with high probability as the ground set grows.

\begin{remark}
    Supplementary software to this paper --- including a \textit{Macaulay2} implementation of \Cref{thm:main} --- can be found at the Zenodo repository \cite{supportingCode}.
\end{remark}

\addcontentsline{toc}{section}{Acknowledgements}
\section*{Acknowledgements}
We thank Tony Nixon, Yue Ren and Benjamin Schr\"oter for their helpful conversations on the topic.

This research was funded in part by the Austrian Science Fund (FWF)
10.55776/I6233. For open access purposes, the authors have applied a CC BY public copyright license to any author accepted manuscript version arising from this submission.

S.\,D.\ was supported by the Heilbronn Institute for Mathematical Research, and was partially supported by the FWO grants G0F5921N (Odysseus) and G023721N, and by the KU Leuven grant iBOF/23/064.
G.\,G.\ was supported by the Austrian Science Fund (FWF) 10.55776/I6233.
B.\,S.\ was supported by the EPSRC grant EP/X036723/1.
D.\,G.\,T.\ was supported by EPSRC grant EP/W524414/1.

\bibliographystyle{plainurl}
\bibliography{ref}

\begin{thebibliography}{10}

\bibitem{AdiprasitoHuhKatz2018}
Karim Adiprasito, June Huh, and Eric Katz.
\newblock Hodge theory for combinatorial geometries.
\newblock {\em Annals of Mathematics. Second Series}, 188(2):381--452, 2018.
\newblock \href {https://doi.org/10.4007/annals.2018.188.2.1} {\path{doi:10.4007/annals.2018.188.2.1}}.

\bibitem{AgostiniBrysiewiczEtAl2023}
Daniele Agostini, Taylor Brysiewicz, Claudia Fevola, Lukas Kühne, Bernd Sturmfels, Simon Telen, and Thomas Lam.
\newblock Likelihood degenerations.
\newblock {\em Advances in Mathematics}, 414:108863, 2023.
\newblock \href {https://doi.org/10.1016/j.aim.2023.108863} {\path{doi:10.1016/j.aim.2023.108863}}.

\bibitem{AllermannHampeRau2016}
Lars Allermann, Simon Hampe, and Johannes Rau.
\newblock On rational equivalence in tropical geometry.
\newblock {\em Canadian Journal of Mathematics}, 68(2):241--257, 2016.
\newblock \href {https://doi.org/10.4153/CJM-2015-036-0} {\path{doi:10.4153/CJM-2015-036-0}}.

\bibitem{AntoliniDewarTanigawa2025}
Dario Antolini, Sean Dewar, and Shin-ichi Tanigawa.
\newblock Dilworth truncations and {Hadamard} products of linear spaces, 2025.
\newblock \href {https://arxiv.org/abs/2508.04798} {\path{arXiv:2508.04798}}, \href {https://doi.org/10.48550/arXiv.2508.04798} {\path{doi:10.48550/arXiv.2508.04798}}.

\bibitem{ArdilaKlivans2006}
Federico Ardila and Caroline~J. Klivans.
\newblock The {B}ergman complex of a matroid and phylogenetic trees.
\newblock {\em Journal of Combinatorial Theory, Series B}, 96(1):38--49, 2006.
\newblock \href {https://doi.org/10.1016/j.jctb.2005.06.004} {\path{doi:10.1016/j.jctb.2005.06.004}}.

\bibitem{Ardila2024}
Federico Ardila-Mantilla.
\newblock Intersection theory of matroids: variations on a theme.
\newblock In {\em Surveys in combinatorics 2024}, pages 1--30. Cambridge University Press, Cambridge, 2024.
\newblock \href {https://doi.org/10.1017/9781009490559.002} {\path{doi:10.1017/9781009490559.002}}.

\bibitem{ArdilaEurPenaguiao2023}
Federico Ardila-Mantilla, Christopher Eur, and Raul Penaguiao.
\newblock The tropical critical points of an affine matroid.
\newblock {\em S\'{e}minaire Lotharingien de Combinatoire}, 89B:Art. 28, 12, 2023.
\newblock \href {https://doi.org/10.1137/23M1556174} {\path{doi:10.1137/23M1556174}}.

\bibitem{AsimowRoth1978}
Leonard Asimow and Ben Roth.
\newblock The rigidity of graphs.
\newblock {\em Transactions of the American Mathematical Society}, 245:279--289, 1978.
\newblock \href {https://doi.org/10.2307/1998867} {\path{doi:10.2307/1998867}}.

\bibitem{Bernstein2022}
Daniel~Irving Bernstein.
\newblock Generic symmetry-forced infinitesimal rigidity: translations and rotations.
\newblock {\em SIAM Journal on Applied Algebra and Geometry}, 6(2):190--215, 2022.
\newblock \href {https://doi.org/10.1137/20M1346961} {\path{doi:10.1137/20M1346961}}.

\bibitem{BielbyChauhanEtAl2025}
Amelia Bielby, Arushi Chauhan, Cassia Pearce, and Yue Ren.
\newblock The tropical galaxy of a {L}aman graph, 2025.
\newblock \href {https://arxiv.org/abs/2511.09246} {\path{arXiv:2511.09246}}.

\bibitem{Bjorner1992}
Anders Bj{\"o}rner.
\newblock The homology and shellability of matroids and geometric lattices.
\newblock In {\em Matroid applications}, pages 226--283. Cambridge University Press, Cambridge, 1992.
\newblock \href {https://doi.org/10.1017/CBO9780511662041.008} {\path{doi:10.1017/CBO9780511662041.008}}.

\bibitem{BocciCarlini2024}
Cristiano Bocci and Enrico Carlini.
\newblock {\em Hadamard products of projective varieties}.
\newblock Frontiers in Mathematics. Birkh{\"a}user, Cham, 2024.
\newblock \href {https://doi.org/10.1007/978-3-031-54263-3} {\path{doi:10.1007/978-3-031-54263-3}}.

\bibitem{BocciCarliniKileel2016}
Cristiano Bocci, Enrico Carlini, and Joe Kileel.
\newblock Hadamard products of linear spaces.
\newblock {\em Journal of Algebra}, 448:595--617, 2016.
\newblock \href {https://doi.org/10.1016/j.jalgebra.2015.10.008} {\path{doi:10.1016/j.jalgebra.2015.10.008}}.

\bibitem{BorceaStreinu2010}
Ciprian~S. Borcea and Ileana Streinu.
\newblock Periodic frameworks and flexibility.
\newblock {\em Proceedings of the Royal Society A: Mathematical, Physical and Engineering Sciences}, 466(2121):2633--2649, 2010.
\newblock \href {https://doi.org/10.1098/rspa.2009.0676} {\path{doi:10.1098/rspa.2009.0676}}.

\bibitem{BrandtSpeyer2022}
Madeline Brandt and David~E. Speyer.
\newblock Computation of {Dressians} by dimensional reduction.
\newblock {\em Advances in Geometry}, 22(3):409--420, 2022.
\newblock \href {https://doi.org/10.1515/advgeom-2022-0016} {\path{doi:10.1515/advgeom-2022-0016}}.

\bibitem{CapcoGalletEtAl2018}
Jose Capco, Matteo Gallet, Georg Grasegger, Christoph Koutschan, Niels Lubbes, and Josef Schicho.
\newblock The number of realizations of a {L}aman graph.
\newblock {\em SIAM Journal on Applied Algebra and Geometry}, 2(1):94--125, 2018.
\newblock \href {https://doi.org/10.1137/17M1118312} {\path{doi:10.1137/17M1118312}}.

\bibitem{Chen2019}
Justin Chen.
\newblock {Matroids: a \emph{Macaulay2} package}.
\newblock {\em The Journal of Software for Algebra and Geometry}, 9(1):19--27, 2019.
\newblock \href {https://doi.org/10.2140/jsag.2019.9.19} {\path{doi:10.2140/jsag.2019.9.19}}.

\bibitem{supportingCode}
Oliver Clarke, Sean Dewar, Matteo Gallet, Georg Grasegger, Daniel~Green Tripp, and Ben Smith.
\newblock {Supplementary software for ``Counting fibres of the Hadamard product using Bergman fans''}, 2025.
\newblock \href {https://doi.org/10.5281/zenodo.17692525} {\path{doi:10.5281/zenodo.17692525}}.

\bibitem{ClarkeDewarEtAl2025}
Oliver Clarke, Sean Dewar, Daniel~Green Tripp, James Maxwell, Anthony Nixon, Yue Ren, and Ben Smith.
\newblock A tropical approach to rigidity: counting realisations of frameworks, 2025.
\newblock \href {https://arxiv.org/abs/2502.10255} {\path{arXiv:2502.10255}}.

\bibitem{CoxLittleOShea2005}
David~A. Cox, John Little, and Donal O'Shea.
\newblock {\em Using algebraic geometry}, volume 185 of {\em Graduate Texts in Mathematics}.
\newblock Springer, New York, NY, 2nd ed. edition, 2005.
\newblock \href {https://doi.org/10.1007/b138611} {\path{doi:10.1007/b138611}}.

\bibitem{CuetoMortonSturmfels2010}
Mar{\'{\i}}a~Ang{\'e}lica Cueto, Jason Morton, and Bernd Sturmfels.
\newblock Geometry of the restricted {Boltzmann} machine.
\newblock In Marlos~A.G. Viana and Henry~P. Wynn, editors, {\em Algebraic methods in statistics and probability II}, volume 516 of {\em Contemporary Mathematics}, pages 135--153. American Mathematical Society (AMS), Providence, RI, 2010.
\newblock \href {https://doi.org/10.1090/conm/516} {\path{doi:10.1090/conm/516}}.

\bibitem{CuetoTobisYu2010}
Mar{\'{\i}}a~Ang{\'e}lica Cueto, Enrique~A. Tobis, and Josephine Yu.
\newblock An implicitization challenge for binary factor analysis.
\newblock {\em Journal of Symbolic Computation}, 45(12):1296--1315, 2010.
\newblock \href {https://doi.org/10.1016/j.jsc.2010.06.011} {\path{doi:10.1016/j.jsc.2010.06.011}}.

\bibitem{Dunstan1976}
Frank David~John Dunstan.
\newblock Matroids and submodular functions.
\newblock {\em The Quarterly Journal of Mathematics}, 27(3):339--348, 1976.
\newblock \href {https://doi.org/10.1093/qmath/27.3.339} {\path{doi:10.1093/qmath/27.3.339}}.

\bibitem{Edmonds2003}
Jack Edmonds.
\newblock Submodular functions, matroids, and certain polyhedra.
\newblock In Michael Jünger, Gerhard Reinelt, and Giovanni Rinaldi, editors, {\em Combinatorial optimization -- Eureka, you shrink!}, volume 2570 of {\em Lecture Notes in Computer Science}, pages 11--26. Springer, Berlin, 2003.
\newblock \href {https://doi.org/10.1007/3-540-36478-1_2} {\path{doi:10.1007/3-540-36478-1_2}}.

\bibitem{EdmondsRota1966}
Jack Edmonds and Gian-Carlo Rota.
\newblock Submodular set functions.
\newblock In {\em Proceedings of the Waterloo Combinatorics Conference}, 1966.
\newblock Abstract.

\bibitem{FerroniSchroeter2024}
Luis Ferroni and Benjamin Schr{\"o}ter.
\newblock Valuative invariants for large classes of matroids.
\newblock {\em Journal of the London Mathematical Society. Second Series}, 110(3):86, 2024.
\newblock Id/No e12984.
\newblock \href {https://doi.org/10.1112/jlms.12984} {\path{doi:10.1112/jlms.12984}}.

\bibitem{FrancoisRau2013}
Georges Fran\c{c}ois and Johannes Rau.
\newblock The diagonal of tropical matroid varieties and cycle intersections.
\newblock {\em Collectanea Mathematica}, 64(2):185--210, 2013.
\newblock \href {https://doi.org/10.1007/s13348-012-0072-1} {\path{doi:10.1007/s13348-012-0072-1}}.

\bibitem{Fulton1998}
William Fulton.
\newblock {\em Intersection theory}, volume~2 of {\em Ergebnisse der Mathematik und ihrer Grenzgebiete. 3. Folge}.
\newblock Springer, Berlin, 2nd edition, 1998.

\bibitem{FultonMacPherson1981}
William Fulton and Robert MacPherson.
\newblock {\em Categorical framework for the study of singular spaces}, volume 243 of {\em Memoirs of the American Mathematical Society}.
\newblock American Mathematical Society (AMS), Providence, RI, 1981.
\newblock \href {https://doi.org/10.1090/memo/0243} {\path{doi:10.1090/memo/0243}}.

\bibitem{FultonSturmfels1997}
William Fulton and Bernd Sturmfels.
\newblock Intersection theory on toric varieties.
\newblock {\em Topology}, 36(2):335--353, 1997.
\newblock \href {https://doi.org/10.1016/0040-9383(96)00016-X} {\path{doi:10.1016/0040-9383(96)00016-X}}.

\bibitem{GraysonStillman}
Daniel~R. Grayson and Michael~E. Stillman.
\newblock Macaulay2, a software system for research in algebraic geometry.
\newblock URL: \url{https://macaulay2.com/}.

\bibitem{GrossTucker2001}
Jonathan~L. Gross and Thomas~W. Tucker.
\newblock {\em Topological Graph Theory}.
\newblock Dover Publications, New York, 2001.

\bibitem{HoltRen2025}
Isaac Holt and Yue Ren.
\newblock Generic root counts of tropically transverse systems -- an invitation to tropical geometry in {OSCAR}.
\newblock In W.~Decker, C.~Eder, C.~Fieker, M.~Horn, and M.~Joswig, editors, {\em The Computer Algebra System OSCAR}, volume~32 of {\em Algorithms and Computation in Mathematics}. Springer, Cham, 2025.
\newblock \href {https://doi.org/10.1007/978-3-031-62127-7_15} {\path{doi:10.1007/978-3-031-62127-7_15}}.

\bibitem{HuhKatz2012}
June Huh and Eric Katz.
\newblock Log-concavity of characteristic polynomials and the {Bergman} fan of matroids.
\newblock {\em Mathematische Annalen}, 354(3):1103--1116, 2012.
\newblock \href {https://doi.org/10.1007/s00208-011-0777-6} {\path{doi:10.1007/s00208-011-0777-6}}.

\bibitem{JacksonOwen2019}
Bill Jackson and John~C. Owen.
\newblock Equivalent realisations of a rigid graph.
\newblock {\em Discrete Applied Mathematics}, 256:42--58, 2019.
\newblock Distance Geometry Theory and Applications (DGTA 16).
\newblock \href {https://doi.org/10.1016/j.dam.2017.12.009} {\path{doi:10.1016/j.dam.2017.12.009}}.

\bibitem{JordanKaszanitzkyTanigawa2016}
Tibor Jord{\'a}n, Vikt{\'o}ria~E. Kaszanitzky, and Shin-ichi Tanigawa.
\newblock Gain-sparsity and symmetry-forced rigidity in the plane.
\newblock {\em Discrete \& Computational Geometry}, 55(2):314--372, 2016.
\newblock \href {https://doi.org/10.1007/s00454-015-9755-1} {\path{doi:10.1007/s00454-015-9755-1}}.

\bibitem{JoswigSchroter2017}
Michael Joswig and Benjamin Schröter.
\newblock Matroids from hypersimplex splits.
\newblock {\em Journal of Combinatorial Theory, Series A}, 151:254--284, 2017.
\newblock \href {https://doi.org/10.1016/j.jcta.2017.05.001} {\path{doi:10.1016/j.jcta.2017.05.001}}.

\bibitem{KummerVinzant2019}
Mario Kummer and Cynthia Vinzant.
\newblock The chow form of a reciprocal linear space.
\newblock {\em Michigan Mathematical Journal}, 68(4):831--858, 2019.
\newblock \href {https://doi.org/10.1307/mmj/1571731287} {\path{doi:10.1307/mmj/1571731287}}.

\bibitem{MaclaganSturmfels2015}
Diane Maclagan and Bernd Sturmfels.
\newblock {\em Introduction to tropical geometry}, volume 161 of {\em Graduate Studies in Mathematics}.
\newblock American Mathematical Society (AMS), Providence, RI, 2015.
\newblock \href {https://doi.org/10.1090/gsm/161} {\path{doi:10.1090/gsm/161}}.

\bibitem{MalesteinTheran2013}
Justin Malestein and Louis Theran.
\newblock Generic combinatorial rigidity of periodic frameworks.
\newblock {\em Advances in Mathematics}, 233(1):291--331, 2013.
\newblock \href {https://doi.org/10.1016/j.aim.2012.10.007} {\path{doi:10.1016/j.aim.2012.10.007}}.

\bibitem{MalesteinTheran2015}
Justin Malestein and Louis Theran.
\newblock Frameworks with forced symmetry {I}: Reflections and rotations.
\newblock {\em Discrete {\&} Computational Geometry}, 54(2):339--367, 2015.
\newblock \href {https://doi.org/10.1007/s00454-015-9692-z} {\path{doi:10.1007/s00454-015-9692-z}}.

\bibitem{MayhweNewmanEtAl2011}
Dillon Mayhew, Mike Newman, Dominic Welsh, and Geoff Whittle.
\newblock On the asymptotic proportion of connected matroids.
\newblock {\em European Journal of Combinatorics}, 32(6):882--890, 2011.
\newblock Matroids, Polynomials and Enumeration.
\newblock \href {https://doi.org/10.1016/j.ejc.2011.01.016} {\path{doi:10.1016/j.ejc.2011.01.016}}.

\bibitem{RosenSidmanTheran2020}
Zvi Rosen, Jessica Sidman, and Louis Theran.
\newblock Algebraic matroids in action.
\newblock {\em American Mathematical Monthly}, 127(3):199--216, 2020.
\newblock \href {https://doi.org/10.1080/00029890.2020.1689781} {\path{doi:10.1080/00029890.2020.1689781}}.

\bibitem{RosenSidmanTheran2025}
Zvi Rosen, Jessica Sidman, and Louis Theran.
\newblock Linearizing algebraic matroids, 2025.
\newblock \href {https://arxiv.org/abs/2507.07220} {\path{arXiv:2507.07220}}.

\bibitem{Schrijver2003}
Alexander Schrijver.
\newblock {\em Combinatorial optimization. {Polyhedra} and efficiency}, volume~24 of {\em Algorithms and Combinatorics}.
\newblock Springer, Berlin, Heidelberg, 2003.

\bibitem{Shaw2013}
Kristin Shaw.
\newblock A tropical intersection product in matroidal fans.
\newblock {\em SIAM Journal on Discrete Mathematics}, 27(1):459--491, 2013.
\newblock \href {https://doi.org/10.1137/110850141} {\path{doi:10.1137/110850141}}.

\bibitem{Speyer2008}
David~E. Speyer.
\newblock Tropical linear spaces.
\newblock {\em SIAM Journal on Discrete Mathematics}, 22(4):1527--1558, 2008.
\newblock \href {https://doi.org/10.1137/080716219} {\path{doi:10.1137/080716219}}.

\bibitem{Speyer2009}
David~E. Speyer.
\newblock A matroid invariant via the {K-theory} of the {Grassmannian}.
\newblock {\em Advances in Mathematics}, 221(3):882--913, 2009.
\newblock \href {https://doi.org/10.1016/j.aim.2009.01.010} {\path{doi:10.1016/j.aim.2009.01.010}}.

\end{thebibliography}

\appendix

\section{Tropical geometry background}
\label{tropical_background}

In this appendix section we cover all the necessary tropical geometry background material required throughout the paper.

\subsection{Tropicalisation and tropical varieties} \label{tropical_background:tropicalisation}

Let $\mathbb{K}$ be an algebraically closed field with \textit{valuation} $\nu$;
i.e., a map $\nu \colon \mathbb{K} \setminus \{0\} \rightarrow \RR$ such that for all $a,b\in \mathbb{K}$ we have:
\begin{itemize}
    \item $\nu(ab) = \nu(a) + \nu(b)$,
    \item $\nu(a+b) \ge \min\{\nu(a), \nu(b)\}$ with equality if $\nu(a) \neq \nu(b)$.
\end{itemize}
A valuation is said to be \textit{non-trivial} if it is not identically zero.
If our valuation is non-trivial,
we define the \emph{tropicalisation} of an algebraic set $X \subseteq \mathbb{K}^n$ to be the Euclidean closure of the coordinate-wise valuation of its points with non-zero coordinates:
\begin{equation*}
    \Trop(X) := \overline{\Big\{ \big(\nu(x_1), \nu(x_2), \dots, \nu(x_n) \big) : x \in X, ~ x_i \neq 0 ~ \forall i \in [n] \Big\}} \subseteq \RR^n.
\end{equation*}
We now say that $\Trop(X)$ is a \emph{tropical variety}.
If $I \subseteq \mathbb{K}[x_1^{\pm}, \dots, x_n^{\pm}]$ is the set of Laurent polynomials that vanish on $X \cap (\mathbb{K}\setminus\{0\})^n$,
we set $\Trop(I) := \Trop(X)$.
For a single Laurent polynomial $f \in \mathbb{K}[x_1^{\pm}, \dots, x_n^{\pm}]$ we use the shorthand $\Trop (f) = \Trop (\langle f \rangle)$.

Typically we work with polynomials over the field $\CC$, which does not naturally come equipped with a non-trivial valuation. Thus we extend $\CC$ to field of Puiseux series $\CC\{\!\{t\}\!\}$ with the valuation
\begin{equation*}
    \nu \left( \sum_{i=k}^\infty a_i t^{i/n} \right) := k/n  \quad \text{where} \quad a_k \neq 0.
\end{equation*}
Specifically,
given an algebraic set $X \subset \CC^n$ where $X \cap (\mathbb{C}\setminus \{0\})^n$ is the vanishing set of the ideal $I \subseteq \CC[x_1^{\pm}, \dots, x_n^{\pm}]$,
we set $\widetilde I := I \cdot \CC\{\!\{t\}\!\}[x_1^{\pm}, \dots, x_n^{\pm}]$ and then define $\Trop(X) := \Trop (\widetilde{I})$.

\begin{example}
    The linear form $x_1 - 1 \in \CC[x_1, \dots, x_n]$ has zero set $\{(1,x_2,\ldots,x_n) : x_i \in \mathbb{C}\}$.
    Thus its tropicalisation is $\Trop(x_1 - 1) = \{0\} \times \RR^{n-1}$.
\end{example}

\subsection{Balanced polyhedral complexes}\label{tropical_background:bcp}

A \emph{polyhedral complex} $\Sigma$ in $\mathbb{R}^n$ is a collection of (convex) polyhedra in $\mathbb{R}^n$ such that:
\begin{itemize}
    \item If $\sigma \in \Sigma$ then every face of $\sigma$ is contained in $\Sigma$,
    \item If $\sigma,\tau \in \Sigma$ then $\sigma \cap \tau$ is a face of $\sigma$ and $\tau$.
\end{itemize}
The \emph{support} of a polyhedral complex~$\Sigma$ is the union $|\Sigma| := \bigcup_{\sigma \in \Sigma} \sigma \subseteq \mathbb{R}^n$.
The \emph{dimension} of a polyhedral complex is the maximum of the dimensions of its polyhedra.
The inclusion-wise maximal polyhedra of $\Sigma$ are the \emph{maximal polyhedra} or \emph{maximal cells} of~$\Sigma$.
A polyhedral complex is called \emph{pure} if each of its maximal polyhedra have the same dimension.
The \emph{lineality space} of a polyhedral complex $\Sigma$ is the largest affine subspace $L \subset \mathbb{R}^n$ such that for all points $x \in |\Sigma|$ and all $y \in L$ we have $x+y \in |\Sigma|$.
For each face $\sigma \in \Sigma$, the \emph{relative interior} of $\sigma$ is
\begin{equation*}
{\rm relint}(\sigma) = \{x \in \sigma : x \notin \tau \text{ for any proper face } \tau \subsetneq \sigma\}.
\end{equation*}
A polyhedral complex is called a \emph{fan} if all its faces are convex cones,
and a polyhedral complex is \emph{rational} if each polyhedra $P \in \Sigma$ is defined by rational hyperplanes.
A \emph{weighted polyhedral complex $\Sigma$} is a polyhedral complex together with the data of multiplicities $\mult_\Sigma(\sigma) \in \ZZ$ for each maximal polyhedron $\sigma \in \Sigma$.

With this terminology, we can now state the following definition.

\begin{definition}\label{def:balanced}
    Let $\Sigma$ be a $d$-dimensional pure rational weighted polyhedral complex.
    For each polyhedron $\eta \in \Sigma$, define the lattice $N_\eta := \ZZ^n \cap \Span(\eta - u)$ for some arbitrary element $u \in \eta$.
    Fix a $(d-1)$-dimensional cell $\tau$ and, for each $d$-dimensional cell $\sigma$ containing $\tau$,
    let $v_\sigma\in\ZZ^n$ be a vector such that $\ZZ\cdot v_\sigma + N_\tau = N_\sigma$.
    We say that $\Sigma$ is \emph{balanced at} $\tau$ if $\sum_{\sigma\supsetneq\tau}\mult_\Sigma(\sigma)v_\sigma\in N_\tau$, and that $\Sigma$ is a \emph{balanced polyhedral complex} (or \emph{tropical cycle}) if it is balanced at every $(d-1)$-dimensional cell.
\end{definition}

It is good to keep in mind the following key examples:
\begin{enumerate}
    \item Every tropical variety is a balanced polyhedral complex \cite[Theorem 3.3.5]{MaclaganSturmfels2015}.
    \item By assigning multiplicity 1 to each maximal cell, every Bergman fan is a \emph{balanced fan}, a balanced polyhedral complex that is also a fan \cite[Theorem 4.4.5]{MaclaganSturmfels2015}.
    \item We can extend our definition of balanced polyhedral complexes to allow for polyhedral complexes contained within spaces such as $\mathbb{R}^n / \mathbb{R} \cdot \mathbf{1}$. Because of this, we consider the projective Bergman fan to also be a balanced fan.
\end{enumerate}

\subsection{Stable intersection}\label{def:stableIntersection}

Let $\Sigma_1,\Sigma_2$ be two balanced polyhedral complexes in $\RR^n$.
Their \emph{stable intersection} is the pure rational polyhedral complex consisting of the polyhedra
\begin{equation*}
    \Sigma_1\wedge\Sigma_2 \coloneqq \Big\{\sigma_1\cap\sigma_2\bigmid\sigma_1\in\Sigma_1, \sigma_2\in\Sigma_2 \, , \, \dim(\sigma_1+\sigma_2)=n \Big\} \, .
\end{equation*}
We make the stable intersection $\Sigma_1 \wedge \Sigma_2$ into a balanced polyhedral complex by assigning the following multiplicity to each maximal cell $\sigma_1\cap\sigma_2\in\Sigma_1\wedge\Sigma_2$:
\begin{equation*}
    \mult_{\Sigma_1\wedge\Sigma_2}(\sigma_1\cap\sigma_2) \coloneqq \sum_{\tau_1,\tau_2}\mult_{\Sigma_1}(\tau_1)\mult_{\Sigma_2}(\tau_2)[ \mathbb{Z}^n : N_{\tau_1}+N_{\tau_2}],
\end{equation*}
where the sum is over all maximal cells $\tau_i\in\Sigma_i$ with $\sigma_1 \cap \sigma_2 \subseteq \tau_i$ for $i=1,2$, and $\tau_1\cap (\tau_2+\varepsilon\cdot v)\neq\emptyset$ for a fixed generic $v\in\RR^n$ and $\varepsilon>0$ sufficiently small.
The integer $[\mathbb{Z}^n : N_{\tau_1}+N_{\tau_2}]$ denotes the \emph{index} of the sublattice $N_{\tau_1} + N_{\tau_2} \subseteq \mathbb{Z}^n$.
The multiplicity of the stable intersection does not depend of the choice of $v$, hence it is well-defined.

The assumption that $\Sigma_1$ and $\Sigma_2$ are balanced is important, since it implies the following two properties.
Firstly, if the stable intersection $\Sigma_1 \wedge \Sigma_2$ is 0-dimensional, then the summation
\begin{equation*}
    \sum_{\tau} \mult_{\Sigma_1 \wedge (\Sigma_2+u)} (\tau)
\end{equation*}
taken over each maximal cell $\tau \in  \Sigma_1 \wedge (\Sigma_2+u)$, is independent of the choice of $u$.
Secondly, it implies that stable intersection is associative, and so the stable intersection $\Sigma_1 \wedge \cdots \wedge \Sigma_k$ is well-defined for any choice of balanced polyhedral complexes $\Sigma_1,\ldots,\Sigma_k$.

\subsection{Recession fans and stars}

We now describe two balanced fans that can be formed from a balanced polyhedral complex $\Sigma$: recession fans and stars.

\begin{definition}\label{def:recessionfan}
Given a polyhedra $\sigma = \{ x \in \RR^n \mid  Ax \le b\}$ for some matrix $A \in \RR^{d \times n}$ and $b \in \RR^d$, we define the recession cone of $\sigma$ as $\rec(\sigma) := \{x \in \RR^n \mid Ax \le \mathbf{0} \}$.
From this, we define $\rec(\Sigma)$ to be the union of all cones $\rec(\sigma)$ taken over polyhedra $\sigma \in \Sigma$.
The set $\rec(\Sigma)$ can always be given the structure of a polyhedral fan, which we then call the \emph{recession fan} of $\Sigma$.
Additionally, $\rec(\Sigma)$ is a balanced fan when assigned the multiplicities $\mult_{\rec(\Sigma)}(\tau)=\sum_{\rec(\tau')=\tau}\mult_\Sigma(\tau')$.
\end{definition}

If $\Sigma'$ is another polyhedral complex with the same support as $\Sigma$, then the recession fans $\rec(\Sigma)$ and $\rec(\Sigma')$ have the same support, hence the recession fan is well-defined on supports.

\begin{definition}\label{def:star}
    Let $w$ be a point in the support $|\Sigma|$.
    The \emph{star} of $\Sigma$ at $w$ is the balanced fan $\Star_w(\Sigma)$ with maximal cells
    \begin{equation*}\sigma_w := \left\{\lambda(v - w) \mid v \in \sigma \, , \, \lambda \geq 0\right\}, \quad \text{ for each maximal cell $\sigma \in \Sigma$ containing } w \in \sigma ,
    \end{equation*}
    where each cone $\sigma_w$ has multiplicity $\mult_\Sigma(\sigma)$.
\end{definition}

Intuitively, $\mathrm{star}_w(\Sigma)$ is $\Sigma$ viewed locally around $w$. In the case that $\Sigma$ is the Bergman fan of a matroid $M$, its stars are given by Bergman fans of the \textit{initial matroids $M_{p,w}$}. See \Cref{def: matroid subdivision}.

\begin{proposition}[{\cite[Corollary~4.4.8]{MaclaganSturmfels2015}}]
    Let $M$ be a matroid of rank $r$ on $[n]$ and $p \in \RR^n$ be tropical Pl\"ucker vector. Let $\sigma \in L_p$ be a cone of the tropical linear space $L_p = \{w \in \RR^n : M_{p, w} \text{ has no loops}\}$ and $w \in \sigma$ be a point in its relative interior. Then $\Star_w(L_p) = \Trop(M_{p,w})$, in particular, we have
    \begin{equation*}
        \Star_w(\Trop(M)) = \Trop(M_{p,w})
        \quad \text{for} \quad
        p_B = 
        \begin{cases}
            0 & \text{if } B \in \mathcal B(M),\\
            \infty & \text{if } B \notin \mathcal B(M).
        \end{cases}
    \end{equation*}
\end{proposition}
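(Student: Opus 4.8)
The plan is to reduce the claim to an equality of supports, after which the weighting is automatic, and to obtain the support equality from the transitivity of initial degenerations.

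First I would use that $L_p$, being a polyhedral complex, is locally conical at $w$: unwinding \Cref{def:star}, a vector $v\in\RR^n$ lies in $|\Star_w(L_p)|$ if and only if $w+\epsilon v\in|L_p|$ for all sufficiently small $\epsilon>0$ (the ray from $w$ in direction $v$ meets only finitely many cells of $L_p$, so for small $\epsilon$ it stays inside a single cell, which is contained in a maximal cell that must then contain $w$). Since every maximal cell of $L_p$ has multiplicity $1$ (see the discussion of \eqref{eq:troplinearspace}) and every maximal cell of a Bergman fan has multiplicity $1$ (see the remark after \Cref{def:projectiveBergmanFan}), both sides of the asserted identity are pure balanced complexes with all weights equal to $1$, and such a tropical cycle is determined by its support. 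It therefore suffices to prove $|\Star_w(L_p)|=|\Trop(M_{p,w})|$; the compatibility of the fine fan structures then follows by applying the argument below to each cell of $L_p$ through $w$ rather than only at the level of supports.

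The core is a transitivity statement for initial matroids. Fix $v\in\RR^n$ and let $q\colon\binom{[n]}{r}\to\RR\cup\{\infty\}$ be the Pl\"ucker vector with $q_B=0$ for $B\in\cB(M_{p,w})=\cB_{p,w}$ and $q_B=\infty$ otherwise; this is a genuine tropical Pl\"ucker vector because $\Trop(M_{p,w})=L_q$ is a tropical linear space, as recalled in \Cref{weak_order}. The claim is that for all sufficiently small $\epsilon>0$ one has $\cB_{p,\,w+\epsilon v}=\cB_{q,v}$, and hence $M_{p,\,w+\epsilon v}=M_{q,v}$. This is a finite minimisation argument: writing $p_B-\sum_{i\in B}(w_i+\epsilon v_i)=\bigl(p_B-\sum_{i\in B}w_i\bigr)-\epsilon\sum_{i\in B}v_i$, every $B\notin\cB_{p,w}$ with $p_B$ finite exceeds $\min_{B'}\bigl(p_{B'}-\sum_{i\in B'}w_i\bigr)$ by a fixed positive amount, so once $\epsilon$ is below the corresponding threshold all minimisers lie in $\cB_{p,w}$; on $\cB_{p,w}$ the quantity $p_B-\sum_{i\in B}w_i$ is constant, so the minimisers of $p_B-\sum_{i\in B}(w_i+\epsilon v_i)$ are exactly the $B\in\cB_{p,w}$ maximising $\sum_{i\in B}v_i$, which by definition of $q$ is precisely $\cB_{q,v}$.

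To finish, I would chain the equivalences: $v\in|\Star_w(L_p)|$ iff $w+\epsilon v\in L_p$ for all small $\epsilon>0$, iff $M_{q,v}$ is loopless (using the definition \eqref{eq:troplinearspace} of $L_p$ together with the transitivity statement), iff $v\in L_q=\Trop(M_{p,w})$. Together with the reduction above this yields $\Star_w(L_p)=\Trop(M_{p,w})$, and the ``in particular'' clause is the special case where $p$ is the Pl\"ucker vector with $p_B=0$ for $B\in\cB(M)$ and $\infty$ otherwise, so that $L_p=\Trop(M)$ and $M_{p,w}$ is the matroid written $M_{\mathbf 0,w}$ in the proof of \Cref{lemma:point_multiplicity}. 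I expect the only mildly delicate points to be the $v$-dependence of the $\epsilon$-threshold (harmless, since everything is argued pointwise in $v$) and the bookkeeping needed to upgrade the support equality to an equality of fine fan structures; the transitivity statement itself, though the conceptual heart, requires essentially no work.
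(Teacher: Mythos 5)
The paper does not prove this proposition; it is cited verbatim from Maclagan--Sturmfels (Corollary~4.4.8), so there is no in-paper argument to compare against. Your proof is essentially the standard one and it is correct: the reduction to supports via the observation that both sides carry all multiplicities equal to $1$ is legitimate, the local-cone description of $\Star_w(L_p)$ is a correct unwinding of \Cref{def:star}, and the transitivity computation for $\cB_{p,w+\epsilon v}$ is exactly right (on $\cB_{p,w}$ the quantity $p_B-\sum_{i\in B}w_i$ is constant, so the residual minimisation of $-\epsilon\sum_{i\in B}v_i$ picks out $\cB_{q,v}$). The chain of equivalences at the end, combining local conicality with the transitivity statement and the looplessness criterion defining $L_p$, gives the support equality, and the ``in particular'' clause follows by specialising $p$ to the trivial Pl\"ucker vector.

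Two small remarks, neither a real gap. First, the statement as printed says $p\in\RR^n$, but (as your argument tacitly assumes) a tropical Pl\"ucker vector lives in $(\RR\cup\{\infty\})^{\binom{[n]}{r}}$; you handle the $\infty$-entries correctly, so this is just a typo in the source, not a defect in your argument. Second, the claimed upgrade from support equality to equality of fan structures is unnecessary for the statement as used in the paper: $\Star_w(L_p)$ and $\Trop(M_{p,w})$ are identified as tropical cycles (weighted supports up to refinement), and your support-plus-weights argument already establishes that. You could safely drop the sentence about re-running the argument cellwise.
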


\section{Proof of \texorpdfstring{\Cref{lem:removebridge} and \Cref{lemma:tropical_untangling}}{bridge removal and tropical untangling lemmas}}\label{subsec:defferedlemmas}

Before we prove our previously deferred key lemmas,
we require the following technical result regarding projections of Bergman fans.

\begin{lemma}[{\cite[Lemma 3.8]{FrancoisRau2013}}]\label{lem:FrancoisRau2013}
    Let $M$ be a loopless matroid with ground set $E$ and let $F \subset E$ be a flat of $M$ where $r(M\setminus F) = r(M)$ (e.g., $F=\{e\}$ where $e$ is not a coloop).
    Let
    \begin{equation*}
        \pi_F \colon \Trop (M) \rightarrow \Trop (M\setminus F), ~ (x_e)_{e \in E} \mapsto (x_e)_{e \in E \setminus F}.
    \end{equation*}
    Then $\pi_F$ is surjective.
    Moreover, $\pi_F$ is injective when restricted to the union of the relative interiors of maximal cones of $\Trop(M)$ not containing $\chi_S$ in their linear span for every non-empty subset $S\subseteq F$ (e.g. not containing $\{0\}^{E\setminus e}\times\RR$ in the case $F=\{e\}$).
\end{lemma}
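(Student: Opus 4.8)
The plan is to argue entirely within the fine fan structure on $\Trop(M)$, whose cones correspond to flags of flats of $M$ and whose maximal cones correspond to maximal flags $\mathcal{F}\colon \emptyset = F_0 \subsetneq F_1 \subsetneq \cdots \subsetneq F_r = E$ with $r = r(M)$ and $r_M(F_i) = i$. The linear span of $\sigma_{\mathcal{F}}$ is the space of vectors that are constant on every \emph{layer} $F_i \setminus F_{i-1}$, so $\chi_S$ lies in it precisely when $S$ is a union of layers. Hence the maximal cones singled out in the statement --- those containing no $\chi_S$ with $\emptyset \neq S \subseteq F$ --- are exactly those whose every layer meets $E \setminus F$; I will call these the \emph{good} cones. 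I will also freely use that the flats of the deletion $M \setminus F$ are exactly the sets $H \setminus F$ with $H$ a flat of $M$, that $r_{M\setminus F}$ is the restriction of $r_M$, and that the hypothesis $r(M\setminus F) = r(M)$ is equivalent to $\operatorname{cl}_M(E \setminus F) = E$.

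For surjectivity, given $y \in \Trop(M\setminus F)$, let $t_1 > \cdots > t_m$ be its distinct coordinate values and set $G_i := \{e \in E\setminus F : y_e \geq t_i\}$, so that $G_1 \subsetneq \cdots \subsetneq G_m = E\setminus F$ is a chain of flats of $M\setminus F$. I would define $w \in \RR^E$ by $w_e = y_e$ for $e \in E\setminus F$, and, for $e \in F$, by $w_e = t_{i(e)}$ where $i(e)$ is the least index with $e \in \operatorname{cl}_M(G_i)$ (this exists since $e \in E = \operatorname{cl}_M(E\setminus F) = \operatorname{cl}_M(G_m)$). A direct computation then gives $E_{w \geq t_i} = \operatorname{cl}_M(G_i)$ for every $i$ (using $\operatorname{cl}_M(G_i) \cap (E\setminus F) = \operatorname{cl}_{M\setminus F}(G_i) = G_i$), and since $w$ takes no values outside $\{t_1,\dots,t_m\}$, every set $E_{w \geq t}$ is a flat of $M$; thus $w \in \Trop(M)$ and $\pi_F(w) = y$.

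For injectivity, suppose $w \in \operatorname{relint}(\sigma_{\mathcal{F}})$ and $w' \in \operatorname{relint}(\sigma_{\mathcal{F}'})$ lie in good maximal cones with $\pi_F(w) = \pi_F(w')$. Because every layer of $\mathcal{F}$ meets $E \setminus F$, the sets $G_i := F_i \cap (E\setminus F)$ form a strictly increasing chain, so $\mathcal{G}\colon \emptyset = G_0 \subsetneq \cdots \subsetneq G_r = E\setminus F$ is a maximal flag of flats of $M\setminus F$ (of length $r = r(M\setminus F)$) and $\pi_F(w) \in \operatorname{relint}(\sigma_{\mathcal{G}})$; likewise $\pi_F(w') \in \operatorname{relint}(\sigma_{\mathcal{G}'})$. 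Since a point lies in the relative interior of a unique cone of the fine structure, $\mathcal{G} = \mathcal{G}'$. Now $r_M(F_i) = i = r_{M\setminus F}(G_i) = r_M(G_i)$ with $G_i \subseteq F_i$ both flats of $M$, so $F_i = \operatorname{cl}_M(G_i) = \operatorname{cl}_M(G'_i) = F'_i$, i.e. $\mathcal{F} = \mathcal{F}'$. Finally, $\pi_F$ is injective on $\operatorname{span}(\sigma_{\mathcal{F}})$: a vector there that vanishes on $E\setminus F$ is constant on each layer and vanishes on the nonempty part of each layer meeting $E\setminus F$, hence is zero. Therefore $w = w'$.

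The surjectivity half is a routine explicit construction, so I expect the main obstacle to be the bookkeeping in the injectivity half: pinning down that the ``good'' cones are exactly those whose layers all meet $E\setminus F$, that the coordinate projection of a good maximal cone of $\Trop(M)$ is again a maximal cone of $\Trop(M\setminus F)$ of the same dimension, and that a flat $F_i$ of $M$ is recovered from $F_i \cap (E\setminus F)$ as its $M$-closure under the rank hypothesis. Once these are secured, injectivity of $\pi_F$ on each good cone plus uniqueness of the cone whose relative interior contains a given point closes the argument; in the running special case $F = \{e\}$ with $e$ not a coloop, ``good'' just means $\{e\}$ is never a layer of the flag, i.e. $\sigma_{\mathcal{F}}$ does not contain $\RR\chi_{\{e\}} = \{0\}^{E\setminus e}\times\RR$.
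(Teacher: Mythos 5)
Your proof is correct, but it takes a genuinely different (and more self-contained) route than the paper does. The paper treats this statement as a citation: its entire proof is a translation step, verifying that the injectivity condition as phrased here --- no $\chi_S$ with $\emptyset\neq S\subseteq F$ lies in $\lin(\sigma_\mathcal{G})$ --- is equivalent to the condition appearing inside the proof of \cite[Lemma~3.8]{FrancoisRau2013}, namely that the chain $G_0\setminus F\subseteq\cdots\subseteq G_r\setminus F$ remains a maximal chain of flats of $M\setminus F$. Surjectivity and the actual injectivity argument are simply inherited from the cited source. You instead reprove both halves from first principles: an explicit lift of a point $y\in\Trop(M\setminus F)$ to $\Trop(M)$ via closures $\operatorname{cl}_M(G_i)$ for surjectivity, and for injectivity the chain of observations that a good maximal cone projects onto a maximal cone of $\Trop(M\setminus F)$, that uniqueness of the cone of the fine structure containing a given relative-interior point forces $\mathcal{G}=\mathcal{G}'$ and hence $F_i=\operatorname{cl}_M(G_i)=F_i'$, and that $\pi_F$ is linearly injective on $\lin(\sigma_\mathcal{F})$ for a good cone. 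Both approaches establish the claim; yours avoids having to unpack the cited proof at the cost of length. One small imprecision to flag: in the injectivity step you assert that $G_i$ and $F_i$ are ``both flats of $M$'', but $G_i=F_i\cap(E\setminus F)$ is a flat of $M\setminus F$ and need not be a flat of $M$. This is harmless, since what you actually use is only that $F_i$ is a flat of $M$ containing $G_i$ with $r_M(G_i)=r_M(F_i)$, which already gives $F_i=\operatorname{cl}_M(G_i)$; you may want to phrase it that way.
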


\begin{proof}
    The only difference with \cite[Lemma 3.8]{FrancoisRau2013} is the explicit condition on when the $\pi_F$ is injective.
    However, their proof notice that this happens in the relative interior of cones $\sigma_\mathcal{G}$ corresponding to maximal chains of flats $\mathcal{G}: \emptyset=G_0\subsetneq G_1\subsetneq \cdots \subsetneq G_r=E$ such that $\emptyset=G_0\setminus F\subseteq G_1\setminus F\subseteq \cdots \subseteq G_r\setminus F=E\setminus F$ is a maximal chain of flats of $M\setminus F$;
    that is, when $G_{i-1}\setminus F\neq G_i\setminus F$ for all $i\in\{1,\dots,r\}$.
    Now, this is equivalent to saying that $G_i\setminus G_{i-1}\nsubseteq F$ for all $i\in\{1,\dots,r\}$. If we had $S:=G_j\setminus G_{j-1}\subseteq F$ for some $j$, then
    \begin{equation*}
        \chi_S\in\lin(\sigma_\mathcal{G})=\lin\bigl(\bigl\{\chi_{G_i\setminus G_{i-1}}:i\in\{1,\dots,r\}\bigr\}\bigr).
    \end{equation*}
    Conversely, if $\chi_S\in\lin(\sigma_\mathcal{G})$ for some non-empty $S\subseteq F$ then, being $\{\chi_{G_i\setminus G_{i-1}}:i\in\{1,\dots,r\}\}$ a basis of $\lin(\sigma_\mathcal{G})$ with disjoint supports, we have that there exists $j\in\{1,\dots,r\}$ such that $G_j\setminus G_{j-1}\subseteq S\subseteq F$.
    Hence, $G_i\setminus G_{i-1}\nsubseteq F$ for all $i\in\{1,\dots,r\}$ is equivalent to $\lin(\sigma_\mathcal{G})$ not containing $\chi_S$ for every non-empty subset $S\subseteq F$.
\end{proof}

We recall the statement of \Cref{lem:removebridge}:

\removebridge*

\begin{proof}
    If $M$ has a loop $\ell$ then, since $e$ is not a loop of $M$, we have that $\ell$ is a loop $M\setminus e$.
    Similarly, if $N$ has a loop $\ell$ then, since $e$ is coloop of $N$ and thus cannot be a loop, we have that $\ell$ is a loop $N\setminus e$.
    In either case, it follows from \Cref{prop:loopbad} that $M \ast N = 0$ and $(M \setminus e) \ast (N \setminus e) = 0$.
    We may now assume that $M$ and $N$ are loopless.

    As $e$ is a coloop of $N$, then $N=(N\setminus e) \oplus U_{\{e\},1}$ and
    \begin{equation}\label{eq:removebridge1}
        \Trop(N)=\Trop(N\setminus e)\times\Trop(U_{\{e\},1}) = \Trop (N\setminus e) \times \mathbb{R}.
    \end{equation}
    Now choose generic $w=(v,v_e)\in \RR^{E\setminus e}\times\RR$ and fix an element $\epsilon \in E \setminus \{e\}$.
    By \Cref{eq:removebridge1}, we have
    \begin{equation*}
        -\Trop(N) + w = (-\Trop (N\setminus e) +v ) \times \mathbb{R}.
    \end{equation*}
    Combining this with the representation of the flip product as a cardinality (see \Cref{eq:FPasCardinality}),
    we have that
    \begin{align}\label{eq:M ast N bridge}
        M\ast N &= \Big| \Trop(M)\cap(-\Trop(N)+w)\cap\Trop(x_\epsilon-1)\Big| \nonumber \\
        &=\Bigg| \Trop(M) \cap \bigg( \Big[ (-\Trop (N\setminus e)+v )\cap \Trop(y_\epsilon -1) \Big] \times \mathbb{R}\bigg) \Bigg| ,
    \end{align}
    (where $\Trop(x_\epsilon-1) \subset \mathbb{R}^E$ and $\Trop(y_\epsilon-1) \subset \mathbb{R}^{E \setminus \{e\}}$)
    and
    \begin{equation}\label{eq:setB}
        (M\setminus e)\ast (N\setminus e) = \Big| \Trop(M\setminus e)\cap(-\Trop(N\setminus e)+v)\cap\Trop(y_\epsilon -1) \Big|,
    \end{equation}
    and all points in the respective intersection lie in the relative interior of maximal cones of each complex.
    With this in mind, we define the finite sets
    \begin{align*}
        A &:= \Trop(M) \cap \bigg( \Big[ (-\Trop (N\setminus e)+v )\cap \Trop(y_\epsilon -1) \Big] \times \mathbb{R}\bigg) \\
        B &:= \Trop(M\setminus e)\cap(-\Trop(N\setminus e)+v)\cap\Trop(y_\epsilon -1).
    \end{align*}
    From \Cref{eq:M ast N bridge,eq:setB},
    $M \ast N = |A|$ and $(M\setminus e)\ast (N\setminus e) = |B|$.
    Let $\pi_e := \pi_{\{e\}}$ be the map defined in \Cref{lem:FrancoisRau2013} for $M$ and $F=\{e\}$.
    Since $\pi_e(A) = B$ and each point of $A$ is contained within the relative interior of the cones of $\Trop(M)$, it now suffices to show that no point in~$A$ is contained in a maximal cone of $\Trop(M)$ that contains $\{0\}^{E\setminus e}\times\RR$ in its linear span.

    Suppose otherwise, and set $(p,p_e) \in A$ to be a point that is contained in a maximal cone $\sigma$ of $\Trop(M)$ which in turn contains $\{0\}^{E\setminus e}\times\RR$ in its linear span.
    Since $(p,p_e)$ is contained within the relative interior of $\sigma$,
    there exists $r_0>0$ such that $(p,p_e+r) \in \sigma$ for each $0<r<r_0$.
    However, this now implies that $(p,p_e+r) \in A$ for each $0<r<r_0$,
    contradicting that $A$ is a finite set.
    This concludes the proof of the lemma.
\end{proof}

We recall the statement of \Cref{lemma:tropical_untangling}:

\tropicaluntangling*

\begin{proof}
    Fix the following Bergman fans:
    \begin{align*}
        \Sigma_1 &:= \Trop(M_1\setminus \epsilon), & \widetilde{\Sigma}_1 &:= \Trop(M_1), & \Sigma_2 &:= \Trop(M_2),  \\
        T_1 &:= \Trop(N_1), & T_2 &:= \Trop(N_2\setminus \epsilon), & \widetilde{T}_2 &:= \Trop(N_2).
    \end{align*}
    By construction,
    \begin{equation*}
        \widetilde{\Sigma}_1\times\Sigma_2=\Trop(M_1\oplus M_2), \qquad T_1\times\widetilde{T}_2=\Trop(N_1\oplus N_2).
    \end{equation*}
    Choose a generic point $(\lambda_1,\lambda_\epsilon, \lambda_2)\in\RR^{E_1\setminus \epsilon}\times\RR\times\RR^{E_2\setminus \epsilon}$.
    With this, each of the pairs $(\Sigma_1,\lambda_1-T_1)$, $(\Sigma_2,\lambda_2-T_2)$ and $(\widetilde{\Sigma}_1\times\Sigma_2,(\lambda_1,\lambda_\epsilon,\lambda_2)-(T_1\times\widetilde{T}_2))$ intersect transversally in the relative interior of maximal cones. Now consider the sets
    \begin{align*}
        S &= \Big\{(x_1,x_2,y_1,y_2)\in \Sigma_1\times\Sigma_2\times T_1\times T_2: x_i+y_i=\lambda_i , \ i \in \{1,2\} \Big\},\\
        \widetilde{S} &= \Big\{(x_1,x_\epsilon,x_2,y_1,y_\epsilon,y_2)\in \widetilde{\Sigma}_1\times\Sigma_2\times T_1\times \widetilde{T}_2: x_i+y_i=\lambda_i , \ i \in \{1,2,\epsilon\} \Big\}.
    \end{align*}
    Taking just the $x$-coordinates, $S$ and $\widetilde{S}$ correspond to the intersections
    \[
        (\Sigma_1\times\Sigma_2)\cap\bigl((\lambda_1, \lambda_2) - T_1\times T_2\bigr)
        \quad \text{and} \quad
        (\widetilde{\Sigma}_1\times\Sigma_2)\cap\bigl((\lambda_1,\lambda_\epsilon, \lambda_2) - T_1\times \widetilde{T}_2\bigr),
    \]
    respectively. 
    Consider as well the equivalence relations $\sim_1$ and $\sim_2$ on $\widetilde{S}$ and $S$, respectively, given by 
    \begin{align*}
        (x_1, x_\epsilon,x_2,y_1, y_\epsilon,y_2)&\sim_1 (x_1+a\bo,x_\epsilon+a,x_2+a\bo,y_1-a\bo, y_\epsilon-a ,y_2-a\bo), \\
        (x_1,x_2,y_1,y_2)&\sim_2 (x_1+b\bo,x_2+c\bo,y_1-b\bo,y_2-c\bo)
    \end{align*}
    for any $a,b,c\in\RR$, where $\bo=(1,\ldots,1)$ in the corresponding Euclidean space. Note that:
    \begin{equation*}
        (M_1 \oplus M_2) \ast (N_1  \oplus  N_2)=\Big|\widetilde{S}/\!\!\sim_1\Big|, \qquad
        \Big( (M_1 \setminus \epsilon) \ast N_1 \Big) \Big( M_2 \ast (N_2 \setminus \epsilon) \Big)=\Big|S/\!\!\sim_2\Big|.
    \end{equation*}
    Hence, it is enough to give injective functions $\varphi \colon \widetilde{S}/\!\!\sim_1\longrightarrow S/\!\!\sim_2$ and $\psi \colon S/\!\!\sim_2\longrightarrow \widetilde{S}/\!\!\sim_1$.

    By \cite[Proposition 2.25]{Shaw2013}, there exist functions $g \colon T_2\longrightarrow\RR$ and $h \colon \Sigma_1\longrightarrow\RR$ such that the following holds:
    \begin{enumerate}[label=(\alph*)]
        \item $g$ (respectively, $h$) is linear when restricted to each cone of $T_2$ (respectively, $\Sigma_1$);
        \item the maps
        \begin{equation*}
            T_2\rightarrow \widetilde{T}_2, ~ y \mapsto \bigl(y, g(y)\bigr), \qquad \Sigma_1\rightarrow\widetilde{\Sigma}_1, ~ x \mapsto \bigl(x, h(x)\bigr)
        \end{equation*}
        are right inverses of the projections $\widetilde{T}_2\longrightarrow T_2$ and $\widetilde{\Sigma}_1\longrightarrow\Sigma_1$, respectively;
        \item $g(y+a\bo)=g(y)+a$ and $h(x+a\bo)=h(x)+a$ for any $a\in \RR,\ y\in T_2,\ x\in \Sigma_1$.
    \end{enumerate}
    With these two functions, we make the following key observation we exploit later on.
    Choose any $(x_1, x_\epsilon,x_2,y_1, y_\epsilon,y_2)\in \widetilde{S}$.
    Then $(x_1,x_2,y_1,y_2)\in S$, and so $x_1$ is in the relative interior of a maximal cell of $\Sigma_1$ and $y_2$ is in the relative interior of a maximal cell of $T_2$. An application of \Cref{lem:FrancoisRau2013} then gives that $x_\epsilon=h(x_1)$ and $y_\epsilon=g(y_2)$ must hold.

    Define the (possibly poorly-defined) maps
    $\varphi \colon \widetilde{S}/\!\!\sim_1\longrightarrow S/\!\!\sim_2$ and $\psi \colon S/\!\!\sim_2 \longrightarrow \widetilde{S}/\!\!\sim_1$ as follows:
    \begin{align*}
        \varphi(x_1,x_\epsilon,x_2,y_1, y_\epsilon,y_2)&=(x_1+x_\epsilon\bo,x_2-x_\epsilon\bo,y_1-x_\epsilon\bo, y_2+x_\epsilon\bo),\\
        \psi(x_1,x_2,y_1,y_2)&=(x_1-h(x_1)\bo, 0, x_2+(g(y_2)-\lambda_\epsilon)\bo, y_1+h(x_1)\bo,\lambda_\epsilon,y_2-(g(y_2)-\lambda_\epsilon)\bo).
    \end{align*}
    We now prove both $\varphi$ and $\psi$ are well-defined and injective.

    \begin{claim}
        $\varphi$ is well-defined and injective.
    \end{claim}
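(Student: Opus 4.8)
\begin{claimproof}
The plan is to check three things in order: that $\varphi$ maps $\widetilde{S}$ into $S$, that it is compatible with $\sim_1$ and $\sim_2$ and so descends to a map $\widetilde{S}/\!\!\sim_1\,\to\,S/\!\!\sim_2$, and that this descended map is injective. First I would note that all four matroids are loopless: $M_1$ and $N_2$ by hypothesis, while if $M_2$ or $N_1$ had a loop then $\Sigma_2$ or $T_1$ would be empty and hence both $S$ and $\widetilde{S}$ empty, making the claim vacuous. In particular each of $\Sigma_1,\Sigma_2,T_1,T_2$ is invariant under translation by $\RR\cdot\bo$.

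For the first point I would verify the four coordinates of $\varphi(x_1,x_\epsilon,x_2,y_1,y_\epsilon,y_2)$ separately. The coordinate $x_1+x_\epsilon\bo$ lies in $\Sigma_1=\Trop(M_1\setminus\epsilon)$ since $x_1=\pi_\epsilon(x_1,x_\epsilon)\in\Sigma_1$ (using $r(M_1\setminus\epsilon)=r(M_1)$ and surjectivity of $\pi_\epsilon$ from \Cref{lem:FrancoisRau2013}) and $\Sigma_1$ is $\bo$-invariant; symmetrically $y_2+x_\epsilon\bo\in T_2=\Trop(N_2\setminus\epsilon)$, and $x_2-x_\epsilon\bo\in\Sigma_2$, $y_1-x_\epsilon\bo\in T_1$ are immediate. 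The two defining sum conditions of $S$ then come out as $(x_1+x_\epsilon\bo)+(y_1-x_\epsilon\bo)=x_1+y_1=\lambda_1$ and $(x_2-x_\epsilon\bo)+(y_2+x_\epsilon\bo)=x_2+y_2=\lambda_2$. For the second point I would apply $\varphi$ to the $\sim_1$-shifted tuple $(x_1+a\bo,x_\epsilon+a,x_2+a\bo,y_1-a\bo,y_\epsilon-a,y_2-a\bo)$ and compare with $\varphi(x_1,x_\epsilon,x_2,y_1,y_\epsilon,y_2)$: the outputs agree except in the first coordinate (differing by $2a\bo$) and the third (differing by $-2a\bo$), which is exactly a $\sim_2$-move with $b=2a$, $c=0$. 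Hence $\varphi$ is constant on $\sim_1$-classes and descends.

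The injectivity step is the one I expect to need real care, and it is where the observation recorded just above --- that every point of $\widetilde{S}$ has $x_\epsilon=h(x_1)$ and $y_\epsilon=g(y_2)$, via the injectivity part of \Cref{lem:FrancoisRau2013} together with the transversality forced by genericity of $(\lambda_1,\lambda_\epsilon,\lambda_2)$ --- gets used. From $\varphi(z)\sim_2\varphi(z')$ with $z=(x_1,x_\epsilon,x_2,y_1,y_\epsilon,y_2)$ and $z'=(x_1',x_\epsilon',x_2',y_1',y_\epsilon',y_2')$, I would unwind $\sim_2$ to obtain scalars $b,c$ and four identities expressing the primed data as $\bo$-translates of the unprimed data. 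Applying $h$ to the identity for the first coordinate and using $h(x+a\bo)=h(x)+a$ with $h(x_1)=x_\epsilon$, $h(x_1')=x_\epsilon'$ forces $b=2(x_\epsilon'-x_\epsilon)$; applying $g$ to the identity for the last coordinate and using $g(y_2)=y_\epsilon=\lambda_\epsilon-x_\epsilon$ and $g(y_2')=y_\epsilon'=\lambda_\epsilon-x_\epsilon'$ forces $c=0$. Setting $a:=x_\epsilon'-x_\epsilon$ and substituting $b=2a$, $c=0$ back into the four identities then yields $x_1'=x_1+a\bo$, $x_2'=x_2+a\bo$, $y_1'=y_1-a\bo$, $y_2'=y_2-a\bo$, together with $x_\epsilon'=x_\epsilon+a$ and $y_\epsilon'=y_\epsilon-a$; i.e.\ $z'$ is the $\sim_1$-translate of $z$ by $a$, so $[z]=[z']$ in $\widetilde{S}/\!\!\sim_1$. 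The genuine obstacle is not any of these computations but ensuring $x_\epsilon=h(x_1)$ really holds at every point of $\widetilde{S}$ --- equivalently, that the intersections defining $S$ and $\widetilde{S}$ avoid the maximal cones of $\Trop(M_1)$ whose linear span contains $\{0\}^{E_1\setminus\epsilon}\times\RR$ (and the analogous cones for $N_2$) --- which is exactly where genericity of $\lambda$ must be combined with \Cref{lem:FrancoisRau2013}.
\end{claimproof}
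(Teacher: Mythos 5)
Your proof is correct and takes essentially the same route as the paper's: explicit algebraic verification for the target set, compatibility with $\sim_1$/$\sim_2$, and then injectivity hinging on the observation $x_\epsilon=h(x_1)$, $y_\epsilon=g(y_2)$ (which the paper also establishes, just before the claim, via genericity of $\lambda$ and \Cref{lem:FrancoisRau2013}). The sign of the $\sim_2$-parameter $b$ differs from the paper's because you translate $z$ to $z'$ rather than $z'$ to $z$, but the relation is symmetric so both are fine; the only small slip is that $x_1\in\Sigma_1$ follows directly from the definition of $\pi_\epsilon$ (the projection lands in $\Trop(M_1\setminus\epsilon)$ because circuits of $M_1\setminus\epsilon$ are circuits of $M_1$), not from the surjectivity part of \Cref{lem:FrancoisRau2013}.
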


    \begin{claimproof}
        Choose any $(x_1, x_\epsilon,x_2,y_1, y_\epsilon,y_2)\in \widetilde{S}$.
        It is clear that $(x_1 + x_\epsilon\bo, x_2-x_\epsilon\bo, y_1-x_\epsilon\bo, y_2+x_\epsilon\bo)\in S$, and
        \begin{align*}
            &\Big((x_1+a\bo) + (x_\epsilon+a)\bo, (x_2+a\bo)-(x_\epsilon+a)\bo, (y_1-a\bo) - (x_\epsilon+a)\bo, (y_2-a\bo) + (x_\epsilon+a)\bo\Big)\\
            = \ &\Big((x_1+x_\epsilon\bo) + 2a\bo, x_2 - x_\epsilon\bo,(y_1-x_\epsilon\bo) - 2a\bo, y_2+x_\epsilon\bo\Big)\\
            \sim_2 \ &\Big(x_1 + x_\epsilon\bo, x_2 - x_\epsilon\bo,y_1-x_\epsilon\bo, y_2 + x_\epsilon\bo\Big),
        \end{align*}
        proving that $\varphi \colon \widetilde{S}/\sim_1\longrightarrow S/\sim_2$ is indeed well-defined.

        Now assume there exists $(x_1',x_\epsilon',x_2',y_1', y_\epsilon',y_2')\in \widetilde{S}$ such that
        \begin{equation*}
            \Big(x_1+x_\epsilon\bo,x_2-x_\epsilon\bo,y_1-x_\epsilon\bo, y_2+x_\epsilon\bo \Big)=\Big(x_1'+x_\epsilon'\bo+a\bo,x_2'-x_\epsilon'\bo+b\bo,y_1'-x_\epsilon'\bo-a\bo, y_2'+x_\epsilon'\bo-b\bo \Big)
        \end{equation*}
        for some $a,b \in \mathbb{R}$.
        Then $x_1'=x_1+(x_\epsilon-x_\epsilon'-a)\bo$, which implies that
        \begin{equation*}
            x_\epsilon-x_\epsilon'=h(x_1)-h(x_1')=h(x_1)-h(x_1+(x_\epsilon-x_\epsilon'-a)\bo) =-(x_\epsilon-x_\epsilon')+a.
        \end{equation*}
        Thus $a=2(x_\epsilon-x_\epsilon')$. Similarly, $y_2'=y_2+(x_\epsilon-x_\epsilon'+b)\bo$ implies that $y_\epsilon'-y_\epsilon=x_\epsilon-x_\epsilon'+b$, but
        \begin{equation*}
            y_\epsilon'-y_\epsilon=\lambda_\epsilon-x_\epsilon'-\lambda_\epsilon+x_\epsilon=x_\epsilon-x_\epsilon'
        \end{equation*}
        and thus $b=0$.
        Substituting the values of $a$ and $b$ we get
        \begin{align*}
            x_1'&=x_1-(a/2)\bo & y_1'&=y_1+(a/2)\bo\\
            x_2'&=x_2-(a/2)\bo & y_2'&=y_2+(a/2)\bo\\
            x_\epsilon'&=x_\epsilon+x_\epsilon'-x_\epsilon=x_\epsilon-a/2&
            y_\epsilon'&=y_\epsilon+y_\epsilon'-y_\epsilon=y_\epsilon+x_\epsilon-x_\epsilon'\\
            &&&=y_\epsilon+a/2.
        \end{align*}
        Hence, $(x_1',x_\epsilon',x_2',y_1', y_\epsilon',y_2')\sim_1 (x_1,x_\epsilon,x_2,y_1, y_\epsilon,y_2)$, proving injectivity.
    \end{claimproof}

    \begin{claim}
        $\psi$ is well-defined and injective.
    \end{claim}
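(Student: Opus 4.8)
The plan is to mirror the structure of the proof that $\varphi$ is well-defined and injective, but now working with the explicit right-inverse functions $g$ and $h$ provided by \cite[Proposition 2.25]{Shaw2013}. First I would check that $\psi$ lands in $\widetilde{S}/\!\!\sim_1$: given $(x_1,x_2,y_1,y_2) \in S$, I need to verify that the proposed tuple
\begin{equation*}
    \bigl(x_1-h(x_1)\bo,\ 0,\ x_2+(g(y_2)-\lambda_\epsilon)\bo,\ y_1+h(x_1)\bo,\ \lambda_\epsilon,\ y_2-(g(y_2)-\lambda_\epsilon)\bo\bigr)
\end{equation*}
actually lies in $\widetilde{\Sigma}_1\times\Sigma_2\times T_1\times\widetilde{T}_2$ and satisfies the three coordinate-sum equations $x_i+y_i=\lambda_i$ for $i\in\{1,2,\epsilon\}$. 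The first $\widetilde{\Sigma}_1$ coordinate $x_1-h(x_1)\bo$ together with the $T_1$ coordinate $y_1+h(x_1)\bo$ sums to $x_1+y_1=\lambda_1$, and this tuple lies in $\widetilde{\Sigma}_1$ because $(x_1-h(x_1)\bo, 0)$ differs from $(x_1, h(x_1))$ — which is in $\widetilde\Sigma_1$ by property (b) — by a multiple of $\bo$, and Bergman fans contain their lineality line $\RR\cdot\bo$. A symmetric argument handles the $\Sigma_2$ and $\widetilde{T}_2$ coordinates using $g$, and the $\epsilon$-coordinates trivially sum to $\lambda_\epsilon$. Then I would check that $\psi$ respects the equivalence relations: replacing $(x_1,x_2,y_1,y_2)$ by $(x_1+b\bo, x_2+c\bo, y_1-b\bo, y_2-c\bo)$ changes $h(x_1)$ to $h(x_1)+b$ and $g(y_2)$ to $g(y_2)-c$ by property (c), and plugging these in, the output changes only by adding $\sim_1$-permissible multiples of $\bo$ in each block (with the $\epsilon$-coordinates staying at $0$ and $\lambda_\epsilon$), so $\psi$ is well-defined on $S/\!\!\sim_2$.

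For injectivity, I would suppose $\psi(x_1,x_2,y_1,y_2) \sim_1 \psi(x_1',x_2',y_1',y_2')$, which means the two output tuples differ by the $\sim_1$-action with some parameter $a\in\RR$. Comparing the second coordinate forces $0 = 0 + a$, hence $a=0$, so in fact the two output tuples are equal on the nose. Equating the first coordinate gives $x_1 - h(x_1)\bo = x_1' - h(x_1')\bo$, so $x_1 = x_1' + (h(x_1)-h(x_1'))\bo$; applying $h$ and using property (c) gives $h(x_1) = h(x_1') + (h(x_1)-h(x_1'))$, which is automatically satisfied, so I instead read off directly that $x_1$ and $x_1'$ differ by a multiple of $\bo$. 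Similarly the $\widetilde{T}_2$-coordinate equality gives $y_2 - (g(y_2)-\lambda_\epsilon)\bo = y_2' - (g(y_2')-\lambda_\epsilon)\bo$, so $y_2$ and $y_2'$ differ by a multiple of $\bo$; the $T_1$-coordinate equality $y_1+h(x_1)\bo = y_1'+h(x_1')\bo$ then pins down the relation between $y_1$ and $y_1'$, and the $\Sigma_2$-coordinate equality does the same for $x_2,x_2'$. Collecting these, one finds $(x_1',x_2',y_1',y_2')$ differs from $(x_1,x_2,y_1,y_2)$ by exactly a $\sim_2$-shift (the $b$ coming from the $x_1$ vs $x_1'$ discrepancy, the $c$ from $x_2$ vs $x_2'$, consistently matched by the $y$-blocks because all four tuples satisfy the coordinate-sum equations), hence they represent the same class in $S/\!\!\sim_2$.

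Finally I would note that $\varphi$ and $\psi$ are mutually inverse — or at least that the existence of injections both ways forces both $S/\!\!\sim_2$ and $\widetilde{S}/\!\!\sim_1$ to be finite sets of the same cardinality, which is all that is needed — closing the proof of \Cref{lemma:tropical_untangling}. One can verify $\psi\circ\varphi = \mathrm{id}$ by a short computation: starting from $(x_1,x_\epsilon,x_2,y_1,y_\epsilon,y_2)\in\widetilde S$, we have by the key observation that $x_\epsilon = h(x_1)$ and $y_\epsilon = g(y_2)$, and since $x_\epsilon + y_\epsilon = \lambda_\epsilon$ one checks the composite returns the original tuple up to $\sim_1$. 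The main obstacle I anticipate is bookkeeping: keeping straight which block each $\bo$-shift lives in and confirming that the $h$-dependent and $g$-dependent shifts in the definition of $\psi$ interact correctly with the $\sim_1$-relation's single parameter $a$ (as opposed to the two parameters $b,c$ of $\sim_2$) — in particular the fact that the $\epsilon$-coordinate of the $\sim_1$-action is tied to the $\bo$-shifts in \emph{all three} blocks is what makes the parameter count work out, and getting that alignment right is the delicate point. The injectivity of $\psi$ crucially uses that setting the second coordinate of the output to the fixed value $0$ (rather than letting it float) rigidifies the $\sim_1$-action, which is the analogue of how $\varphi$'s injectivity used the key observation $x_\epsilon = h(x_1)$.
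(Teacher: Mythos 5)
Your proposal matches the paper's proof step for step: you establish membership of the output tuple in $\widetilde{S}$, verify $\sim_2$-invariance via property (c) of $g$ and $h$ (and indeed, as you'd find on writing out the calculation, the output is literally unchanged, not merely shifted), and for injectivity you force $a=0$ from the frozen $\epsilon$-slot and then read off the $\sim_2$-shift parameters $b=h(x_1')-h(x_1)$ and $c=g(y_2)-g(y_2')$ from the remaining coordinate equalities, exactly as the paper does. The closing remark about $\psi\circ\varphi=\mathrm{id}$ is a harmless addition; the paper relies only on the two injections between finite sets.
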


    \begin{claimproof}
        Choose any $(x_1,x_2,y_1,y_2)\in S$.
        We first observe that
        \begin{align*}
            x_2+(g(y_2)-\lambda_\epsilon)\bo &\in \Sigma_2, & y_1+h(x_1)\bo &\in T_1, \\ 
            (x_1-h(x_1)\bo,0) &\in \widetilde{\Sigma}_1, &(\lambda_\epsilon,y_2-(g(y_2)-\lambda_\epsilon)\bo) &\in \widetilde{T}_2.
        \end{align*}
        From this it is clear that
        \begin{equation*}
            \Big(x_1-h(x_1)\bo ~ , ~ 0 ~ , ~ x_2+(g(y_2)-\lambda_\epsilon)\bo ~ , ~  y_1+h(x_1)\bo ~ , ~ \lambda_\epsilon ~ , ~ y_2-(g(y_2)-\lambda_\epsilon)\bo \Big)\in \widetilde{S}.
        \end{equation*}
        For each $a,b\in\RR$ we have
        \begin{equation*}
            \begin{pmatrix}
                x_1+a\bo-h(x_1+a\bo)\bo\\
                0\\
                x_2+b\bo+(g(y_2-b\bo)-\lambda_\epsilon)\bo\\
                y_1-a\bo+h(x_1+a\bo)\bo\\
                \lambda_\epsilon\\
                y_2-b\bo-(g(y_2-b\bo)-\lambda_\epsilon)\bo
            \end{pmatrix}
            =
            \begin{pmatrix}
               x_1+(a-h(x_1)-a)\bo\\
               0\\
               x_2+(b+g(y_2)-b-\lambda_\epsilon)\bo\\
               y_1+(-a+h(x_1)+a)\bo\\
               \lambda_\epsilon\\
               y_2+(-b-g(y_2)+b+\lambda_\epsilon)\bo
            \end{pmatrix}
            =
            \begin{pmatrix}
                x_1-h(x_1)\bo\\
                0\\
                x_2+(g(y_2)-\lambda_\epsilon)\bo\\
                y_1+h(x_1)\bo\\
                \lambda_\epsilon\\
                y_2-(g(y_2)-\lambda_\epsilon)\bo
            \end{pmatrix}
        \end{equation*}
        proving that $\psi \colon S/\sim_2\longrightarrow \widetilde{S}/\sim_1$ is indeed well-defined.

        Now assume there exists $(x_1',x_2',y_1',y_2')\in S$ such that
        \begin{equation*}
            \begin{pmatrix}
                x_1-h(x_1)\bo\\
                0\\
                x_2+(g(y_2)-\lambda_\epsilon)\bo\\
                y_1+h(x_1)\bo\\
                \lambda_\epsilon\\
                y_2-(g(y_2)-\lambda_\epsilon)\bo
            \end{pmatrix}
            =
            \begin{pmatrix}
                x_1'-h(x_1')\bo+a\bo\\
                a\\
                x_2'+(g(y_2')-\lambda_\epsilon)\bo+a\bo\\
                y_1'+h(x_1')\bo-a\bo\\
                \lambda_\epsilon-a\\
                y_2'-(g(y_2')-\lambda_\epsilon)\bo-a\bo
            \end{pmatrix}
        \end{equation*}
        for some $a \in \mathbb{R}$.
        Direct comparison gives $a=0$.
        Using this with the substitutions $b=h(x_1')-h(x_1)$ and $c=g(y_2)-g(y_2')$, we get
        \begin{equation*}
            x_1'=x_1+b\bo , \qquad x_2'=x_2+c\bo, \qquad   y_1'=y_1-b\bo, \qquad
            y_2'=y_2-c\bo.
        \end{equation*}
        Hence, $(x_1',x_2',y_1',y_2') \sim_2 (x_1,x_2,y_1,y_2)$, proving injectivity.
    \end{claimproof}

    This now concludes the proof of the result.
\end{proof}

\end{document}